\newcommand{\E}{\operatorname{\mathbb E}}
\numberwithin{equation}{section}
\newtheorem{thm}{Theorem}[section]
\newtheorem{lemma}{Lemma}[section]
\newtheorem{corollary}{Corollary}[section]
\theoremstyle{remark}
\newtheorem{remark}{Remark}[section]
\newtheorem*{thm1}{Claim A}
\newtheorem*{thm2}{Claim B}
\newtheorem*{thm3}{Claim C}
\newtheorem*{thm4}{Claim D}
\def\conditionA{Condition $A$}
\def\conditionAp{Condition $\tilde A$}
\def\conditionB{Condition $B$}
\newtheorem*{con0}{\conditionA}
\newtheorem*{con1}{\conditionAp}
\newtheorem*{con2}{\conditionB}
\def\given{\,|\,}
\def\E{\mathbb{E}}
\def\reals{\mathbb{R}}
\def\ones{\mathds{1}}
\let\hat\widehat
\let\tilde\widetilde
\def\Var{\textsf{Var}}
\newcommand{\argmin}{\mathop{\rm argmin}}
\newcommand{\wh}{\widehat}
\newcommand{\wt}{\widetilde}
\newcommand{\fnorm}[1]{\|#1\|_{\rm F}}
\newcommand{\opnorm}[1]{\|#1\|_{\rm op}}
\newcommand{\sgn}{\mathop{\rm sign}}
\newcommand{\iprod}[2]{\left \langle #1, #2 \right\rangle}
\newcommand{\TV}{{\sf TV}}
\newcommand{\blue}{\color{blue}}
\newcommand{\nb}[1]{\texttt{\blue[#1]}}
\begin{document}

\setlength{\parskip}{0.5em}
\begin{frontmatter}
\runtitle{Model Repair}
\title{Model Repair: Robust Recovery of Over-Parameterized Statistical Models}
\begin{aug}
\vskip15pt
\address{
\begin{tabular}{ccc}
{\normalsize\rm\bfseries Chao Gao} && {\normalsize\rm\bfseries John Lafferty}\\[5pt]
Department of Statistics && Department of Statistics and Data Science\\
University of Chicago && Yale University\\[10pt]
\end{tabular}
\\[10pt]
\today
\vskip10pt
}
\end{aug}

\begin{abstract}
A new type of robust estimation problem is introduced where the goal is to recover a statistical model that has been corrupted after it has been estimated from data. Methods are proposed for ``repairing'' the model using only the design and not the response values used to fit the model in a supervised learning setting. Theory is developed which reveals that two important ingredients are necessary for model repair---the statistical model must be over-parameterized, and the estimator must incorporate redundancy. In particular, estimators based on stochastic gradient descent are seen to be well suited to model repair, but sparse estimators are not in general repairable. After formulating the problem and establishing a key technical lemma related to robust estimation, a series of results are presented for repair of over-parameterized linear models, random feature models, and artificial neural networks. Simulation studies are presented that corroborate and illustrate the theoretical findings.
\end{abstract}

\end{frontmatter}


\section{Introduction}
\label{sec:intro}

In this paper we introduce a new type of robust estimation problem---how to recover a statistical
model that has been corrupted after estimation. Traditional robust estimation assumes that the data are corrupted, and studies methods of estimation that are immune to these corruptions or outliers in the data.
In contrast, we explore the setting where the data are ``clean'' but a statistical model is corrupted after it has been estimated using the data. We study methods for recovering the model that do not require re-estimation from scratch, using only the design and not the original response values.

The problem of model repair is motivated from several different perspectives. First, it can be formulated as a well-defined statistical problem that is closely related to, but different from, traditional robust estimation, and that deserves study in its own right. From a more practical perspective, modern machine learning practice is increasingly working with very large statistical models. For example, artificial neural networks having several million parameters are now routinely estimated. It is anticipated that neural networks having trillions of parameters will be built in the coming years, and that large models will be increasingly embedded in systems, where they may be subject to errors and corruption of the parameter values. In this setting, the maintenance of models in a fault tolerant manner becomes a concern.  A different perspective takes inspiration from plasticity in brain function, with the human brain in particular having a remarkable ability to repair itself after trauma. The framework for model repair that we introduce in this paper can be viewed as a simple but mathematically rigorous formulation of this ability in neural networks.

At a high level, our findings reveal that two important ingredients are necessary for model repair. First, the statistical model must be over-parameterized, meaning that there should be many more parameters than observations.
While over-parameterization leads to issues of identifiability from traditional perspectives, here it is seen as a necessary property of the model. Second, the estimator must incorporate redundancy in some form; for instance, sparse estimators of over-parameterized models will not in general be repairable. Notably, we show that estimators based on gradient descent and stochastic gradient descent are well suited to model repair.

At its core, our formulation and analysis of model repair rests upon representing an estimator in terms of the row space of functions of the data design matrix. This leads to a view of model repair as a form of robust estimation. The recovery algorithms that we propose are based on solving a linear program that is equivalent to median regression. Our key technical lemma, which may be of independent interest, gives sharp bounds on the probability that this linear program successfully recovers the model, which in turn determines the level of over-parameterization that is required. An interesting facet of this formulation is that the response vector is not required by the repair process. Because the model is over-parameterized, the estimator effectively encodes the response. This phenomenon can be viewed from the perspective of communication theory, where the corruption process is seen as a noisy channel, and the design matrix is seen as a linear error-correcting code for communication over this channel.

After formulating the problem  and establishing the key technical lemma, we present a series of results for
repair of over-parameterized linear models, random feature models, and artificial neural networks. These form the main technical contributions of this paper. A series of simulation experiments are presented that corroborate and illustrate our theoretical results. In the following section we give a more detailed overview of our results, including the precise formulation of the model repair problem, its connection to robust estimation and error correcting codes, and an example of the repair algorithm in simulation. We then present the key lemma, followed by detailed analysis of model repair for specific model classes.
We present the proof of the key lemma in Section \ref{sec:regression}, and the proofs of the neural network repair results with hyperbolic tangent activation are given in Section \ref{sec:pf-nn-repair}. Proofs of technical lemmas and the results for  neural networks with ReLU activation are presented in the appendix, to make the presentation more readable.
Section~\ref{sec:discuss} gives a discussion of directions for further research and potential implications of our findings for applications.

\let\epsilon\varepsilon
\def\point#1{\vskip10pt\noindent{\it\bfseries #1.}\enspace}
\def\L{{\mathcal L}}

\section{Problem formulation and overview of results}
\label{sec:overview}

In this section we formulate the problem of model repair, and give an overview of our results. Suppose that $\hat\theta \in \reals^p$ is a model with $p$ parameters estimated on $n$ data points $\{(x_i, y_i)\}_{i=1}^n$ as a classification or regression model. The model $\hat\theta$ is then corrupted by noise. The primary noise model we study in this paper is
\begin{equation}
  \eta = \hat\theta + z
\end{equation}
where $z_j \sim (1-\epsilon) \delta_0 + \epsilon Q_j$ and $Q_j$ is an arbitrary distribution. In other words, each component $\hat\theta_j$ of $\hat\theta$ is corrupted by additive noise from an arbitrary distribution $Q_j$ with probability $\epsilon$, where $0\leq \epsilon \leq 1$, and is uncorrupted with probability $1-\epsilon$. The noise vector $z$ is assumed to be independent of the design $\{x_i\}_{i=1}^n$. We discuss alternative error models later in the paper. The goal is to recover $\hat\theta$ from $\eta$, without reestimating the model from scratch; in particular, without using the response values $\{y_i\}$ when the model is estimated in a supervised learning setting.

\point{Overparameterized linear models}
To explain the main ideas, let us first consider the setting of under-determined linear regression. Let $X\in\reals^{n\times p}$ be the design matrix and $y\in\reals^n$ a vector of response values, and suppose that we wish to minimize the squared error  $\|y-X\theta\|_2^2$. If $n > p$ then this is an under-determined optimization problem. Among all solutions to the linear system $y = X\theta$, the solution of minimal norm $\|\theta\|_2$ is given by
\begin{equation}
  \hat \theta = X^T (X X^T)^{-1} y \label{eq:min-norm-solution}
\end{equation}
assuming that $X$ has full rank $n$ \citep{boyd:04}. Thus, $\hat\theta$ lies in the row space of the $n\times p$ design matrix $X$. The risk behavior of (\ref{eq:min-norm-solution}) has been well studied in the recent literature \citep{belkin2019two,hastie2019surprises,bartlett2020benign}.

Now suppose that $\eta = \hat\theta + z$ where $z_j \sim (1-\epsilon) \delta_0 + \epsilon Q_j$. The method we propose to recover $\hat\theta$ from $\eta$ is to let $\tilde u\in\reals^n$ be the solution to the optimization
\begin{equation}
  \tilde u = \argmin_u \| \eta  - X^T u\|_1
  \label{eq:keylp}
\end{equation}
and define the repaired model as $\tilde\theta = X^T\tilde u$.
The linear program defined in \eqref{eq:keylp} can be thought of as performing median regression of $\eta$ onto the rows of $X$.
Our analysis shows that, under appropriate assumptions, the estimated model is exactly recovered with high probability, so that $\tilde \theta = \hat\theta$, as long as $n/p \leq c(1-\epsilon)^2$ for some sufficiently small constant $c$.

Figure~\ref{fig:exp} shows the performance of
the repair algorithm in simulation. The design is sampled as $x_{ij} \sim N(0,1)$ and the corruption distribution is $Q_j = N(1,1)$ for each $j$. With the sample size fixed at $n=50$, the dimension $p$ is varied according to $p_k/n=200/k^2$ with
$k$ ranging from 1 to 6. The plots show the empirical probability of exact repair $\tilde\theta = \hat\theta$ as a function of $\epsilon$. The roughly equal spacing of the curves agrees with our theory, which indicates that $\sqrt{n/p}/(1-\epsilon)$ should be sufficiently small for successful repair. The theory indicates that the repair probability for dimension $p_k$ as a function of the adjusted value $\epsilon_k = \epsilon + c'\cdot k - \frac{1}{2}$ should exhibit a threshold at
$\epsilon_k = 1/2$ for the constant $c' = \frac{\sqrt{2}}{20 c}$; this is seen in the right plot of Figure~\ref{fig:exp}.

\point{Robust regression} This procedure can be viewed in terms of robust regression. Specifically, $\eta$ can be viewed as a corrupted response vector, and $A = X^T \in \reals^{p\times n}$ can be viewed as a design matrix that is \textit{not corrupted}.  Our result makes precise conditions under which this robust regression problem can be successfully carried out.
In particular, we show that model repair is possible even if $\epsilon \to 1$, so that the proportion of corrupted model components approaches one. This is in contrast to the traditional Huber model where the design is also corrupted \citep{huber:64}, under which consistent estimation is only possible if $\epsilon$ is below some small constant \citep{chen2016,gao2020}. The problem of robust regression with uncorrupted design has a rich literature; we review some of the relevant work in Section \ref{sec:regression}.

\begin{figure}[t]
  \begin{tabular}{cc}
    \hskip-3pt
    \includegraphics[width=.48\textwidth]{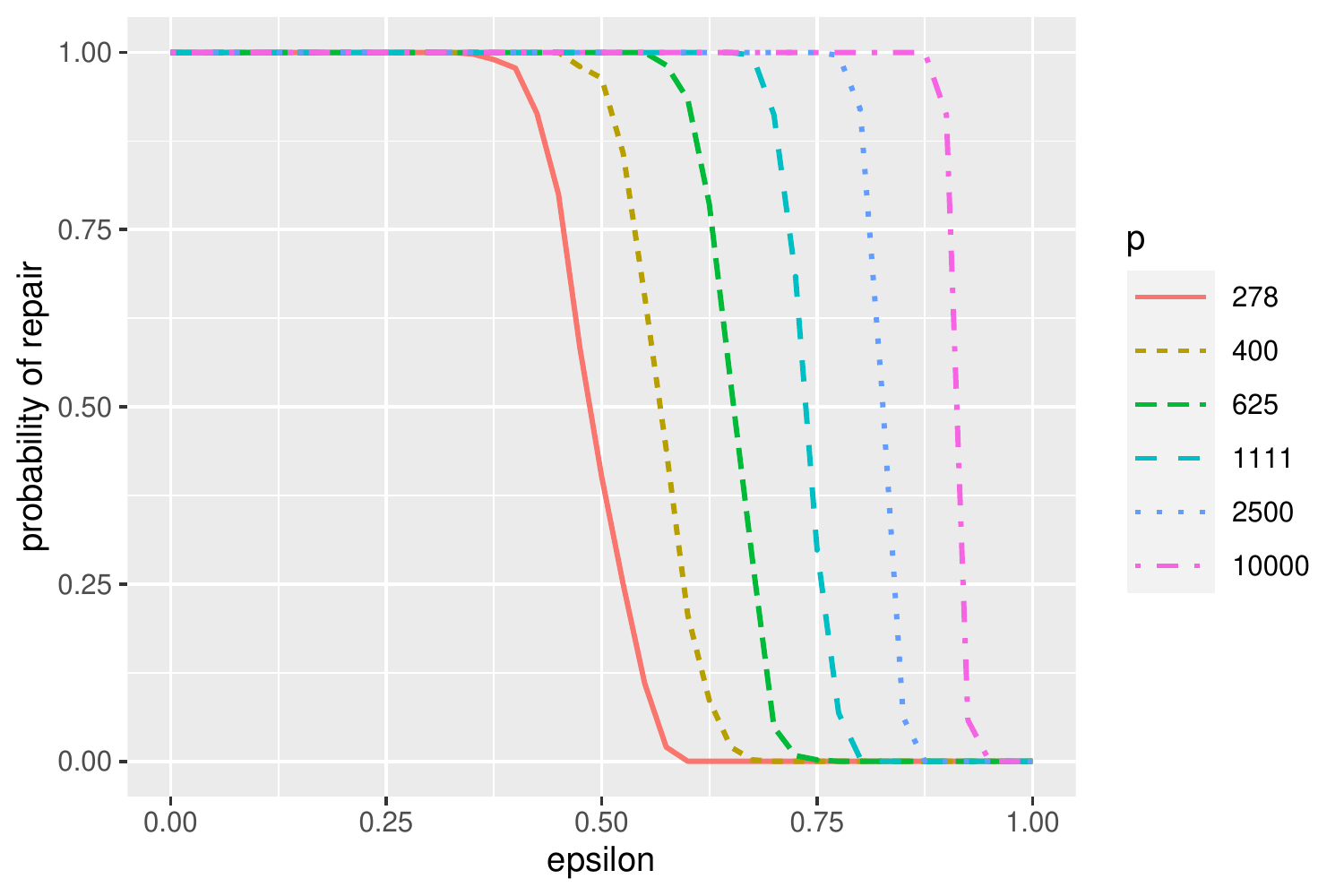} &
    \hskip-3pt
    \includegraphics[width=.48\textwidth]{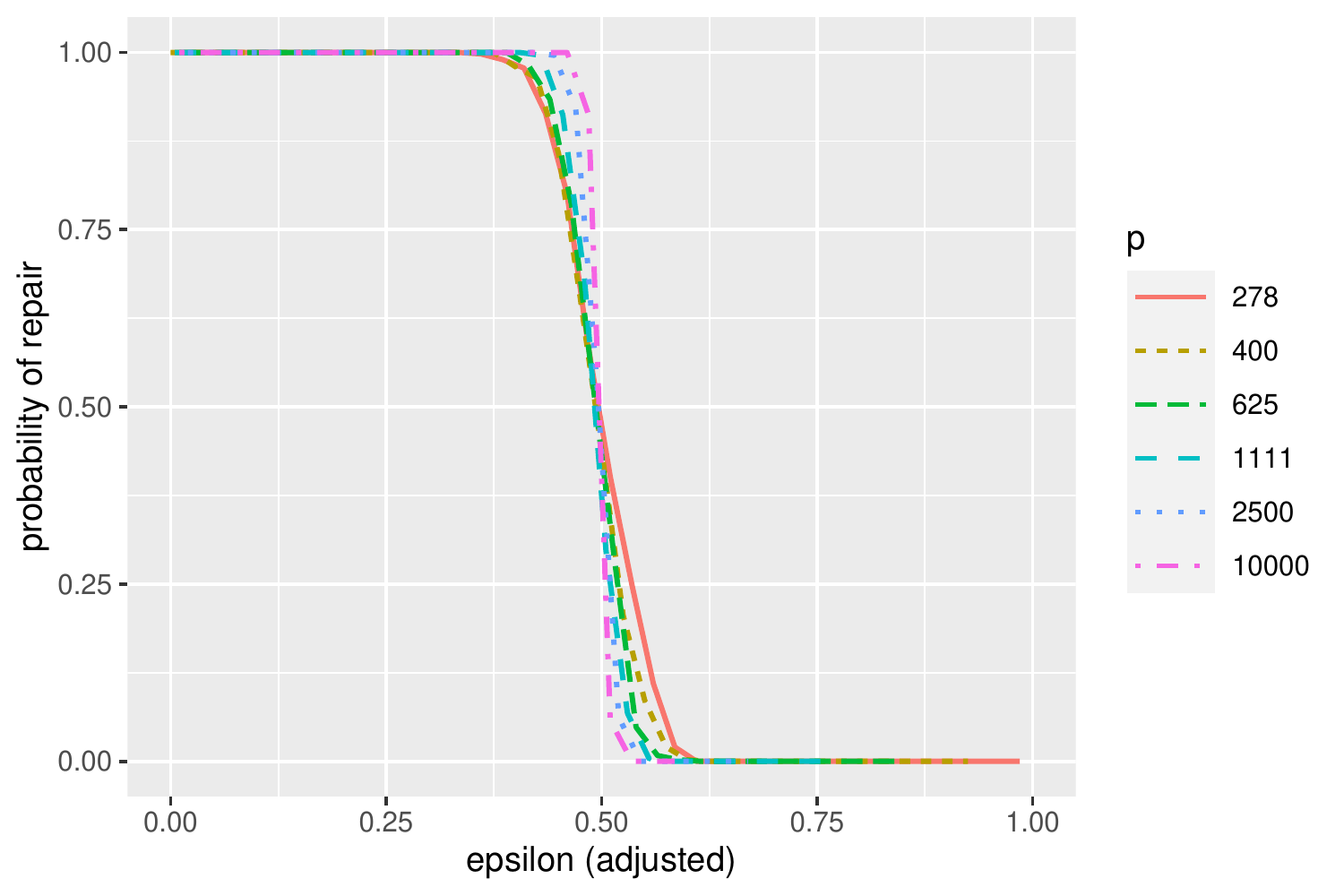}
  \end{tabular}
\caption{Left: Empirical probability of exact repair as a function of $\epsilon$.
The sample size is $n=50$ and the model dimension $p$
varies as $p_k/n = 200 /k^2$, for $k=1,\ldots, 6$; each point is an average over 500 trials. The plot on the right
shows the repair probability as a function of the adjusted value $\epsilon_k = \epsilon + c' \cdot k - \frac{1}{2}$
for dimension $p_k$, where the constant is $c'=\frac{\sqrt{2}}{20 c}=0.085$.}
\label{fig:exp}
\end{figure}

\point{Error-correcting codes}
Model repair can also be viewed in terms of error-correcting codes. Specifically, viewing the response vector $y\in\reals^n$  as a message to be communicated over a noisy channel, the minimum norm model $\hat\theta = X^T u = X^T (X X^T)^{-1} y$ redundantly
encodes $y$ since $p > n$ (see Figure~\ref{fig:code}). The decoding algorithm $\tilde u = \argmin_u \|\eta - X^T u\|$ then recovers the data $y$ according to $y = (XX^T) \tilde u$. The inequality $n/p < c(1-\epsilon)^2$ gives a condition on the rate of the code, that is, the level of redundancy that is sufficient for this decoding procedure to recover the message with high probability.

When $X$ is a random Gaussian matrix, the mapping $u \to X^T u = \sum_{i=1}^n u_i X_i^T$ can be viewed as a superposition of random codewords in $\reals^p$ \citep{joseph2012,rush2017}. The fundamental difference with channel coding is that in our regression setting the design matrix $X$ is fixed, and is not chosen for optimal channel coding. Indeed, the noise model $w \to w + z$ that we consider, with $z_j \sim (1-\epsilon)\delta_0 +\epsilon Q_j$ corresponds to a channel having infinite capacity, and a simple repetition code would suffice for identifying components that are uncorrupted \citep{CoverThomas}.

\begin{figure}[t]
  \includegraphics[width=1.0\textwidth]{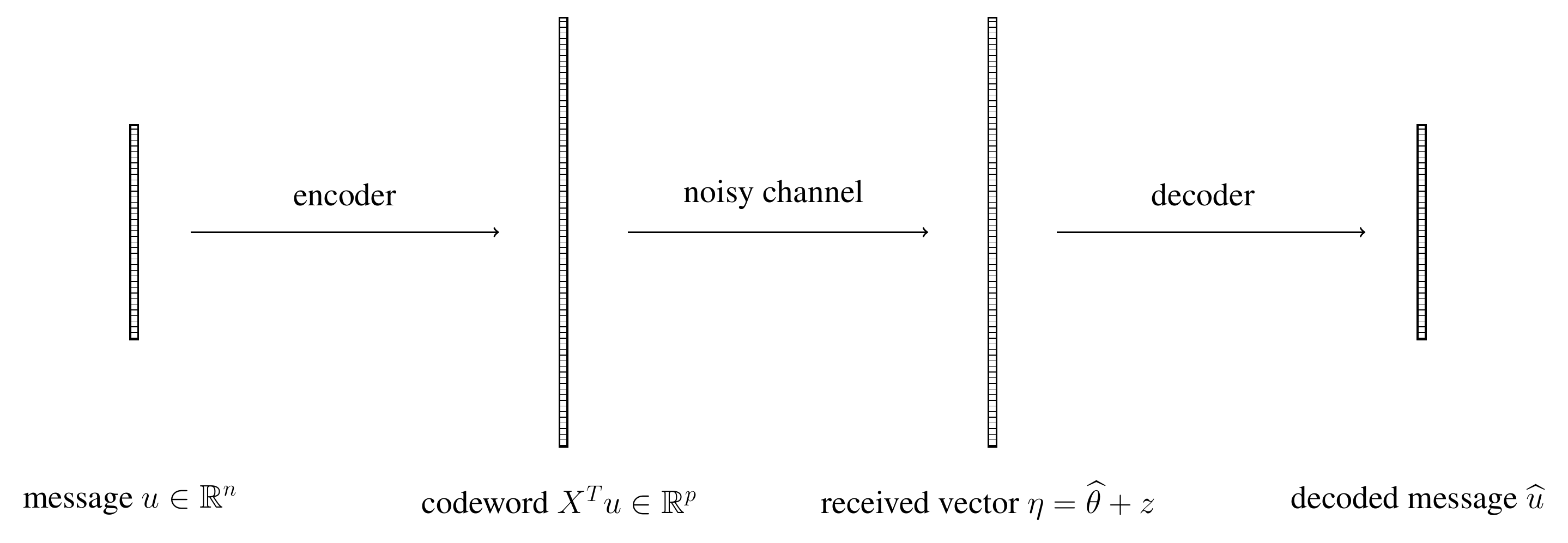}
  \caption{Model repair viewed in terms of error-correcting codes. The model $\hat\theta = X^T u\in\reals^p$ is in the row-space of the design matrix, which gives a redundant representation of the ``message'' $u\in\reals^n$ for $n < p$. The model is received as a noisy version $\eta$ with each entry corrupted with probability $\epsilon$. The received vector is decoded by solving a linear program.}
  \label{fig:code}
\end{figure}

\point{Estimators based on gradient descent}
The comments made above carry over to estimators of linear models based on gradient descent and stochastic gradient descent for arbitrary loss functions. Consider objective functions of the form
\begin{equation}
  \ell(\theta) = \frac{1}{n} \sum_{i=1}^n \L(y_i, x_i^T \theta)\label{eq:gen-obj}
\end{equation}
where $\L(y,f)$ is a general loss function; this includes
a broad range of estimators for problems such as linear least squares and logistic regression, robust regression, support vector machines, and others. The gradient descent update rule is
\begin{align}
  \theta^{(t+1)} &= \theta^{(t)} - \gamma_t \frac{1}{n} \sum_{i=1}^n \nabla_\theta \L(y_i, x_i^T \theta^{(t-1)}) \\
  &= \theta^{(t)} - \gamma_t \frac{1}{n} \sum_{i=1}^n \frac{\partial}{\partial f} \L(y_i, x_i^T \theta^{(t-1)}) x_i \\
  &= \theta^{(t)} - \sum_{i=1}^n w_i^{(t)} x_i,
\end{align}
where $\gamma_t$ is a step size parameter. If the model is initialized at $\theta^{(0)}= 0\in\reals^p$ then the estimate at time $t$ can thus be written as
\begin{equation}
  \theta^{(t)} = X^T u^{(t)}
\end{equation}
for some $u^{(t)}\in \reals^n$. After contamination, we have $\eta = X^T u^{(t)} + z$. Therefore, we can recover the model
by computing  $\tilde\theta = X^T \tilde u$ where $\tilde u$ is the solution to \eqref{eq:keylp}.

The same conclusion holds for estimators based on stochastic gradient descent. With $B_t$ denoting the set of samples used in the mini-batch of the $t$th iteration, we can write
\begin{equation}
  \theta^{(t)} = \sum_{i\in B_1\cup\cdots\cup B_t} w_i^{(t)} x_i.
\end{equation}
We repair the corrupted model $\eta$ by computing $\tilde \theta = X^T_{B_1\cup \cdots B_t} \tilde u$ with
\begin{equation}
  \tilde u = \argmin_u \| \eta - X^T_{B_1\cup \cdots B_t} u\|_1
\end{equation}
where the submatrix $X_{B_1\cup\cdots B_t}$ only includes rows of $X$ for indices that were visited during some stochastic
gradient descent step. Our theory then establishes that the model is recovered with high probability in case
\begin{equation}
  \frac{\sqrt{{|B_1 \cup \cdots B_t|}/{p}}}{1-\epsilon} < c.
\end{equation}
Typically the training takes place in ``epochs'' where all $n$ data points are visited in each epoch.

\point{Random features and neural networks} Our theory extends to random features models \citep{rahimi2008}, where the covariates are $\tilde X = \psi(X W)\in \reals^{n\times p}$ where $X\in\reals^{n\times d}$, the matrix $W\in\reals^{d\times p}$ is a random Gaussian matrix that is not trained, and $\psi$ is a threshold function such as the hyperbolic tangent function or rectified linear unit. In particular, when the model is trained using gradient descent, the parameters $\hat\theta$ lie in the row
space of the matrix $\tilde X$. We also show how the ideas can be extended to neural networks, where the weights $W$ are trained. This requires modifications to the training and recovery algorithms that we detail below.

In the following section we present the key technical lemma that explains how these results are possible; this is a result in robust regression that may be of independent interest. In Sections~\ref{sec:linear} and \ref{sec:neural} we state the theoretical results for over-complete linear models and neural networks trained with gradient descent. The proofs of the neural network results with hyperbolic tangent activation are given in Section \ref{sec:pf-nn-repair}.


\section{Key lemma: Robust regression with uncorrupted design}
\label{sec:regression}

Consider a regression model $\eta=Au^*+z\in\mathbb{R}^m$, where $A^T=(a_1,a_2,...,a_m)^T\in\mathbb{R}^{m\times k}$ is a design matrix and $u^*\in\mathbb{R}^k$ is a vector of regression coefficients to be recovered. We consider a random design setting, and the distribution of $A$ will be specified later. For the noise vector $z\in\mathbb{R}^m$, we assume it is independent of the design matrix $A$, and
\begin{equation}
z_i\sim (1-\epsilon)\delta_0 + \epsilon Q_i, \label{eq:noise-add-con}
\end{equation}
independently for all $i\in[m]$. In other words, a fraction $\epsilon$ of the components $\eta_i$ are contaminated by
additive noise $z_i$ that is drawn from an arbitrary and unknown distribution. To robustly recover $u^*$, we propose the estimator
$$\wh{u}=\argmin_{u\in\mathbb{R}^k}\|\eta-Au\|_1.$$
The estimator $\hat u$ can be computed using standard linear programming.
In order that $\wh{u}$ successfully recovers the true regression coefficients $u^*$, we need to impose the following conditions on the design matrix $A$.

\begin{con0}
There exists some $\sigma^2$, such that for any fixed (not random) $c_1,...,c_m$ satisfying $\max_i|c_i|\leq 1$,
\begin{equation}
  \left\|\frac{1}{m}\sum_{i=1}^mc_ia_i\right\|^2\leq \frac{\sigma^2k}{m},
\end{equation}
with high probability.
\end{con0}

\begin{con2}
There exist $\underline{\lambda}$ and $\overline{\lambda}$, such that
\begin{eqnarray}
\label{eq:l1-upper-A} \inf_{\|\Delta\|=1}\frac{1}{m}\sum_{i=1}^m|a_i^T\Delta| &\geq& \underline{\lambda}, \\
\label{eq:l2-upper-A} \sup_{\|\Delta\|=1}\frac{1}{m}\sum_{i=1}^m|a_i^T\Delta|^2 &\leq& \overline{\lambda}^2,
\end{eqnarray}
with high probability.
\end{con2}

\begin{thm}\label{thm:main-improved}
Assume the design matrix $A$ satisfies \conditionA{} and \conditionB. Then if
\begin{equation}
\frac{\overline{\lambda}\sqrt{\frac{k}{m}\log\left(\frac{em}{k}\right)}+\epsilon\sigma\sqrt{\frac{k}{m}}}{\underline{\lambda}(1-\epsilon)}
\end{equation}
is sufficiently small, we have $\wh{u}=u^*$ with high probability.
\end{thm}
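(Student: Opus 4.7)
The plan is to show strict positivity of the excess $\ell_1$ loss $F(\Delta) := \|\eta - A(u^*+\Delta)\|_1 - \|\eta - Au^*\|_1$ for every nonzero $\Delta \in \reals^k$. Let $S = \{i : z_i \neq 0\}$ and $r_i = \sgn(z_i)$. The elementary scalar inequality $|z - t| - |z| \geq |t|\mathbf{1}\{z = 0\} - t\,\sgn(z)\mathbf{1}\{z \neq 0\}$, summed over $i$, gives the subgradient-style lower bound
\begin{equation}
F(\Delta) \;\geq\; G(\Delta) \;:=\; \sum_{i\notin S}|a_i^T\Delta| \;-\; \sum_{i\in S} r_i\,(a_i^T\Delta).
\end{equation}
Because $G$ is positively $1$-homogeneous in $\Delta$, it suffices to show $\inf_{\|\Delta\|=1} G(\Delta) > 0$ with high probability.

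Since $z$ is independent of $A$, I first compute the conditional expectation given $A$. With $\zeta_i = \mathbf{1}\{z_i \neq 0\}\sim\mathrm{Bernoulli}(\epsilon)$ and $\bar\mu_i := \E_{q \sim Q_i}[\sgn(q)]\in[-1,1]$, the vector $\bar\mu$ is deterministic and
\begin{equation}
\E[G(\Delta)\mid A] \;=\; (1-\epsilon)\sum_{i=1}^m |a_i^T\Delta| \;-\; \epsilon\,\langle A^T\bar\mu,\Delta\rangle.
\end{equation}
\conditionB{} gives $\sum|a_i^T\Delta|\geq \underline\lambda m$ uniformly on the unit sphere, while \conditionA{} applied to the single deterministic vector $\bar\mu$ yields $\|A^T\bar\mu\|\leq \sigma\sqrt{km}$, so $|\langle A^T\bar\mu,\Delta\rangle|\leq \sigma\sqrt{km}$ by Cauchy--Schwarz. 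Hence on the joint high-probability event,
\begin{equation}
\inf_{\|\Delta\|=1}\E[G(\Delta)\mid A] \;\geq\; (1-\epsilon)\underline\lambda m - \epsilon\sigma\sqrt{km}.
\end{equation}

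What remains is the uniform deviation $\sup_{\|\Delta\|=1}\bigl|G(\Delta) - \E[G(\Delta)\mid A]\bigr|$. Writing this as
\begin{equation}
-\sum_i(\zeta_i-\epsilon)|a_i^T\Delta| \;-\; \sum_i(\zeta_i r_i - \epsilon\bar\mu_i)\,a_i^T\Delta,
\end{equation}
both sums are of independent bounded mean-zero terms, and a pointwise Hoeffding bound using the variance proxy $\sum|a_i^T\Delta|^2\leq\overline\lambda^2 m$ from \conditionB{} gives sub-Gaussian tails of scale $\overline\lambda\sqrt m$. To pass to the supremum over the unit sphere of $\reals^k$ I use a $\delta$-net of cardinality $(3/\delta)^k$, together with the Lipschitz bound $\sum|a_i^T(\Delta-\Delta')|\leq \overline\lambda m\,\|\Delta-\Delta'\|$ (Cauchy--Schwarz with \conditionB) controlling the discretization error. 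Balancing $\delta\asymp\sqrt{k/m}$ yields
\begin{equation}
\sup_{\|\Delta\|=1}\bigl|G(\Delta) - \E[G(\Delta)\mid A]\bigr| \;\leq\; C\,\overline\lambda\sqrt{mk\log(em/k)}
\end{equation}
with high probability. Combining with the mean bound gives $G(\Delta)\geq (1-\epsilon)\underline\lambda m - \epsilon\sigma\sqrt{km} - C\overline\lambda\sqrt{mk\log(em/k)}$, which is strictly positive whenever the quantity displayed in the theorem is sufficiently small.

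The main technical obstacle is the uniform deviation step. One must extract the sharp logarithmic factor $\log(em/k)$ rather than $\log m$, which dictates the balance $\delta\asymp\sqrt{k/m}$. More structurally, it is essential to cleanly split off the deterministic expectation of the contaminated sum, handled by \conditionA{} applied to the single fixed vector $\bar\mu$ to give $\epsilon\sigma\sqrt{km}$, from the zero-mean fluctuation, handled by chaining against \conditionB{} to give $\overline\lambda\sqrt{mk\log(em/k)}$. Bounding the whole random sign vector $\zeta_i r_i$ monolithically through \conditionA{} would only yield the weaker $\sigma\sqrt{km}$, losing the crucial factor of $\epsilon$.
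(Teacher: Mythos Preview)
Your proof is correct and follows essentially the same route as the paper's: both isolate the $(1-\epsilon)\underline\lambda$ signal via \conditionB, feed a single deterministic coefficient vector ($\bar\mu_i=\E_{Q_i}\sgn(q)$ for you, $1-2Q_i(0)$ in the paper---the same quantity up to atoms at $0$) into \conditionA{} to extract the $\epsilon\sigma\sqrt{k/m}$ bias term, and control the fluctuation by Hoeffding over a $\sqrt{k/m}$-net, which is exactly the content of the paper's Lemma~\ref{lem:EP}. The only cosmetic difference is ordering: you linearize pointwise via the subgradient inequality before averaging (yielding the $1$-homogeneous lower bound $G$), whereas the paper averages first to form $L(u)=\E[L_m(u)\mid A]$ and then linearizes via convexity of $f_i(x)=\E_{Q_i}(|x+z|-|z|)$.
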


\begin{proof}
Define $L_m(u)=\frac{1}{m}\sum_{i=1}^m(|a_i^T(u^*-u)+z_i|-|z_i|)$, and $L(u)=\mathbb{E}(L_m(u) \given A)$.
Letting $t\geq 0$ be arbitrary, we must have $\inf_{\|u-u^*\|\geq t}L_m(u)\leq L_m(u^*)=0$. By the convexity of $L_m(u)$, this leads to $\inf_{\|u-u^*\|= t}L_m(u)\leq 0$, and thus
\begin{align*}
\inf_{\|u-u^*\|=t}L(u) &\leq \inf_{\|u-u^*\|=t}L_m(u) + \sup_{\|u-u^*\|=t} \left(L(u) - L_m(u)\right) \\
& \leq \sup_{\|u-u^*\|=t}|L_m(u)-L(u)|.
\end{align*}
Introducing the notation $f_i(x)=\mathbb{E}_{z_i\sim Q_i}(|x+z_i|-|z_i|)$ and $Q_i(x)=Q_i(z_i\leq x)$,
it is easy to see that $f_i(0)=0$ and $f'_i(x)=1-2Q_i(-x)$. Observe that we can write
\begin{equation}
L(u)=(1-\epsilon)\frac{1}{m}\sum_{i=1}^m|a_i^T(u-u^*)| + \epsilon\frac{1}{m}\sum_{i=1}^mf_i(a_i^T(u^*-u)). \label{eq:L-decomp}
\end{equation}
For any $u$ such that $\|u-u^*\|=t$, the first term of (\ref{eq:L-decomp}) can be lower bounded by
$$(1-\epsilon)\frac{1}{m}\sum_{i=1}^m|a_i^T(u-u^*)| \geq \underline{\lambda}(1-\epsilon)t,$$
by \conditionB. To analyze the second term of (\ref{eq:L-decomp}), we note that $f_i$ is a convex function, and therefore
for any $u$ such that $\|u-u^*\|=t$,
\begin{eqnarray*}
\epsilon\frac{1}{m}\sum_{i=1}^mf_i(a_i^T(u^*-u)) &\geq& \epsilon\frac{1}{m}\sum_{i=1}^mf_i(0) + \epsilon\frac{1}{m}\sum_{i=1}^mf_i'(0)a_i^T(u^*-u) \\
&=& \epsilon\frac{1}{m}\sum_{i=1}^m\left(1-2Q_i(0)\right)a_i^T(u^*-u) \\
&\geq& -\epsilon t\left\|\frac{1}{m}\sum_{i=1}^m\left(1-2Q_i(0)\right)a_i\right\| \\
&\geq& -\epsilon t\sigma\sqrt{\frac{k}{m}},
\end{eqnarray*}
where the first inequality uses Cauchy-Schwarz, and the second inequality uses \conditionA.
By \conditionB{} and an empirical process result proved as Lemma \ref{lem:EP} in Appendix \ref{sec:pf-robust-reg}, we have
\begin{equation}
\sup_{\|u-u^*\|= t}|L_m(u)-L(u)| \lesssim t\overline{\lambda}\sqrt{\frac{k}{m}\log\left(\frac{em}{k}\right)}, \label{eq:upper-EP}
\end{equation}
with high probability.
Therefore, we have shown that $\|\wh{u}-u^*\|\geq t$ implies
$$\underline{\lambda}(1-\epsilon)t - \epsilon t\sigma\sqrt{\frac{k}{m}} \lesssim t\overline{\lambda}\sqrt{\frac{k}{m}\log\left(\frac{em}{k}\right)},$$
which is impossible when $\frac{\overline{\lambda}\sqrt{\frac{k}{m}\log\left(\frac{em}{k}\right)}+\epsilon\sigma\sqrt{\frac{k}{m}}}{\underline{\lambda}(1-\epsilon)}$ is sufficiently small, and thus $\|\wh{u}-u^*\|< t$ with high probability. Since $t$ is arbitrary, we must have $\wh{u}=u^*$.
\end{proof}

The theorem gives a sufficient condition for the exact recovery of the regression coefficients. When both $(\sigma+\overline{\lambda})/\underline{\lambda}$ and $1-\epsilon$ are constants, the condition becomes that $k/m$ is sufficiently small.

A notable feature of this theorem is that it allows for $\epsilon \rightarrow 1$; that is, an arbitrarily large fraction of the components of the response $Au^*$ can be corrupted. This is in contrast to robust regression where both the response and design are contaminated. To be specific, consider independent observations $(a_i,\eta_i)\sim (1-\epsilon)P_{u^*}+\epsilon Q_i$, where the probability distribution $P_{u^*}$ encodes the linear model $\eta_i=a_i^Tu_i$, and for each $i\in[m]$, there is probability $\epsilon$ that the pair $(a_i,\eta_i)$ is drawn from some arbitrary distribution $Q_i$. In this setting, consistent or exact recovery of the regression coefficient is only possible when $\epsilon<c$ for some small constant $c>0$ \citep{gao2020}. The reason Theorem \ref{thm:main-improved} allows $\epsilon\rightarrow 1$ is that there is no contamination for the design matrix $A$.


Another distinguishing feature of Theorem \ref{thm:main-improved} is that there is no assumption imposed on the contamination distribution $Q_i$, even though the median regression procedure apparently requires the noise to be symmetric around zero. To understand this phenomenon, consider a population objective function
$$\ell(u)=\mathbb{E}|\eta - a^Tu|,$$
where $\eta=a^Tu^*+z$, and the expectation is over both $a$ and $z$. In order for the minimizer of $\ell(u)$ to recover $u^*$ in the population, a criterion usually called Fisher consistency, it is required that $\nabla\ell(u^*)=0$. Under the assumption that $a$ and $z$ are independent, this gives
\begin{equation}
\nabla\ell(u^*)=\mathbb{E}[\sgn(z)a]=\mathbb{E}\sgn(z)\mathbb{E}a=0. \label{eq:pop-insight}
\end{equation}
This means we should be able to achieve consistency without any assumption on the noise variable $z$ as long as we assume $\mathbb{E}a_i=0$.

But \conditionA{} can be viewed as a general assumption that covers $\mathbb{E}a_i=0$ as a special case. As a concrete example, let us suppose the design matrix $A$ has $m$ uncorrelated rows and its entries all have mean zero and variance at most one. Then,
$$\mathbb{E}\left\|\frac{1}{m}\sum_{i=1}^mc_ia_i\right\|^2=\sum_{j=1}^k\mathbb{E}\left(\frac{1}{m}\sum_{i=1}^mc_ia_{ij}\right)^2=\sum_{j=1}^k\frac{1}{m^2}\sum_{i=1}^mc_i^2\mathbb{E}a_{ij}^2\leq \frac{k}{m},$$
and thus \conditionA{} holds with some constant $\sigma^2$, by an additional argument using Markov's inequality.

More generally, \conditionA{} also allows a design matrix with entries whose means are not necessarily zero. This will in general lead to a term $\sigma^2$ that may not be of constant order. However, since the condition of Theorem \ref{thm:main-improved} involves an additional $\epsilon$ factor in front of $\sigma$, the robust estimator $\wh{u}$ can still recover $u^*$ as long as the contamination proportion is vanishing at an appropriate rate. As an important application, the result of Theorem \ref{thm:main-improved} also applies to design matrices with an intercept.

We also introduce an alternative of \conditionA. By (\ref{eq:pop-insight}), we observe that Fisher consistency also follows if $\mathbb{E}\sgn(z_i)=0$. However, this does not mean that we have to assume the distribution of $z_i$ is symmetric. It turns out we only need the distribution of $a_i$ to be symmetric by applying a symmetrization argument.
Note that with the help of independent Rademacher random variables $\delta_i\sim\text{Uniform}\{\pm 1\}$, we can write the data generating process as $\delta_i\eta_i=\delta_ia_i^Tu^*+\delta_iz_i$. With this new representation, we can also view $\delta_i\eta_i$, $\delta_ia_i$ and $\delta_iz_i$ as the response, covariate, and noise. Now the noise $\delta_iz_i$ is symmetric around zero, and it can be shown that $\delta_ia_i$ and $\delta_iz_i$ are still independent as long as the distribution of $a_i$ is symmetric. Since for any $u\in\mathbb{R}^k$,
$$\sum_{i=1}^m|\delta_i\eta_i- \delta_ia_i^Tu|=\sum_{i=1}^m|\eta_i-a_i^Tu|,$$
we obtain an equivalent median regression after symmetrization.
This alternative condition is stated as follows.

\begin{con1}
Given i.i.d. Rademacher random variables $\delta_1,...,\delta_m$, the distribution of
$$\wt{A}^T=(\delta_1a_1,\delta_2a_2,...,\delta_ma_m)^T$$
is identical to that of $A^T$.
\end{con1}

\begin{thm}\label{thm:robust-reg}
Assume the design matrix $A$ satisfies \conditionAp{} and \conditionB. Then if $$\frac{\overline{\lambda}\sqrt{\frac{k}{m}\log\left(\frac{em}{k}\right)}}{\underline{\lambda}(1-\epsilon)}$$
is sufficiently small, we have $\wh{u}=u^*$ with high probability.
\end{thm}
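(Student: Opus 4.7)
The plan is to reduce Theorem \ref{thm:robust-reg} to the proof of Theorem \ref{thm:main-improved} by a symmetrization argument that effectively replaces the noise with a distribution symmetric around zero, which kills the term of order $\epsilon\sigma\sqrt{k/m}$ that previously required \conditionA. Let $\delta_1,\dots,\delta_m$ be i.i.d.\ Rademacher variables independent of $(A,z)$, and define $\tilde a_i = \delta_i a_i$, $\tilde z_i = \delta_i z_i$, and $\tilde\eta_i = \delta_i\eta_i = \tilde a_i^T u^* + \tilde z_i$. Since $|\delta_i(\eta_i - a_i^T u)| = |\eta_i - a_i^T u|$ pointwise in $u$, we have
$$\hat u = \argmin_{u\in\mathbb{R}^k}\|\eta - Au\|_1 = \argmin_{u\in\mathbb{R}^k}\|\tilde\eta - \tilde A u\|_1,$$
so it suffices to analyze the symmetrized regression problem $(\tilde A, \tilde\eta)$.

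The next step is to confirm that $(\tilde A, \tilde z)$ satisfies the same hypotheses used in the proof of Theorem \ref{thm:main-improved}, now with the bonus that the noise is symmetric. \conditionAp{} gives $\tilde A \stackrel{d}{=} A$, so \conditionB{} and the empirical process bound of Lemma \ref{lem:EP} are inherited by $\tilde A$. Picking off the $i$th row shows $\delta_i a_i \stackrel{d}{=} a_i$, i.e., each $a_i$ is symmetric around zero; combining this marginal symmetry with the mutual independence of $A$, $z$, and $\delta$ gives, by a standard factorization argument, $\tilde A \perp \tilde z$. The new noise satisfies $\tilde z_i \sim (1-\epsilon)\delta_0 + \epsilon\tilde Q_i$, where $\tilde Q_i = (Q_i(\cdot) + Q_i(-\cdot))/2$ is symmetric around zero, so in particular $\tilde Q_i(0) = 1/2$.

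Finally I would rerun the argument of Theorem \ref{thm:main-improved} verbatim on $(\tilde A, \tilde\eta)$. The decomposition analogous to (\ref{eq:L-decomp}) now reads
$$L(u) = (1-\epsilon)\frac{1}{m}\sum_{i=1}^m|\tilde a_i^T(u-u^*)| + \epsilon\frac{1}{m}\sum_{i=1}^m \tilde f_i(\tilde a_i^T(u^*-u)),$$
where $\tilde f_i$ is convex with $\tilde f_i(0) = 0$ and $\tilde f_i'(0) = 1 - 2\tilde Q_i(0) = 0$. Hence the second term is nonnegative by convexity, and no appeal to \conditionA{} is required—this is precisely where the $\epsilon\sigma\sqrt{k/m}$ contribution disappears. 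The first term remains bounded below by $\underline\lambda(1-\epsilon)t$ via \conditionB, and (\ref{eq:upper-EP}) still controls $\sup_{\|u-u^*\|=t}|L_m(u)-L(u)|$ at the rate $t\overline\lambda\sqrt{(k/m)\log(em/k)}$. The same contradiction argument then yields $\hat u = u^*$ as soon as $\overline\lambda\sqrt{(k/m)\log(em/k)}/(\underline\lambda(1-\epsilon))$ is sufficiently small. The only real obstacle is the bookkeeping around the symmetrization—verifying that the transformed triple $(\tilde A, \tilde z, \tilde\eta)$ meets the hypotheses of the previous argument—after which the proof reduces to a near-verbatim rerun with the linear correction term removed.
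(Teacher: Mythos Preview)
Your approach is essentially the paper's: introduce Rademacher signs, use \conditionAp{} to get $\tilde A \stackrel{d}{=} A$ and $\tilde A \perp \tilde z$, then rerun the argument of Theorem~\ref{thm:main-improved} with symmetric noise so that the \conditionA{} term drops out. The only cosmetic difference is that the paper lower bounds the full conditional expectation $f_i(x)=\mathbb{E}(|x+\tilde z_i|-|\tilde z_i|)$ directly by $(1-\epsilon)|x|$ via a case analysis exploiting the symmetry of $\tilde z_i$, whereas you split into the $(1-\epsilon)$ and $\epsilon$ mixture components and use convexity together with $\tilde f_i'(0)=0$; both routes deliver the same bound $L(u)\ge\underline\lambda(1-\epsilon)t$.
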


To close this section, we note that the problem of robust regression with uncorrupted design is also recognized as outlier-robust regression in the literature.
This problem has been studied previously by \cite{tsakonas2014convergence,wright,nguyen1,nguyen2,karmalkar2018compressed}. In particular, \cite{bhatia} proposed a hard-thresholding algorithm that consistently recovers the regression coefficients as long as $\epsilon$ is below some small constant. The recent work \cite{suggala2019adaptive} has established consistent recovery while allowing $\epsilon\rightarrow 1$. Compared with their algorithm, our method based on $\ell_1$ minimization is much simpler. Moreover, we allow $\epsilon=1-\Theta\left(\sqrt{\frac{k}{m}\log\left(\frac{em}{k}\right)}\right)$, compared with the requirement $\epsilon\leq 1-\Theta\left(\frac{1}{\log\log m}\right)$ in \cite{suggala2019adaptive}.


\section{Repair of linear and random feature models}
\label{sec:linear}

Consider a linear model with $X\in\mathbb{R}^{n\times p}$ being the design matrix and $y\in\mathbb{R}^n$ being a vector of response values. We assume that each entry of the design matrix is i.i.d. $N(0,1)$ and do not impose any assumption on the response $y$. A machine learning algorithm learns a linear model $X\wh{\theta}$ with some $\wh{\theta}\in\mathbb{R}^{p}$. The vector $\wh{\theta}$ is either computed via the formula (\ref{eq:min-norm-solution}) or through a gradient-based algorithm with the objective (\ref{eq:gen-obj}) initialized from $0$. Either case implies $\wh{\theta}$ belongs to the row space of $X$. Suppose we observe a contaminated version of $\wh{\theta}$ through $
\eta=\wh{\theta}+z$, where $z$ is independent of $\wh{\theta}$ and $z_j\sim (1-\epsilon)\delta_0+\epsilon Q_j$ independently for all $j\in[p]$. We then propose to recover $\wh{\theta}$ via
$$\wt{u}=\argmin_{u\in\mathbb{R}^n}\|\eta-X^Tu\|_1,$$
and define the repaired model as $\wt{\theta}=X^T\wt{u}$. This turns out to be the same robust regression problem studied in Section \ref{sec:regression}, and thus we only need to check the design matrix $A=X^T$ satisfies \conditionA{} and \conditionB.
\begin{lemma}\label{lem:design-linear}
Assume $n/p$ is sufficiently small. Then, \conditionA{} and \conditionB{} hold for $A=X^T$, $m=p$ and $k=n$ with some constants $\sigma^2$, $\underline{\lambda}$ and $\overline{\lambda}$.
\end{lemma}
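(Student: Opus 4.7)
The plan is to verify the two conditions directly, using the fact that the rows $a_1,\ldots,a_p$ of $A = X^T$ are i.i.d.\ $N(0,I_n)$ vectors (they are the columns of $X$), with $m=p$ and $k=n$.

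For \conditionA, I would exploit Gaussian rotational invariance: for any deterministic $c$ with $\|c\|_\infty\leq 1$, the random vector $\sum_{i=1}^p c_i a_i$ is $N(0,\|c\|_2^2 I_n)$, so $\|\sum_i c_i a_i\|^2$ equals $\|c\|_2^2 W$ with $W\sim\chi^2_n$. Since $\|c\|_2^2\leq p$ and $W\leq 2n$ with high probability by a standard $\chi^2$ tail bound, $\|p^{-1}\sum c_i a_i\|^2\leq 2n/p$, which gives \conditionA{} with $\sigma^2=2$. For the upper bound in \conditionB, I would note that $\sup_{\|\Delta\|=1}p^{-1}\sum_i(a_i^T\Delta)^2 = \sigma_{\max}(A)^2/p$, and invoke the Davidson–Szarek bound $\sigma_{\max}(A)\leq \sqrt{p}+\sqrt{n}+t$ with probability $\geq 1-2e^{-t^2/2}$; choosing $t$ a large absolute constant yields a constant $\overline{\lambda}^2$ provided $n/p$ is small.

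The nontrivial piece is the uniform lower bound $\inf_{\|\Delta\|=1}p^{-1}\sum_i|a_i^T\Delta|\geq\underline{\lambda}$. My plan is a standard net-plus-concentration argument tailored to the Gaussian design. For each fixed unit $\Delta$, $a_i^T\Delta\sim N(0,1)$ i.i.d., so $\mathbb{E}|a_i^T\Delta|=\sqrt{2/\pi}$, and viewing the map $(a_{ij})\mapsto p^{-1}\sum_i|a_i^T\Delta|$ as a function of the Gaussian coordinates, a direct gradient computation shows it is $1/\sqrt{p}$-Lipschitz; hence by Gaussian concentration it equals $\sqrt{2/\pi}+O(t/\sqrt{p})$ with probability at least $1-2e^{-t^2/2}$. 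I would then take an $\epsilon$-net $\mathcal{N}_\epsilon$ of $S^{n-1}$ of cardinality at most $(3/\epsilon)^n$, apply a union bound with $t\asymp\sqrt{n\log(1/\epsilon)}$, and finally extend the bound from $\mathcal{N}_\epsilon$ to the whole sphere via
\[
\Bigl|\,p^{-1}\!\sum_i|a_i^T\Delta|-p^{-1}\!\sum_i|a_i^T\Delta'|\,\Bigr|\leq\|\Delta-\Delta'\|\cdot p^{-1}\!\sum_i\|a_i\|,
\]
using the easy w.h.p.\ bound $p^{-1}\sum_i\|a_i\|\leq C\sqrt{n}$. Choosing $\epsilon$ of order $1/\sqrt{n}$ makes the discretization error a small absolute constant, while the union-bound factor $\exp(O(n\log n))$ is dominated by the Gaussian concentration probability $\exp(-\Omega(p))$ whenever $n/p$ is sufficiently small, yielding $\underline{\lambda}$ close to $\tfrac12\sqrt{2/\pi}$.

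The main obstacle is exactly this uniform lower bound: balancing the entropy of $S^{n-1}$ against the Gaussian concentration rate, and patching the net estimate to the sphere via a Lipschitz extension whose constant is controlled by $p^{-1}\sum_i\|a_i\|\approx\sqrt{n}$. Everything fits together precisely because the hypothesis $n/p$ small gives room for the $\sqrt{n/p}$-type deviations to be dominated by the constant $\sqrt{2/\pi}$.
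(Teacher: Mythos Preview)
Your proposal is essentially correct and follows the same net-plus-Gaussian-concentration route as the paper for the lower $\ell_1$ bound, with Davidson--Szarek for the upper $\ell_2$ bound and a direct Gaussian/$\chi^2$ computation for \conditionA. The one substantive difference is in how you pass from the net to the full sphere.

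The paper bounds $\bigl|\tfrac1p\sum_j|a_j^T\Delta|-\tfrac1p\sum_j|a_j^T\Delta'|\bigr|\le \|\Delta-\Delta'\|\cdot\sup_{\|v\|=1}\tfrac1p\sum_j|a_j^Tv|$ and then uses a self-bounding rearrangement: the Lipschitz constant is the very supremum being controlled, so a \emph{constant}-scale net ($\zeta=1/3$, size $7^n$) suffices, and the union bound costs only $e^{O(n)}$, yielding deviation $O(\sqrt{n/p})$ under the stated hypothesis ``$n/p$ sufficiently small.'' You instead use the cruder bound $\bigl|\cdots\bigr|\le\|\Delta-\Delta'\|\cdot p^{-1}\sum_i\|a_i\|\asymp\|\Delta-\Delta'\|\sqrt{n}$, which forces $\epsilon\asymp 1/\sqrt{n}$, a net of size $e^{O(n\log n)}$, and hence deviation $O(\sqrt{(n\log n)/p})$. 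Your last sentence (``$\exp(O(n\log n))$ is dominated by $\exp(-\Omega(p))$ whenever $n/p$ is small'') is not quite right: that implication needs $p\gg n\log n$, not merely $p\gg n$. So your argument proves the lemma under the slightly stronger hypothesis $(n\log n)/p$ small. This is harmless for the downstream corollary (which already carries logarithmic factors), but if you want the lemma exactly as stated, replace your crude Lipschitz bound by the paper's self-bounding trick and take a constant-radius net.
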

Combine Lemma \ref{lem:design-linear} and Theorem \ref{thm:main-improved}, and we obtain the following guarantee for model repair.
\begin{corollary}\label{cor:repair-linear}
Assume $\frac{\sqrt{\frac{n}{p}}\log\left(\frac{ep}{n}\right)}{1-\epsilon}$ is sufficiently small. We then have $\wt{\theta}=\wh{\theta}$ with high probability.
\end{corollary}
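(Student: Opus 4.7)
The plan is to combine Lemma~\ref{lem:design-linear} with Theorem~\ref{thm:main-improved}, so the proof is essentially bookkeeping. First, I would recast the repair problem as an instance of the robust regression setup of Section~\ref{sec:regression}. Because $\wh\theta$ lies in the row space of $X$ (either via the minimum-norm formula \eqref{eq:min-norm-solution} or via a gradient-based method initialized at zero), we can write $\wh\theta = X^T u^*$ for some $u^* \in \reals^n$. Setting $A := X^T \in \reals^{p\times n}$ with $m = p$ and $k = n$, the observation $\eta = \wh\theta + z$ takes the form $\eta = A u^* + z$ of \eqref{eq:noise-add-con}, and the repair estimator $\wt u = \argmin_{u} \|\eta - X^T u\|_1$ coincides with the $\ell_1$ estimator $\wh u$ of Section~\ref{sec:regression}. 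Hence exact recovery $\wh u = u^*$ immediately yields $\wt\theta = X^T \wt u = X^T u^* = \wh\theta$.

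Second, Lemma~\ref{lem:design-linear} supplies \conditionA{} and \conditionB{} for $A = X^T$ with absolute constants $\sigma^2$, $\underline\lambda$, $\overline\lambda$ whenever $n/p$ is sufficiently small. I would then invoke Theorem~\ref{thm:main-improved}, which guarantees $\wh u = u^*$ with high probability as soon as
\begin{equation*}
\frac{\overline\lambda \sqrt{\tfrac{n}{p}\log(ep/n)} + \epsilon\sigma\sqrt{n/p}}{\underline\lambda (1-\epsilon)}
\end{equation*}
is sufficiently small.

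Finally, I would check that the corollary's hypothesis dominates this quantity. Since $\sigma$, $\underline\lambda$, $\overline\lambda$ are absolute constants and the smallness of $n/p$ forces $p \geq n$ and hence $\log(ep/n) \geq 1$, we have $\sqrt{\tfrac{n}{p}\log(ep/n)} \leq \sqrt{n/p}\,\log(ep/n)$ and $\epsilon\sqrt{n/p} \leq \sqrt{n/p} \leq \sqrt{n/p}\,\log(ep/n)$. Both summands in the numerator are therefore bounded by a constant multiple of $\sqrt{n/p}\,\log(ep/n)$, so the assumption that $\frac{\sqrt{n/p}\,\log(ep/n)}{1-\epsilon}$ is sufficiently small suffices. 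There is no genuine obstacle beyond this reduction; the substance of the result resides in Theorem~\ref{thm:main-improved} and in the design-matrix verification of Lemma~\ref{lem:design-linear}, and the corollary itself is a direct specialization.
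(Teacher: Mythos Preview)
Your proposal is correct and follows exactly the paper's approach: write $\wh\theta = X^T u^*$, identify the repair problem with the robust regression setup via $A = X^T$, invoke Lemma~\ref{lem:design-linear} for Conditions~$A$ and~$B$, and apply Theorem~\ref{thm:main-improved}. The paper's own proof is the same two-line reduction; your added paragraph verifying that the corollary's hypothesis controls the quantity in Theorem~\ref{thm:main-improved} is straightforward bookkeeping that the paper leaves implicit.
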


We note that compared with the robust regression setting, the roles of the sample size and dimension are switched in model repair. Corollary \ref{cor:repair-linear} requires that the linear model to be overparametrized in the sense of $p\gg n(1-\epsilon)^2$ (with logarithmic factors ignored) in order that repair is successful. 

Besides an overparametrized model, we also require that the estimator $\wh{\theta}$ lies in the row space of the design matrix $X$, so that the redundancy of a overparametrized model is preserved in the estimator.
\begin{remark}
To understand the requirement on the estimator $\wh{\theta}$, let us consider a simple toy example. We assume that $X$ has $p$ identical columns, which is clearly an overparametrized model. Consider two estimators:
\begin{eqnarray*}
\wh{\theta}_{\sf min-norm} &\in& \argmin\left\{\|\theta\|: y=X\theta\right\}, \\
\wh{\theta}_{\sf sparse} &\in& \argmin\left\{\|\theta\|_0: y=X\theta\right\}.
\end{eqnarray*}
It is clear that $\wh{\theta}_{\sf min-norm}$ has identical entries and $\wh{\theta}_{\sf sparse}$ has one nonzero entry. Since the contamination will change an $\epsilon$-proportion of the entries, $\wh{\theta}_{\sf sparse}$ cannot be repaired if its only nonzero entry is changed. On the other hand, $\wh{\theta}_{\sf min-norm}$ is resilient to the contamination, and its redundant structure leads to consistent model repair. It is known that gradient based algorithms lead to implicit $\ell_2$ norm regularizations \citep{neyshabur2014search}, which then explains the result of Corollary \ref{cor:repair-linear}.
\end{remark}

We also study a random feature model with design $\{\psi(W_j^Tx_i)\}_{i\in[n],j\in[p]}$, where $x_i\sim N(0,I_d)$ and $W_j\sim N(0,d^{-1}I_d)$ independently for all $i\in[n]$ and $j\in[p]$. We choose the nonlinear activation function to be $\psi(t)=\tanh(t)=\frac{e^t-e^{-t}}{e^t+e^{-t}}$, the hyperbolic tangent unit. The design matrix can thus be written as $\wt{X}=\psi(XW)\in\mathbb{R}^{n\times p}$ with $X\in\mathbb{R}^{n\times d}$ and $W\in\mathbb{R}^{d\times p}$. This is an important model, and its asymptotic risk behavior under overparametrization has recently been studied by \cite{mei2019generalization}. We show that the design matrix $\wt{X}^T=\psi(W^TX^T)$ satisfies \conditionA{} and \conditionB{} so that model repair is possible.

\begin{lemma}\label{lem:design-rf}
Assume $n/p^2$ and $n/d$ are sufficiently small. Then, \conditionA{} and \conditionB{} hold for $A=\wt{X}^T$, $m=p$ and $k=n$ with some constants  $\sigma^2$, $\underline{\lambda}$ and $\overline{\lambda}$.
\end{lemma}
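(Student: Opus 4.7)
The plan is to verify Conditions $A$ and $B$ for the matrix $A = \tilde X^T$ whose rows are $a_j = \psi(X W_j) \in \mathbb{R}^n$, $j = 1, \ldots, p$, working conditionally on $X$ and then collecting probabilities over $X$. Condition $A$ is essentially automatic: since $\psi = \tanh$ is odd and $W_j \sim N(0, d^{-1} I_d)$ is symmetric, the rows $a_j$ are i.i.d.\ and symmetric around zero conditional on $X$, with $\|a_j\|_\infty \leq 1$. Hence for any fixed $c \in [-1,1]^p$,
$$\mathbb{E}_W \Big\|\frac{1}{p}\sum_{j=1}^p c_j a_j\Big\|^2 = \frac{1}{p^2}\sum_j c_j^2 \mathbb{E}_W \|a_j\|^2 \leq \frac{n}{p},$$
so Markov's inequality yields the condition with some constant $\sigma^2$.

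The heart of the argument is a spectral analysis of the conditional covariance $\Sigma = \mathbb{E}_W[a_1 a_1^T \given X]$. Using the Hermite expansion $\psi = \sum_{k \text{ odd}} c_k H_k$ and the identity $\mathbb{E}[H_k(Z_i) H_k(Z_{i'})] = k!\,\rho_{ii'}^k$ for jointly Gaussian $(Z_i,Z_{i'})$ with correlation $\rho_{ii'} = X_i^T X_{i'}/(\|X_i\|\|X_{i'}\|)$, one obtains $\Sigma = \sum_{k \text{ odd}} c_k^2\, k!\, \bar\rho^{\odot k}$, where $\bar\rho$ is the $n\times n$ correlation matrix with $\bar\rho_{ii}=1$. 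Standard Wishart bounds give $\|\bar\rho - I\|_{\rm op} \lesssim \sqrt{n/d}$ with high probability, so that $\lambda_{\min}(\bar\rho) \geq 1/2$ and $\|\bar\rho\|_{\rm op} \leq 3/2$ when $n/d$ is small. The Schur product theorem (in the form $\|A \odot B\|_{\rm op} \leq (\max_i A_{ii})\|B\|_{\rm op}$ for PSD $A$) then implies $\|\bar\rho^{\odot k}\|_{\rm op} \leq \|\bar\rho\|_{\rm op}$ for every $k \geq 1$, giving $\|\Sigma\|_{\rm op} \leq \|\bar\rho\|_{\rm op}\,\mathbb{E}\psi(Z)^2 = O(1)$. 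Keeping only the first Hermite mode, $\Sigma \succeq c_1^2 \bar\rho$ yields $\lambda_{\min}(\Sigma) \geq c_1^2/2$, where $c_1 = \mathbb{E}[Z\psi(Z)] > 0$.

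With $\Sigma$ thus controlled, the upper bound \eqref{eq:l2-upper-A} in Condition $B$ follows from a matrix Bernstein inequality applied to the i.i.d.\ rank-one PSD matrices $a_j a_j^T$ with $\|a_j a_j^T\|_{\rm op} \leq n$: this yields $\|\frac{1}{p}\sum_j a_j a_j^T - \Sigma\|_{\rm op} = o(1)$ and hence $\overline{\lambda}^2$ of constant order. The lower bound \eqref{eq:l1-upper-A} is obtained via a small-ball/Paley--Zygmund argument. For fixed unit $\Delta$, the map $W \mapsto \psi(XW)^T\Delta$ is $\|X\|_{\rm op}$-Lipschitz; since $\|X\|_{\rm op}/\sqrt d = O(1)$ when $n/d$ is small, Gaussian concentration makes $a_j^T\Delta$ sub-Gaussian with dimension-free variance proxy, so $\mathbb{E}(a_j^T\Delta)^4 = O(1)$ while $\mathbb{E}(a_j^T\Delta)^2 = \Delta^T \Sigma \Delta \geq c_1^2/2$. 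Paley--Zygmund then gives $\Prob(|a_j^T\Delta|\geq c_0 \given X) \geq q_0 > 0$ uniformly in $\Delta$, and Hoeffding's inequality yields concentration of $\frac{1}{p}\sum_j|a_j^T\Delta|$ around its mean at rate $O(1/\sqrt p)$. Uniformization over $S^{n-1}$ uses a $1/2$-net of cardinality $5^n$ together with the Lipschitz bound $|\frac{1}{p}\sum_j(|a_j^T\Delta|-|a_j^T\Delta'|)| \leq \overline\lambda\|\Delta-\Delta'\|$; the resulting union bound is controlled by the hypothesis that $n/p^2$ (and hence $n/p$) is sufficiently small.

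The main obstacle is the spectral control of $\Sigma$. Because $\psi = \tanh$ is non-polynomial, infinitely many Hadamard powers $\bar\rho^{\odot k}$ appear, and a naive row-sum (Gershgorin) bound would require the much stronger condition $n^2/d \ll 1$; the Schur product bound is what reduces everything to $\|\bar\rho\|_{\rm op}$ under the hypothesis $n/d \ll 1$. A secondary subtlety is that the Lipschitz constant of $W \mapsto \psi(XW)^T\Delta$ is only dimension-free because $\|X\|_{\rm op} \asymp \sqrt d$, which again requires $n \leq d$; this is precisely the role played by the $n/d$ hypothesis in producing uniform sub-Gaussian tails for $a_j^T\Delta$ and hence the clean small-ball estimate that drives \eqref{eq:l1-upper-A}.
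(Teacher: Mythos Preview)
Your approach is correct in outline but takes a genuinely different route from the paper. For Condition~$B$, the paper does not use a Hermite expansion at all. For the lower $\ell_1$ bound~(\ref{eq:l1-upper-A}) it works directly with $\mathbb{E}\bigl|\sum_i\psi(W^Tx_i)\Delta_i\bigr|$, conditioning on $\{1/2\le\|W\|^2\le2\}$ and applying a Berry--Esseen bound (Lemma~\ref{lem:stein}) to the sum over~$i$, with a separate case analysis when $\max_i|\Delta_i|$ is large; uniformity in $\Delta$ is then obtained by two layers of Gaussian concentration (in $W$ and in $X$) combined with discretization. For the upper bound~(\ref{eq:l2-upper-A}) it proves a dedicated random-matrix lemma (Lemma~\ref{lem:lim-G}) by Taylor-expanding the kernel $\mathbb{E}[\psi(W^Tx_i)\psi(W^Tx_l)\mid X]$ around the normalised points $\bar x_i=\sqrt d\,x_i/\|x_i\|$. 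Your Hermite/Schur-product route is more structural, avoids the Berry--Esseen case split entirely, and in fact yields a slightly milder requirement on $p$ (your sub-Gaussian concentration plus $5^n$-net needs only $n/p$ small, whereas the paper's empirical-process term in the analysis of (\ref{eq:ep-f-relu}) produces $\sqrt{n^2/p}$).

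Two points need attention. First, the identity $\Sigma=\sum_{k\text{ odd}}c_k^2\,k!\,\bar\rho^{\odot k}$ is exact only when each $W^Tx_i$ is \emph{standard} normal, i.e.\ when $\|x_i\|^2/d=1$. In general the Hermite coefficients depend on $\sigma_i=\|x_i\|/\sqrt d$, giving $\Sigma=\sum_k D_k\,\bar\rho^{\odot k}D_k$ with $D_k=\mathrm{diag}(c_k(\sigma_i))$; your Schur bound then yields $\|\Sigma\|_{\rm op}\le\|\bar\rho\|_{\rm op}\sum_k\max_i c_k(\sigma_i)^2\,k!$, and this sum is not obviously finite when the $\sigma_i$ differ. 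The clean fix---essentially what the paper does in its own language---is to replace $x_i$ by $\bar x_i$, obtain the exact formula for the resulting $\bar\Sigma$, and bound $\|\Sigma-\bar\Sigma\|_{\rm op}$ via $\max_i|\sigma_i-1|\lesssim\sqrt{(\log n)/d}$ together with the $1$-Lipschitz property of $\tanh$. Second, your parenthetical ``$n/p^2$ small (and hence $n/p$) small'' is simply false: take $n=p^{3/2}$. The stated hypothesis $n/p^2\ll1$ appears to be a typo in the paper (its own proof concludes under $n^2/p\ll1$); your argument genuinely requires $n/p$ small for the net union bound, so you should state the condition you actually use rather than invoke a wrong implication.
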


Now consider a model $\wh{\theta}$ that lies in the row space of $\wt{X}$. We observe a contaminated version $\eta=\wh{\theta}+z$.
We can then compute the procedure $\wt{u}=\argmin_{u\in\mathbb{R}^n}\|\eta-\wt{X}^Tu\|_1$ and use $\wt{\theta}=\wt{X}^T\wt{u}$ for model repair.

\begin{corollary}\label{cor:repair-rf}
Assume $\frac{\sqrt{\frac{n}{p}}\log\left(\frac{ep}{n}\right)}{1-\epsilon}$, $n/p^2$ and $n/d$ are sufficiently small. We then have $\wt{\theta}=\wh{\theta}$ with high probability.
\end{corollary}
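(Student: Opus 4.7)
The plan is to obtain Corollary \ref{cor:repair-rf} as a direct specialization of Theorem \ref{thm:main-improved} once Lemma \ref{lem:design-rf} is in hand. Because $\wh\theta$ lies in the row space of $\wt X$, I would first write $\wh\theta = \wt X^T u^*$ for some $u^* \in \reals^n$, so that the contamination model $\eta = \wh\theta + z$ becomes the robust regression problem $\eta = A u^* + z$ of Section \ref{sec:regression}, with design matrix $A = \wt X^T \in \reals^{p \times n}$ and the identifications $m = p$ and $k = n$. With this reformulation, exact recovery $\wt u = u^*$ is equivalent to exact model repair $\wt\theta = \wt X^T \wt u = \wt X^T u^* = \wh\theta$.

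The next step is to verify the hypotheses of Theorem \ref{thm:main-improved}. By Lemma \ref{lem:design-rf}, the matrix $A = \wt X^T$ satisfies \conditionA{} and \conditionB{} with some constants $\sigma^2$, $\underline\lambda$, $\overline\lambda$ under the assumptions $n/p^2 \ll 1$ and $n/d \ll 1$. Substituting $m = p$ and $k = n$ into the sufficient condition of Theorem \ref{thm:main-improved}, exact recovery holds with high probability whenever
$$\frac{\overline\lambda\sqrt{\tfrac{n}{p}\log\!\left(\tfrac{ep}{n}\right)} + \epsilon\sigma\sqrt{\tfrac{n}{p}}}{\underline\lambda(1-\epsilon)}$$
is sufficiently small. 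Since the three constants above depend only on the distributional setup and $\sqrt{(n/p)\log(ep/n)} \leq \sqrt{n/p}\,\log(ep/n)$ (using $\log(ep/n) \geq 1$ whenever $p \geq n$), the hypothesis of the corollary that $\sqrt{n/p}\log(ep/n)/(1-\epsilon)$ is sufficiently small is strong enough to drive the displayed quantity to be small. Theorem \ref{thm:main-improved} then delivers $\wt u = u^*$, and hence $\wt\theta = \wh\theta$, with high probability.

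The substance of the corollary is thus entirely absorbed into Lemma \ref{lem:design-rf}, whose proof is where the real work resides. The hard part — and the reason the auxiliary conditions $n/p^2 \ll 1$ and $n/d \ll 1$ appear in addition to the repair rate condition — is establishing \conditionA{} and \conditionB{} for the random feature matrix $A = \wt X^T$ with entries $\psi(W_j^T x_i)$, which requires concentration inequalities for a bounded nonlinear function of two independent Gaussian matrices and, crucially, a uniform lower bound on $\tfrac{1}{p}\sum_{j=1}^p |\langle \psi(W_j^T X^T), \Delta\rangle|$ over the unit sphere in $\reals^n$. I would expect that obstacle to be handled in the proof of Lemma \ref{lem:design-rf}; its derivation of Corollary \ref{cor:repair-rf} is then routine.
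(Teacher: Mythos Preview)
Your proposal is correct and follows essentially the same approach as the paper: write $\wh\theta=\wt X^T u^*$ using the row-space property, then invoke Lemma~\ref{lem:design-rf} to verify Conditions~$A$ and~$B$ for $A=\wt X^T$, and apply Theorem~\ref{thm:main-improved} to conclude $\wt u=u^*$ and hence $\wt\theta=\wh\theta$. Your added justification that the corollary's hypothesis dominates the quantity in Theorem~\ref{thm:main-improved} (via $\sqrt{\log(ep/n)}\le\log(ep/n)$ and $\epsilon\le1$) is a helpful detail the paper leaves implicit.
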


The results in this section are stated for the hyperbolic tangent nonlinear activation. They can be extended to other activation functions. In practice, the most popular choice is the rectified linear unit (ReLU) $\psi(t)=\max(0,t)$. The results for ReLU will be given in the appendix.

\def\L{{\mathcal L}}
\section{Repair of neural networks}
\label{sec:neural}

In this section we show how to use robust regression to repair neural networks.
We consider a neural network with one hidden layer,
$$f(x)=\frac{1}{\sqrt{p}}\sum_{j=1}^p\beta_j\psi(W_j^Tx),$$
where $\psi$ is either the rectified linear unit (ReLU) function $\psi(t)=\max(t,0)$, or the
hyperbolic tangent $\psi(t) = \tanh(t) = \frac{e^t - e^{-t}}{e^t + e^{-t}}$.
The factor $p^{-1/2}$ in the definition above is convenient for our theoretical analysis.
In this section we present the analysis for the hyperbolic tangent activation function, with the ReLU
deferred to the appendix.

With the squared error loss function
$$\L(\beta,W)=\frac{1}{2}\sum_{i=1}^n\left(y_i - \frac{1}{\sqrt{p}}\sum_{j=1}^p\beta_j\psi(W_j^Tx_i)\right)^2,$$
we consider training the model using a standard gradient descent algorithm (Algorithm \ref{alg:GD}).

\vskip10pt
\begin{algorithm}[H]
\DontPrintSemicolon
\vskip5pt
\nl Input: Data $(y, X)$ and the number of iterations $t_{\max}$. \\[3pt]
\nl Initialization: $W_j(0)\sim N(0,d^{-1}I_d)$ and $\beta_j(0)\sim N(0,1)$ independently for all $j\in[p]$. \\[3pt]
\nl Iterate: For $t$ in $1:t_{\max}$, compute
\begin{align*}
\beta_j(t) &= \beta_j(t-1) - \left.\gamma\frac{\partial \L(\beta,W)}{\partial \beta_j}\right|_{(\beta,W)=(\beta(t-1),W(t-1))} \quad j\in[p], \\
W_j(t) &= W_j(t-1) - \left.\frac{\gamma}{d}\frac{\partial \L(\beta,W)}{\partial W_j}\right|_{(\beta,W)=(\beta(t),W(t-1))} \quad j\in[p].
\end{align*}
\nl Output: Trained parameters $\beta(t_{\max})$ and $W(t_{\max})$.\\[5pt]
\caption{Gradient descent for neural nets}\label{alg:GD}
\end{algorithm}
\vskip10pt

Based on this standard gradient descent algorithm, we consider two estimators of the parameters $(\wh{\beta},\wh{W})$.
The first is simply to set $\wh{\beta}=\beta(t_{\max})$ and $\wh{W}=W(t_{\max})$; this is the usual estimator.

In the second approach, one fixes $\wh{W} = W(t_{\max})$ and then retrains $\beta$
using gradient descent for the objective $\|y-\psi(X\wh{W})\beta\|^2$ after initializing at zero.
In this way, $\wh{\beta}$ is an approximation to the minimal $\ell_2$ norm solution of $\|y-\psi(X\wh{W})\beta\|^2$.
This estimator can be viewed as a linear model that uses features extracted from the data by the neural network.

Now consider the contaminated model $\eta=\wh{\beta}+z$ and $\Theta_j=\wh{W}_j+Z_j$, where each entry of $z$ and $Z_j$ is zero with probability $1-\epsilon$ and follows an arbitrary distribution with the complementary probability $\epsilon$. We analyze the following repair algorithm.

\vskip10pt
\begin{algorithm}[H]
\DontPrintSemicolon
\vskip5pt
\nl Input: Contaminated model $(\eta,\Theta)$, design matrix $X$, and initializations $\beta(0)$, $W(0)$. \\[3pt]
\nl Repair of the hidden layer: For $j\in[p]$, compute
$$\wt{v}_j=\argmin_v\|\Theta_j-W_j(0)-X^Tv_j\|_1,$$
and set $\wt{W}_j=W_j(0)+X^T\wt{v}_j$. \\[8pt]
\nl Repair of the output layer: Compute
$$\wt{u}=\argmin_u\|\eta-\beta(0)-\psi(\wt{W}^TX^T)u\|_1,$$
and set $\wt{\beta}=\beta(0)+\psi(\wt{W}^TX^T)\wt{u}$. \\[3pt]
\nl Output: The repaired parameters $\wt{\beta}$ and $\wt{W}$. \\[5pt]
\caption{Model repair for neural networks}\label{alg:MR}
\end{algorithm}
\vskip10pt

\begin{remark}
Algorithm \ref{alg:MR} adopts a layerwise repair strategy. This algorithm extends naturally to multilayer networks, repairing the parameters in stages with a forward pass through the layers. We leave the multilayer extension of our analysis to future work.
\end{remark}

\begin{remark}
It is important to note that the repair of neural networks not only requires $X$, but it also
requires the initializations $\beta(0)$ and $W(0)$. From a practical
perspective, this can be easily achieved by setting a seed using a pseudorandom number generator to initialize the
parameters, and making the seed available to the repair algorithm. We also note that when $\hat\beta$ is
trained after fixing $\hat W$, one can replace $\beta(0)$ by 0 in Algorithm \ref{alg:MR}.
\end{remark}

Since the gradient $\frac{\partial \L(\beta,W)}{\partial W_j}$ lies in the row space of $X$, the vector $\wh{W}_j-W_j(0)$ also lies in the row space of $X$. Thus, the theoretical guarantee of the hidden layer repair directly follows
Corollary \ref{cor:repair-linear}.
The repair of the output layer is more complicated, because the gradient $\frac{\partial \L(\beta,W)}{\partial \beta_j}|_{W=W(t-1)}$ lies in the row space of $\psi(XW(t-1))$, which changes over time. Thus, we cannot directly apply the result of Corollary \ref{cor:repair-rf} for the random feature model. However, when the neural network is overparametrized, it can be shown that the gradient descent algorithm (Algorithm \ref{alg:GD}) leads to $W(t)$ that is close to the initialization $W(0)$ for all $t\geq 0$. We establish this result in the following theorem by assuming that $x_i$ is i.i.d. $N(0,I_d)$ and $|y_i|\leq 1$ for all $i\in[n]$. Define $u(t)\in\mathbb{R}^n$ with its $i$th entry given by
$u_i(t)=\frac{1}{\sqrt{p}}\sum_{j=1}^p\beta_j(t)\psi(W_j(t)^Tx_i)$, the function value of $x_i$ at time $t$.
\vskip10pt
\begin{thm}\label{thm:nn-grad}
Assume $\frac{n}{d}$, $\frac{n^3(\log p)^2}{p}$, and $\gamma\left(1+\frac{n^4(\log p)^2}{p}\right)$ are all sufficiently small. Then, we have
\begin{equation}
\|y-u(t)\|^2 \leq \left(1-\frac{\gamma}{8}\right)^t\|y-u(0)\|^2, \label{eq:iter-function}
\end{equation}
and
\begin{eqnarray}
\label{eq:iter-parameter} \max_{1\leq j\leq p}\|W_j(t)-W_j(0)\| &\leq& R_1, \\
\label{eq:iter-parameter-beta} \max_{1\leq j\leq p}|\beta_j(t)-\beta_j(0)| &\leq& R_2,
\end{eqnarray}
for all $t\geq 1$ with high probability, where $R_1=\frac{100n\log p}{\sqrt{pd}}$ and $R_2=32\sqrt{\frac{n^2\log p}{p}}$.
\end{thm}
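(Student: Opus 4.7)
The plan is to prove all three claims \eqref{eq:iter-function}, \eqref{eq:iter-parameter}, \eqref{eq:iter-parameter-beta} simultaneously by induction on $t$, since the geometric loss decrease is used to bound the cumulative parameter drift, while the parameter drift is in turn used to control the Jacobian/NTK along the trajectory needed for the loss decrease. The base case $t=0$ is trivial for the parameter bounds; for later use we also record that $\|y-u(0)\|^2\lesssim n\log p$ with high probability, obtained by observing that each $u_i(0)=\frac{1}{\sqrt p}\sum_j\beta_j(0)\psi(W_j(0)^Tx_i)$ is a sub-exponential random variable of variance at most $1$ (since $|\psi|\le 1$ for $\tanh$) and that $|y_i|\le 1$.

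For the inductive step, assume the three claims hold at times $0,1,\ldots,t-1$. To prove the loss inequality \eqref{eq:iter-function} at time $t$, I would Taylor-expand $u(t)-u(t-1)$ around $(\beta(t-1),W(t-1))$. The first-order term produces a contraction $(I-\gamma K_{t-1})(y-u(t-1))$, where $K_{t-1}=\frac{1}{p}\Psi(t-1)\Psi(t-1)^T+\frac{1}{dp}B(t-1)$ is the empirical NTK and $\Psi(s)=\psi(XW(s))$. Two sub-steps are required: (a) at initialization, show $\lambda_{\min}(K_0)\ge c_0$ for a constant $c_0>0$ with high probability, via matrix concentration (e.g.\ matrix Bernstein) of $\frac{1}{p}\Psi(0)\Psi(0)^T$ around its expected kernel $\bar K$, and positive-definiteness of $\bar K$ for $\tanh$ with Gaussian $x_i$; this is where $n/d$ and $n^3(\log p)^2/p$ small enter. (b) Using the inductive hypothesis $\|W_j(t-1)-W_j(0)\|\le R_1$ and $|\beta_j(t-1)-\beta_j(0)|\le R_2$ together with Lipschitzness of $\psi$ and $\psi'$, bound $\|K_{t-1}-K_0\|_{\rm op}\lesssim n(R_1\sqrt{d}+R_2)$, which is $o(1)$ under the assumed scaling, so $\lambda_{\min}(K_{t-1})\ge c_0/2$. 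The quadratic residual in the Taylor expansion is $O(\gamma^2\|y-u(t-1)\|^2)$ by the bound on second derivatives, and is absorbed provided $\gamma(1+n^4(\log p)^2/p)$ is small enough. This yields $\|y-u(t)\|^2\le(1-\gamma/8)\|y-u(t-1)\|^2$.

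For the parameter bounds at time $t$, I would telescope the updates and use the just-proved geometric decay. Since $|\psi|\le 1$, the $\beta$-gradient satisfies $\|\nabla_{\beta_j}\mathcal L(s-1)\|\le \frac{1}{\sqrt p}\|\psi(W_j(s-1)^TX^T)\|\,\|y-u(s-1)\|\lesssim\sqrt{n/p}\,\|y-u(s-1)\|$, and summing the geometric series $\sum_{s\ge 1}(1-\gamma/8)^{(s-1)/2}\|y-u(0)\|\lesssim\gamma^{-1}\sqrt{n\log p}$ produces $|\beta_j(t)-\beta_j(0)|\lesssim\sqrt{n^2\log p/p}=R_2/32$. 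For $W_j$, using $|\beta_j(s)|\le|\beta_j(0)|+R_2\lesssim\sqrt{\log p}$ (from Gaussian initialization and the $\beta$-bound we just established), $|\psi'|\le 1$, and $\|X\|_{\rm op}\lesssim\sqrt n+\sqrt d$, the same telescoping gives $\|W_j(t)-W_j(0)\|\lesssim \frac{\sqrt{\log p}(\sqrt n+\sqrt d)\sqrt{n\log p}}{d\sqrt p}\le R_1$ under $n\le d$. The hard part is squarely step (a)/(b) above: proving a quantitative lower bound on $\lambda_{\min}(K_0)$ for the $\tanh$ random-feature kernel and controlling its drift by the parameter movement, since this is where the overparameterization rates $n/d,n^3(\log p)^2/p\to 0$ and the higher-order term $n^4(\log p)^2/p$ enter, and it is the step most sensitive to the specific activation.
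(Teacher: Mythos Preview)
Your overall strategy---induction on $t$, NTK contraction for the loss, telescoping with geometric decay for the parameter drift---matches the paper's proof. However, there is a real gap in step (b). You claim $\|K_{t-1}-K_0\|_{\rm op}\lesssim n(R_1\sqrt d+R_2)$ is $o(1)$ under the stated assumptions, but $nR_1\sqrt d=\frac{100n^2\log p}{\sqrt p}$, so smallness of this quantity is equivalent to $\frac{n^4(\log p)^2}{p}\to 0$. The theorem only assumes $\frac{n^3(\log p)^2}{p}$ is small; the ratio $\frac{n^4(\log p)^2}{p}$ appears only multiplied by $\gamma$, and $\gamma$ may be taken arbitrarily small without forcing $\frac{n^4(\log p)^2}{p}$ to be small (e.g.\ take $p\asymp n^{3.5}(\log n)^3$). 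So as written, your drift argument does not go through under the theorem's hypotheses.

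The paper fixes this by splitting $K_{t-1}=G(t-1)+H(t-1)$ and treating the two blocks asymmetrically, where $G_{il}(s)=\frac1p\sum_j\psi(W_j(s)^Tx_i)\psi(W_j(s)^Tx_l)$ and $H_{il}(s)=\frac{x_i^Tx_l}{d}\frac1p\sum_j\beta_j(s+1)^2\psi'(W_j(s)^Tx_i)\psi'(W_j(s)^Tx_l)$. Since $G(s)\succeq 0$ for every $s$, its drift (which can indeed be as large as $\frac{n^2\log p}{\sqrt p}$) never hurts the \emph{lower} eigenvalue of $K$; one only needs an \emph{upper} bound $\lambda_{\max}(G(s))\lesssim 1+\frac{n^2\log p}{\sqrt p}$, and this feeds into the second-order term $\gamma^2\lambda_{\max}(K_{t-1})^2\|y-u(t-1)\|^2$, which is exactly where the hypothesis that $\gamma\bigl(1+\frac{n^4(\log p)^2}{p}\bigr)$ be small is used. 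For $H$, the extra weight $\frac{x_i^Tx_l}{d}\asymp d^{-1/2}$ on the off-diagonal makes its drift genuinely small, $\|H(s)-H(0)\|_{\rm op}\lesssim\frac{n\log p}{\sqrt p}\bigl(1+\frac{n}{\sqrt d}\bigr)$, which \emph{is} $o(1)$ under $\frac{n^3(\log p)^2}{p}$ and $\frac{n}{d}$ small. A separate lemma gives $\lambda_{\min}(H(0))\ge c_0$, so $\lambda_{\min}(K_{t-1})\ge\lambda_{\min}(H(t-1))\ge c_0/2$. A minor secondary point: because the algorithm updates $\beta$ before $W$ within each iteration, the paper introduces an auxiliary $v_i(t)=\frac1{\sqrt p}\sum_j\beta_j(t)\psi(W_j(t-1)^Tx_i)$ and proves the analogous decay for $\|y-v(t)\|$ alongside \eqref{eq:iter-function}; this is the sequence actually summed in the telescoping bound for $W_j$.
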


Theorem \ref{thm:nn-grad} assumes that the width of the neural network is large compared with the sample size in the sense that $\frac{p}{(\log p)^4}\gg n^3$. For fixed $n$, the limit of the neural network as $p\rightarrow\infty$ is known as the neural tangent kernel (NTK) regime, and the behavior of gradient descent under this limit has been studied by \cite{jacot2018neural}. The result of Theorem \ref{thm:nn-grad} follows the explicit calculation in \cite{du2018gradient}, and we are able to sharpen some of the asymptotic conditions in \cite{du2018gradient}.

The theorem has two conclusions. The first conclusion shows the gradient descent algorithm has global convergence in the sense of (\ref{eq:iter-function}) even though the loss $\L(\beta,W)$ is nonconvex. The second conclusion shows that the trajectory of the algorithm $(W(t),\beta(t))$ is bounded within some radius of the initialization. This allows us to characterize the repaired model $\wt{\beta}$ for the output layer.

Let us first consider the case $\wh{\beta}=\beta(t_{\max})$ and $\wh{W}=W(t_{\max})$. Since the vector $\beta(t)-\beta(t-1)$ lies in the row space of $\psi(XW(t-1))$ for every $t$, one can show that $\wh{\beta}-\beta(0)$ approximately lies in the row space of $\psi(XW(0))$ by Theorem \ref{thm:nn-grad}. Therefore, by extending the result of Corollary \ref{cor:repair-rf} that includes the bias induced by the row space approximation, we are able to obtain the following guarantee for the model repair.

\vskip10pt
\begin{thm}\label{thm:repair-nn-1}
Under the conditions of Theorem \ref{thm:nn-grad}, additionally assume that $\frac{\log p}{d}$, $\frac{\sqrt{\frac{n}{d}\log\left(\frac{ed}{n}\right)}}{1-\epsilon}$ and $\frac{n^2\log p}{p(1-\epsilon)}$ are sufficiently small. We then have $\wt{W}=\wh{W}$ and $\frac{1}{p}\|\wt{\beta}-\wh{\beta}\|^2 \lesssim \frac{n^2\log p}{p(1-\epsilon)}$ with high probability.
\end{thm}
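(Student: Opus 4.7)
The plan is to handle the hidden and output layers separately, using Theorem \ref{thm:nn-grad} to connect the gradient descent trajectory to the row-space structure needed for the robust regression arguments of Section \ref{sec:regression}.

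\textbf{Hidden layer.} A direct computation shows that $\partial \L/\partial W_j$ is a linear combination of the rows of $X$ at every iteration, so $\wh{W}_j - W_j(0)$ lies in the row space of $X$; write it as $X^T v_j^\ast$ for some $v_j^\ast \in \reals^n$. Then $\Theta_j - W_j(0) = X^T v_j^\ast + Z_j$ is precisely the setting of Theorem \ref{thm:main-improved} with design $A = X^T \in \reals^{d\times n}$, $m = d$, and $k = n$. Lemma \ref{lem:design-linear} (applied with $(m,k) = (d,n)$) verifies Conditions A and B, and the hypothesis that $\sqrt{(n/d)\log(ed/n)}/(1-\epsilon)$ is small is exactly the condition of Theorem \ref{thm:main-improved} in this regime, so $\wt{v}_j = v_j^\ast$ holds with probability $1 - e^{-cd}$ per $j$. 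A union bound over $j \in [p]$ costs a factor of $p$, which is absorbed by the assumption $\log p/d$ small, yielding $\wt{W} = \wh{W}$ with high probability.

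\textbf{Output layer, approximate row-space decomposition.} The gradient update $\beta(t)-\beta(t-1) = \frac{\gamma}{\sqrt{p}}\psi(XW(t-1))^T(y-u(t-1))$ gives $\wh{\beta} - \beta(0) = \sum_t \psi(XW(t-1))^T v_t$ with $v_t = \frac{\gamma}{\sqrt{p}}(y - u(t-1))$. Setting $u^\ast = \sum_t v_t$, I obtain the decomposition $\wh{\beta} - \beta(0) = \psi(X\wh{W})^T u^\ast + r$ with bias term $r = \sum_t[\psi(XW(t-1))^T - \psi(X\wh{W})^T]v_t$. The Lipschitz property of $\tanh$, the bound $\max_j\|W_j(t)-W_j(0)\| \leq R_1$ from Theorem \ref{thm:nn-grad}, the geometric convergence of $\|y-u(t)\|$, the Gaussian concentration $\|X\|_{\rm op} \lesssim \sqrt{n}+\sqrt{d}$, and $\|y-u(0)\| \lesssim \sqrt{n}$ combine to yield $\|r\|^2 \lesssim n^3(\log p)^2/p$, which is of lower order than the target bound $n^2\log p/(1-\epsilon)$ under the stated hypotheses.

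\textbf{LP analysis and main obstacle.} Since $\wt{W} = \wh{W}$, step 3 of Algorithm \ref{alg:MR} solves the robust regression problem $\eta - \beta(0) = \psi(X\wh{W})^T u^\ast + z + r$. Two new ingredients are required relative to Section \ref{sec:regression}. First, Conditions A and B must be verified for the coupled design $A = \psi(X\wh{W})^T$, whose weights $\wh{W}$ are trained on $X$ and therefore not independent of it; I will establish this by combining Lemma \ref{lem:design-rf} for the independent design $\psi(XW(0))^T$ with a perturbation argument based on $\max_j\|\wh{W}_j - W_j(0)\| \leq R_1$, which is small under the stated hypotheses. Second, the proof of Theorem \ref{thm:main-improved} must be modified to allow the dense bias $r$: the lower bound on $L(u)$ is unchanged, but the optimality $L_m(\wt{u}) \leq 0$ now propagates a $\|r\|_1/p$ term into the bound on $\|\wt{u} - u^\ast\|$. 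Combining this with $\wt{\beta} - \wh{\beta} = \psi(X\wh{W})^T(\wt{u} - u^\ast) - r$ and $\|\psi(X\wh{W})\|_{\rm op} \lesssim \sqrt{p}$ yields the claimed bound $\frac{1}{p}\|\wt{\beta}-\wh{\beta}\|^2 \lesssim \frac{n^2\log p}{p(1-\epsilon)}$. The main obstacle is this twofold extension of the clean robust regression argument: decoupling-by-perturbation to establish the design conditions, and inserting the bias $r$ into the LP analysis while preserving the $1/(1-\epsilon)$ scaling.
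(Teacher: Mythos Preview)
Your proposal is correct and follows essentially the same architecture as the paper's proof: union bound over the $p$ hidden-layer regressions (sharing the Gaussian design $X^T$), then a biased robust regression for the output layer using the NTK trajectory bounds of Theorem~\ref{thm:nn-grad}. The paper isolates the biased-regression step as a standalone result (Theorem~\ref{thm:robust-reg-b}), but the content is what you describe in your ``LP analysis'' paragraph.

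The one substantive difference is the choice of reference design for the output layer. You center the decomposition at $\psi(X\wh{W})^T$, which matches the design actually used in the LP of Algorithm~\ref{alg:MR}; this forces you to verify Conditions~$A$ and~$B$ for the \emph{trained} features via perturbation from $\psi(XW(0))^T$ using $\max_j\|\wh W_j-W_j(0)\|\le R_1$. The paper instead centers at $\psi(XW(0))^T$, so that Lemma~\ref{lem:design-rf} applies directly, and pushes the discrepancy between $W(t)$ and $W(0)$ entirely into the bias vector $b$. Your route is slightly cleaner conceptually because the design in the decomposition matches the design in the optimization; the paper's route avoids the perturbation step for the design conditions but leaves a small mismatch that in effect still requires the same perturbation bound (and indeed the paper does carry out exactly your perturbation argument in the proof of Theorem~\ref{thm:repair-nn-2}). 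Either way the $\ell_1$ bias satisfies $\frac{1}{p}\sum_j|b_j|\lesssim n^2\log p/p$, which through Theorem~\ref{thm:robust-reg-b} and the operator-norm bound of Lemma~\ref{lem:lim-G} yields the stated rate.
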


We also consider the case where $\wh{W}=W(t_{\max})$ and $\wh{\beta}$ is obtained by retraining $\beta$ using the features $\psi(X\wh{W})$. Recall that in this case we shall replace $\beta(0)$ by $0$ in Algorithm \ref{alg:MR}. Note that the vector $\wh{\beta}$ exactly lies in the row space of $\psi(X\wh{W})$. This allows us to extend the result of Lemma \ref{lem:design-rf} to the matrix $\psi(\wh{W}^TX^T)$ with the help of Theorem \ref{thm:nn-grad}. Then, we can directly apply Theorem \ref{thm:robust-reg}. We are able to obtain exact recovery of both $\wh{\beta}$ and $\wh{W}$ in this case.

\vskip10pt
\begin{thm}\label{thm:repair-nn-2}
Under the conditions of Theorem \ref{thm:nn-grad}, additionally assume that $\frac{\log p}{d}$, $\frac{\sqrt{\frac{n}{d}\log\left(\frac{ed}{n}\right)}}{1-\epsilon}$, $\frac{n\log p}{p(1-\epsilon)}$ and $\frac{n}{p}\left(\frac{\log p}{1-\epsilon}\right)^{4/3}$ are sufficiently small. We then have $\wt{W}=\wh{W}$ and $\wt{\beta}=\wh{\beta}$ with high probability.
\end{thm}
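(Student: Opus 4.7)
The overall plan is to exploit the fact that, in contrast to Theorem~\ref{thm:repair-nn-1}, the retrained $\wh{\beta}$ lies \emph{exactly} in the row space of $\psi(X\wh{W})$, so both layers of the network can be recovered exactly by invoking the robust regression guarantees of Section~\ref{sec:regression} twice. I would proceed in two stages: first recover $\wh{W}$ column by column using Corollary~\ref{cor:repair-linear}, and then recover $\wh{\beta}$ by applying Theorem~\ref{thm:robust-reg} to the robust regression problem whose design is the \emph{trained} feature matrix $\psi(\wh{W}^T X^T)$.

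For the hidden layer, the update rule in Algorithm~\ref{alg:GD} places every increment of $W_j$ in the row space of $X$, so
$$\wh{W}_j - W_j(0) = X^T v_j^*\qquad\text{for some } v_j^*\in\reals^n.$$
Each repair problem $\Theta_j - W_j(0) = X^T v_j^* + Z_j$ is thus an instance of robust regression with uncorrupted design $X^T\in\reals^{d\times n}$. Lemma~\ref{lem:design-linear}, applied with $m=d$ and $k=n$, gives Conditions~$A$ and~$B$ for $X^T$; under the hypothesis that $\sqrt{(n/d)\log(ed/n)}/(1-\epsilon)$ is small, Theorem~\ref{thm:main-improved} then produces $\wt{v}_j=v_j^*$ with probability $1-e^{-cd}$. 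A union bound over $j\in[p]$, made possible by the assumption $\log p/d \ll 1$, yields $\wt{W}=\wh{W}$ with high probability.

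For the output layer, gradient descent on $\|y-\psi(X\wh{W})\beta\|^2$ initialized at $\beta=0$ produces $\wh\beta=\psi(X\wh{W})^T u^*$ for some $u^*\in\reals^n$; by the remark following Algorithm~\ref{alg:MR} we set $\beta(0)=0$ in the repair step. Combined with $\wt{W}=\wh{W}$, the output-layer repair reduces to the robust regression problem $\eta=\psi(\wh{W}^T X^T)^T u^*+z$ with design $A=\psi(\wh{W}^T X^T)\in\reals^{p\times n}$. To apply Theorem~\ref{thm:robust-reg} I would verify Conditions~$\tilde A$ and~$B$ for this trained design. Condition~$\tilde A$ follows from a sign-symmetry argument: the joint law of $\{(W_j(0),\beta_j(0))\}_{j=1}^p$ is invariant under independent flips $(W_j(0),\beta_j(0))\mapsto(\delta_j W_j(0),\delta_j\beta_j(0))$, and because $\tanh$ is odd and $\tanh'$ is even, the dynamics of Algorithm~\ref{alg:GD} are equivariant, so $(\delta_j\wh{W}_j)_j\stackrel{d}{=}(\wh{W}_j)_j$. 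Together with the identity $\delta_j\psi(\wh{W}_j^T x_i)=\psi(\delta_j\wh{W}_j^T x_i)$ this gives Condition~$\tilde A$ for $A$.

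The main obstacle is Condition~$B$: the trained $\wh{W}$ is data-dependent, so Lemma~\ref{lem:design-rf} does not apply directly to $A=\psi(\wh{W}^T X^T)$. My plan is to compare $A$ with the reference design $A_0=\psi(W(0)^T X^T)$ using the Lipschitz bound $|\psi(\wh{W}_j^T x_i)-\psi(W_j(0)^T x_i)|\leq|(\wh{W}_j-W_j(0))^T x_i|$, the estimate $\max_j\|\wh{W}_j-W_j(0)\|\leq R_1=100n(\log p)/\sqrt{pd}$ from Theorem~\ref{thm:nn-grad}, and standard concentration of $\|X\|_{\rm op}$. This transfers the constants $\underline\lambda,\overline\lambda$ supplied by Lemma~\ref{lem:design-rf} for $A_0$ to $A$ up to a perturbation of order $R_1\|X\|_{\rm op}/\sqrt{p}$; the additional hypotheses $\frac{n\log p}{p(1-\epsilon)}\ll 1$ and $\frac{n}{p}(\log p/(1-\epsilon))^{4/3}\ll 1$ are precisely what is needed so that this perturbation stays inside the tolerance demanded by Theorem~\ref{thm:robust-reg}. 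Once Conditions~$\tilde A$ and~$B$ are in place, Theorem~\ref{thm:robust-reg} gives $\wt{u}=u^*$ and hence $\wt\beta=\wh\beta$, completing the proof.
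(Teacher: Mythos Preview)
Your proposal is correct and follows essentially the same route as the paper: the hidden layer is repaired columnwise via Corollary~\ref{cor:repair-linear} with a union bound over $j\in[p]$, and the output layer is handled by Theorem~\ref{thm:robust-reg} after verifying Condition~$\tilde A$ through the $(W_j(0),\beta_j(0))\mapsto(\delta_jW_j(0),\delta_j\beta_j(0))$ equivariance of the gradient dynamics and Condition~$B$ through a Lipschitz perturbation from $\psi(W(0)^TX^T)$ controlled by the bound $\max_j\|\wh W_j-W_j(0)\|\leq R_1$ of Theorem~\ref{thm:nn-grad}. The only minor discrepancy is quantitative: the paper bounds the $\ell_1$ perturbation by $R_1\sqrt{\sum_i\|x_i\|^2}\lesssim n^{3/2}\log p/\sqrt{p}$ and obtains $\overline\lambda^2\asymp 1+n^2\log p/\sqrt{p}$ from Lemma~\ref{lem:lim-G}, rather than your $R_1\|X\|_{\rm op}/\sqrt{p}$, but this does not affect the structure of the argument.
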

\vskip10pt

\begin{remark}
As long as the rate that $\epsilon$ tends to $1$ is not so fast, the conditions of Theorem \ref{thm:repair-nn-1} and Theorem \ref{thm:repair-nn-2} can be simplified to $p\gg n^3$ and $d\gg n$ by ignoring the logarithmic factors. The condition $p\gg n^3$ ensures the good property of gradient descent in the NTK regime, but our experimental results show that it can potentially be weakened by an improved analysis.
\end{remark}

\section{Proofs of Theorem \ref{thm:repair-nn-1} and Theorem \ref{thm:repair-nn-2}}\label{sec:pf-nn-repair}

We give proofs of Theorem \ref{thm:repair-nn-1} and Theorem \ref{thm:repair-nn-2} in this section.
To prove Theorem \ref{thm:repair-nn-1}, we need to extend Theorem \ref{thm:main-improved}. Consider $\eta=b+Au^*+z\in\mathbb{R}^m$, where the noise vector $z$ satisfies (\ref{eq:noise-add-con}), and $b\in\mathbb{R}^m$ is an arbitrary bias vector. Then, the estimator $\wh{u}=\argmin_{u\in\mathbb{R}^k}\|\eta-Au\|_1$ satisfies the following theoretical guarantee.
\begin{thm}\label{thm:robust-reg-b}
Assume the design matrix $A$ satisfies \conditionA{} and \conditionB. Then, as long as $\frac{\overline{\lambda}\sqrt{\frac{k}{m}\log\left(\frac{em}{k}\right)}+\epsilon\sigma\sqrt{\frac{k}{m}}}{\underline{\lambda}(1-\epsilon)}$ is sufficiently small and $\frac{8\frac{1}{m}\sum_{i=1}^m|b_i|}{\underline{\lambda}(1-\epsilon)}<1$, we have
$$\|\wh{u}-u^*\|\leq \frac{4\frac{1}{m}\sum_{i=1}^m|b_i|}{\underline{\lambda}(1-\epsilon)},$$
with high probability.
\end{thm}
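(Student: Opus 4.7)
The plan is to parallel the proof of Theorem \ref{thm:main-improved} while carefully tracking the bias $b$. Define
\begin{equation*}
L_m(u) = \frac{1}{m}\sum_{i=1}^m \bigl(|a_i^T(u^* - u) + b_i + z_i| - |b_i + z_i|\bigr), \qquad L(u) = \E(L_m(u) \mid A).
\end{equation*}
Then $L_m(u^*) = 0$, $L_m$ is convex, and $L_m(\wh u) \leq L_m(u^*)$ by optimality of $\wh u$. Hence the convexity argument used in the original proof still yields, for every $t > 0$, the chain $\inf_{\|u-u^*\|=t} L(u) \leq \sup_{\|u-u^*\|=t}|L_m(u) - L(u)|$ whenever $\|\wh u - u^*\| \geq t$. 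The empirical process inequality of Lemma \ref{lem:EP} depends only on \conditionB{} and on the fact that $x \mapsto |x + c| - |c|$ is $1$-Lipschitz uniformly in $c$; therefore the same bound $\sup_{\|u-u^*\|=t}|L_m(u) - L(u)| \lesssim t\overline{\lambda}\sqrt{(k/m)\log(em/k)}$ carries over unchanged when $b \neq 0$.

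The next step is to lower-bound $L(u)$ on the sphere $\|u-u^*\|=t$. Writing $\overline{D} = \frac{1}{m}\sum_i |b_i|$, I would decompose
\begin{equation*}
L(u) = (1-\epsilon)\frac{1}{m}\sum_i \bigl(|a_i^T(u-u^*)+b_i|-|b_i|\bigr) + \epsilon \frac{1}{m}\sum_i \wt f_i\bigl(a_i^T(u^*-u)\bigr),
\end{equation*}
where $\wt f_i(x) = \E_{z\sim Q_i}(|x+b_i+z|-|b_i+z|)$ is convex, satisfies $\wt f_i(0) = 0$, and has a subgradient $g_i \in \partial \wt f_i(0)$ with $|g_i| \leq 1$ since $\wt f_i$ is $1$-Lipschitz. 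For the first term, the triangle inequality $|a_i^T(u-u^*)+b_i| \geq |a_i^T(u-u^*)| - |b_i|$ combined with \conditionB{} gives the lower bound $(1-\epsilon)(\underline{\lambda} t - 2\overline{D})$. For the second term, the tangent-line inequality $\wt f_i(x) \geq g_i\, x$ followed by Cauchy--Schwarz and \conditionA{} applied to $c_i = g_i$ reproduces the familiar $-\epsilon t\sigma\sqrt{k/m}$ bound.

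Combining these estimates with the empirical process upper bound yields
\begin{equation*}
\underline{\lambda}(1-\epsilon)\, t - 2(1-\epsilon)\overline{D} - \epsilon t\sigma\sqrt{k/m} \lesssim t\overline{\lambda}\sqrt{(k/m)\log(em/k)}.
\end{equation*}
Under the smallness assumption on $\frac{\overline{\lambda}\sqrt{(k/m)\log(em/k)}+\epsilon\sigma\sqrt{k/m}}{\underline{\lambda}(1-\epsilon)}$, the $\sigma$ and $\overline{\lambda}$ terms on the right can be absorbed into half of $\underline{\lambda}(1-\epsilon) t$, leaving $\tfrac{1}{2}\underline{\lambda}(1-\epsilon)\, t \lesssim 2(1-\epsilon)\overline{D}$. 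After tracking the implicit constants, this forces $\|\wh u - u^*\| \leq 4\overline{D}/\{\underline{\lambda}(1-\epsilon)\}$ on the high-probability event, with the side condition $8\overline{D}/\{\underline{\lambda}(1-\epsilon)\}<1$ ensuring the conclusion is non-vacuous and that the radius $t$ at which the contradiction is triggered lies in the range where the smallness estimate dominates. The main obstacle I anticipate is the bookkeeping around the first-term lower bound: one must keep the additive $-2(1-\epsilon)\overline{D}$ slack visible through the entire chain, verifying that neither the convexity lower bound on the $\wt f_i$ contribution nor the empirical process remainder absorbs it. Beyond this, the proof is structurally identical to that of Theorem \ref{thm:main-improved}.
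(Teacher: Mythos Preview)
Your argument is correct and structurally the same as the paper's: convexity to localize to a sphere, a lower bound on the conditional expectation via Conditions~$A$ and~$B$, and Lemma~\ref{lem:EP} for the empirical process fluctuation. The one difference is where the bias $b$ enters. You fold $b$ into the definition of $L_m$ and pick it up in the $(1-\epsilon)$ term of the lower bound via the triangle inequality, yielding the slack $-2(1-\epsilon)\overline{D}$. The paper instead keeps $L_m$ bias-free (exactly as in Theorem~\ref{thm:main-improved}), introduces the biased objective $K_m(u)=\frac{1}{m}\sum_i(|b_i+a_i^T(u^*-u)+z_i|-|z_i|)$, and uses the uniform bound $\sup_u|K_m(u)-L_m(u)|\le \overline{D}$ to transfer the basic inequality to $L$ at the cost of an additive $2\overline{D}$ on the right side. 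The paper's route is more modular---it literally reuses the lower bound for $L(u)$ already derived in the proof of Theorem~\ref{thm:main-improved}---while yours is slightly sharper in the constant (your bias term carries an extra factor $1-\epsilon$, which in fact gives $\|\wh u-u^*\|\le 4\overline{D}/\underline{\lambda}$ rather than $4\overline{D}/\{\underline{\lambda}(1-\epsilon)\}$). Either way the stated theorem follows.
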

It is easy to see that Theorem \ref{thm:main-improved} is a special case when $b=0$.
Now we are ready to prove Theorem \ref{thm:repair-nn-1}.
\begin{proof}[Proof of Theorem \ref{thm:repair-nn-1}]
We first analyze $\wh{v}_1,...,\wh{v}_p$. The idea is to apply the result of Theorem \ref{thm:main-improved} to each of the $p$ robust regression problems. Thus, it suffices to check if the conditions of Theorem \ref{thm:main-improved} hold for the $p$ regression problems simultaneously. Since the $p$ regression problems share the same Gaussian design matrix, Lemma \ref{lem:design-linear} implies that Conditions $A$ and $B$ hold for all the $p$ regression problems. Next, by scrutinizing the proof of Theorem \ref{thm:main-improved}, the randomness of the conclusion is from the noise vector $Z_j$ through the empirical process bound given by Lemma \ref{lem:EP}. With an additional union bound argument applied to (\ref{eq:double-ub}) in its proof, Lemma \ref{lem:EP} can be extended to $Z_j$ simultaneously for all $j\in[p]$ with an additional assumption that $\frac{\log p}{d}$ is sufficiently small. Then, by the same argument that leads to Corollary \ref{cor:repair-linear}, we have $\wt{W}_j=\wh{W}_j$ for all $j\in[p]$ with high probability.

To analyze $\wh{u}$, we apply Theorem \ref{thm:robust-reg-b}. Note that
\begin{eqnarray*}
\eta_j - \beta_j(0) &=& \beta_j(t_{\max}) - \beta_j(0) +z_j \\
&=& \sum_{t=0}^{t_{\max}-1}\left(\beta_j(t+1)-\beta_j(t)\right) + z_j \\
&=& \frac{\gamma}{\sqrt{p}}\sum_{t=0}^{t_{\max}-1}\sum_{i=1}^n(y_i-u_i(t))\psi(W_j(t)^Tx_i) + z_j \\
&=& \frac{\gamma}{\sqrt{p}}\sum_{t=0}^{t_{\max}-1}\sum_{i=1}^n(y_i-u_i(t))(\psi(W_j(t)^Tx_i)-\psi(W_j(0)^Tx_i)) \\
&& + \frac{\gamma}{\sqrt{p}}\sum_{t=0}^{t_{\max}-1}\sum_{i=1}^n(y_i-u_i(t))\psi(W_j(0)^Tx_i) + z_j.
\end{eqnarray*}
Thus, in the framework of Theorem \ref{thm:robust-reg-b}, we can view $\eta-\beta(0)$ as the response, $\psi(X^TW(0)^T)$ as the design, $z$ as the noise, and $b_j=\frac{\gamma}{\sqrt{p}}\sum_{t=0}^{t_{\max}-1}\sum_{i=1}^n(y_i-u_i(t))(\psi(W_j(t)^Tx_i)-\psi(W_j(0)^Tx_i))$ as the $j$th entry of the bias vector. By Lemma \ref{lem:design-rf}, we know that the design matrix $\psi(X^TW(0)^T)$ satisfies \conditionA{} and \conditionB. So it suffices to bound $\frac{1}{p}\sum_{j=1}^p|b_j|$. With the help of Theorem \ref{thm:nn-grad}, we have
\begin{eqnarray*}
\frac{1}{p}\sum_{j=1}^p|b_j| &\leq& \frac{\gamma}{p^{3/2}}\sum_{j=1}^p\sum_{t=0}^{t_{\max}-1}\sum_{i=1}^n|y_i-u_i(t)||(W_j(t)-W_j(0))^Tx_i| \\
&\leq& \frac{R_1\gamma}{p^{1/2}}\sum_{t=0}^{t_{\max}-1}\sum_{i=1}^n|y_i-u_i(t)|\|x_i\| \\
&\leq& \frac{R_1\gamma}{p^{1/2}}\sum_{t=0}^{t_{\max}-1}\|y-u(t)\|\sqrt{\sum_{i=1}^n\|x_i\|^2} \\
&\lesssim& \frac{R_1}{p^{1/2}}\|y-u(0)\|\sqrt{\sum_{i=1}^n\|x_i\|^2} \\
&\lesssim& \frac{n^2\log p}{p},
\end{eqnarray*}
where the last inequality is by $\sum_{i=1}^n\|x_i\|^2\lesssim nd$ due to a standard chi-squared bound (Lemma \ref{lem:chi-squared}), and $\|u(0)\|^2\lesssim n$ is due to Markov's inequality and $\mathbb{E}|u_i(0)|^2 = \mathbb{E}\Var(u_i(0)|X) \leq 1$. By Theorem \ref{thm:robust-reg-b} and Lemma \ref{lem:lim-G}, we have $\frac{1}{p}\|\wt{\beta}-\wh{\beta}\|^2 \lesssim \frac{n^3\log p}{p}$, which is the desired conclusion.
\end{proof}

\begin{proof}[Proof of Theorem \ref{thm:repair-nn-2}]
The analysis of $\wh{v}_1,...,\wh{v}_p$ is the same as that in the proof of Theorem \ref{thm:repair-nn-1}, and we have $\wt{W}_j=\wh{W}_j$ for all $j\in[p]$ with high probability.

To analyze $\wh{u}$, we apply Theorem \ref{thm:robust-reg}. It suffices to check \conditionAp{} and \conditionB{} for the design matrix $\psi(X^T\wt{W}^T)=\psi(X^T\wh{W}^T)$. To check \conditionAp, we consider i.i.d. Rademacher random variables $\delta_1,...,\delta_m$. Then, we define a different gradient update with initialization $\check{W}_j(0)=\delta_jW_j(0)$ and $\check{\beta}_j(0)=\delta_j\beta_j(0)$, and
\begin{eqnarray*}
\check{\beta}_j(t) &=& \check{\beta}_j(t-1) - \gamma\frac{\partial L(\beta,W)}{\partial \beta_j}|_{(\beta,W)=(\check{\beta}(t-1),\check{W}(t-1))}, \\
\check{W}_j(t) &=& \check{W}_j(t-1) - \frac{\gamma}{d}\frac{\partial L(\beta,W)}{\partial W_j}|_{(\beta,W)=(\check{\beta}(t),\check{W}(t-1))}.
\end{eqnarray*}
In other words, $(W(t),\beta(t))$ and $(\check{W}(t),\check{\beta}(t))$ only differ in terms of the initialization. Recall that $u_i(t)=\frac{1}{\sqrt{p}}\sum_{j=1}^p\beta_j(t)\psi(W_j(t)^Tx_i)$. We also define 
\begin{eqnarray*}
\check{u}_i(t) &=& \frac{1}{\sqrt{p}}\sum_{j=1}^p\check{\beta}_j(t)\psi(\check{W}_j(t)^Tx_i), \\
v_i(t) &=& \frac{1}{\sqrt{p}}\sum_{j=1}^p\beta_j(t)\psi(W_j(t-1)^Tx_i), \\
\check{v}_i(t) &=& \frac{1}{\sqrt{p}}\sum_{j=1}^p\check{\beta}_j(t)\psi(\check{W}_j(t-1)^Tx_i).
\end{eqnarray*}
It is easy to see that
$$\check{u}_i(t)=\frac{1}{\sqrt{p}}\sum_{j=1}^p\delta_j\beta_j(t)\psi(\delta_jW_j(t)^Tx_i)=\frac{1}{\sqrt{p}}\sum_{j=1}^p\beta_j(t)\psi(W_j(t)^Tx_i)=u_i(t).$$
Similarly, we also have
$$\check{v}_i(t)=\frac{1}{\sqrt{p}}\sum_{j=1}^p\delta_j\beta_j(t)\psi(\delta_jW_j(t-1)^Tx_i)=\frac{1}{\sqrt{p}}\sum_{j=1}^p\beta_j(t)\psi(W_j(t-1)^Tx_i)=v_i(t).$$
Suppose $\check{W}_j(k)=\delta_jW_j(k)$ and $\check{\beta}_j(k)=\delta_j\beta_j(k)$ are true. Since
\begin{eqnarray*}
\frac{\partial L(\beta,W)}{\partial \beta_j}|_{(\beta,W)=(\check{\beta}(k),\check{W}(k))} &=& \frac{1}{\sqrt{p}}\sum_{i=1}^n(\check{u}_i(k)-y_i)\psi(\check{W}_j(k)^Tx_i) \\
&=& \frac{1}{\sqrt{p}}\sum_{i=1}^n({u}_i(k)-y_i)\psi(\delta_j{W}_j(k)^Tx_i) \\
&=& \delta_j\frac{1}{\sqrt{p}}\sum_{i=1}^n({u}_i(k)-y_i)\psi({W}_j(k)^Tx_i) \\
&=& \delta_j\frac{\partial L(\beta,W)}{\partial \beta_j}|_{(\beta,W)=({\beta}(k),{W}(k))},
\end{eqnarray*}
we have $\check{\beta}_j(k+1)=\delta_j\beta_j(k+1)$.
Then,
\begin{eqnarray*}
\frac{\partial L(\beta,W)}{\partial W_j}|_{(\beta,W)=(\check{\beta}(k+1),\check{W}(k))} &=& \frac{1}{\sqrt{p}}\check{\beta}_j(k+1)\sum_{i=1}^n(\check{v}_i(k+1)-y_i)\psi'(\check{W}_j(k)^Tx_i)x_i \\
&=& \frac{1}{\sqrt{p}}\delta_j{\beta}_j(k+1)\sum_{i=1}^n({v}_i(k+1)-y_i)\psi'(\delta_j{W}_j(k)^Tx_i)x_i \\
&=& \frac{1}{\sqrt{p}}\delta_j{\beta}_j(k+1)\sum_{i=1}^n({v}_i(k+1)-y_i)\psi'({W}_j(k)^Tx_i)x_i \\
&=& \delta_j\frac{\partial L(\beta,W)}{\partial W_j}|_{(\beta,W)=({\beta}(k+1),{W}(k))},
\end{eqnarray*}
and thus we also have $\check{W}_j(k+1)=\delta_jW_j(k+1)$. A mathematical induction argument leads to $\check{W}_j(t)=\delta_jW_j(t)$ and $\check{\beta}_j(t)=\delta_j\beta_j(t)$ for all $t\geq 1$. Since $(\check{W}(0),\check{\beta}(0))$ and $(W(0),\beta(0))$ have the same distribution, we can conclude that $(\check{W}(t),\check{\beta}(t))$ and $(W(t),\beta(t))$ also have the same distribution. Therefore, \conditionAp{} holds for the design matrix $\psi(X^T\wh{W}^T)=\psi(X^TW(t_{\max})^T)$.

We also need to check \conditionB. By Theorem \ref{thm:nn-grad}, we have
\begin{eqnarray*}
&& \left|\frac{1}{p}\sum_{j=1}^p\left|\sum_{i=1}^n\psi(\wh{W}_j^Tx_i)\Delta_i\right| - \frac{1}{p}\sum_{j=1}^p\left|\sum_{i=1}^n\psi(W_j(0)^Tx_i)\Delta_i\right|\right| \\
&\leq& \frac{1}{p}\sum_{j=1}^p\sum_{i=1}^n|\wh{W}_j^Tx_i-W_j(0)^Tx_i||\Delta_i| \\
&\leq& R_1\sum_{i=1}^n\|x_i\||\Delta_i| \leq R_1\sqrt{\sum_{i=1}^n\|x_i\|^2} \lesssim \frac{n^{3/2}\log p}{\sqrt{p}},
\end{eqnarray*}
where $\sum_{i=1}^n\|x_i\|^2\lesssim nd$ is by Lemma \ref{lem:chi-squared}. By Lemma \ref{lem:design-rf}, we can deduce that
$$\inf_{\|\Delta\|=1}\frac{1}{p}\sum_{j=1}^p\left|\sum_{i=1}^n\psi(\wh{W}_j^Tx_i)\Delta_i\right|\gtrsim 1,$$
as long as $\frac{n^{3/2}\log p}{\sqrt{p}}$ is sufficiently small. 
According to Lemma \ref{lem:lim-G}, we also have
$$\sup_{\|\Delta\|=1}\frac{1}{p}\sum_{j=1}^p\left|\sum_{i=1}^n\psi(\wh{W}_j^Tx_i)\Delta_i\right|^2\lesssim 1+\frac{n^2\log p}{\sqrt{p}}.$$
See (\ref{eq:Gk-spec-arctan}) in the appendix for details of derivation.
Therefore, \conditionB{} holds with $\overline{\lambda}^2\asymp 1+\frac{n^2\log p}{\sqrt{p}}$ and $\underline{\lambda}\asymp 1$. Apply Theorem \ref{thm:robust-reg}, we have $\wt{\beta}=\wh{\beta}$ with high probability as desired.
\end{proof}


\section{Simulation studies}
\label{sec:experiments}

In this section we discuss experimental results for over-paramaterized linear models, random feature models, and neural networks, illustrating and confirming the theoretical results presented in the previous sections.
In all of our experiments, the \texttt{quantreg} package in $R$ is used to carry out the $\ell_1$ optimization of \eqref{eq:keylp} as quantile regression for quantile level $\tau = \frac{1}{2}$ using the Frisch-Newton interior point algorithm to solve the linear program (method \texttt{fn} in this package).

\subsection{Over-parameterized linear models}

We begin by giving further details of the simulation briefly discussed
in Section~\ref{sec:overview}. In this experiment we simulate underdetermined linear models where $p > n$.
We generate $n$ data points $(x_i, y_i)$ where
$y_i = x_i^T \theta^* + w_i$ with $w_i$ an additive noise term. We then compute the minimum norm estimator
\begin{equation*}
  \hat\theta = X^T (X X^T)^{-1} y.
\end{equation*}
The estimated model is corrupted to
\begin{equation*}
  \eta = \hat\theta + z
\end{equation*}
where $z_j \sim (1-\epsilon) \delta_0 +\epsilon Q$. The corrupted estimator is then repaired by performing median regression:
\begin{align*}
  \tilde u &= \argmin \|\eta - X^T u\|_1, \\
  \tilde \theta &= X^T \tilde u.
\end{align*}

The design is sampled as $X_{ij} \sim N(0,1)$ and we take $\theta_j^* \sim N(0,1)$ and $Q = N(1,1)$. In the plots shown
in Figure~\ref{fig:exp1} the sample size is fixed at $n=100$ and the dimension $p$ is varied according to $p/n=200/j^2$
for a range of values of $j$. The plots show the empirical probability of exact repair $\tilde\theta = \hat\theta$ as a function of $\epsilon$. Each point on the curves is the average repair success over $500$ random trials.
The roughly equal spacing of the curves agrees with the theory, which indicates that $\sqrt{n/p}/(1-\epsilon)$ should be sufficiently small for successful repair. The right plot in Figure~\ref{fig:exp1b} shows the per-coefficient repair probability, and the left plot shows the probability that the entire model is repaired; in this plot the sample size is $n=50$. The per-coefficient repair probability is the empirical probability that $\tilde \theta_j = \hat\theta_j$, averaged over $j=1,\ldots, p$.

\begin{figure}[ht]
  \begin{center}
    \begin{tabular}{cc}
      \includegraphics[width=.47\textwidth]{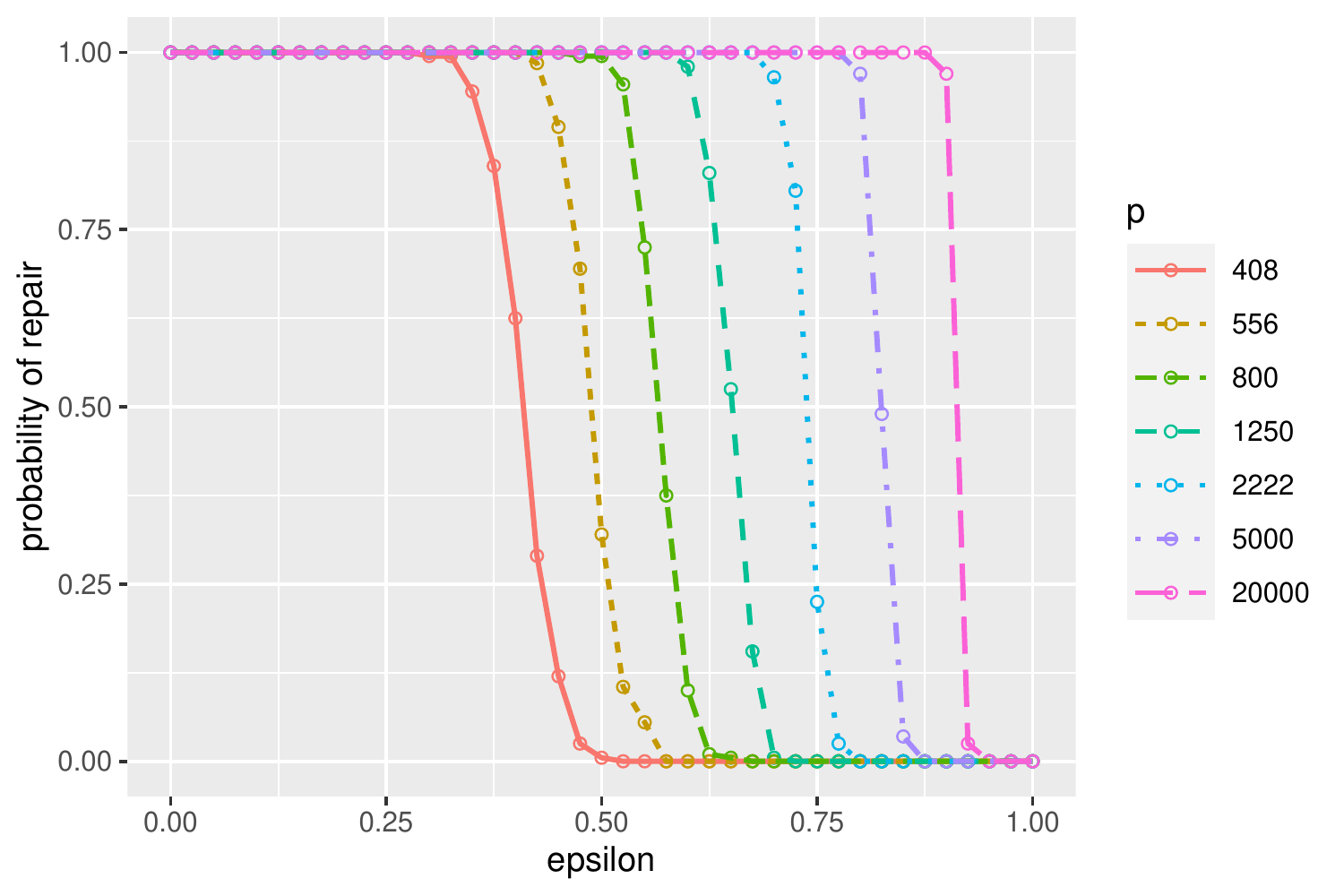} &
      \includegraphics[width=.47\textwidth]{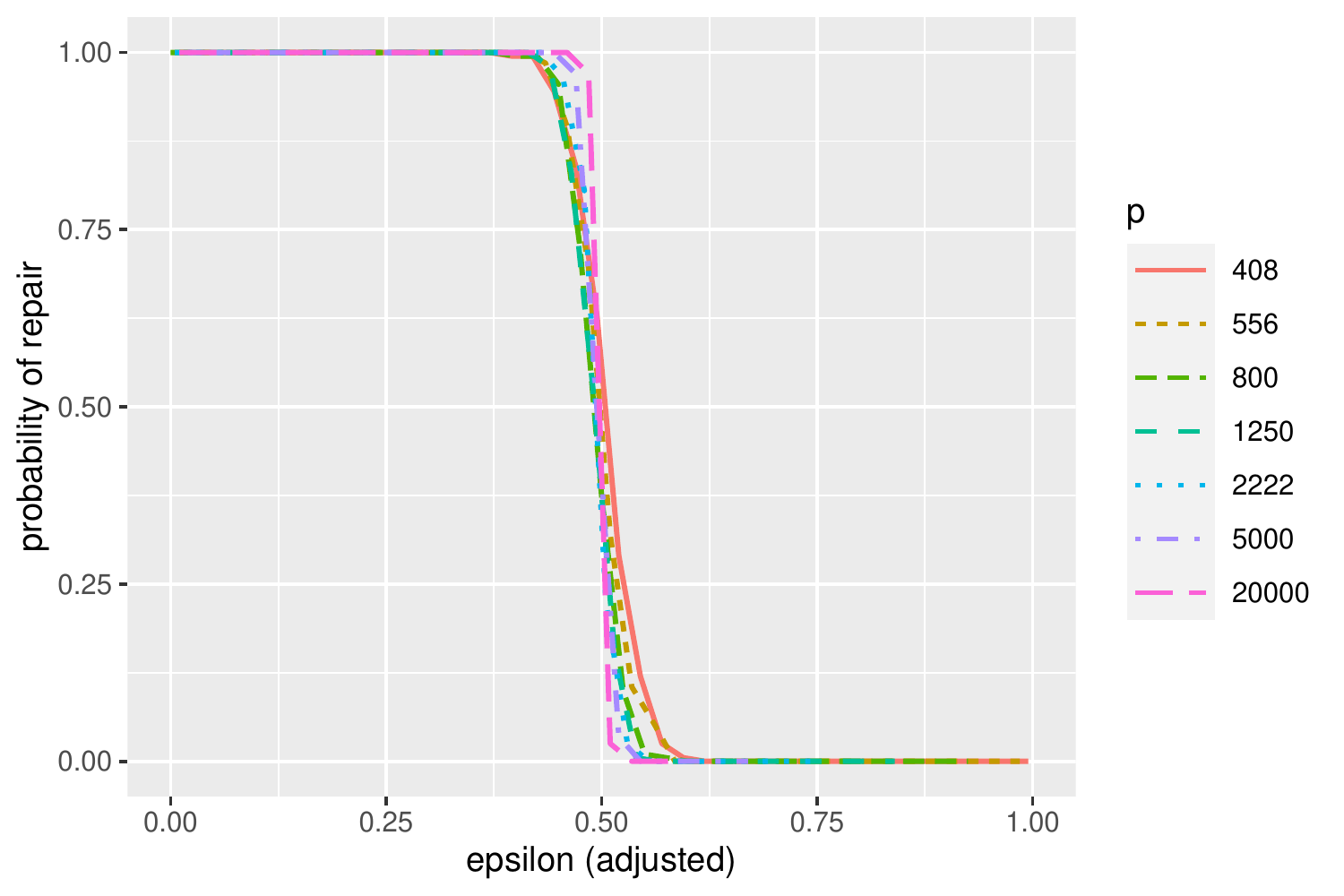}\\[-10pt]
    \end{tabular}
    \caption{Model repair for underdetermined linear models $y=X^T\theta + w$ with $p>n$. The left plot shows the empirical probability of successful model repair for $n=100$ with the model dimension $p$
    varying as $p/n = 200 /j^2$, for $j=1,\ldots, 7$. Each point is an average over 500 random trials. The covariates are sampled as $N(0,1)$ and the corruption distribution is $Q=N(1,1)$. The right plot shows the repair probablity as a function
    of the adjusted corruption probability $\tilde\epsilon_j = \epsilon + c'\cdot j - \frac{1}{2}$ for $c'=0.085$.}
    \label{fig:exp1}
    \vskip20pt
    \begin{tabular}{cc}
      \includegraphics[width=.47\textwidth]{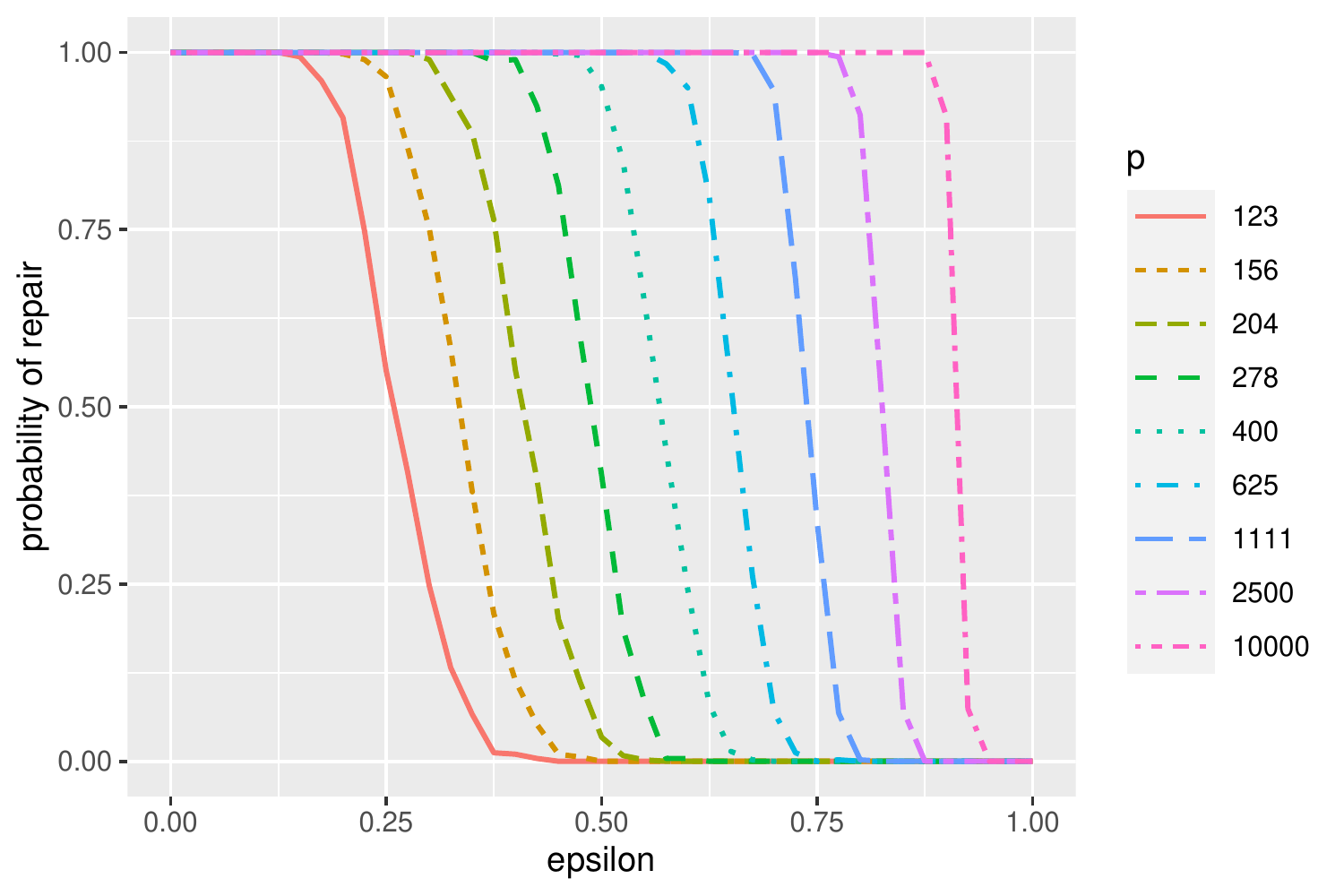} &
      \includegraphics[width=.47\textwidth]{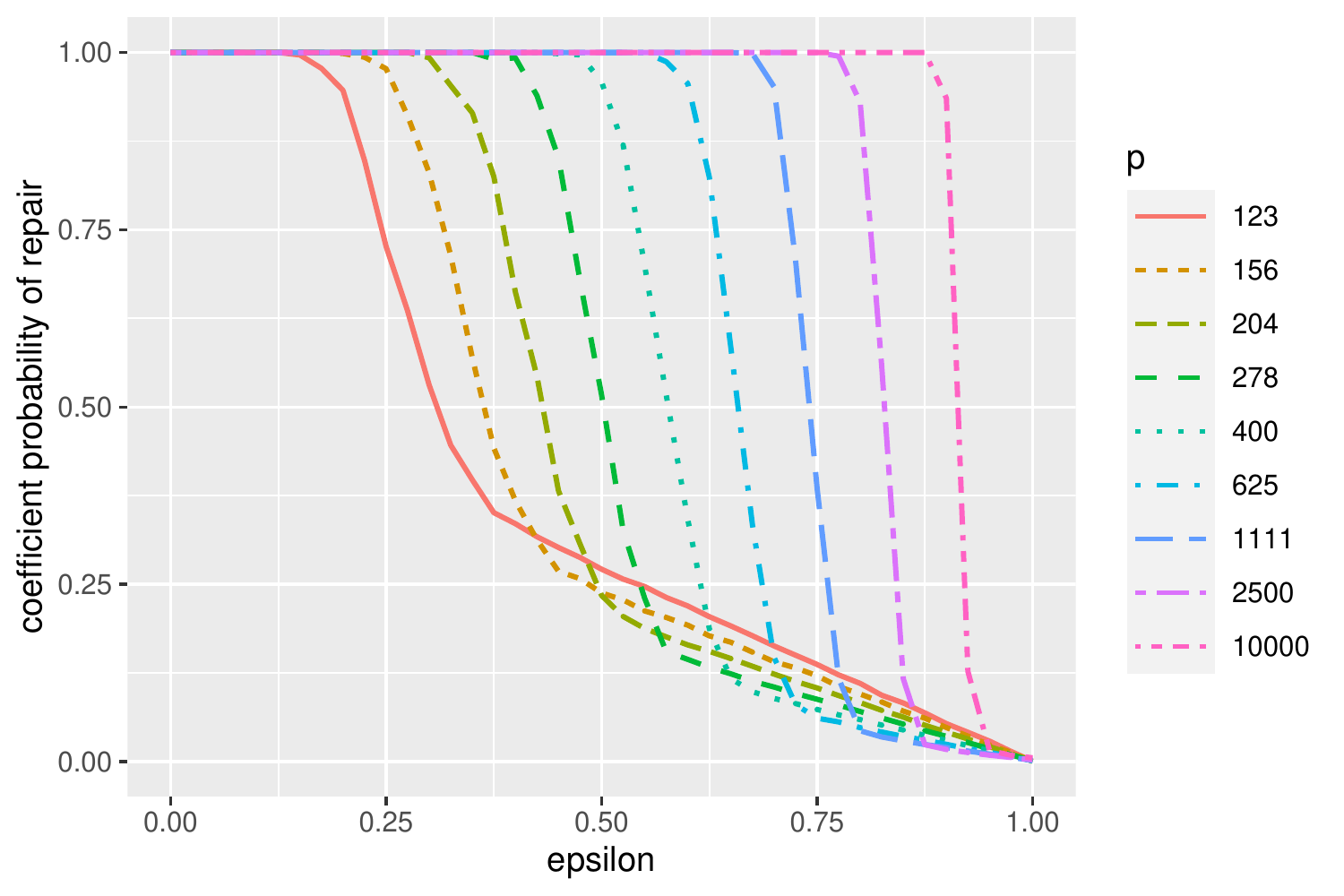}\\[-10pt]
    \end{tabular}
  \end{center}
\caption{Left: Empirical probability of successful model repair for $n=50$ with the model dimension $p$
varying as $p/n = 200 /j^2$. Right: Per-coefficient probability of successful repair.}
\label{fig:exp1b}
\end{figure}

\begin{figure}[t]
  \begin{center}
    \begin{tabular}{c}
      $n=50$ and $p=500$ fixed, varying mean $\mu$\\
      \includegraphics[width=.45\textwidth]{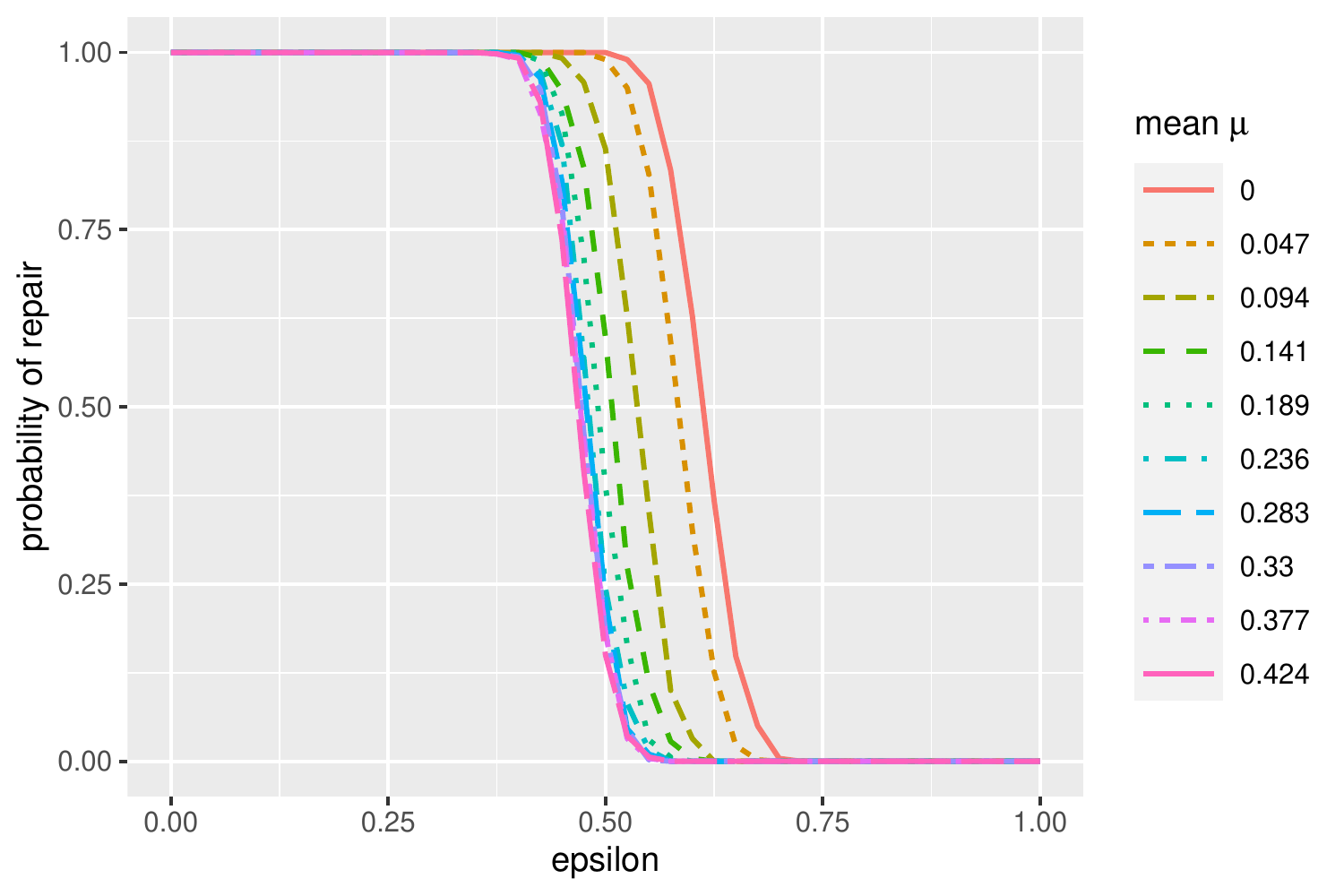}
    \end{tabular}
  \end{center}
\caption{Model repair for linear models with design entries $X_{i,j}\sim N(\mu, 1)$,
where the mean $\mu$ is varied and the sample size and dimension are fixed at $n=50$ and $p=500$. Consistent with Theorem~\ref{thm:main-improved}, a smaller corruption fraction $\epsilon$ is tolerated as the mean $\mu$ increases. In the plot above, the means are chosen as $\mu_j = c_j/\sqrt{n}$ for $c_j$ varying between zero and two.}
\label{fig:mean}
\vskip5pt
\begin{center}
  \begin{tabular}{cc}
    {\scriptsize $n=50$} & {\scriptsize $n=100$} \\
    \includegraphics[width=.45\textwidth]{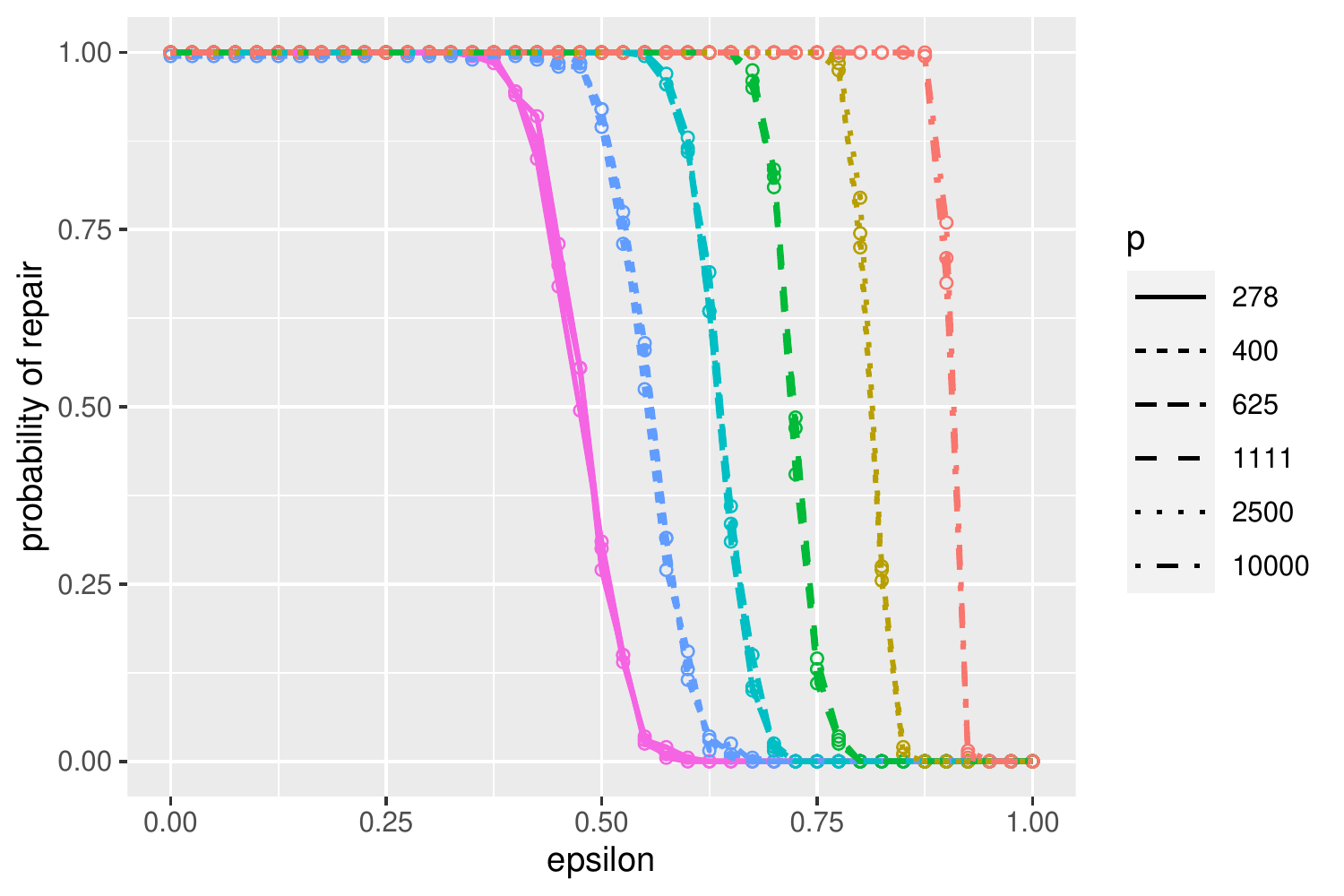} &
    \includegraphics[width=.45\textwidth]{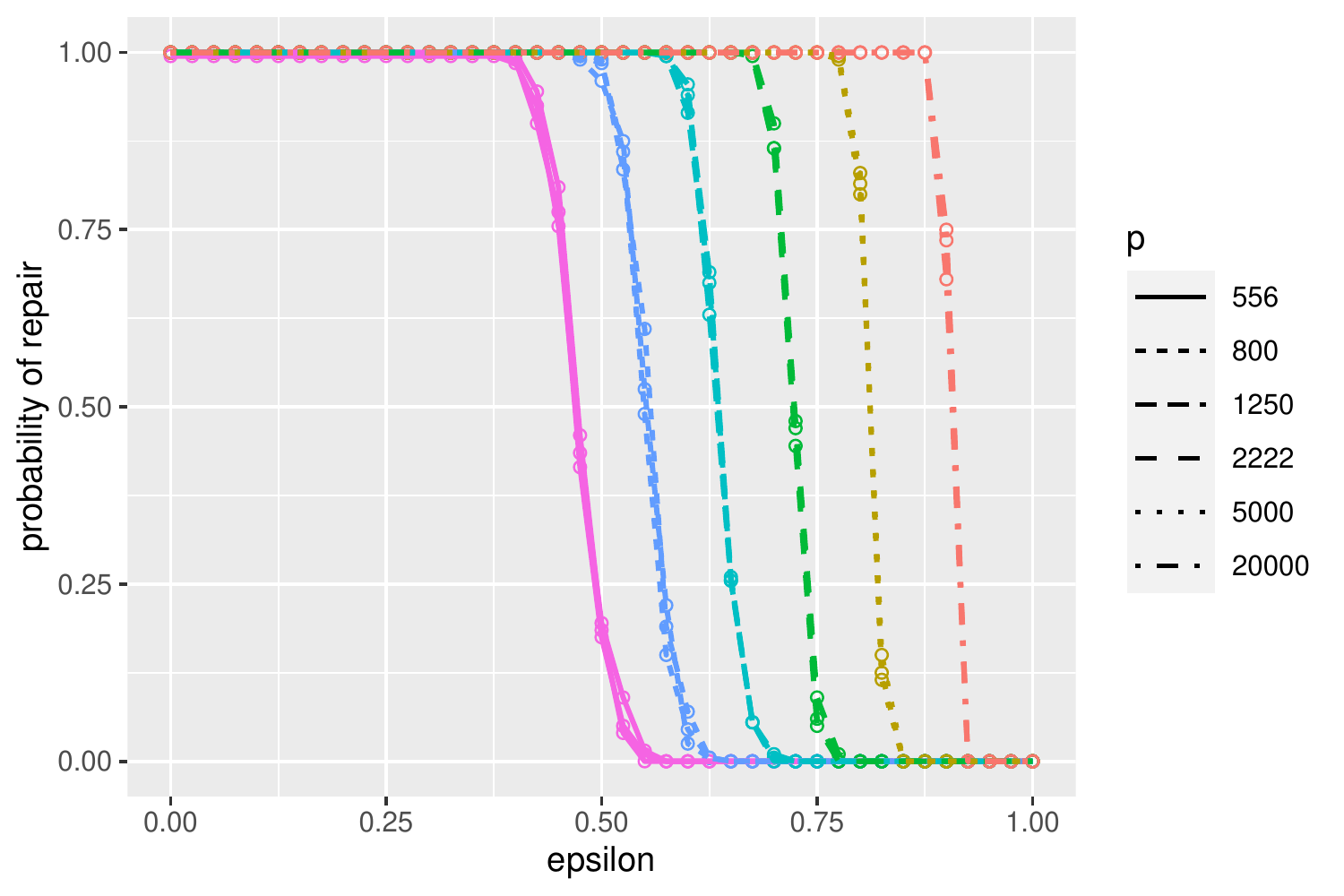}\\[-10pt]
  \end{tabular}
\end{center}
\caption{Model repair for random feature models $y=\psi(XW)\theta + w$ with $p>n$, where $\psi(\cdot) = \tanh(\cdot)$
for $n=50$ (left) and $n=100$ (right). For each value of $p$, three values of $d$ are evaluated, $d=p$, $d=\lceil 2p/3\rceil$,
and $d=\lceil p/2\rceil$; the results are effectively the same for each $d$. The curves are very similar when $\tanh$ is replaced by ReLU, as long as the population mean is subtracted from the features.}
\label{fig:rf}a
\end{figure}

\subsection{Varying the mean}

In this experiment we simulate over-parameterized linear models
with nonzero mean. The data are generated as $X_{ij} \sim N(\mu,1)$
independently, where we vary the mean $\mu$ and fix the dimension $p=500$ and sample size $n=50$.
The probability of successful repair is shown in Figure~\ref{fig:mean}.

As expected, the fraction $\epsilon$ that allows successful repair decreases; it appears to saturate at some fixed value $\epsilon_{\min}$. The mean $\mu$ cannot be made too large because it causes the design $X$ to become ill-conditioned, and the median regression fails.

The inequality in Condition $A$ in this case takes the form
\begin{align*}
  \mathbb{E}\left\|\frac{1}{m}\sum_{i=1}^m c_i a_i\right\|^2 &\leq \frac{k}{m} + \mu^2 k
   \equiv \frac{n}{p} + \mu^2 n.
\end{align*}
The means $\mu$ in Figure~\ref{fig:mean} are taken to be $\mu_j = c_j / \sqrt{n}$ as $c_j$ varies between zero and two.

\subsection{Random features models trained with gradient descent}

In this experiment we simulate over-parameterized random features models.
We generate $n$ data points $(\wt{x}_i, y_i)$ where
$y_i = \wt{x}_i^T \theta^* + w_i$ with $w_i$ an additive noise term. The covariates are generated
as a layer of a random neural network, with $\wt{x}_i = \tanh(W^Tx_i)$ where $x_i \in\reals^d$ with $x_{ij} \sim N(0,1)$
and $W\in\reals^{d\times p}$ with $W_{ij} \sim N(0, 1/d)$. We then approximate the least squares
solution using gradient descent intialized at zero, with updates
\begin{equation*}
  \hat\theta^{(t)} = \hat\theta^{(t-1)} + \frac{\eta}{n} \wt{X}^T R^{(t-1)}
\end{equation*}
where the residual vector $R^{(t-1)}\in\reals^n$ is given by $R_i^{(t-1)} = (y_i - \wt{x}_i^ T\hat\theta^{(t-1)})$.
The step size $\eta$ is selected empirically to insure convergence in under $1{,}000$ iterations.
Figure~\ref{fig:rf} shows two sets of results, for $n=50$ and $n=100$. For each value of the final dimension $p$,
three values of the original data dimension $d$ are selected: $d=p$, $d=\lceil 2p/3\rceil$,
and $d=\lceil p/2\rceil$. The recovery success curves for gradient descent are similar to those obtained for the minimal norm solution. The results are also similar if the ReLU activation function is used, as long as the population mean of the features is subtracted.

\begin{figure}[t]
  \begin{center}
    \vskip15pt
    \begin{tabular}{ccc}
    \hskip-6pt\includegraphics[width=.32\textwidth]{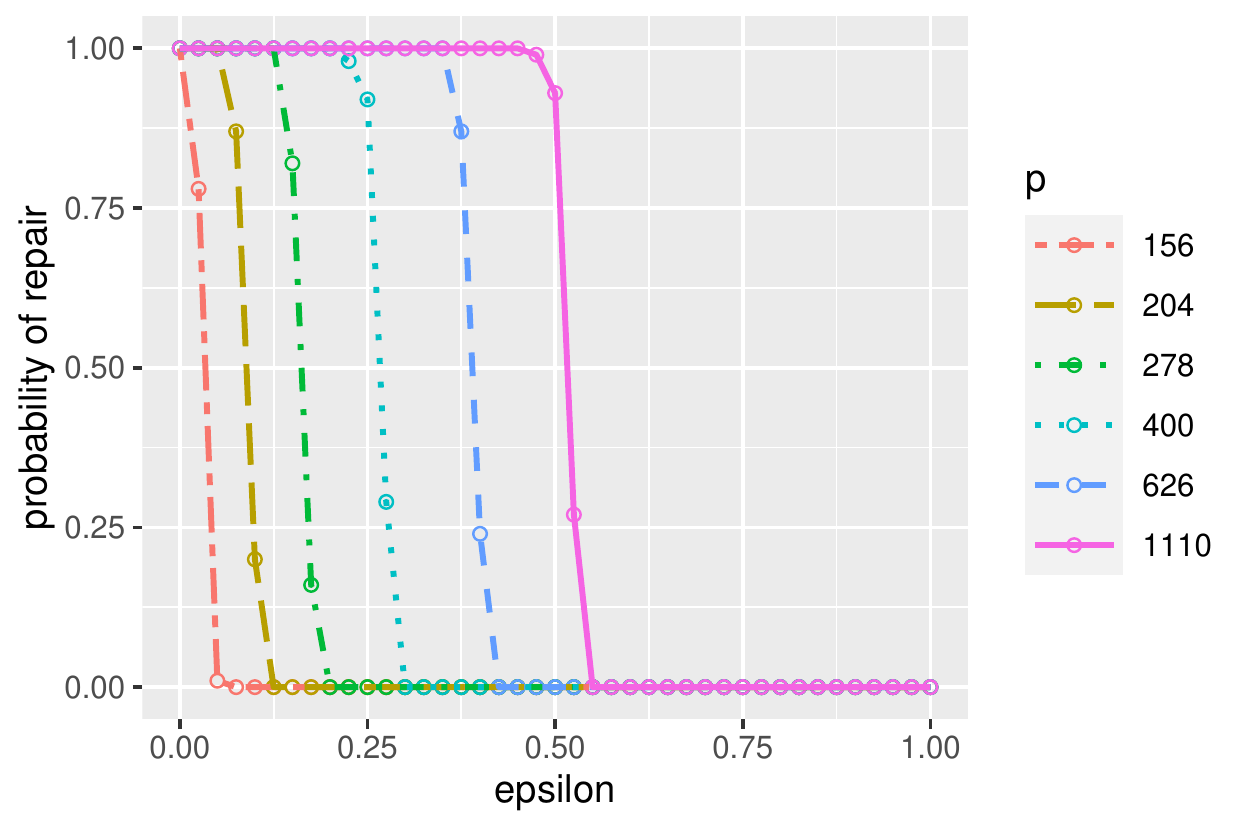} &
    \hskip-6pt\includegraphics[width=.32\textwidth]{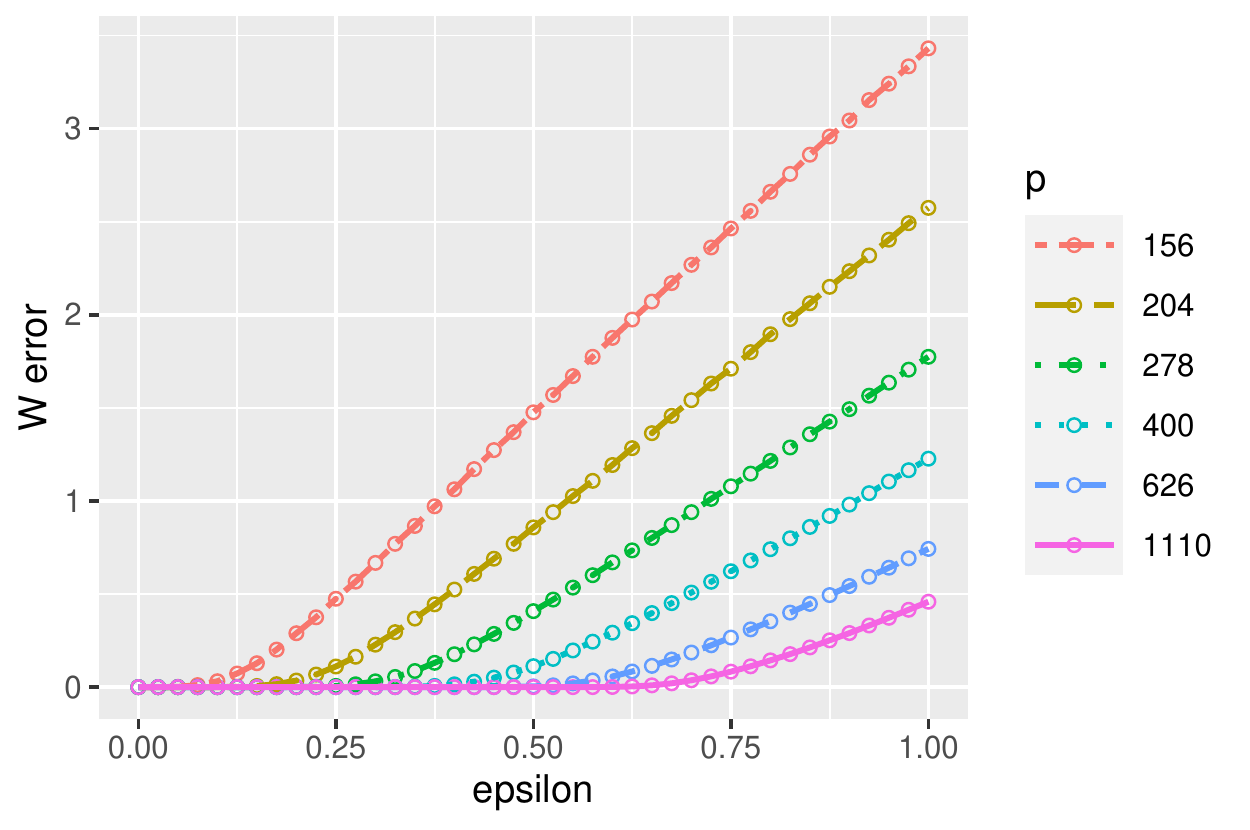} &
    \hskip-6pt\includegraphics[width=.32\textwidth]{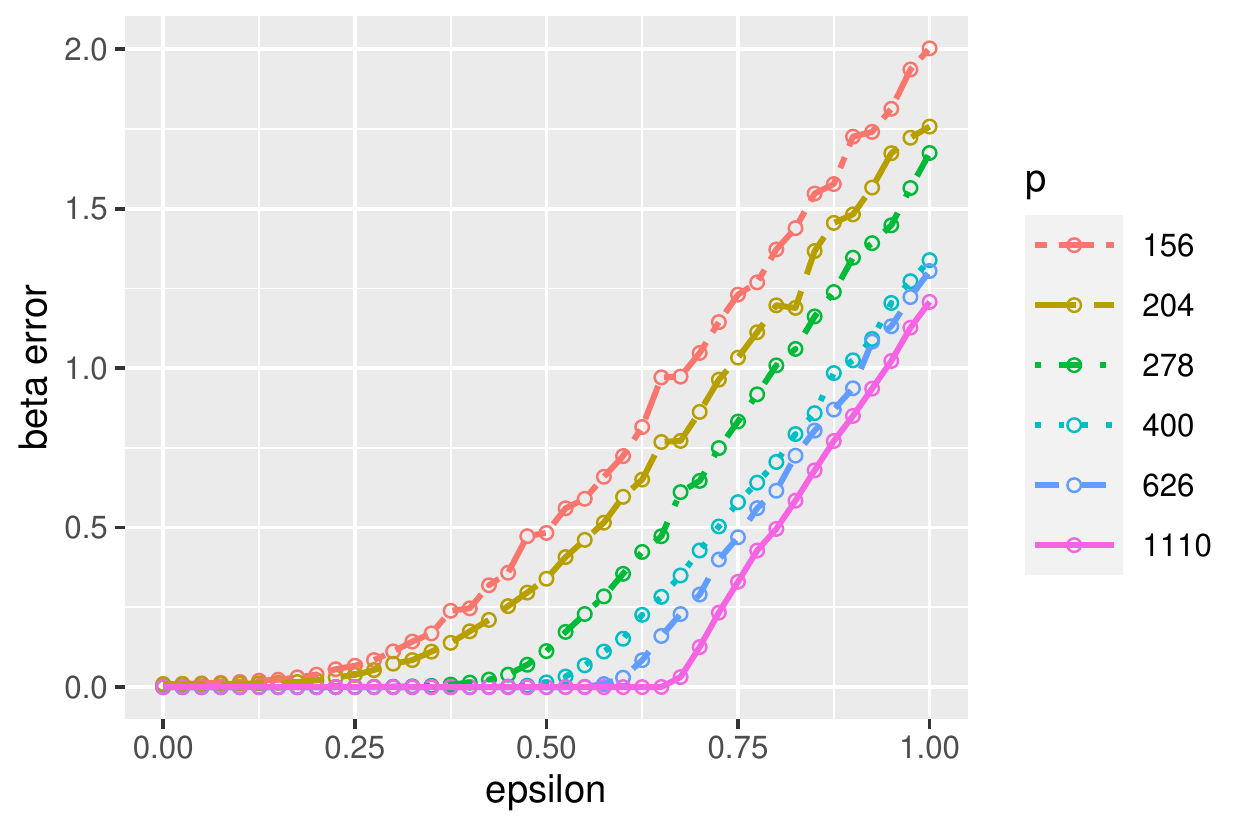}

    \end{tabular}
  \end{center}
\caption{Model repair for neural networks with a single hidden layer. The sample size is fixed at $n=50$, the number of hidden units is $p$, and the input dimension is $d=p/2$; the dimenson of the design is $X\in\reals^{n\times d}$, and the hidden layer is generated
as $\tanh(XW)$ where $W\in\reals^{d\times p}$. The predicted values are $\hat y = \tanh(XW)\beta$.
Left: The neural network is trained with gradient descent after initializing $W_{ij}$ as $N(0, 1/d)$
and $\beta \sim N(0, I_p)$. After training, a forward pass is made where $\hat W$ is fixed and the parameters
$\beta$ are retrained using gradient descent initialized at zero. This allows exact repair by running the linear program in stages, first repairing $\hat W$, and then repairing $\hat \beta$. Center and right: The neural network is trained with gradient descent using random initialization of $W$ and $\beta$; no forward pass is made after training. The weight matrix $W$ and weight vector $\beta$ are then not repaired exactly. The center and right plots show the average $L_2$ error in the estimated coefficients $W_{ij}$ and $\beta_j$ as a function of the corruption fraction $\epsilon$. In all plots, each point is the average over $100$ random trials. }
\label{fig:ann}
\end{figure}

\subsection{Neural networks}
\vskip10pt

In this final set of simulations we investigate repair algorithms for neural networks. We report results using a single hidden layer and the use of the hyperbolic tangent activation function. Results using the ReLU activation are similar as long as the features are centered.

As described in Section~\ref{sec:neural}, we consider two ways of training the models---with or without a forward pass to train the network parameters in stages. In the first approach, the network is trained using gradient descent, and the weights $\hat W\in\reals^{d\times p}$ are then fixed. Next, the weights $\beta\in\reals^p$ are initialized at zero, and gradient descent over $\beta$ is carried out using features
$\tilde X = \psi(X\hat W)$. This two-pass approach allows for exact repair, and only the initial weights $W(0)$ need to be accessed by the repair algorithm, using the seed value used in the pseudorandom number generator.

The left plot in Figure~\ref{fig:ann} shows the behavior of the linear program for exact repair when using this two-stage training algorithm.
The sample size is fixed at $n=50$, the number of ``neurons'' $W_j$ varies as $p$, and we take $d=p/2$. It can been seen that similar repair curves are obtained as for linear models, but the curves are shifted toward the left, indicating an overall smaller probability of successful repair. This is because successful repair requires that
$O(p^2)$ parameters are recovered, each of the $p$ columns $W_j\in\reals^d$ in addition to the vector $\beta\in\reals^p$.

In the second approach, the neural network is trained using standard gradient descent, without a final forward pass. As described in
Section~\ref{sec:neural}, when trained in this manner the column space of $\tilde X = \psi(X W)$ is continually changing. However,
the ``neural tangent kernel'' analysis ensures that the linear program will approximately recover the trained parameters after they are corrupted by additive noise. This is seen in the center and right plots of Figure~\ref{fig:ann}, which show the
squared errors $(\tilde W_{ij} - \hat W_{ij})^2$ and $(\tilde \beta_j - \hat\beta_j)^2$, averaged over $i$ and $j$. These results are consistent with our analysis, and suggest that the growth conditions on $d$ and $p$ in the results of our theorems are conservative.


\def\ones{{\mathds{1}}}

\section{Discussion}
\label{sec:discuss}

In this paper we introduced the problem of model repair, related it to robust estimation, and established a series of results showing the theoretical performance of a repair algorithm that is based on median regression. The specific models treated include linear models and families of neural networks trained using gradient descent. The experimental results largely validate the theory, quantifying how model repair requires over-parameterization in the model and redundancy in the estimator.

This work suggests several directions to explore in future research.  A natural problem is to establish lower bounds for  model repair. In particular, our results show the level of over-parameterization sufficient for repair algorithms based on $\ell_1$ optimization. What level is required if the algorithm is not specified? Answering this question might exploit the rich literature on depth functions and multivariate generalizations of the median, together with minimax analysis for estimation and testing under the classical Huber model \citep{chen2018robust,diakonikolas,diakonikolas:2017}. In a different direction, \cite{gao2018robust} introduces a connection between these optimizations and certain learning algorithms for adversarial neural networks called $f$-GANs, giving a variational characterization of robust estimation that could lead to new algorithmic procedures for model repair.

The repair problem also could be formulated in other ways.  For example,
the corruption model could be modified, allowing a dependence between $z$ and $X$;
a simple form of this dependence would be $\eta_j \given X \sim (1-\epsilon) \delta_{\hat\theta_j} + \epsilon Q_j$.
What if the repair algorithm does not have access to the original training inputs $x_1,\ldots, x_n$?
If a new unlabeled dataset $x'_1,\ldots, x'_{m}$ is available for which $\text{span}(x_1,\ldots,x_n) \subset \text{span}(x'_1,\ldots,x'_{m})$, the results proven here will carry over. One could consider other formulations that make different assumptions on the information that is available.

It can be expected that the results for neural networks with a single layer established in the current paper
can be extended to multiple layers, based on results for multilayer networks that extend the analysis of gradient descent
of \cite{du2018gradient}, including \cite{zhuli} and \cite{dulee}. It would be interesting to
consider model repair for other architectures and estimation algorithms, including convolutional networks
and deep generative networks \citep{goodfellow2014generative,nvp,glow}.

Another natural direction to explore is repair for other families of statistical models, where over-parameterization and redundancy may take different forms. For instance, in classical Gaussian sequence models for orthonormal bases, additional coefficients could provide insurance against corrupted estimates.
It would also be interesting
to explore sequence models such as isotonic and shape-constrained regression. For example, consider the piecewise constant signals with $k$ pieces,
$$\Theta_k  = \{\theta : \mbox{$\theta_i = \mu_j$ for $i\in (a_{j-1}, a_j]$ for
some $0=a_0 \leq a_1\leq \cdots \leq a_k=n$}\}.$$
Adaptivity of the least--squares estimator to $k$ has been well-established
\citep{chatterjee2015risk,Chat14,bellec}; but
the redundancy in the sequence could also be exploited in model repair. If an initial estimator $\hat \theta$ is
corrupted to $\eta = \hat\theta + z$, a natural repair procedure is
$$\tilde \theta_i = \text{mode}(\{\eta_{i-h},\ldots,\eta_{i+h}\})$$
with $h$ acting as a bandwidth parameter. We conjecture that
$\inf_h \E\|\tilde\eta -\theta\|^2 = O(k \log (n/k))$. In this setting,
the piecewise constant signal acts as a simple repetition code, with majority vote
serving as a natural decoding procedure.

Returning to some of the motivation mentioned in the introduction, when training increasingly large neural networks
it becomes necessary to estimate the models in a distributed manner, and erasures and errors may occur when
communicating parameters across nodes, or after the trained model has been embedded in an application.
Instead of running the repair program on a central hub,
which would require sharing data and potentially compromising privacy, the linear program
might also be distributed \citep{hong12}. Finally, drawing an analogy to brain plasticity and repair after trauma, if a spatially localized part of a multilayer network is permanently corrupted, the repair problem needs to be reformulated to allow ``rewiring'' the parameters to obtain a model whose predictions are close to those of the original model, possibly through specialized training. With appropriate formalization, these and other extensions might permit statistical analysis.


\section*{Acknowledgments}

Research of CG is supported in part by NSF grant DMS-1712957 and NSF CAREER award DMS-1847590. Research of JL is supported in part by NSF grant CCF-1839308.

\setlength{\bibsep}{8pt plus 0.3ex}
\bibliographystyle{apalike}
\bibliography{repair}

\begin{thebibliography}{}

\bibitem[Allen{-}Zhu et~al., 2018]{zhuli}
Allen{-}Zhu, Z., Li, Y., and Song, Z. (2018).
\newblock A convergence theory for deep learning via over-parameterization.
\newblock {\em arXiv:1811.03962}.

\bibitem[Bartlett et~al., 2020]{bartlett2020benign}
Bartlett, P.~L., Long, P.~M., Lugosi, G., and Tsigler, A. (2020).
\newblock Benign overfitting in linear regression.
\newblock {\em Proceedings of the National Academy of Sciences}.

\bibitem[Belkin et~al., 2019]{belkin2019two}
Belkin, M., Hsu, D., and Xu, J. (2019).
\newblock Two models of double descent for weak features.
\newblock {\em arXiv preprint arXiv:1903.07571}.

\bibitem[Bellec and Tsybakov, 2015]{bellec}
Bellec, P.~C. and Tsybakov, A.~B. (2015).
\newblock Sharp oracle bounds for monotone and convex regression through
  aggregation.
\newblock {\em Journal of Machine Learning Research}, 16(56):1879--1892.

\bibitem[Bhatia et~al., 2017]{bhatia}
Bhatia, K., Jain, P., Kamalaruban, P., and Kar, P. (2017).
\newblock Consistent robust regression.
\newblock In Guyon, I., Luxburg, U.~V., Bengio, S., Wallach, H., Fergus, R.,
  Vishwanathan, S., and Garnett, R., editors, {\em Advances in Neural
  Information Processing Systems 30}, pages 2110--2119.

\bibitem[Boyd and Vandenberghe, 2004]{boyd:04}
Boyd, S. and Vandenberghe, L. (2004).
\newblock {\em Convex Optimization}.
\newblock Cambridge University Press, New York, NY, USA.

\bibitem[Chatterjee, 2014]{Chat14}
Chatterjee, S. (2014).
\newblock A new perspective on least squares under convex constraint.
\newblock {\em The Annals of Statistics}, 42(6):2340--2381.

\bibitem[Chatterjee et~al., 2015]{chatterjee2015risk}
Chatterjee, S., Guntuboyina, A., and Sen, B. (2015).
\newblock On risk bounds in isotonic and other shape restricted regression
  problems.
\newblock {\em The Annals of Statistics}, 43(4):1774--1800.

\bibitem[Chen et~al., 2016]{chen2016}
Chen, M., Gao, C., and Ren, Z. (2016).
\newblock A general decision theory for {H}uber’s $\epsilon$-contamination
  model.
\newblock {\em Electron. J. Statist.}, 10(2):3752--3774.

\bibitem[Chen et~al., 2018]{chen2018robust}
Chen, M., Gao, C., and Ren, Z. (2018).
\newblock Robust covariance and scatter matrix estimation under huber’s
  contamination model.
\newblock {\em The Annals of Statistics}, 46(5):1932--1960.

\bibitem[Cirel'son et~al., 1976]{cirel1976norms}
Cirel'son, B.~S., Ibragimov, I.~A., and Sudakov, V. (1976).
\newblock Norms of {G}aussian sample functions.
\newblock In {\em Proceedings of the Third Japan—USSR Symposium on
  Probability Theory}, pages 20--41. Springer.

\bibitem[Cover and Thomas, 2006]{CoverThomas}
Cover, T. and Thomas, J. (2006).
\newblock {\em Elements of Information Theory}.
\newblock Wiley, 2nd edition.

\bibitem[Davidson and Szarek, 2001]{davidson2001local}
Davidson, K.~R. and Szarek, S.~J. (2001).
\newblock Local operator theory, random matrices and {B}anach spaces.
\newblock {\em Handbook of the geometry of Banach spaces}, 1(317-366):131.

\bibitem[Diakonikolas et~al., 2017]{diakonikolas:2017}
Diakonikolas, I., Kamath, G., Kane, D.~M., Li, J., Moitra, A., and Stewart, A.
  (2017).
\newblock Being robust (in high dimensions) can be practical.
\newblock In {\em Proceedings of the 34th International Conference on Machine
  Learning - Volume 70}, ICML'17, pages 999--1008. JMLR.org.

\bibitem[Diakonikolas and Kane, 2019]{diakonikolas}
Diakonikolas, I. and Kane, D.~M. (2019).
\newblock Recent advances in algorithmic high-dimensional robust statistics.
\newblock arXiv:1911.05911.

\bibitem[Dinh et~al., 2017]{nvp}
Dinh, L., Sohl-Dickstein, J., and Bengio, S. (2017).
\newblock Density estimation using real {NVP}.
\newblock arXiv:1605.08803.

\bibitem[Du et~al., 2018a]{dulee}
Du, S.~S., Lee, J.~D., Li, H., Wang, L., and Zhai, X. (2018a).
\newblock Gradient descent finds global minima of deep neural networks.
\newblock {\em arXiv:1811.03804}.

\bibitem[Du et~al., 2018b]{du2018gradient}
Du, S.~S., Zhai, X., Poczos, B., and Singh, A. (2018b).
\newblock Gradient descent provably optimizes over-parameterized neural
  networks.
\newblock {\em arXiv:1810.02054}.

\bibitem[Gao, 2020]{gao2020}
Gao, C. (2020).
\newblock Robust regression via mutivariate regression depth.
\newblock {\em Bernoulli}, 26(2):1139--1170.

\bibitem[Gao et~al., 2019]{gao2018robust}
Gao, C., Liu, J., Yao, Y., and Zhu, W. (2019).
\newblock Robust estimation via generative adversarial networks.
\newblock In {\em International Conference on Learning Representations}.

\bibitem[Goodfellow et~al., 2014]{goodfellow2014generative}
Goodfellow, I., Pouget-Abadie, J., Mirza, M., Xu, B., Warde-Farley, D., Ozair,
  S., Courville, A., and Bengio, Y. (2014).
\newblock Generative adversarial nets.
\newblock In {\em Advances in neural information processing systems}, pages
  2672--2680.

\bibitem[Hastie et~al., 2019]{hastie2019surprises}
Hastie, T., Montanari, A., Rosset, S., and Tibshirani, R.~J. (2019).
\newblock Surprises in high-dimensional ridgeless least squares interpolation.
\newblock {\em arXiv preprint arXiv:1903.08560}.

\bibitem[Hoeffding, 1963]{hoeffding1963probability}
Hoeffding, W. (1963).
\newblock Probability inequalities for sums of bounded random variables.
\newblock {\em Journal of the American Statistical Association},
  58(301):13--30.

\bibitem[Hong et~al., 2012]{hong12}
Hong, Y., Vaidya, J., and Lu, H. (2012).
\newblock Secure and efficient distributed linear programming.
\newblock {\em Journal of Computer Security}, 20:583--63.

\bibitem[Huber, 1964]{huber:64}
Huber, P.~J. (1964).
\newblock Robust estimation of a location parameter.
\newblock {\em Ann. Math. Statist.}, 35(1):73--101.

\bibitem[Jacot et~al., 2018]{jacot2018neural}
Jacot, A., Gabriel, F., and Hongler, C. (2018).
\newblock Neural tangent kernel: Convergence and generalization in neural
  networks.
\newblock In {\em Advances in neural information processing systems}, pages
  8571--8580.

\bibitem[Joseph and Barron, 2012]{joseph2012}
Joseph, A. and Barron, A.~R. (2012).
\newblock Least squares superposition codes of moderate dictionary size are
  reliable at rates up to capacity.
\newblock {\em IEEE Transactions on Information Theory}, 58(5):2541--2557.

\bibitem[Karmalkar and Price, 2018]{karmalkar2018compressed}
Karmalkar, S. and Price, E. (2018).
\newblock Compressed sensing with adversarial sparse noise via l1 regression.
\newblock {\em arXiv preprint arXiv:1809.08055}.

\bibitem[Kingma and Dhariwal, 2018]{glow}
Kingma, D.~P. and Dhariwal, P. (2018).
\newblock Glow: {G}enerative flow with invertible 1x1 convolutions.
\newblock In Bengio, S., Wallach, H., Larochelle, H., Grauman, K.,
  Cesa-Bianchi, N., and Garnett, R., editors, {\em Advances in Neural
  Information Processing Systems 31}, pages 10215--10224. Curran Associates,
  Inc.

\bibitem[Laurent and Massart, 2000]{laurent2000adaptive}
Laurent, B. and Massart, P. (2000).
\newblock Adaptive estimation of a quadratic functional by model selection.
\newblock {\em The Annals of Statistics}, 28(5):1302--1338.

\bibitem[Mei and Montanari, 2019]{mei2019generalization}
Mei, S. and Montanari, A. (2019).
\newblock The generalization error of random features regression: Precise
  asymptotics and double descent curve.
\newblock {\em arXiv preprint arXiv:1908.05355}.

\bibitem[Neyshabur et~al., 2014]{neyshabur2014search}
Neyshabur, B., Tomioka, R., and Srebro, N. (2014).
\newblock In search of the real inductive bias: On the role of implicit
  regularization in deep learning.
\newblock {\em arXiv preprint arXiv:1412.6614}.

\bibitem[Nguyen and Tran, 2013a]{nguyen1}
Nguyen, N.~H. and Tran, T.~D. (2013a).
\newblock Exact recoverability from dense corrupted observations via $\ell_1$
  minimization.
\newblock {\em IEEE Trans. Info. Theory}, 59(4):2017–2035.

\bibitem[Nguyen and Tran, 2013b]{nguyen2}
Nguyen, N.~H. and Tran, T.~D. (2013b).
\newblock Robust lasso with missing and grossly corrupted observations.
\newblock {\em IEEE Trans. Info. Theory}, 59(4):2036--2056.

\bibitem[Rahimi and Recht, 2008]{rahimi2008}
Rahimi, A. and Recht, B. (2008).
\newblock Random features for large-scale kernel machines.
\newblock In Platt, J.~C., Koller, D., Singer, Y., and Roweis, S.~T., editors,
  {\em Advances in Neural Information Processing Systems 20}, pages 1177--1184.
  Curran Associates, Inc.

\bibitem[Ross and Pek{\"o}z, 2007]{ross2007second}
Ross, S.~M. and Pek{\"o}z, E.~A. (2007).
\newblock {\em A second course in probability}.
\newblock www. ProbabilityBookstore. com.

\bibitem[Rush et~al., 2017]{rush2017}
Rush, C., Greig, A., and Venkataramanan, R. (2017).
\newblock Capacity-achieving sparse superposition codes via approximate message
  passing decoding.
\newblock {\em IEEE Trans. Info. Theory}, 63(3):1476--1500.

\bibitem[Suggala et~al., 2019]{suggala2019adaptive}
Suggala, A.~S., Bhatia, K., Ravikumar, P., and Jain, P. (2019).
\newblock Adaptive hard thresholding for near-optimal consistent robust
  regression.
\newblock {\em arXiv preprint arXiv:1903.08192}.

\bibitem[Tsakonas et~al., 2014]{tsakonas2014convergence}
Tsakonas, E., Jald{\'e}n, J., Sidiropoulos, N.~D., and Ottersten, B. (2014).
\newblock Convergence of the huber regression m-estimate in the presence of
  dense outliers.
\newblock {\em IEEE Signal Processing Letters}, 21(10):1211--1214.

\bibitem[Vershynin, 2010]{vershynin2010introduction}
Vershynin, R. (2010).
\newblock Introduction to the non-asymptotic analysis of random matrices.
\newblock {\em arXiv preprint arXiv:1011.3027}.

\bibitem[Wright and Ma, 2010]{wright}
Wright, J.~A. and Ma, Y. (2010).
\newblock Dense error correction via $\ell_1$-minimization.
\newblock {\em IEEE Trans. Info. Theory}, 56(7).

\end{thebibliography}

\clearpage

\appendix

\section{Proofs}
\label{sec:proof}
\overfullrule=1mm

\subsection{Technical Lemmas}

We present a few technical lemmas that will be used in the proofs. The first lemma is Hoeffding's inequality.
\begin{lemma}[\cite{hoeffding1963probability}]\label{lem:hoeffding}
Consider independent random variables $X_1,...,X_n$ that satisfy $X_i\in[a_i,b_i]$ for all $i\in[n]$. Then, for any $t>0$,
$$\mathbb{P}\left(\left|\sum_{i=1}^n(X_i-\mathbb{E}X_i)\right|>t\right) \leq 2\exp\left(-\frac{2t^2}{\sum_{i=1}^n(b_i-a_i)^2}\right).$$
\end{lemma}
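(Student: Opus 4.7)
The plan is to prove Hoeffding's inequality via the standard Chernoff bound approach, combining the exponential Markov inequality with independence and a moment generating function bound for bounded random variables.

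\textbf{Step 1: Reduction to mean-zero variables.} I would first center the variables by setting $Y_i = X_i - \mathbb{E}X_i$, so each $Y_i$ has mean zero and lies in the interval $[a_i', b_i']$ where $a_i' = a_i - \mathbb{E}X_i$ and $b_i' = b_i - \mathbb{E}X_i$, preserving the width $b_i' - a_i' = b_i - a_i$. The target probability becomes $\P(|S_n| > t)$ where $S_n = \sum_{i=1}^n Y_i$.

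\textbf{Step 2: Chernoff bound.} For any $s > 0$, by Markov's inequality applied to the nonnegative random variable $e^{s S_n}$ and using independence of the $Y_i$,
$$\P(S_n > t) \leq e^{-st}\, \mathbb{E} e^{sS_n} = e^{-st} \prod_{i=1}^n \mathbb{E} e^{sY_i}.$$
An identical argument applied to $-S_n$ handles the other tail, and a union bound will give the factor of $2$ in the final inequality.

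\textbf{Step 3: MGF bound for bounded mean-zero variables (the key lemma).} The main technical ingredient is the sub-Gaussian tail of a bounded mean-zero random variable: if $Y \in [a', b']$ with $\mathbb{E}Y = 0$, then $\mathbb{E} e^{sY} \leq \exp\!\bigl(s^2 (b'-a')^2 / 8\bigr)$. I would prove this by convexity of the exponential function, writing
$$e^{sY} \leq \frac{b' - Y}{b' - a'} e^{sa'} + \frac{Y - a'}{b' - a'} e^{sb'},$$
for $Y \in [a', b']$, then taking expectations (using $\mathbb{E}Y = 0$) and analyzing the resulting function of $s$. Setting $\phi(u) = -\theta u + \log\bigl(1 - \theta + \theta e^{u}\bigr)$ with $\theta = -a'/(b'-a')$ and $u = s(b'-a')$, a direct computation of $\phi(0) = \phi'(0) = 0$ together with a uniform bound $\phi''(u) \leq 1/4$ yields $\phi(u) \leq u^2/8$ by Taylor's theorem with remainder, giving the desired sub-Gaussian MGF bound. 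This step is the main obstacle, though it is entirely classical.

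\textbf{Step 4: Combine and optimize.} Multiplying the per-coordinate bounds gives $\prod_i \mathbb{E} e^{sY_i} \leq \exp\!\bigl(s^2 \sum_i (b_i - a_i)^2 / 8\bigr)$, so
$$\P(S_n > t) \leq \exp\!\left(-st + \frac{s^2}{8}\sum_{i=1}^n (b_i - a_i)^2\right).$$
Minimizing the right side over $s > 0$ at $s^* = 4t / \sum_i (b_i - a_i)^2$ gives the one-sided bound $\exp\!\bigl(-2t^2 / \sum_i (b_i - a_i)^2\bigr)$. Applying the symmetric argument to $-S_n$ and taking a union bound over the two tails produces the stated two-sided inequality with the factor of $2$ in front.
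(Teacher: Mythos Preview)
Your proof is correct and follows the classical Chernoff-bound argument due to Hoeffding himself. Note, however, that the paper does not supply its own proof of this lemma: it is stated as a technical lemma with a citation to \cite{hoeffding1963probability} and used as a black box in the empirical-process bounds (e.g., Lemma~\ref{lem:EP}). So there is nothing in the paper to compare your argument against; your write-up simply fills in the standard proof that the authors chose to cite rather than reproduce.
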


Next, we need a central limit theorem with an explicit third moment bound. The following lemma is Theorem 2.20 of \cite{ross2007second}.
\begin{lemma}\label{lem:stein}
If $Z\sim N(0,1)$ and $W=\sum_{i=1}^nX_i$ where $X_i$ are independent mean $0$ and $\Var(W)=1$, then
$$\sup_z\left|\mathbb{P}(W\leq z)-\mathbb{P}(Z\leq z)\right|\leq 2\sqrt{3\sum_{i=1}^n\mathbb{E}|X_i|^3}.$$
\end{lemma}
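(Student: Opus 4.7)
The plan is to use Stein's method. Recall that $Z\sim N(0,1)$ is characterized by the identity $\E[f'(Z)-Zf(Z)]=0$ for every sufficiently smooth $f$. Since $w\mapsto \mathbf{1}\{w\leq z\}$ is not smooth, I would first approximate it by a Lipschitz function $h_{z,\alpha}$ with $\|h_{z,\alpha}'\|_\infty\leq 1/\alpha$ sandwiching $\mathbf{1}\{w\leq z\}\leq h_{z,\alpha}(w)\leq\mathbf{1}\{w\leq z+\alpha\}$. Because the density of $Z$ is bounded by $1/\sqrt{2\pi}$, this reduces $\sup_z|\P(W\leq z)-\P(Z\leq z)|$ to controlling $|\E h_{z,\alpha}(W)-\E h_{z,\alpha}(Z)|$ at the cost of an additive smoothing error of order $\alpha$.

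For the main estimate I would solve the Stein equation
$$f'(w)-wf(w)=h_{z,\alpha}(w)-\E h_{z,\alpha}(Z)$$
by the explicit formula $f(w)=e^{w^2/2}\int_{-\infty}^w[h_{z,\alpha}(y)-\E h_{z,\alpha}(Z)]e^{-y^2/2}\,dy$ and invoke the standard Stein bounds, in particular $\|f''\|_\infty\lesssim \|h_{z,\alpha}'\|_\infty=1/\alpha$. Then $\E h_{z,\alpha}(W)-\E h_{z,\alpha}(Z)=\E[f'(W)-Wf(W)]$. Decomposing $W=\sum_i X_i$ and letting $W^{(i)}=W-X_i$, the independence of $X_i$ from $W^{(i)}$ together with $\E X_i=0$ gives
$$\E[X_i f(W)]=\E\bigl[X_i(f(W)-f(W^{(i)}))\bigr]=\E\Bigl[X_i\int_0^{X_i}f'(W^{(i)}+t)\,dt\Bigr].$$
A Taylor expansion of $f'$ around $W^{(i)}$ produces a remainder controlled by $\|f''\|_\infty|X_i|^3$, while the leading-order term is $\E X_i^2\cdot \E f'(W^{(i)})$. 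Summing in $i$, using $\sum_i \E X_i^2=1$, and replacing $f'(W^{(i)})$ by $f'(W)$ at the same Taylor cost, yields $|\E[f'(W)-Wf(W)]|\leq C\alpha^{-1}\sum_i\E|X_i|^3$.

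Finally I would optimize the smoothing parameter, choosing $\alpha\asymp\sqrt{\sum_i\E|X_i|^3}$ to balance the smoothing error and the Stein error; this produces a bound of the stated square-root form. The main obstacle will be the constant bookkeeping: reaching the precise factor $2\sqrt{3}$ requires using the sharpest Stein bound on $\|f''\|_\infty$ in terms of $\|h_{z,\alpha}'\|_\infty$ and tracking the normal density constant $1/\sqrt{2\pi}$ in the smoothing step, since any slack in either estimate is amplified by the final square root.
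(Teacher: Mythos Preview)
The paper does not prove this lemma; it simply quotes it as Theorem~2.20 of \cite{ross2007second}. Your Stein's-method-with-smoothing argument is the standard route to this Berry--Esseen type bound and is essentially what that reference does, so your plan is correct and aligned with the cited source; the only delicate part, as you acknowledge, is tracking constants to land exactly on $2\sqrt{3}$.
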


We also need a Talagrand Gaussian concentration inequality. The following version has explicit constants.
\begin{lemma}[\cite{cirel1976norms}]\label{lem:talagrand}
Let $f:\mathbb{R}^k\rightarrow\mathbb{R}$ be a Lipschitz function with constant $L>0$. That is, $|f(x)-f(y)|\leq L\|x-y\|$ for all $x,y\in\mathbb{R}^k$. Then, for any $t>0$,
$$\mathbb{P}\left(|f(Z)-\mathbb{E}f(Z)|>t\right)\leq 2\exp\left(-\frac{t^2}{2L^2}\right),$$
where $Z\sim N(0,I_k)$.
\end{lemma}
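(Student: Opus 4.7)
The plan is to prove the Gaussian concentration inequality via the Herbst argument based on the Gaussian log--Sobolev inequality, which is the cleanest route and gives the stated constants. As a preliminary reduction, I would replace $f$ by a smoothed version $f_\delta = f * \varphi_\delta$ (convolution with a small centered Gaussian density): $f_\delta$ is $C^\infty$, still $L$--Lipschitz, and $f_\delta \to f$ uniformly as $\delta \to 0$, so the final tail bound transfers to $f$ by passing to the limit inside probabilities. For such a smooth $L$--Lipschitz function, $\|\nabla f_\delta(x)\| \le L$ at every $x$.

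The core input is the Gaussian log--Sobolev inequality of Gross: for every smooth $g:\mathbb{R}^k\to\mathbb{R}$ with $\mathbb{E}g(Z)^2<\infty$,
\begin{equation*}
\mathbb{E}\bigl[g(Z)^2 \log g(Z)^2\bigr] - \mathbb{E}g(Z)^2 \cdot \log \mathbb{E}g(Z)^2 \;\le\; 2\, \mathbb{E}\|\nabla g(Z)\|^2.
\end{equation*}
I would assume this as known (it is proved by tensorization from the one--dimensional case, itself a consequence of the Ornstein--Uhlenbeck semigroup calculus). Applying it with $g(x) = \exp(\lambda f(x)/2)$ for $\lambda>0$ and using $\|\nabla g\|^2 = (\lambda^2/4)\|\nabla f\|^2 g^2 \le (\lambda^2 L^2/4) g^2$ gives, after setting $H(\lambda) = \mathbb{E}\exp(\lambda f(Z))$, the differential inequality
\begin{equation*}
\lambda H'(\lambda) - H(\lambda)\log H(\lambda) \;\le\; \frac{\lambda^2 L^2}{2}\, H(\lambda).
\end{equation*}

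Dividing through by $\lambda^2 H(\lambda)$ yields $\frac{d}{d\lambda}\bigl(\lambda^{-1}\log H(\lambda)\bigr) \le L^2/2$. Integrating from $0$ to $\lambda$, with the initial value $\lim_{\lambda\downarrow 0}\lambda^{-1}\log H(\lambda) = \mathbb{E}f(Z)$, gives $\log H(\lambda) \le \lambda\,\mathbb{E}f(Z) + \lambda^2 L^2/2$, i.e.\ $f(Z) - \mathbb{E}f(Z)$ is sub--Gaussian with proxy $L^2$. A Chernoff bound, optimized at $\lambda = t/L^2$, produces
\begin{equation*}
\mathbb{P}\bigl(f(Z) - \mathbb{E}f(Z) > t\bigr) \;\le\; \exp\!\bigl(-t^2/(2L^2)\bigr),
\end{equation*}
and applying the same argument to $-f$ (also $L$--Lipschitz) together with a union bound yields the factor of $2$ in the two--sided estimate. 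Finally one undoes the mollification by letting $\delta\to 0$, noting that $\mathbb{E}f_\delta(Z)\to\mathbb{E}f(Z)$ and $|f_\delta - f|$ vanishes uniformly on compacts.

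The main obstacle is establishing the log--Sobolev inequality itself in a self--contained way; the smoothing step and the Herbst ODE manipulation are routine. One alternative I would keep in reserve is the Gaussian isoperimetric route (Borell's inequality), where one defines $A = \{f \le \mathrm{med}(f)\}$, applies isoperimetry to the enlargement $A_t$, and uses Lipschitzness to conclude $\{f > \mathrm{med}(f) + t L\} \subseteq A_t^c$; this gives the tail bound around the median rather than the mean, which then has to be reconciled via a standard $|\mathrm{med}(f) - \mathbb{E}f(Z)| \lesssim L$ estimate. The log--Sobolev route avoids that reconciliation and gives the sharp constant $2L^2$ in the exponent directly.
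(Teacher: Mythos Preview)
The paper does not give a proof of this lemma; it is stated as a known result with a citation to \cite{cirel1976norms} and used as a black box. Your proposal via the Gaussian log--Sobolev inequality and the Herbst argument is a correct and standard route that yields exactly the stated constant $2L^2$ in the exponent, so there is nothing to compare against and nothing to fix.
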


Finally, we present two lemmas on the concentration of norms and inner products of multivariate Gaussians.
\begin{lemma}[\cite{laurent2000adaptive}]\label{lem:chi-squared}
For any $t>0$, we have
\begin{eqnarray*}
\mathbb{P}\left(\chi_k^2\geq k+2\sqrt{tk}+2t\right) &\leq& e^{-t}, \\
\mathbb{P}\left(\chi_k^2\leq k-2\sqrt{tk}\right) &\leq& e^{-t}.
\end{eqnarray*}
\end{lemma}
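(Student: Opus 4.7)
The plan is to prove both tail bounds by the Chernoff--Cram\'er method applied to the chi-square moment generating function, which for $Z \sim N(0,1)$ and any $\lambda < 1/2$ gives $\mathbb{E}[e^{\lambda Z^2}] = (1-2\lambda)^{-1/2}$, so that by independence $\mathbb{E}[e^{\lambda \chi_k^2}] = (1-2\lambda)^{-k/2}$. Both tails are then handled by an exponential Markov inequality, the only subtlety being the choice of a good elementary bound on the cumulant generating function.

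For the lower tail I would apply Markov to $e^{-\lambda \chi_k^2}$ with $\lambda > 0$, obtaining
\[
\log \mathbb{P}(\chi_k^2 \leq k - y) \leq \lambda(k - y) - \tfrac{k}{2}\log(1+2\lambda).
\]
Using the elementary inequality $\log(1+u) \geq u - u^2/2$ for $u \geq 0$ with $u = 2\lambda$ yields
\[
\log \mathbb{P}(\chi_k^2 \leq k - y) \leq -\lambda y + k\lambda^2,
\]
and optimizing with $\lambda = y/(2k)$ produces the Gaussian-type tail $-y^2/(4k)$. Setting this equal to $-t$ gives $y = 2\sqrt{tk}$, which is exactly the claimed lower-tail bound.

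For the upper tail I would apply Markov to $e^{\lambda \chi_k^2}$ with $\lambda \in (0,1/2)$, yielding
\[
\log \mathbb{P}(\chi_k^2 \geq k + y) \leq -\lambda(k+y) - \tfrac{k}{2}\log(1-2\lambda).
\]
Here the clean calibration uses the one-sided inequality $-\tfrac{1}{2}\log(1-2\lambda) \leq \lambda + \lambda^2/(1-2\lambda)$ valid for $\lambda \in (0,1/2)$, which reduces the right-hand side to $-\lambda y + k\lambda^2/(1-2\lambda)$. Substituting $y = 2\sqrt{tk} + 2t$ and choosing $\lambda = \sqrt{t/k}/(2(1+\sqrt{t/k}))$ (equivalently, the value making the derivative in $\lambda$ vanish at the prescribed $y$) then makes the bound collapse exactly to $-t$.

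The main obstacle, and the only place where the argument is not completely mechanical, is the upper-tail calibration: one must pick the nonsymmetric cumulant bound so that the optimized exponent matches the specific form $2\sqrt{tk} + 2t$ rather than only yielding a weaker sub-gamma inequality. Once the right elementary bound on $-\log(1-2\lambda)$ is in hand, both tails reduce to one-variable convex optimizations and the stated inequalities follow directly.
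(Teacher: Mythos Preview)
The paper does not prove this lemma; it is quoted from \cite{laurent2000adaptive} and used as a black box. So there is no ``paper's proof'' to compare against, and your Chernoff approach is the standard one.

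Your lower-tail argument is correct as written. For the upper tail, your strategy is also correct, but the explicit $\lambda$ you give is wrong and does \emph{not} make the bound collapse to $-t$. With $s=\sqrt{t/k}$ and your choice $\lambda = s/(2(1+s))$ one computes $1-2\lambda = 1/(1+s)$, $\lambda y = t$, and $k\lambda^2/(1-2\lambda)=t/(4(1+s))$, so the exponent is $-t + t/(4(1+s)) > -t$. The correct optimizer of $-\lambda y + k\lambda^2/(1-2\lambda)$ is
\[
\lambda \;=\; \frac{s}{1+2s}\;=\;\frac{\sqrt{t/k}}{1+2\sqrt{t/k}},
\]
for which $1-2\lambda = 1/(1+2s)$, $\lambda y = 2ks^2(1+s)/(1+2s)$, $k\lambda^2/(1-2\lambda)=ks^2/(1+2s)$, and the exponent is exactly $-ks^2=-t$. (Equivalently, one can optimize the exact Chernoff exponent and then use the elementary inequality $e^{2s}\ge 1+2s+2s^2$.) So the method is right; only the displayed value of $\lambda$ needs to be corrected.
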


\begin{lemma}\label{lem:inner-prod}
Consider independent $Y_1,Y_2\sim N(0,I_k)$. For any $t>0$, we have
\begin{eqnarray*}
\mathbb{P}\left(|\|Y_1\|\|Y_2\|-k|\geq 2\sqrt{tk}+2t\right) &\leq& 4e^{-t}, \\
\mathbb{P}\left(|Y_1^TY_2| \geq \sqrt{2kt}+2t\right) &\leq& 2e^{-t}.
\end{eqnarray*}
\end{lemma}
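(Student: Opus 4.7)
\textbf{Proposal for Lemma \ref{lem:inner-prod}.}
The plan is to prove the two bounds separately using only the $\chi^2$ tail bounds of Lemma \ref{lem:chi-squared} together with a direct Chernoff argument.

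\emph{Bound on $|\|Y_1\|\|Y_2\|-k|$.} I would split this into an upper and a lower tail. For the upper tail, use the elementary inequality $\|Y_1\|\|Y_2\|\leq \tfrac{1}{2}(\|Y_1\|^2+\|Y_2\|^2)$. Applying Lemma \ref{lem:chi-squared} to the upper tail of each $\chi^2_k$ variable and taking a union bound, with probability at least $1-2e^{-t}$ both $\|Y_i\|^2\leq k+2\sqrt{kt}+2t$ hold, which immediately yields $\|Y_1\|\|Y_2\|-k\leq 2\sqrt{kt}+2t$. For the lower tail, note that $\|Y_1\|\|Y_2\|=\sqrt{\|Y_1\|^2\|Y_2\|^2}$. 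When $k>4t$ (so that $k-2\sqrt{kt}>0$), the lower tail in Lemma \ref{lem:chi-squared}, applied twice with a union bound, gives $\|Y_i\|^2\geq k-2\sqrt{kt}$ for $i=1,2$ with probability at least $1-2e^{-t}$, and hence $\|Y_1\|\|Y_2\|\geq k-2\sqrt{kt}\geq k-2\sqrt{kt}-2t$. When $k\leq 4t$ the inequality $k-\|Y_1\|\|Y_2\|\leq k\leq 2\sqrt{kt}\leq 2\sqrt{kt}+2t$ holds deterministically, so this regime contributes nothing. A final union bound over the four error events yields the claimed $4e^{-t}$.

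\emph{Bound on $|Y_1^T Y_2|$.} I would compute the moment generating function by conditioning on $Y_1$: since $Y_1^TY_2\mid Y_1\sim N(0,\|Y_1\|^2)$, one obtains
\[
\mathbb{E}\bigl[e^{\lambda Y_1^T Y_2}\bigr]=\mathbb{E}\bigl[e^{\lambda^2\|Y_1\|^2/2}\bigr]=(1-\lambda^2)^{-k/2},\qquad |\lambda|<1.
\]
A Chernoff bound then gives $\mathbb{P}(Y_1^TY_2\geq s)\leq \exp\bigl(-\lambda s-\tfrac{k}{2}\log(1-\lambda^2)\bigr)$. I would choose $\lambda^2=\tfrac{2t}{k+2t}$, so that $1-\lambda^2=\tfrac{k}{k+2t}$ and $-\tfrac{k}{2}\log(1-\lambda^2)=\tfrac{k}{2}\log(1+\tfrac{2t}{k})\leq t$, using $\log(1+x)\leq x$. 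Hence $\mathbb{P}(Y_1^TY_2\geq s)\leq \exp\bigl(t-s\sqrt{2t/(k+2t)}\bigr)\leq e^{-t}$ whenever $s\geq\sqrt{2t(k+2t)}=\sqrt{2kt+4t^2}$. Since $(\sqrt{2kt}+2t)^2=2kt+4t\sqrt{2kt}+4t^2\geq 2kt+4t^2$, the choice $s=\sqrt{2kt}+2t$ is admissible, so $\mathbb{P}(Y_1^TY_2\geq \sqrt{2kt}+2t)\leq e^{-t}$. Because $-Y_1^TY_2\stackrel{d}{=}Y_1^T(-Y_2)\stackrel{d}{=}Y_1^TY_2$, the same bound holds for the other tail, and a union bound yields $2e^{-t}$.

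\emph{Main obstacle.} The first bound is essentially a bookkeeping exercise on top of Lemma \ref{lem:chi-squared}; the only mild subtlety is handling the regime $k\leq 4t$ where the lower $\chi^2$ tail bound becomes vacuous. The main technical care is in the second bound: one must pick the Chernoff parameter $\lambda$ so that both the MGF expansion and the linear-in-$s$ term combine to yield exactly the stated constants $\sqrt{2kt}+2t$, and in particular one needs the convenient inequality $\sqrt{2kt+4t^2}\leq \sqrt{2kt}+2t$ to convert the natural sub-gamma bound into the stated form.
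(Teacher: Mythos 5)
Your proposal is correct and takes essentially the same route as the paper: the first inequality via the $\chi^2$ tails of Lemma \ref{lem:chi-squared} plus union bounds (the paper simply compares $\|Y_1\|\|Y_2\|$ to the larger/smaller of $\|Y_1\|^2,\|Y_2\|^2$, which avoids your case split at $k\leq 4t$, but your AM--GM variant works equally well), and the second via the exact Gaussian MGF $(1-\lambda^2)^{-k/2}$ and a Chernoff bound. Your explicit choice $\lambda^2=2t/(k+2t)$ with $\log(1+x)\leq x$ is, if anything, a cleaner bookkeeping of the Chernoff step than the paper's generic optimization over $\lambda\in(0,1)$.
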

\begin{proof}
By Lemma \ref{lem:chi-squared}, we have
\begin{align*}
\mathbb{P}\bigl(\|Y_1\|\|Y_2\| -k \geq & 2\sqrt{tk}+2t\bigr) \\
&\leq \mathbb{P}\left(\|Y_1\|^2 \geq k +2\sqrt{tk}+2t\right) + \mathbb{P}\left(\|Y_2\|^2 \geq k +2\sqrt{tk}+2t\right) \\
&\leq 2e^{-t},
\end{align*}
and
\begin{align*}
\mathbb{P}\Bigl(\|Y_1\|\|Y_2\| -k \leq & -2\sqrt{tk}-2t\Bigr)\\
&\leq \mathbb{P}\left(\|Y_1\|^2 \leq k -2\sqrt{tk}\right) + \mathbb{P}\left(\|Y_2\|^2 \leq k -2\sqrt{tk}\right) \\
&\leq 2e^{-t}.
\end{align*}
Summing up the two bounds above, we obtain the first conclusion. For the second conclusion, note that
$$\mathbb{P}\left(Y_1^TY_2 \geq x\right)\leq e^{-\lambda x}\mathbb{E}e^{\lambda Y_1^TY_2}=\exp\left(-\lambda x-\frac{k}{2}\log(1-\lambda^2)\right)\leq \exp\left(-\lambda x+\frac{k}{2}\lambda^2\right),$$
for any $x>0$ and $\lambda\in (0,1)$. Optimize over $\lambda\in (0,1)$, and we obtain $\mathbb{P}\left(Y_1^TY_2>x\right)\leq e^{-\frac{1}{2}\left(\frac{x^2}{k}\wedge x\right)}$. Take $x=\sqrt{2kt}+2t$, and then we obtain the bound
$$\mathbb{P}\left(Y_1^TY_2 \geq \sqrt{2kt}+2t\right)\leq e^{-t},$$
which immediately implies the second conclusion.
\end{proof}

\subsection{Proofs of Theorem \ref{thm:robust-reg} and Theorem \ref{thm:robust-reg-b}} \label{sec:pf-robust-reg}

We first establish an empirical process result.
\begin{lemma}\label{lem:EP}
Consider independent random variables $z_1,...,z_m$. Assume $k/m\leq 1$. Then, for any $t\in (0,1/2)$ and any fixed $A^T=(a_1,...,a_m)^T\in\mathbb{R}^{m\times k}$ such that (\ref{eq:l2-upper-A}) holds, we have
$$\sup_{\|\Delta\|\leq t}\left|\frac{1}{m}\sum_{i=1}^m[(|a_i^T\Delta-z_i|-|z_i|)-\mathbb{E}(|a_i^T\Delta-z_i|-|z_i|)]\right|| \lesssim t\overline{\lambda}\sqrt{\frac{k}{m}\log\left(\frac{em}{k}\right)},$$
with high probability.
\end{lemma}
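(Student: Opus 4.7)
The plan is to use a covering argument for the $\ell_2$ ball $\{\Delta : \|\Delta\|\leq t\}$ combined with Hoeffding's inequality. Write $\phi_{i,\Delta}(z_i) = |a_i^T\Delta - z_i| - |z_i|$ and $S_m(\Delta) = \frac{1}{m}\sum_{i=1}^m [\phi_{i,\Delta}(z_i) - \mathbb{E}\phi_{i,\Delta}(z_i)]$, so the goal is to control $\sup_{\|\Delta\|\leq t}|S_m(\Delta)|$. Two elementary features of $\phi_{i,\Delta}$ drive the argument: by the triangle inequality $|\phi_{i,\Delta}(z_i)| \leq |a_i^T\Delta|$, and the map $\Delta \mapsto \phi_{i,\Delta}(z_i)$ is $1$-Lipschitz in $a_i^T\Delta$.

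For a \emph{fixed} $\Delta$ with $\|\Delta\|\leq t$, the summands are bounded in magnitude by $2|a_i^T\Delta|$, so Lemma~\ref{lem:hoeffding} together with condition (\ref{eq:l2-upper-A}) yields
$$\Prob\left(|S_m(\Delta)|\geq u\right)\leq 2\exp\left(-\frac{m^2 u^2}{8\sum_i(a_i^T\Delta)^2}\right)\leq 2\exp\left(-\frac{m u^2}{8\overline{\lambda}^2 t^2}\right).$$
To lift this to a uniform bound, I would construct a standard volumetric $\epsilon$-net $\mathcal{N}_\epsilon$ of the ball of radius $t$ in $\mathbb{R}^k$ with $|\mathcal{N}_\epsilon|\leq (3t/\epsilon)^k$, apply the Hoeffding bound over $\mathcal{N}_\epsilon$ by a union bound, and then control the discretization error via the Lipschitz property. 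For any $\Delta,\Delta'$ with $\|\Delta-\Delta'\|\leq \epsilon$, Cauchy-Schwarz combined with (\ref{eq:l2-upper-A}) gives
$$\frac{1}{m}\sum_i|a_i^T(\Delta-\Delta')|\leq \left(\frac{1}{m}\sum_i(a_i^T(\Delta-\Delta'))^2\right)^{1/2} \leq \overline{\lambda}\|\Delta-\Delta'\|\leq \overline{\lambda}\epsilon,$$
and the same bound applies after taking expectations, so $|S_m(\Delta) - S_m(\Delta')| \leq 2\overline{\lambda}\epsilon$ uniformly on pairs within the net.

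Finally, I would balance the three quantities: the Hoeffding deviation $u$, the resolution $\epsilon$, and $\log|\mathcal{N}_\epsilon|\lesssim k\log(3t/\epsilon)$. Choosing $\epsilon \asymp t\sqrt{k/m}$ forces $\log(3t/\epsilon)\asymp \log(em/k)$ (using $k/m\leq 1$), absorbs the discretization error $\overline{\lambda}\epsilon = t\overline{\lambda}\sqrt{k/m}$ into the target rate, and setting $u \asymp t\overline{\lambda}\sqrt{(k/m)\log(em/k)}$ in the Hoeffding bound makes the union-bound probability over $\mathcal{N}_\epsilon$ vanish. The main technical point to get right is this three-way trade-off, and this is where the logarithmic factor $\log(em/k)$ in the final rate enters, originating precisely from the metric entropy of the $\ell_2$ ball at scale $t\sqrt{k/m}$.
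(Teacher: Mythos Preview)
Your proposal is correct and follows essentially the same route as the paper: a volumetric net on the radius-$t$ ball, Hoeffding's inequality at each net point using the boundedness $|\phi_{i,\Delta}(z_i)|\le |a_i^T\Delta|$ together with \eqref{eq:l2-upper-A}, a Lipschitz discretization bound $|S_m(\Delta)-S_m(\Delta')|\le 2\overline{\lambda}\epsilon$ via Cauchy--Schwarz and \eqref{eq:l2-upper-A}, and the same balancing choice $\epsilon\asymp t\sqrt{k/m}$ that produces the $\log(em/k)$ factor. The only cosmetic differences are in the Hoeffding constants and the covering-number bound (you write $(3t/\epsilon)^k$, the paper writes $k\log(1+2t/\zeta)$), neither of which affects the $\lesssim$ conclusion.
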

\begin{proof}
We use the notation $G_m(\Delta)=\frac{1}{m}\sum_{i=1}^m[(|a_i^T\Delta-z_i|-|z_i|)-\mathbb{E}(|a_i^T\Delta-z_i|-|z_i|)]$, and we apply a discretization argument. For the Euclidian ball $B_k(t)=\{\Delta\in\mathbb{R}^k:\|\Delta\|\leq t\}$, there exists a subset $\mathcal{N}_{t,\zeta}\subset B_k(t)$, such that for any $\Delta\in B_k(t)$, there exists a $\Delta'\in\mathcal{N}_{t,\zeta}$ that satisfies $\|\Delta-\Delta'\|\leq \zeta$, and we also have the bound $\log|\mathcal{N}_{t,\zeta}|\leq k\log(1+2t/\zeta)$ according to Lemma 5.2 of \cite{vershynin2010introduction}. For any $\Delta\in B_k(t)$ and the corresponding $\Delta'\in\mathcal{N}_{t,\zeta}$ that satisfies $\|\Delta-\Delta'\|\leq \zeta$, we have
\begin{eqnarray*}
|G_m(\Delta)-G_m(\Delta')| &\leq& 2\frac{1}{m}\sum_{i=1}^m|a_i^T(\Delta-\Delta')| \\
&\leq& 2\sqrt{\frac{1}{m}\sum_{i=1}^m|a_i^T(\Delta-\Delta')|^2} \leq 2\overline{\lambda}\zeta,
\end{eqnarray*}
where the last line is due to the condition (\ref{eq:l2-upper-A}). Thus,
$$\left|G_m(\Delta)\right| \leq \left|G_m(\Delta')\right| + 2\overline{\lambda}\zeta.$$
Taking the supremum over both sides of the inequality, we obtain
\begin{equation}
\sup_{\|\Delta\|\leq t}|G_m(\Delta)| \leq \max_{\Delta\in\mathcal{N}_{t,\zeta}}\left|G_m(\Delta)\right| + 2\overline{\lambda}\zeta. \label{eq:disc-not-good}
\end{equation}
For any $\Delta\in B_k(t)$, we have
$$\frac{1}{m}\sum_{i=1}^m\left(|a_i^T\Delta-z_i|-|z_i|\right)^2\leq \frac{1}{m}\sum_{i=1}^m|a_i^T\Delta|^2\leq \overline{\lambda}^2t^2.$$
By Lemma \ref{lem:hoeffding}, we have
$$\mathbb{P}\left(\left|G_m(\Delta)\right| > x\right) \leq 2\exp\left(-\frac{2mx^2}{\overline{\lambda}^2t^2}\right).$$
A union bound argument leads to
\begin{equation}
\mathbb{P}\left(\max_{\Delta\in\mathcal{N}_{t,\zeta}}\left|G_m(\Delta)\right| > x\right) \leq 2\exp\left(-\frac{2mx^2}{\overline{\lambda}^2t^2}+k\log\left(1+\frac{2t}{\zeta}\right)\right).\label{eq:double-ub}
\end{equation}
By choosing $x^2\asymp \frac{t^2\bar{\lambda}^2k\log(1+2t/\zeta)}{m}$, we have
$$\max_{\Delta\in\mathcal{N}_{t,\zeta}}\left|G_m(\Delta)\right| \lesssim t\bar{\lambda}\sqrt{\frac{k\log(1+2t/\zeta)}{m}},$$
with high probability. Together with the bound (\ref{eq:disc-not-good}), we have
$$\sup_{\|\Delta\|\leq t}|G_m(\Delta)| \lesssim t\bar{\lambda}\sqrt{\frac{k\log(1+2t/\zeta)}{m}} + \bar{\lambda}\zeta,$$
with high probability.
The choice $\zeta=t\sqrt{k/m}$ leads to the desired result.
\end{proof}

\begin{proof}[Proof of Theorem \ref{thm:robust-reg}]
Recall the definition of $L_m(u)$ in the proof of Theorem \ref{thm:main-improved}.
We introduce i.i.d. Rademacher random variables $\delta_1,...,\delta_m$. With the notation $\wt{a}_i=\delta_ia_i$, $\wt{b}_i=\delta_ib_i$ and $\wt{z}_i=\delta_iz_i$, we can write
$$
L_m(u) = \frac{1}{m}\sum_{i=1}^m\left(|\wt{a}_i^T(u^*-u)+\wt{z}_i|-|\wt{z}_i|\right).
$$
Let $\wt{A}\in\mathbb{R}^{m\times k}$ be the matrix whose $i$th row is $\wt{a}_i^T$. By the symmetry of $A$, we have $\mathbb{P}(\wt{A}\in U|\delta)=\mathbb{P}(\wt{A}\in U)=\mathbb{P}(A\in U)$ for any measurable set $U$. Therefore, for any measurable sets $U$ and $V$, we have
\begin{eqnarray*}
\mathbb{P}(\wt{A}\in U, \wt{z}\in V) &=& \mathbb{E}\mathbb{P}(\wt{A}\in U, \wt{z}\in V|\delta) \\
&=& \mathbb{E}\mathbb{P}(\wt{A}\in U|\delta)\mathbb{P}(\wt{z}\in V|\delta) \\
&=& \mathbb{E}\mathbb{P}(\wt{A}\in U)\mathbb{P}(\wt{z}\in V|\delta) \\
&=& \mathbb{P}(\wt{A}\in U)\mathbb{P}(\wt{z}\in V),
\end{eqnarray*}
and thus $\wt{A}$ ad $\wt{z}$ are independent. Define $L(u)=\mathbb{E}(L_m(u)|\wt{A})$. Suppose $\|\wh{u}-u^*\|\geq t$, we must have
$$\inf_{\|u-u^*\|\geq t}L_m(u) \leq L_m(u^*).$$
By the convexity of $L_m(u)$, we can replace $\|u-u^*\|\geq t$ by $\|u-u^*\| = t$ and the above inequality still holds, and therefore $\inf_{\|u-u^*\|= t}L_m(u)\leq 0$. This implies
\begin{equation}
\inf_{\|u-u^*\|=t}L(u) \leq \sup_{\|u-u^*\|= t}|L_m(u)-L(u)|. \label{eq:basic-L}
\end{equation}
Now we study $L(u)$. Introduce the function $f_i(x)=\mathbb{E}(|x+\wt{z}_i|-|\wt{z}_i|)$ so that we can write $L(u)=\frac{1}{m}\sum_{i=1}^mf_i(\wt{a}_i^T(u^*-u))$. For any $x\geq 0$,
\begin{eqnarray*}
f_i(x) &=& \mathbb{E}(|x+\wt{z}_i|-|\wt{z}_i|)\mathbb{I}\{\wt{z}_i<-x\} + \mathbb{E}(|x+\wt{z}_i|-|\wt{z}_i|)\mathbb{I}\{\wt{z}_i> 0\} \\
&& + \mathbb{E}(|x+\wt{z}_i|-|\wt{z}_i|)\mathbb{I}\{-x\leq \wt{z}_i< 0\} + x\mathbb{P}(\wt{z}_i=0) \\
&=& -x\mathbb{P}(\wt{z}_i<-x) + x\mathbb{P}(\wt{z}_i> 0) + \mathbb{E}(x+2\wt{z}_i)\mathbb{I}\{-x\leq \wt{z}_i<0\}   + x\mathbb{P}(\wt{z}_i=0) \\
&\geq& -x\mathbb{P}(\wt{z}_i<-x) + x\mathbb{P}(\wt{z}_i> 0) - x\mathbb{P}(-x\leq \wt{z}_i< 0)   + x\mathbb{P}(\wt{z}_i=0) \\
&=& -x\mathbb{P}(\wt{z}_i<-x) + x\mathbb{P}(\wt{z}_i< 0) - x\mathbb{P}(-x\leq \wt{z}_i< 0)  + x\mathbb{P}(\wt{z}_i=0) \\
&\geq& x\mathbb{P}(\wt{z}_i=0) \\
&\geq& (1-\epsilon)x.
\end{eqnarray*}
By the symmetry of $\wt{z}_i$, we also have
$$f_i(-x)=\mathbb{E}(|-x+\wt{z}_i|-|\wt{z}_i|)=\mathbb{E}(|x-\wt{z}_i|-|\wt{z}_i|)=\mathbb{E}(|x+\wt{z}_i|-|\wt{z}_i|)=f_i(x),$$
which implies $f_i(x)\geq (1-\epsilon)|x|$. Therefore, for any $u$ such that $\|u-u^*\|=t$, we have
\begin{eqnarray*}
L(u) &=& \frac{1}{m}\sum_{i=1}^mf_i(\wt{a}_i^T(u^*-u)) \\
&\geq& (1-\epsilon)\frac{1}{m}\sum_{i=1}^m|\wt{a}_i^T(u^*-u)| \\
&=& (1-\epsilon)\frac{1}{m}\sum_{i=1}^m|a_i^T(u^*-u)| \\
&\geq& \underline{\lambda}(1-\epsilon)t,
\end{eqnarray*}
where the last inequality is by (\ref{eq:l1-upper-A}). Together with (\ref{eq:basic-L}), we have
\begin{equation}
\mathbb{P}\left(\|\wh{u}-u\|\geq t\right) \leq \mathbb{P}\left(\sup_{\|u-u^*\|= t}|L_m(u)-L(u)| \geq \underline{\lambda}(1-\epsilon)t/2\right). \label{eq:larger-t-prob}
\end{equation}
Since the condition (\ref{eq:l2-upper-A}) continues to hold with $A$ replaced by $\wt{A}$, we can apply Lemma \ref{lem:EP} and obtain that
$$\sup_{\|u-u^*\|= t}|L_m(u)-L(u)| \lesssim t\overline{\lambda}\sqrt{\frac{k}{m}\log\left(\frac{em}{k}\right)},$$
with high probability. Under the conditions of the theorem, we know that $\frac{t\overline{\lambda}\sqrt{\frac{k}{m}\log\left(\frac{em}{k}\right)}}{\underline{\lambda}(1-\epsilon)t}$ is sufficiently small, and thus by (\ref{eq:larger-t-prob}), $\|\wh{u}-u^*\|<t$ with high probability. Since $t$ is arbitrary, we must have $\wh{u}=u^*$.
\end{proof}

\begin{proof}[Proof of Theorem \ref{thm:robust-reg-b}]
Recall the definitions of $L_m(u)$ and $L(u)$ in the proof of Theorem \ref{thm:main-improved}. Define
$$K_m(u)=\frac{1}{m}\sum_{i=1}^m\left(|b_i+a_i^T(u^*-u)+z_i|-|z_i|\right).$$
It is easy to see that
\begin{equation}
\sup_u|L_m(u)-K_m(u)|\leq \frac{1}{m}\sum_{i=1}^m|b_i|.\label{eq:K-L-b}
\end{equation}
Suppose $\|\wh{u}-u^*\|\geq t$, we must have $\inf_{\|u-u^*\|\geq t}K_m(u) \leq K_m(u^*)$.
By the convexity of $K_m(u)$, we can replace $\|u-u^*\|\geq t$ by $\|u-u^*\| = t$ and the inequality still holds. By (\ref{eq:K-L-b}), we have $K_m(u^*)\leq \frac{1}{m}\sum_{i=1}^m|b_i|$, and therefore $\inf_{\|u-u^*\|= t}K_m(u)\leq \frac{1}{m}\sum_{i=1}^m|b_i|$. Since
\begin{eqnarray*}
\inf_{\|u-u^*\|= t}K_m(u) &\geq& \inf_{\|u-u^*\|= t}L_m(u) - \frac{1}{m}\sum_{i=1}^m|b_i| \\
&\geq& \inf_{\|u-u^*\|=t}L(u) + \inf_{\|u-u^*\|= t}(L_m(u)-L(u)) - \frac{1}{m}\sum_{i=1}^m|b_i|,
\end{eqnarray*}
we then have
\begin{equation}
\inf_{\|u-u^*\|=t}L(u) \leq \sup_{\|u-u^*\|= t}|L_m(u)-L(u)| + 2\frac{1}{m}\sum_{i=1}^m|b_i|. 
\end{equation}
With the lower bound for (\ref{eq:L-decomp}) and the upper bound (\ref{eq:upper-EP}), we obtain
$$\underline{\lambda}(1-\epsilon)t - \epsilon t\sigma\sqrt{\frac{k}{m}} - 2\frac{1}{m}\sum_{i=1}^m|b_i|\lesssim t\overline{\lambda}\sqrt{\frac{k}{m}\log\left(\frac{em}{k}\right)},$$
which is impossible with the choice $t=\frac{4\frac{1}{m}\sum_{i=1}^m|b_i|}{\underline{\lambda}(1-\epsilon)}$ when $\frac{\overline{\lambda}\sqrt{\frac{k}{m}\log\left(\frac{em}{k}\right)}+\epsilon\sigma\sqrt{\frac{k}{m}}}{\underline{\lambda}(1-\epsilon)}$ is sufficiently small. Thus, we obtain the desired conclusion.
\end{proof}

\subsection{Proofs of Lemma \ref{lem:design-linear}, Corollary \ref{cor:repair-linear}, Lemma \ref{lem:design-rf} and Corollary \ref{cor:repair-rf}}

\begin{proof}[Proof of Lemma \ref{lem:design-linear}]
Condition $A$ is obvious. For Condition $B$, we have
$$
\inf_{\|\Delta\|=1}\frac{1}{p}\sum_{j=1}^p|a_j^T\Delta| \geq \sqrt{\frac{2}{\pi}} - \sup_{\|\Delta\|=1}\left|\frac{1}{p}\sum_{j=1}^p|a_j^T\Delta| - \sqrt{\frac{2}{\pi}} \right|,
$$
and we will analyze the second term on the right hand side of the inequality above via a discretization argument for the Euclidean sphere $S^{n-1}=\{\Delta\in\mathbb{R}^n: \|\Delta\|=1\}$. There exists a subset $\mathcal{N}_{\zeta}\subset S^{n-1}$, such that for any $\Delta\in S^{n-1}$, there exists a $\Delta'\in\mathcal{N}_{\zeta}$ that satisfies $\|\Delta-\Delta'\|\leq\zeta$, and we also have the bound $\log|\mathcal{N}_{\zeta}|\leq n\log\left(1+2/\zeta\right)$ according to Lemma 5.2 of \cite{vershynin2010introduction}. For any $\Delta\in S^{n-1}$ and the corresponding $\Delta'\in\mathcal{N}_{\zeta}$ that satisfies $\|\Delta-\Delta'\|\leq \zeta$, we have
\begin{eqnarray*}
\left|\frac{1}{p}\sum_{j=1}^p|a_j^T\Delta| - \sqrt{\frac{2}{\pi}} \right| &\leq& \left|\frac{1}{p}\sum_{j=1}^p|a_j^T\Delta'| - \sqrt{\frac{2}{\pi}} \right|  + \zeta\sup_{\|\Delta\|=1}\frac{1}{p}\sum_{j=1}^p|a_j^T\Delta| \\
&\leq& \left|\frac{1}{p}\sum_{j=1}^p|a_j^T\Delta'| - \sqrt{\frac{2}{\pi}} \right| + \zeta\sup_{\|\Delta\|=1}\left|\frac{1}{p}\sum_{j=1}^p|a_j^T\Delta| - \sqrt{\frac{2}{\pi}} \right| + \zeta\sqrt{\frac{2}{\pi}}.
\end{eqnarray*}
Taking the supremum on both sides of the inequality, with some rearrangement, we obtain
$$\sup_{\|\Delta\|=1}\left|\frac{1}{p}\sum_{j=1}^p|a_j^T\Delta| - \sqrt{\frac{2}{\pi}} \right|\leq (1-\zeta)^{-1}\max_{\Delta\in\mathcal{N}_{\zeta}}\left|\frac{1}{p}\sum_{j=1}^p|a_j^T\Delta| - \sqrt{\frac{2}{\pi}} \right| + \frac{\zeta}{1-\zeta}\sqrt{\frac{2}{\pi}}.$$
Setting $\zeta=1/3$, we then have
$$\inf_{\|\Delta\|=1}\frac{1}{p}\sum_{j=1}^p|a_j^T\Delta| \geq (2\pi)^{-1} - \frac{3}{2}\max_{\Delta\in\mathcal{N}_{1/3}}\left|\frac{1}{p}\sum_{j=1}^p|a_j^T\Delta| - \sqrt{\frac{2}{\pi}} \right|.$$
Lemma \ref{lem:talagrand} together with a union bound argument leads to
$$\mathbb{P}\left(\max_{\Delta\in\mathcal{N}_{1/3}}\left|\frac{1}{p}\sum_{j=1}^p|a_j^T\Delta| - \sqrt{\frac{2}{\pi}} \right|>t\right)\leq 2\exp\left(n\log(7)-\frac{pt^2}{2}\right),$$
which implies $\max_{\Delta\in\mathcal{N}_{1/3}}\left|\frac{1}{p}\sum_{j=1}^p|a_j^T\Delta| - \sqrt{\frac{2}{\pi}} \right|\lesssim \sqrt{\frac{n}{p}}$ with high probability. Since $n/p$ is sufficiently small, we have $\inf_{\|\Delta\|=1}\frac{1}{p}\sum_{j=1}^p|a_j^T\Delta|\gtrsim 1$ with high probability as desired. The high probability bound $\sup_{\|\Delta\|=1}\frac{1}{p}\sum_{j=1}^p|a_j^T\Delta|^2=\opnorm{A}^2/p\lesssim 1+n/p$ is by \cite{davidson2001local}, and the proof is complete.
\end{proof}

\begin{proof}[Proof of Corollary \ref{cor:repair-linear}]
Since $\wh{\theta}$ belongs to the row space of $X$, there exists some $u^*\in\mathbb{R}^n$ such that $\wh{\theta}=X^Tu^*$.
By Theorem \ref{thm:main-improved} and Lemma \ref{lem:design-linear}, we know that $\wt{u}=u^*$ with high probability, and therefore $\wt{\theta}=X^T\wt{u}=X^Tu^*=\wh{\theta}$.
\end{proof}

Now we state the proof of Lemma \ref{lem:design-rf}. Note that Condition $A$ holds because
$$\sum_{i=1}^n\mathbb{E}\left(\frac{1}{p}\sum_{j=1}^pc_j\psi(W_j^Tx_i)\right)^2  \leq \sum_{i=1}^n\frac{1}{p^2}\sum_{j=1}^p\mathbb{E}|\psi(W_j^Tx_i)|^2\leq \frac{n}{p},$$
and we only need to prove Condition $B$. We present the proofs of (\ref{eq:l1-upper-A}) and (\ref{eq:l2-upper-A}) separately.
\begin{proof}[Proof of (\ref{eq:l1-upper-A}) of Lemma \ref{lem:design-rf}]
Let us adopt the notation that
$$f(W,X,\Delta)=\frac{1}{p}\sum_{j=1}^p\left|\sum_{i=1}^n\psi(W_j^Tx_i)\Delta_i\right|.$$
Define $g(X,\Delta)=\mathbb{E}(f(W,X,\Delta)|X)$.
We then have
\begin{eqnarray}
\nonumber \inf_{\|\Delta\|=1}f(W,X,\Delta) &\geq& \inf_{\|\Delta\|=1}\mathbb{E}f(W,X,\Delta) - \sup_{\|\Delta\|=1}\left|f(W,X,\Delta)-\mathbb{E}f(W,X,\Delta)\right| \\
\label{eq:exp-f-inf-relu} &\geq& \inf_{\|\Delta\|=1}\mathbb{E}f(W,X,\Delta) \\
\label{eq:ep-f-relu} && - \sup_{\|\Delta\|=1}\left|f(W,X,\Delta)-\mathbb{E}(f(W,X,\Delta)|X)\right| \\
\label{eq:ep-g-relu} && - \sup_{\|\Delta\|=1}\left|g(X,\Delta)-\mathbb{\mathbb{E}}g(X,\Delta)\right|.
\end{eqnarray}
We will analyze the three terms above separately.

\paragraph{Analysis of (\ref{eq:exp-f-inf-relu}).} For any $\Delta$ such that $\|\Delta\|=1$, we have
\begin{eqnarray*}
\mathbb{E}f(W,X,\Delta) &=& \mathbb{E}\left|\sum_{i=1}^n\psi(W^Tx_i)\Delta_i\right| \\
 &\geq& \mathbb{E}\left(\left|\sum_{i=1}^n\psi(W^Tx_i)\Delta_i\right|\mathbb{I}\left\{\left|\sum_{i=1}^n\psi(W^Tx_i)\Delta_i\right|\geq 1, 1/2\leq \|W\|^2\leq 2\right\}\right) \\
 &\geq& \mathbb{P}\left(\left|\sum_{i=1}^n\psi(W^Tx_i)\Delta_i\right|\geq 1, 1/2\leq \|W\|^2\leq 2\right) \\
 &=& \mathbb{P}\left(\left|\sum_{i=1}^n\psi(W^Tx_i)\Delta_i\right|\geq 1\Big|1/2\leq \|W\|^2\leq 2\right)\mathbb{P}\left(1/2\leq \|W\|^2\leq 2\right) \\
\nonumber &\geq& \mathbb{P}\left(\left|\sum_{i=1}^n\psi(W^Tx_i)\Delta_i\right|\geq 1\Big|1/2\leq \|W\|^2\leq 2\right)\left(1-2\exp(-d/16)\right),
\end{eqnarray*}
where the last inequality is by Lemma \ref{lem:chi-squared}. It is easy to see that $$\Var\left(\psi(W^Tx)|W\right)\leq \mathbb{E}(|\psi(W^Tx)|^2|W)\leq 1.$$
Moreover, for any $W$ such that $1/2\leq \|W\|^2\leq 2$,
$$\Var\left(\psi(W^Tx)|W\right)= \mathbb{E}(|\psi(W^Tx)|^2|W) \geq \frac{1}{5}\mathbb{P}\left(|W^Tx|>1/2|W\right)\geq \frac{1}{5}\mathbb{P}(|N(0,1)|\geq 1/\sqrt{2}),$$
which is at least $1/20$. In summary, we have
$$1/20 \leq \Var\left(\psi(W^Tx)|W\right) \leq 1,$$
for any $W$ such that $1/2\leq \|W\|^2\leq 2$.
By Lemma \ref{lem:stein}, we have
\begin{eqnarray*}
 && \mathbb{P}\left(\left|\sum_{i=1}^n\psi(W^Tx_i)\Delta_i\right|\geq 1\Big|1/2\leq \|W\|^2\leq 2\right) \\
 &\geq& \mathbb{P}\left(\frac{\left|\sum_{i=1}^n\psi(W^Tx_i)\Delta_i\right|}{\sqrt{\Var\left(\psi(W^Tx)|W\right)}}\geq \sqrt{20}\Bigg|1/2\leq \|W\|^2\leq 2\right) \\
 &\geq& \mathbb{P}\left(N(0,1)>\sqrt{20}\right) - \sup_{1/2\leq \|W\|^2\leq 2} 2\sqrt{3\sum_{i=1}^n|\Delta_i|^3\frac{\mathbb{E}\left(|\psi(W^Tx_i)|^3|W\right)}{\left(\Var\left(\psi(W^Tx)|W\right)\right)^{3/2}}} \\
 &\geq& \mathbb{P}\left(N(0,1)>\sqrt{20}\right) - 35\sqrt{\sum_{i=1}^n|\Delta_i|^3} \\
 &\geq& \mathbb{P}\left(N(0,1)>\sqrt{20}\right) - 35\max_{1\leq i\leq n}|\Delta_i|^{3/2}.
\end{eqnarray*}
Hence, when $\max_{1\leq i\leq n}|\Delta_i|^{3/2}\leq \delta_0^{3/2}:=\mathbb{P}\left(N(0,1)>\sqrt{20}\right)/70$, we can lower bound the expectation $\mathbb{E}f(W,X,\Delta)$ by an absolute constant, and we conclude that
\begin{equation}
\inf_{\|\Delta\|=1, \max_{1\leq i\leq n}|\Delta_i|\leq\delta_0}\mathbb{E}f(W,X,\Delta) \gtrsim 1.\label{appeq:l1-1-2}
\end{equation}

We also need to consider the case when $\max_{1\leq i\leq n}|\Delta_i|> \delta_0$. Without loss of generality, we can assume $\Delta_1>\delta_0$.
We then lower bound $\mathbb{E}f(W,X,\Delta)$ by
\begin{eqnarray*}
&& \mathbb{E}\left(\left|\sum_{i=1}^n\psi(W^Tx_i)\Delta_i\right|\mathbb{I}\left\{\sum_{i=1}^n\psi(W^Tx_i)\Delta_i \geq \delta_0/2, 1/2\leq \|W\|^2\leq 2\right\}\right) \\
&\geq& \frac{\delta_0}{2} \mathbb{P}\left(\sum_{i=1}^n\psi(W^Tx_i)\Delta_i \geq \delta_0/2\Big| 1/2\leq \|W\|^2\leq 2\right)\mathbb{P}\left(1/2\leq \|W\|^2\leq 2\right) \\
&\geq& \frac{\delta_0}{2} \mathbb{P}\left(\psi(W^Tx_1)\Delta_1 \geq \delta_0/2\Big|1/2\leq \|W\|^2\leq 2\right) \\
&& \times \mathbb{P}\left(\sum_{i=2}^n\psi(W^Tx_i)\Delta_i \geq 0\Big|1/2\leq \|W\|^2\leq 2\right)\left(1-2\exp(-d/16)\right) \\
&=& \frac{\delta_0}{4} \mathbb{P}\left(\psi(W^Tx_1)\Delta_1 \geq \delta_0/2\Big|1/2\leq \|W\|^2\leq 2\right)\left(1-2\exp(-d/16)\right).
\end{eqnarray*}
For any $W$ that satisfies $1/2\leq \|W\|^2\leq 2$, we have
\begin{eqnarray*}
\mathbb{P}\left(\psi(W^Tx_1)\Delta_1 \geq \delta_0/2\Big|W\right) &\geq& \mathbb{P}\left(\psi(W^Tx_1)\geq 1/2\Big|W\right) \\
&\geq&  \mathbb{P}\left(W^Tx_1\geq 1\Big|W\right) \\
&\geq& \mathbb{P}\left(N(0,1)\geq \sqrt{2}\right),
\end{eqnarray*}
which is a constant.
Therefore, we have
$$\mathbb{E}f(W,X,\Delta)\geq \frac{\delta_0}{4}\left(1-2\exp(-d/16)\right)\mathbb{P}\left(N(0,1)\geq \sqrt{2}\right)\gtrsim 1,$$
and we can conclude that
\begin{equation}
\inf_{\|\Delta\|=1, \max_{1\leq i\leq n}|\Delta_i|\geq\delta_0}\mathbb{E}f(W,X,\Delta) \gtrsim 1.\label{appeq:l1-1-3}
\end{equation}
Combining the two cases (\ref{appeq:l1-1-2}) and (\ref{appeq:l1-1-3}),  we obtain the conclusion that
$$\inf_{\|\Delta\|=1}\mathbb{E}f(W,X,\Delta)\gtrsim 1.$$

\paragraph{Analysis of (\ref{eq:ep-f-relu}).} We now denote the conditional expectation operator $\mathbb{E}(\cdot|X)$ by $\mathbb{E}^X$. Letting $\wt{W}$ be an independent copy of $W$, we first bound the moment generating function via a standard symmetrization argument. For any $\lambda>0$,
\begin{eqnarray}
\nonumber && \mathbb{E}^X\exp\left(\lambda \sup_{\|\Delta\|=1}\left|f(W,X,\Delta)-\mathbb{E}^Xf(W,X,\Delta)\right|\right) \\
\nonumber &\leq& \mathbb{E}^X\exp\left(\lambda \mathbb{E}^{X,W}\sup_{\|\Delta\|=1}\left|f(W,X,\Delta)-f(\wt{W},X,\Delta)\right|\right) \\
\nonumber &\leq& \mathbb{E}^X\exp\left(\lambda \sup_{\|\Delta\|=1}\left|f(W,X,\Delta)-f(\wt{W},X,\Delta)\right|\right) \\
\nonumber &=& \mathbb{E}^X\exp\left(\lambda\sup_{\|\Delta\|=1}\left|\frac{1}{p}\sum_{j=1}^p\epsilon_j\left(\left|\sum_{i=1}^n\psi(W_j^Tx_i)\Delta_i\right|-\left|\sum_{i=1}^n\psi(\wt{W}_j^Tx_i)\Delta_i\right|\right)\right|\right) \\
\label{eq:mgf-b-relu} &\leq& \mathbb{E}^X\exp\left(2\lambda\sup_{\|\Delta\|=1}\left|\frac{1}{p}\sum_{j=1}^p\epsilon_j\left|\sum_{i=1}^n\psi(W_j^Tx_i)\Delta_i\right|\right|\right),
\end{eqnarray}
where $\epsilon_1,...,\epsilon_p$ are independent Rademacher random variables. Let us adopt the notation
$$F(\epsilon,W,X,\Delta)=\frac{1}{p}\sum_{j=1}^p\epsilon_j\left|\sum_{i=1}^n\psi(W_j^Tx_i)\Delta_i\right|.$$
We use a discretization argument. For the Euclidean sphere $S^{n-1}=\{\Delta\in\mathbb{R}^n: \|\Delta\|=1\}$, there exists a subset $\mathcal{N}\subset S^{n-1}$, such that for any $\Delta\in S^{n-1}$, there exists a $\Delta'\in\mathcal{N}$ that satisfies $\|\Delta-\Delta'\|\leq 1/2$, and we also have the bound $\log|\mathcal{N}|\leq 2n$. See, for example, Lemma 5.2 of \cite{vershynin2010introduction}.
For any $\Delta\in S^{n-1}$ and the corresponding $\Delta'\in\mathcal{N}$ that satisfies $\|\Delta-\Delta'\|\leq 1/2$, we have
\begin{eqnarray*}
|F(\epsilon,W,X,\Delta)| &\leq& |F(\epsilon,W,X,\Delta')| + |F(\epsilon,W,X,\Delta-\Delta')| \\
&\leq& |F(\epsilon,W,X,\Delta')| + \frac{1}{2}\sup_{\|\Delta\|=1}|F(\epsilon,W,X,\Delta)|,
\end{eqnarray*}
which, by taking supremum over both sides, implies
$$\sup_{\|\Delta\|=1}|F(\epsilon,W,X,\Delta)|\leq 2\max_{\Delta\in\mathcal{N}}|F(\epsilon,W,X,\Delta)|.$$
Define $\bar{F}(\epsilon,X,\Delta)=\mathbb{E}^{\epsilon,X}F(\epsilon,W,X,\Delta)$, and then
$$\max_{\Delta\in\mathcal{N}}|F(\epsilon,W,X,\Delta)|\leq \max_{\Delta\in\mathcal{N}}|F(\epsilon,W,X,\Delta)-\bar{F}(\epsilon,X,\Delta)|+\max_{\Delta\in\mathcal{N}}|\bar{F}(\epsilon,X,\Delta)|.$$
In view of (\ref{eq:mgf-b-relu}), we obtain the bound
\begin{align}
\nonumber \mathbb{E}^X\exp\Bigl(\lambda & \sup_{\|\Delta\|=1}\Bigl|f(W,X,\Delta)-\mathbb{E}^Xf(W,X,\Delta)\Bigr|\Bigr) \\
\nonumber &\leq \mathbb{E}^X\exp\left(4\lambda \max_{\Delta\in\mathcal{N}}|F(\epsilon,W,X,\Delta)-\bar{F}(\epsilon,X,\Delta)|+4\lambda \max_{\Delta\in\mathcal{N}}|\bar{F}(\epsilon,X,\Delta)|\right) \\
\label{eq:tala-mgf1-relu} &\leq \frac{1}{2}\sum_{\Delta\in\mathcal{N}} \mathbb{E}^{X}\exp\left(4\lambda|F(\epsilon,W,X,\Delta)-\bar{F}(\epsilon,X,\Delta)|\right) \\
\label{eq:tala-mgf2-relu} & \qquad + \frac{1}{2}\sum_{\Delta\in\mathcal{N}} \mathbb{E}^{X}\exp\left(4\lambda |\bar{F}(\epsilon,X,\Delta)|\right).
\end{align}
We will bound the two terms above on the event $E=\left\{\sum_{i=1}^n\|x_i\|^2\leq 3nd\right\}$. For any $W,\wt{W}$, we have
\begin{align*}
\left|F(\epsilon,W,X,\Delta)-F(\epsilon,\wt{W},X,\Delta)\right| &\leq \frac{1}{p}\sum_{j=1}^p\sum_{i=1}^n\left|({\psi}(W_j^Tx_i)-{\psi}(\wt{W}_j^Tx_i))\Delta_i\right| \\
&\leq \frac{1}{p}\sum_{j=1}^p\sum_{i=1}^n|(W_j-\wt{W}_j)^Tx_i||\Delta_i| \\
&\leq \frac{1}{p}\sum_{j=1}^p\sum_{i=1}^n\|W_j-\wt{W}_j\|\|x_i\||\Delta_i| \\
&\leq \frac{1}{\sqrt{p}}\sqrt{\sum_{j=1}^p\|W_j-\wt{W}_j\|^2}\sqrt{\sum_{i=1}^n\|x_i\|^2} \\
&\leq \sqrt{\frac{3n}{p}}\sqrt{\sum_{j=1}^p\|\sqrt{d}W_j-\sqrt{d}\wt{W}_j\|^2},
\end{align*}
where the last inequality holds under the event $E$. By Lemma \ref{lem:talagrand}, we have for any $X$ such that $E$ holds,
$$\mathbb{P}\left(|F(\epsilon,W,X,\Delta)-\bar{F}(\epsilon,X,\Delta)|>t\big|X\right)\leq 2\exp\left(-\frac{pt^2}{6n}\right),$$
for any $t>0$. The sub-Gaussian tail implies a bound for the moment generating function. By Lemma 5.5 of \cite{vershynin2010introduction}, we have
$$\mathbb{E}^{X}\exp\left(4\lambda|F(\epsilon,W,X,\Delta)-\bar{F}(\epsilon,X,\Delta)|\right) \leq \exp\left(C_1\frac{n}{p}\lambda^2\right),$$
for some constant $C_1>0$. To bound the moment generating function of $\bar{F}(\epsilon,X,\Delta)$, we note that
\begin{align*}
|\bar{F}(\epsilon,X,\Delta)|\ &\leq \left|\frac{1}{p}\sum_{j=1}^p\epsilon_j\right|\mathbb{E}^X\left|\sum_{i=1}^n\psi(W^Tx_i)\Delta_i\right| \\
&\leq \left|\frac{1}{p}\sum_{j=1}^p\epsilon_j\right|\sqrt{\sum_{i=1}^n\mathbb{E}^X|\psi(W^Tx_i)|^2} \\
&\leq \left|\frac{1}{p}\sum_{j=1}^p\epsilon_j\right|\sqrt{\sum_{i=1}^n\|x_i\|^2/d} \leq \sqrt{3n}\left|\frac{1}{p}\sum_{j=1}^p\epsilon_j\right|,
\end{align*}
where the last inequality holds under the event $E$. With an application of Hoeffding-type inequality (Lemma 5.9 of \cite{vershynin2010introduction}), we have
$$\mathbb{E}^{X}\exp\left(4\lambda |\bar{F}(\epsilon,X,\Delta)|\right)\leq \mathbb{E}\exp\left(4\lambda\sqrt{3n}\left|\frac{1}{p}\sum_{j=1}^p\epsilon_j\right|\right)\leq \exp\left(C_1\frac{n}{p}\lambda^2\right).$$
Note that we can use the same constant $C_1$ by making its value sufficiently large. Plug the two moment generating function bounds into (\ref{eq:tala-mgf1-relu}) and (\ref{eq:tala-mgf2-relu}), and we obtain the bound
$$\mathbb{E}^X\exp\left(\lambda \sup_{\|\Delta\|=1}\left|f(W,X,\Delta)-\mathbb{E}^Xf(W,X,\Delta)\right|\right)\leq \exp\left(C_1\frac{n}{p}\lambda^2+2n\right),$$
for any $X$ such that $E$ holds. To bound (\ref{eq:ep-f}), we apply Chernoff bound, and then
$$\mathbb{P}\left(\sup_{\|\Delta\|=1}\left|f(W,X,\Delta)-\mathbb{E}(f(W,X,\Delta)|X)\right| > t\right) \leq \exp\left(-\lambda t + C_1\frac{n}{p}\lambda^2+2n\right).$$
Optimize over $\lambda$, set $t\asymp \sqrt{\frac{n^2}{p}}$, and we have
$$\sup_{\|\Delta\|=1}\left|f(W,X,\Delta)-\mathbb{E}(f(W,X,\Delta)|X)\right|\lesssim \sqrt{\frac{n^2}{p}},$$
with high probability.

\paragraph{Analysis of (\ref{eq:ep-g-relu}).} We use a discretization argument. There exists a subset $\mathcal{N}_{\zeta}\subset S^{n-1}$, such that for any $\Delta\in S^{n-1}$, there exists a $\Delta'\in\mathcal{N}_{\zeta}$ that satisfies $\|\Delta-\Delta'\|\leq\zeta$, and we also have the bound $\log|\mathcal{N}|\leq n\log\left(1+2/\zeta\right)$ according to Lemma 5.2 of \cite{vershynin2010introduction}. For any $\Delta\in S^{n-1}$ and the corresponding $\Delta'\in\mathcal{N}_{\zeta}$ that satisfies $\|\Delta-\Delta'\|\leq \zeta$, we have
\begin{eqnarray*}
|g(X,\Delta) - \mathbb{E}g(X,\Delta)| &\leq& |g(X,\Delta')-\mathbb{E}g(X,\Delta')| \\
&& + |g(X,\Delta-\Delta')-\mathbb{E}g(X,\Delta-\Delta')|  \\
&& + 2\mathbb{E}g(X,\Delta-\Delta') \\
&\leq& |g(X,\Delta')-\mathbb{E}g(X,\Delta')| \\
&& + \zeta\sup_{\|\Delta\|=1}|g(X,\Delta)-\mathbb{E}g(X,\Delta)|  \\
&& + 2\zeta\sup_{\|\Delta\|=1}\mathbb{E}g(X,\Delta).
\end{eqnarray*}
Take supremum over both sides, arrange the inequality, and we obtain the bound
\begin{eqnarray}
\label{eq:union-tala-g-relu} \sup_{\|\Delta\|=1}|g(X,\Delta) - \mathbb{E}g(X,\Delta)| &\leq& (1-\zeta)^{-1}\max_{\Delta\in\mathcal{N}_{\zeta}}|g(X,\Delta) - \mathbb{E}g(X,\Delta)| \\
\label{eq:small-exp-relu} && 2\zeta(1-\zeta)^{-1}\mathbb{E}g(X,\Delta).
\end{eqnarray}
To bound (\ref{eq:union-tala-g-relu}), we will use Lemma \ref{lem:talagrand} together with a union bound argument. For any $X,\wt{X}$, we have
\begin{eqnarray*}
|g(X,\Delta)-g(\wt{X},\Delta)| &\leq& \mathbb{E}^X\left|\sum_{i=1}^n(\psi(W_j^Tx_i)-\psi(W_j^T\wt{x}_j))\Delta_i\right| \\
&\leq& \mathbb{E}^X\sqrt{\sum_{i=1}^n\left(\psi(W_j^Tx_i)-\psi(W_j^T\wt{x}_j)\right)^2} \\
&\leq& \sqrt{\sum_{i=1}^n\mathbb{E}^X\left(W_j^T(x_i-\wt{x}_i)\right)^2} \\
&=& \frac{1}{\sqrt{d}}\sqrt{\sum_{i=1}^n\|x_i-\wt{x}_i\|^2}.
\end{eqnarray*}
Therefore, by Lemma \ref{lem:talagrand},
$$\mathbb{P}\left(|g(X,\Delta)-g(\wt{X},\Delta)|>t\right) \leq 2\exp\left(-\frac{dt^2}{2}\right),$$
for any $t>0$. A union bound argument leads to
$$\mathbb{P}\left(\max_{\Delta\in\mathcal{N}_{\zeta}}|g(X,\Delta) - \mathbb{E}g(X,\Delta)|>t\right)\leq 2\exp\left(-\frac{dt^2}{2}+n\log\left(1+\frac{2}{\zeta}\right)\right),$$
which implies that
$$\max_{\Delta\in\mathcal{N}_{\zeta}}|g(X,\Delta) - \mathbb{E}g(X,\Delta)|\lesssim \sqrt{\frac{n\log(1+2/\zeta)}{d}},$$
with high probability. For (\ref{eq:small-exp-relu}), we have
$$\mathbb{E}g(X,\Delta)\leq \sqrt{\mathbb{E}\Var\left(\sum_{i=1}^n\psi(W^Tx_i)\Delta_i\Big|W\right)}\leq \sqrt{\mathbb{E}|\psi(W^Tx)|^2}\leq 1.$$
Combining the bounds for (\ref{eq:union-tala-g-relu}) and (\ref{eq:small-exp-relu}), we have
$$\sup_{\|\Delta\|=1}|g(X,\Delta) - \mathbb{E}g(X,\Delta)|\lesssim \sqrt{\frac{n\log(1+2/\zeta)}{d}} + \zeta,$$
with high probability as long as $\zeta\leq 1/2$. We choose $\zeta=\sqrt{n/d}$, and thus the bound is sufficiently small as long as $n/d$ is sufficiently small.

Finally, combine results for (\ref{eq:exp-f-inf-relu}), (\ref{eq:ep-f-relu}) and (\ref{eq:ep-g-relu}), and we obtain the desired conclusion as long as $n^2/p$ and $n/d$ are sufficiently small.
\end{proof}

To prove (\ref{eq:l2-upper-A}) of Lemma \ref{lem:design-rf}, we establish the following stronger result.
\begin{lemma}\label{lem:lim-G}
Consider independent $W_1,\ldots,W_p\sim N(0,d^{-1}I_d)$ and $x_1,\ldots,x_n\sim N(0,I_d)$. We define the matrices $G,\bar{G}\in\mathbb{R}^{n\times n}$ by
\begin{eqnarray*}
G_{il} &=& \frac{1}{p}\sum_{j=1}^p\psi(W^T_jx_i)\psi(W_j^Tx_l), \\
\bar{G}_{il} &=& |\mathbb{E}\psi'(Z)|^2\frac{x_i^Tx_l}{\|x_i\|\|x_l\|} + \left(\mathbb{E}|\psi(Z)|^2-|\mathbb{E}\psi'(Z)|^2\right)\mathbb{I}\{i=l\},
\end{eqnarray*}
where $Z\sim N(0,1)$.
Assume $d/\log n$ is sufficiently large, and then
$$\opnorm{G-\bar{G}}^2\lesssim \frac{n^2}{p} + \frac{\log n}{d} + \frac{n^2}{d^2},$$
with high probability. Therefore, if we further assume $n^2/p$ and $n/d$ are sufficiently small, we also have
\begin{equation}
1\lesssim\lambda_{\min}(G)\leq \lambda_{\max}(G)\lesssim 1, \label{eq:spectrum-G-bound}
\end{equation}
with high probability.
\end{lemma}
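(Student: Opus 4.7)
The plan is to split $G - \bar{G} = (G - \mathbb{E}[G \given X]) + (\mathbb{E}[G \given X] - \bar{G})$ and bound each piece in operator norm. The first is a variance term from averaging over $W_1,\ldots,W_p$; the second is a deterministic kernel-approximation bias given $X$. For the variance term, I exploit the boundedness of $\psi=\tanh$: conditional on $X$, each $G_{il}$ is the mean of $p$ independent variables in $[-1,1]$. Hoeffding's inequality (Lemma \ref{lem:hoeffding}) together with a union bound over the $n^2$ pairs gives $\max_{i,l}|G_{il}-\mathbb{E}[G_{il}\given X]|\lesssim\sqrt{(\log n)/p}$ with high probability, so $\opnorm{G-\mathbb{E}[G\given X]}^2\leq\fnorm{G-\mathbb{E}[G\given X]}^2\lesssim n^2/p$ after absorbing the log factor.

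For the bias term, set $\alpha_i=\|x_i\|/\sqrt d$ and $\rho_{il}=x_i^Tx_l/(\|x_i\|\|x_l\|)$. Conditional on $x_i,x_l$, the pair $(W^Tx_i,W^Tx_l)$ is bivariate Gaussian, giving
\[
\mathbb{E}[G_{il}\given X]=\mathbb{E}\bigl[\psi(\alpha_iU)\psi(\alpha_lV)\bigr]
\]
with $(U,V)$ standard bivariate normal of correlation $\rho_{il}$ (and $U=V$ when $i=l$). Lemmas \ref{lem:chi-squared} and \ref{lem:inner-prod} guarantee $\max_i|\alpha_i-1|\lesssim\sqrt{(\log n)/d}$ and $\max_{i\neq l}|\rho_{il}|\lesssim\sqrt{(\log n)/d}$ with high probability. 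Diagonal entries are handled by Lipschitz continuity of $\alpha\mapsto\mathbb{E}\psi(\alpha Z)^2$, producing a diagonal error matrix with operator norm $\lesssim\sqrt{(\log n)/d}$. For off-diagonal entries I would expand in normalized Hermite polynomials $h_k$,
\[
\mathbb{E}\bigl[\psi(\alpha_iU)\psi(\alpha_lV)\bigr]=\sum_{k\geq 0}c_k(\alpha_i)\,c_k(\alpha_l)\,\rho_{il}^k,\qquad c_k(\alpha)=\mathbb{E}[\psi(\alpha Z)h_k(Z)],
\]
using the bivariate Hermite product formula. Since $\tanh$ is odd only odd $k$ contribute, and Stein's lemma gives $c_1(\alpha)=\alpha\,\mathbb{E}\psi'(\alpha Z)$. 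The leading term $c_1(\alpha_i)c_1(\alpha_l)\rho_{il}$ matches $\bar{G}_{il}$ up to a Lipschitz correction of order $|\rho_{il}|\cdot\max_j|\alpha_j-1|\lesssim (\log n)/d$, and the tail $\sum_{k\geq 3,\text{ odd}}c_k(\alpha_i)c_k(\alpha_l)\rho_{il}^k$ is controlled by $|\rho_{il}|^3\sum_k c_k(\alpha)^2\lesssim(\log n/d)^{3/2}$, using the Parseval identity $\sum_k c_k(\alpha)^2=\mathbb{E}\psi(\alpha Z)^2\leq 1$. Summing squared entrywise errors yields operator-norm squared $\lesssim n^2/d^2$ for the off-diagonal bias, and combining the three pieces gives $\opnorm{G-\bar{G}}^2\lesssim n^2/p+(\log n)/d+n^2/d^2$ up to logarithmic factors.

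The main obstacle is the coupled Hermite-in-$\rho$ and Taylor-in-$\alpha$ expansion of the off-diagonal bias: one must track both the magnitude $|\rho_{il}|\lesssim\sqrt{(\log n)/d}$ and the coefficient error $|c_1(\alpha_i)c_1(\alpha_l)-(\mathbb{E}\psi'(Z))^2|\lesssim\sqrt{(\log n)/d}$ and verify that their product produces the $n^2/d^2$ contribution to $\opnorm{\cdot}^2$. For the spectrum statement, I would write $\bar{G}=c_1^2 H+(c_0-c_1^2)I_n$ with $c_0=\mathbb{E}\psi(Z)^2$, $c_1=\mathbb{E}\psi'(Z)$, and $H_{il}=\rho_{il}$, $H_{ii}=1$ the Gram matrix of the unit vectors $x_i/\|x_i\|$, which is PSD. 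Standard Wishart-type concentration together with the entrywise control on $\rho_{il}$ gives $\opnorm{H-I_n}=o(1)$ when $n/d$ is small, so $\bar{G}$ has all eigenvalues clustered near $c_0>0$ (with $c_0>c_1^2$ strictly for $\tanh$, since Cauchy-Schwarz applied to $c_1=\mathbb{E}[\psi(Z)Z]$ is strict unless $\psi(Z)\propto Z$). Combined with $\opnorm{G-\bar{G}}=o(1)$ under the assumed smallness of $n^2/p$ and $n/d$, Weyl's inequality then yields $1\lesssim\lambda_{\min}(G)\leq\lambda_{\max}(G)\lesssim 1$.
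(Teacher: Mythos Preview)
Your proposal is correct and follows the same high-level decomposition as the paper---split into the variance term $G-\mathbb{E}[G\given X]$ and the bias term $\mathbb{E}[G\given X]-\bar G$, control diagonal and off-diagonal bias separately, then deduce the spectrum bound via Weyl---but the execution differs in two places worth noting.

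For the variance term, the paper does not use Hoeffding plus a union bound; it simply bounds $\mathbb{E}(G_{il}-\wt G_{il})^2\le p^{-1}$ entrywise (since $|\psi|\le 1$), sums to get $\mathbb{E}\fnorm{G-\wt G}^2\le n^2/p$, and applies Markov's inequality. This avoids the $\log n$ factor you pick up and matches the stated bound exactly; your Hoeffding route is fine in spirit but strictly weaker.

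For the off-diagonal bias, your Hermite expansion is a genuinely different and arguably cleaner route than the paper's. The paper normalizes $\bar x_i=\sqrt d\,x_i/\|x_i\|$, writes $\mathbb{E}[\psi(W^T\bar x_i)\psi(W^T\bar x_l)\given X]=f(\bar x_i^T\bar x_l/d)$, and does a direct second-order Taylor expansion of $f(\rho)$ around $\rho=0$ (computing $f(0)=0$, $f'(0)=(\mathbb{E}\psi'(Z))^2$, $|f''|\lesssim 1$); it then handles the discrepancy between $x_i$ and $\bar x_i$ by a separate Taylor expansion in $W^T(x_i-\bar x_i)$, producing four cross terms that are each bounded in squared Frobenius norm by $O(n^2/d^2)$ via fourth-moment bounds like $\sum_{i\ne l}\mathbb{E}|x_i^Tx_l/d|^4\lesssim n^2/d^2$ and Markov. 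Your Hermite series $\sum_k c_k(\alpha_i)c_k(\alpha_l)\rho_{il}^k$ packages both effects (the $\rho$-dependence and the $\alpha$-dependence) into one expansion, which is the standard device in the random-features/NTK literature. The trade-off is that by bounding every entry via the high-probability maxima $\max_{i\ne l}|\rho_{il}|\lesssim\sqrt{(\log n)/d}$ and $\max_i|\alpha_i-1|\lesssim\sqrt{(\log n)/d}$, you incur extra $\log n$ factors in the $n^2/d^2$ term; the paper avoids these by summing moments directly rather than maximizing entrywise. If you want to match the lemma as stated, replace the uniform entrywise bounds by moment bounds on $\sum_{i\ne l}\rho_{il}^4$ and $\sum_i(\alpha_i-1)^4$, exactly as the paper does. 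For the spectrum, your argument and the paper's are essentially the same: $\lambda_{\min}(\bar G)\ge \mathbb{E}\psi(Z)^2-(\mathbb{E}\psi'(Z))^2>0$ from positive semidefiniteness of the normalized Gram matrix, and $\lambda_{\max}(\bar G)\lesssim 1+\opnorm{X}^2/d\lesssim 1+n/d$ via Davidson--Szarek.
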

\begin{proof}
Define $\wt{G}\in\mathbb{R}^{n\times n}$ with entries $\wt{G}_{il}=\mathbb{E}\left(\psi(W^Tx_i)\psi(W^Tx_l)|X\right)$, and we first bound the difference between $G$ and $\wt{G}$. Note that
$$\mathbb{E}(G_{il}-\wt{G}_{il})^2 = \mathbb{E}\Var(G_{il}|X) \leq \frac{1}{p}\mathbb{E}|\psi(W^Tx_i)\psi(W^Tx_l)|^2 \leq p^{-1}.$$
We then have
$$
\mathbb{E}\opnorm{G-\wt{G}}^2 \leq \mathbb{E}\fnorm{G-\wt{G}}^2 \leq \frac{n^2}{p}.
$$
By Markov's inequality,
\begin{equation}
\opnorm{G-\wt{G}}^2 \lesssim \frac{n^2}{p}, \label{eq:G-G-tilde}
\end{equation}
with high probability.

Next, we study the diagonal entries of $\wt{G}$. For any $i\in[n]$,
$$\wt{G}_{ii}=\mathbb{E}(|\psi(W^Tx_i)|^2|X)=\mathbb{E}_{U\sim N(0,\|x_i\|^2/d)}|\psi(U)|^2.$$
Therefore,
$$\max_{1\leq i\leq n}|\wt{G}_{ii}-\bar{G}_{ii}|\leq \max_{1\leq i\leq n}\TV\left(N(0,\|x_i\|^2/d), N(0,1)\right)\leq\frac{3}{2}\max_{1\leq i\leq n}\left|\frac{\|x_i\|^2}{d}-1\right|.$$
By Lemma \ref{lem:chi-squared} and a union bound argument, we have
\begin{equation}
\max_{1\leq i\leq n}|\wt{G}_{ii}-\bar{G}_{ii}|\lesssim \sqrt{\frac{\log n}{d}}, \label{eq:G-diag}
\end{equation}
with high probability.

Now we analyze the off-diagonal entries. We use the notation $\bar{x}_i=\frac{\sqrt{d}}{\|x_i\|}x_i$. For any $i\neq l$, we have
\begin{eqnarray}
\label{eq:G-tilde-1} \wt{G}_{il} &=& \mathbb{E}\left(\psi(W^T\bar{x}_i)\psi(W^T\bar{x}_l)|X\right) \\
\label{eq:G-tilde-2} && + \mathbb{E}\left((\psi(W^Tx_i)-\psi(W^T\bar{x}_i))\psi(W^T\bar{x}_l)|X\right) \\
\label{eq:G-tilde-3} && + \mathbb{E}\left(\psi(W^T\bar{x}_i)(\psi(W^Tx_l)-\psi(W^T\bar{x}_l))|X\right) \\
\label{eq:G-tilde-4} && + \mathbb{E}\left((\psi(W^Tx_i)-\psi(W^T\bar{x}_i))(\psi(W^Tx_l)-\psi(W^T\bar{x}_l))|X\right).
\end{eqnarray}
For first term on the right hand side of (\ref{eq:G-tilde-1}), we observe that $\mathbb{E}\left(\psi(W^T\bar{x}_i)\psi(W^T\bar{x}_l)|X\right)$ is a function of $\frac{\bar{x}_i^T\bar{x}_l}{d}$, and thus we can write
$$\mathbb{E}\left(\psi(W^T\bar{x}_i)\psi(W^T\bar{x}_l)|X\right)=f\left(\frac{\bar{x}_i^T\bar{x}_l}{d}\right),$$
where
$$f(\rho) = \begin{cases}
\mathbb{E}\psi(\sqrt{1-\rho}U+\sqrt{\rho}Z)\psi(\sqrt{1-\rho}V+\sqrt{\rho}Z), & \rho \geq 0, \\
\mathbb{E}\psi(\sqrt{1+\rho}U-\sqrt{-\rho}Z)\psi(\sqrt{1+\rho}V+\sqrt{-\rho}Z), & \rho < 0,
\end{cases}$$
with $U,V,Z\stackrel{iid}{\sim} N(0,1)$. By some direct calculations, we have $f(0)=0$, $f'(0)=(\mathbb{E}\psi'(Z))^2$, and $\sup_{|\rho|\leq 0.2}|f''(\rho)|\lesssim 1$. Therefore, as long as $|\bar{x}_i^T\bar{x}_l|/d\leq 1/5$,
$$\left|f\left(\frac{\bar{x}_i^T\bar{x}_l}{d}\right)-(\mathbb{E}\psi'(Z))^2\frac{\bar{x}_i^T\bar{x}_l}{d}\right|\leq C_1\left|\frac{\bar{x}_i^T\bar{x}_l}{d}\right|^2,$$
for some constant $C_1>0$. By Lemma \ref{lem:inner-prod}, we know that $\max_{i\neq l}|\bar{x}_i^T\bar{x}_l|/d\lesssim \sqrt{\frac{\log n}{d}}\leq 1/5$ with high probability, which then implies
\begin{equation}
\sum_{i\neq l}\left(\mathbb{E}\left(\psi(W^T\bar{x}_i)\psi(W^T\bar{x}_l)|X\right)-\bar{G}_{il}\right)^2 \leq C_1\sum_{i\neq l}\left|\frac{\bar{x}_i^T\bar{x}_l}{d}\right|^4. \label{eq:G-H}
\end{equation}
The term on the right hand side can be bounded by
$$\sum_{i\neq l}\left|\frac{\bar{x}_i^T\bar{x}_l}{d}\right|^4\leq \frac{d}{\min_{1\leq i\leq n}\|x_i\|^2}\sum_{i\neq l}\left|\frac{x_i^Tx_l}{d}\right|^4.$$
By Lemma \ref{lem:chi-squared}, $\frac{d}{\min_{1\leq i\leq n}\|x_i\|^2}\lesssim 1$ with high probability. By integrating out the probability tail bound of $|x_i^Tx_l|$ given in Lemma \ref{lem:inner-prod}, we have $\sum_{i\neq l}\mathbb{E}\left|\frac{x_i^Tx_l}{d}\right|^4\lesssim \frac{n^2}{d^2}$, and by Markov's inequality, we have $\sum_{i\neq l}\left|\frac{x_i^Tx_l}{d}\right|^4\lesssim \frac{n^2}{d^2}$ with high probability.

We also need to analyze the contributions of (\ref{eq:G-tilde-2}) and (\ref{eq:G-tilde-3}). We can write (\ref{eq:G-tilde-2}) as
\begin{eqnarray}
\label{eq:second-order-G-1} && \mathbb{E}\left[\psi(W^T\bar{x}_l)\psi'(W^T\bar{x}_i)W^T(x_i-\bar{x}_i)|X\right] \\
\label{eq:second-order-G-2} && + \frac{1}{2}\mathbb{E}\left[\psi(W^T\bar{x}_i)\psi''(t_i)|W^T(x_i-\bar{x}_i)|^2|X\right],
\end{eqnarray}
where $t_i$ is some random variable between $W^Tx_i$ and $W^T\bar{x}_i$. The first term (\ref{eq:second-order-G-1}) can be expressed as
$$\left(\frac{\|x_i\|}{\sqrt{d}}-1\right)\mathbb{E}\left[\psi(W^T\bar{x}_l)\psi'(W^T\bar{x}_i)W^T\bar{x}_i|X\right]=\left(\frac{\|x_i\|}{\sqrt{d}}-1\right)g\left(\frac{\bar{x}_i^T\bar{x}_l}{d}\right),$$
where the function $g$ satisfies $g(0)=0$ and $\sup_{|\rho|\leq 0.2}|g'(\rho)|\lesssim 1$, and thus
$$\left|g\left(\frac{\bar{x}_i^T\bar{x}_l}{d}\right)\right|\lesssim \left|\frac{\bar{x}_i^T\bar{x}_l}{d}\right|\lesssim \left|\frac{x_i^Tx_l}{d}\right|,$$
because of the high probability bound $\max_{i\neq l}|\bar{x}_i^T\bar{x}_l|/d\lesssim \sqrt{\frac{\log n}{d}}\leq 1/5$.
Therefore,
\begin{eqnarray}
\nonumber && \sum_{i\neq l}\left(\mathbb{E}\left[\psi(W^T\bar{x}_l)\psi'(W^T\bar{x}_i)W^T(x_i-\bar{x}_i)|X\right]\right)^2 \\
\nonumber &\lesssim& \sum_{i\neq l}\left|\frac{\|x_i\|}{\sqrt{d}}-1\right|^2\left|\frac{{x}_i^T{x}_l}{d}\right|^2 \\
\label{eq:G-H-mixed} &\lesssim& n\sum_{i=1}^n \left|\frac{\|x_i\|}{\sqrt{d}}-1\right|^4 + \sum_{i\neq l}\left|\frac{{x}_i^T{x}_l}{d}\right|^4.
\end{eqnarray}
By integrating out the probability tail bound of Lemma \ref{lem:chi-squared}, we have $\mathbb{E}\left|\frac{\|x_i\|}{\sqrt{d}}-1\right|^4\lesssim d^{-2}$. We also have $\mathbb{E}\left|\frac{{x}_i^T{x}_l}{d}\right|^4\lesssim d^{-2}$. Hence, $\sum_{i\neq l}\left(\mathbb{E}\left[\psi(W^T\bar{x}_l)\psi'(W^T\bar{x}_i)W^T(x_i-\bar{x}_i)|X\right]\right)^2\lesssim \frac{n^2}{d^2}$ with high probability. To bound (\ref{eq:second-order-G-2}), we observe that
$$\frac{1}{2}\mathbb{E}\left[\psi(W^T\bar{x}_i)\psi''(t_i)|W^T(x_i-\bar{x}_i)|^2|X\right]\leq \mathbb{E}(|W^T(x_i-\bar{x}_i)|^2|X)=\left|\frac{\|x_i\|}{\sqrt{d}}-1\right|^2,$$
where the inequality above is by $\sup_x|\psi(x)|\leq 1$ and $\sup_x|\psi''(x)|\leq 2$. Since $\mathbb{E}\left|\frac{\|x_i\|}{\sqrt{d}}-1\right|^4\lesssim d^{-2}$, we then have
$$\sum_{i\neq l}\left(\frac{1}{2}\mathbb{E}\left[\psi(W^T\bar{x}_i)\psi''(t_i)|W^T(x_i-\bar{x}_i)|^2|X\right]\right)^2\lesssim \frac{n^2}{d^2},$$
with high probability. With a similar analysis of (\ref{eq:G-tilde-3}), we conclude that the contributions of (\ref{eq:G-tilde-2}) and (\ref{eq:G-tilde-3}) is at most at the order of $\frac{n^2}{d^2}$ with respect to the squared Frobenius norm.

Finally, we show that the contribution of (\ref{eq:G-tilde-4}) is negligible. Note that
\begin{eqnarray*}
&& \left|\mathbb{E}\left((\psi(W^Tx_i)-\psi(W^T\bar{x}_i))(\psi(W^Tx_l)-\psi(W^T\bar{x}_l))|X\right)\right| \\
&\leq& \left|\frac{\|x_i\|}{\sqrt{d}}-1\right|\left|\frac{\|x_l\|}{\sqrt{d}}-1\right|\mathbb{E}\left(|W^T\bar{x}_i||W^T\bar{x}_l||X\right) \\
&\leq& \left|\frac{\|x_i\|}{\sqrt{d}}-1\right|\left|\frac{\|x_l\|}{\sqrt{d}}-1\right|,
\end{eqnarray*}
where the last inequality is by $\mathbb{E}\left(|W^T\bar{x}_i||W^T\bar{x}_l||X\right)\leq \frac{1}{2}\mathbb{E}(|W^T\bar{x}_i|^2+|W^T\bar{x}_l|^2|X)=1$.
Since
$$\sum_{i\neq l}\mathbb{E}\left(\frac{\|x_i\|}{\sqrt{d}}-1\right)^2\mathbb{E}\left(\frac{\|x_l\|}{\sqrt{d}}-1\right)^2\lesssim \frac{n^2}{d^2},$$
we can conclude that (\ref{eq:G-tilde-4}) is bounded by $O\left(\frac{n^2}{d^2}\right)$ with high probability by Markov's inequality.

Combining the analyses of (\ref{eq:G-tilde-1}), (\ref{eq:G-tilde-2}), (\ref{eq:G-tilde-3}) and (\ref{eq:G-tilde-4}), we conclude that $\sum_{i\neq l}(\wt{G}_{il}-\bar{G}_{il})^2\lesssim \frac{n^2}{d^2}$ with high probability. Together with (\ref{eq:G-G-tilde}) and (\ref{eq:G-diag}), we obtain the desired bound for $\opnorm{G-\bar{G}}$.
For the last conclusion, since $\opnorm{G-\bar{G}}$ is sufficiently small, it is sufficient to show $1\lesssim\lambda_{\min}(\bar{G})\leq\lambda_{\max}(\bar{G})\lesssim 1$. The bound $1\lesssim\lambda_{\min}(\bar{G})$ is a direct consequence of the definition of $\bar{G}$. To upper bound $\lambda_{\max}(\bar{G})$, we have
\begin{eqnarray*}
\lambda_{\max}(\bar{G}) &\lesssim& 1 + \max_{\|v\|=1}\sum_{i=1}^n\sum_{l=1}^nv_iv_l\frac{x_i^Tx_l}{\|x_i\|\|x_l\|} \\
&\lesssim& 1 + \max_{\|v\|=1}\sum_{i=1}^n\sum_{l=1}^nv_iv_l\frac{x_i^Tx_l}{d} \\
&=& 1 + \opnorm{X}^2/d \\
&\lesssim& 1 + \frac{n}{d},
\end{eqnarray*}
with high probability, where the last inequality is by \cite{davidson2001local}. The proof is complete.
\end{proof}

\begin{proof}[Proof of Corollary \ref{cor:repair-rf}]
Since $\wh{\theta}$ belongs to the row space of $\wt{X}$, there exists some $u^*\in\mathbb{R}^n$ such that $\wh{\theta}=\wt{X}^Tu^*$.
By Theorem \ref{thm:main-improved} and Lemma \ref{lem:design-rf}, we know that $\wt{u}=u^*$ with high probability, and therefore $\wt{\theta}=\wt{X}^T\wt{u}=\wt{X}^Tu^*=\wh{\theta}$.
\end{proof}

\subsection{Proof of Theorem \ref{thm:nn-grad}}

To prove Theorem \ref{thm:nn-grad}, we need the following kernel random matrix result.
\begin{lemma}\label{lem:lim-H}
Consider independent $W_1,\ldots,W_p\sim N(0,d^{-1}I_d)$, $x_1,\ldots,x_n\sim N(0,I_d)$, and parameters $\beta_1,\ldots,\beta_p\sim N(0,1)$. We define the matrices $H, \bar{H}\in\mathbb{R}^{n\times n}$ by
\begin{eqnarray*}
H_{il} &=& \frac{x_i^Tx_l}{d}\frac{1}{p}\sum_{j=1}^p\beta_j^2\psi'(W^T_jx_i)\psi'(W_j^Tx_l), \\
\bar{H}_{il} &=& |\mathbb{E}\psi'(Z)|^2\frac{x_i^Tx_l}{\|x_i\|\|x_l\|} + \left(\mathbb{E}|\psi'(Z)|^2-|\mathbb{E}\psi'(Z)|^2\right)\mathbb{I}\{i=l\},
\end{eqnarray*}
where $Z\sim N(0,1)$.
Assume $d/\log n$ is sufficiently large, and then
$$\opnorm{H-\bar{H}}^2 \lesssim \frac{n^2}{pd} + \frac{n}{p} + \frac{\log n}{d} + \frac{n^2}{d^2},$$
with high probability. If we further assume that $d/n$ and $p/n$ are sufficiently large, we will also have
\begin{equation}
0.09\leq\lambda_{\min}(H)\leq \lambda_{\max}(H)\lesssim 1, \label{eq:spectrum-H-bound}
\end{equation}
with high probability.
\end{lemma}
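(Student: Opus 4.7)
My approach mirrors the proof of Lemma \ref{lem:lim-G}, introducing an intermediate matrix obtained by marginalizing over $\beta$ and $W$ while conditioning on $X$, then invoking the kernel-limit calculations on the resulting object.

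\emph{Step 1: Reduce to the $\beta$-averaged matrix.} Define $\tilde{H} \in \mathbb{R}^{n\times n}$ with entries
\[
\tilde{H}_{il} = \frac{x_i^T x_l}{d}\, \mathbb{E}\bigl(\psi'(W^T x_i)\psi'(W^T x_l)\,\big|\,X\bigr),
\]
which equals $\mathbb{E}(H_{il}\mid X)$ because the $\beta_j$ are independent of $(W,X)$ with unit variance. Since $|\psi'|\leq 1$ for $\tanh$ and $\mathbb{E}\beta_j^4 = 3$, the conditional variance satisfies $\Var(H_{il}\mid X)\leq \frac{3}{p}(x_i^T x_l/d)^2$. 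Taking unconditional expectations and using $\mathbb{E}(x_i^T x_l)^2 \lesssim d$ for $i\neq l$ and $\mathbb{E}\|x_i\|^4\lesssim d^2$, Markov's inequality yields
\[
\opnorm{H - \tilde{H}}^2 \leq \fnorm{H - \tilde{H}}^2 \lesssim \frac{n^2}{pd} + \frac{n}{p},
\]
with high probability.

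\emph{Step 2: Kernel approximation of $\tilde{H}$.} Setting $\bar{x}_i = \sqrt{d}\, x_i / \|x_i\|$, I mimic the decomposition (\ref{eq:G-tilde-1})--(\ref{eq:G-tilde-4}) from the proof of Lemma \ref{lem:lim-G}. The principal term is $\mathbb{E}(\psi'(W^T\bar x_i)\psi'(W^T \bar x_l)\mid X) = h(\bar x_i^T\bar x_l/d)$, where
\[
h(\rho) = \mathbb{E}\,\psi'(\sqrt{1-\rho}\,U + \sqrt{\rho}\,Z)\,\psi'(\sqrt{1-\rho}\,V + \sqrt{\rho}\,Z), \quad U,V,Z\stackrel{\mathrm{iid}}{\sim} N(0,1),
\]
satisfying $h(0) = |\mathbb{E}\psi'(Z)|^2$ and $\sup_{|\rho|\leq 1/5}(|h'|+|h''|)\lesssim 1$ (since $\psi' = \mathrm{sech}^2$ is smooth and bounded together with its derivatives). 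The three remainder-term analyses in (\ref{eq:second-order-G-1})--(\ref{eq:G-H-mixed}) carry over verbatim with $\psi$ replaced by $\psi'$, contributing $O(n^2/d^2)$ to $\fnorm{\cdot}^2$. For the diagonal, $\tilde{H}_{ii} = (\|x_i\|^2/d)\,\mathbb{E}_{U\sim N(0,\|x_i\|^2/d)}|\psi'(U)|^2$, so a TV bound plus $\max_i|\|x_i\|^2/d - 1|\lesssim \sqrt{\log n/d}$ (Lemma \ref{lem:chi-squared}) gives $\max_i|\tilde{H}_{ii} - \bar{H}_{ii}|\lesssim \sqrt{\log n/d}$. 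For off-diagonals, the leading term of $\tilde{H}_{il}$ reduces to $|\mathbb{E}\psi'(Z)|^2 \cdot (x_i^T x_l/d)$, while $\bar{H}_{il} = |\mathbb{E}\psi'(Z)|^2\cdot x_i^T x_l/(\|x_i\|\|x_l\|)$; the gap $(1 - d/(\|x_i\|\|x_l\|))\,x_i^T x_l/d$ yields an $O(n^2/d^2)$ Frobenius contribution via Lemma \ref{lem:inner-prod}. Summing these pieces and combining with Step 1 gives the claimed bound on $\opnorm{H - \bar H}^2$.

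\emph{Step 3: Spectral bounds.} Under the additional assumptions $p/n,d/n$ sufficiently large, $\opnorm{H-\bar H}$ is a small constant, so it suffices to control $\bar H$. The matrix $(x_i^T x_l/(\|x_i\|\|x_l\|))_{i,l}$ is a Gram matrix of unit vectors, hence positive semidefinite, so
\[
\lambda_{\min}(\bar H) \geq \mathbb{E}|\psi'(Z)|^2 - |\mathbb{E}\psi'(Z)|^2 = \Var(\psi'(Z)),
\]
and for $\psi = \tanh$ a direct numerical computation yields $\Var(\mathrm{sech}^2(Z)) > 0.09$. For the upper bound, $\lambda_{\max}(\bar H) \lesssim 1 + \opnorm{XX^T}/d \lesssim 1 + n/d$ with high probability, by \cite{davidson2001local} combined with $\min_i \|x_i\|^2/d \gtrsim 1$ from Lemma \ref{lem:chi-squared}.

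\emph{Main obstacle.} The trickiest piece is Step 2: because $\psi'$ is even with nonzero mean, the leading constant $h(0)=|\mathbb{E}\psi'(Z)|^2$ does not vanish (contrast with $f(0)=0$ in Lemma \ref{lem:lim-G}), so one must track this constant through the Taylor expansion and see that it combines with the $x_i^T x_l/d$ prefactor to reconstitute the correct unit-norm kernel $\bar H$, with all mismatches absorbed into lower-order terms.
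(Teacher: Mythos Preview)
Your proposal is correct and follows essentially the same approach as the paper: define the intermediate $\tilde H=\mathbb{E}(H\mid X)$, bound $\fnorm{H-\tilde H}^2$ by a variance calculation, handle diagonals via a TV bound, and treat off-diagonals by replacing $x_i$ with $\bar x_i$ and Taylor-expanding the resulting bivariate-Gaussian kernel in the correlation parameter. The only organizational difference is that the paper absorbs the prefactor $x_i^Tx_l/d$ into its function $f(\rho)=\rho\,h(\rho)$, so that $f(0)=0$ and $f'(0)=(\mathbb{E}\psi'(Z))^2$, which delivers the term $\bar H_{il}$ directly from a first-order Taylor expansion; you instead keep $h$ separate with $h(0)=|\mathbb{E}\psi'(Z)|^2\neq 0$ and reconcile the prefactors $x_i^Tx_l/d$ versus $x_i^Tx_l/(\|x_i\|\|x_l\|)$ afterward, which requires one extra (but easy) bound---so the ``main obstacle'' you flag is in fact sidestepped by the paper's packaging.
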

\begin{proof}
Define $\wt{H}\in\mathbb{R}^{n\times n}$ with entries $\wt{H}_{il}=\frac{x_i^Tx_l}{d}\mathbb{E}\left(\psi'(W^Tx_i)\psi'(W^Tx_l)\big|X\right)$, and we first bound the difference between $H$ and $\wt{H}$. Note that
$$\mathbb{E}(H_{il}-\wt{H}_{il})^2=\mathbb{E}\Var(H_{il}|X)\leq \frac{1}{p}\mathbb{E}\left(\frac{|x_i^Tx_l|^2}{d^2}\beta^4\right)\leq\begin{cases}
\frac{3}{pd}, & i\neq l, \\
9p^{-1}, & i=l.
\end{cases}$$
We then have
$$\mathbb{E}\opnorm{H-\wt{H}}^2 \leq \mathbb{E}\fnorm{H-\wt{H}}^2 \leq \frac{3n^2}{pd} + \frac{9n}{p}.$$
By Markov's inequality,
\begin{equation}
\opnorm{H-\wt{H}}^2 \lesssim \frac{n^2}{pd} + \frac{n}{p}, \label{eq:H-H-tilde}
\end{equation}
with high probability.

Next, we study the diagonal entries of $\wt{H}$. For any $i\in[n]$,
$$\wt{H}_{ii}=\frac{\|x_i\|^2}{d}\mathbb{E}(|\psi'(W^Tx_i)|^2|X)=\frac{\|x_i\|^2}{d}\mathbb{E}_{U\sim N(0,\|x_i\|^2/d)}|\psi'(U)|^2.$$
Since $\sup_x|\psi'(x)|\leq 1$ and $\sup_x|\psi''(x)|\leq 2$, we have
\begin{eqnarray*}
|\wt{H}_{ii} - \bar{H}_{ii}| &\leq& \left|\frac{\|x_i\|^2}{d}-1\right| + \left|\mathbb{E}_{U\sim N(0,\|x_i\|^2/d)}|\psi'(U)|^2 - \mathbb{E}_{U\sim N(0,1)}|\psi'(U)|^2\right| \\
&\leq& \left|\frac{\|x_i\|^2}{d}-1\right| + 2\TV\left(N(0,\|x_i\|^2/d), N(0,1)\right) \\
&\leq& 4\left|\frac{\|x_i\|^2}{d}-1\right|
\end{eqnarray*}
Similar to (\ref{eq:G-diag}), Lemma \ref{lem:chi-squared} and a union bound argument imply
\begin{equation}
\max_{1\leq i\leq n}|\wt{H}_{ii}-\bar{H}_{ii}|\lesssim \sqrt{\frac{\log n}{d}}, \label{eq:H-diag}
\end{equation}
with high probability.

Now we analyze the off-diagonal entries. Recall the notation $\bar{x}_i=\frac{\sqrt{d}}{\|x_i\|}x_i$. For any $i\neq l$, we have
\begin{eqnarray}
\label{eq:H-tilde-1} \wt{H}_{il} &=& \frac{\bar{x}_i^T\bar{x}_l}{d}\mathbb{E}\left(\psi'(W^T\bar{x}_i)\psi'(W^T\bar{x}_l)\big|X\right) \\
\label{eq:H-tilde-2} && + \frac{x_i^Tx_l}{d}\mathbb{E}\left(\psi'(W^T{x}_i)\psi'(W^T{x}_l)-\psi'(W^T\bar{x}_i)\psi'(W^T\bar{x}_l)\big|X\right) \\
\label{eq:H-tilde-3} && + \left(\frac{\|x_i\|\|x_l\|}{d}-1\right)\frac{\bar{x}_i^T\bar{x}_l}{d}\mathbb{E}\left(\psi'(W^T\bar{x}_i)\psi'(W^T\bar{x}_l)\big|X\right).
\end{eqnarray}
For the first term on the right hand side of (\ref{eq:H-tilde-1}), we observe that $\frac{\bar{x}_i^T\bar{x}_l}{d}\mathbb{E}\left(\psi'(W^T\bar{x}_i)\psi'(W^T\bar{x}_l)\big|X\right)$ is a function of $\frac{\bar{x}_i^T\bar{x}_l}{d}$, and thus we can write
$$\frac{\bar{x}_i^T\bar{x}_l}{d}\mathbb{E}\left(\psi'(W^T\bar{x}_i)\psi'(W^T\bar{x}_l)\big|X\right)=f\left(\frac{\bar{x}_i^T\bar{x}_l}{d}\right),$$
where
$$f(\rho)=\begin{cases}
\rho\mathbb{E}\psi'(\sqrt{1-\rho}U+\sqrt{\rho}Z)\psi'(\sqrt{1-\rho}V+\sqrt{\rho}Z), & \rho \geq 0, \\
\rho\mathbb{E}\psi'(\sqrt{1+\rho}U-\sqrt{-\rho}Z)\psi'(\sqrt{1+\rho}V+\sqrt{-\rho}Z), & \rho < 0,
\end{cases}$$
with $U,V,Z\stackrel{iid}{\sim} N(0,1)$. By some direct calculations, we have $f(0)=0$, $f'(0)=(\mathbb{E}\psi'(Z))^2$, and $\sup_{|\rho|\leq 0.2}|f''(\rho)|\lesssim 1$. Therefore, using the same analysis that leads to the bound for (\ref{eq:G-H}), we have
$$\sum_{i\neq l}\left(\frac{\bar{x}_i^T\bar{x}_l}{d}\mathbb{E}\left(\psi'(W^T\bar{x}_i)\psi'(W^T\bar{x}_l)\big|X\right)-\bar{H}_{il}\right)^2 \lesssim \sum_{i\neq l}\left|\frac{\bar{x}_i^T\bar{x}_l}{d}\right|^4\lesssim \frac{n^2}{d^2},$$
with high probability.

For (\ref{eq:H-tilde-2}), we note that
\begin{align*}
 \mathbb{E}\Bigl(\psi'(W^T{x}_i)\psi'(W^T{x}_l)&-\psi'(W^T\bar{x}_i)\psi'(W^T\bar{x}_l)\big|X\Bigr) \\
&\leq \mathbb{E}\left(|\psi'(W^T{x}_i)-\psi'(W^T\bar{x}_i)|\big|X\right) + \mathbb{E}\left(|\psi'(W^T{x}_l)-\psi'(W^T\bar{x}_l)|\big|X\right) \\
&\leq 2\mathbb{E}\left(|W^T(x_i-\bar{x}_i)|\big|X\right) + 2\mathbb{E}\left(|W^T(x_l-\bar{x}_l)|\big|X\right) \\
&= 2\left|\frac{\|x_i\|}{\sqrt{d}}-1\right| + 2\left|\frac{\|x_l\|}{\sqrt{d}}-1\right|,
\end{align*}
where we have used $\sup_x|\psi'(x)|\leq 1$ and $\sup_x|\psi''(x)|\leq 2$ in the above inequalities. Therefore, the contribution of (\ref{eq:H-tilde-2}) in terms of squared Frobenius norm is bounded by
\begin{eqnarray*}
&& \sum_{i\neq l}\left|\frac{x_i^Tx_l}{d}\right|^2\left(2\left|\frac{\|x_i\|}{\sqrt{d}}-1\right| + 2\left|\frac{\|x_l\|}{\sqrt{d}}-1\right|\right)^2 \\
&\lesssim& \sum_{i\neq l}\left|\frac{x_i^Tx_l}{d}\right|^4 + n\sum_{i=1}^n\left|\frac{\|x_l\|}{\sqrt{d}}-1\right|^4 \\
&\lesssim& \frac{n^2}{d^2},
\end{eqnarray*}
with high probability, and the last inequality above uses the same analysis that bounds (\ref{eq:G-H-mixed}).

Finally, since (\ref{eq:H-tilde-3}) can be bounded by $\left|\frac{\|x_i\|\|x_l\|}{d}-1\right|\left|\frac{\bar{x}_i^T\bar{x}_l}{d}\right|$, its contribution in terms of squared Frobenius norm is bounded by
\begin{eqnarray*}
&& \sum_{i\neq l}\left|\frac{\|x_i\|\|x_l\|}{d}-1\right|^2\left|\frac{\bar{x}_i^T\bar{x}_l}{d}\right|^2 \\
&\lesssim& \sum_{i\neq l}\left|\frac{\|x_i\|\|x_l\|}{d}-1\right|^4 + \sum_{i\neq l}\left|\frac{\bar{x}_i^T\bar{x}_l}{d}\right|^4.
\end{eqnarray*}
We have already shown that $\sum_{i\neq l}\left|\frac{\bar{x}_i^T\bar{x}_l}{d}\right|^4\lesssim \frac{n^2}{d^2}$ in the analysis of (\ref{eq:G-H}). For the first term on the right hand side of the above inequality, we use Lemma \ref{lem:inner-prod} and obtain a probability tail bound for $|\|x_i\|\|x_l\|-d|$. By integrating out this tail bound, we have
$$\sum_{i\neq l}\mathbb{E}\left(\frac{\|x_i\|\|x_l\|}{d}-1\right)^4\lesssim \frac{n^2}{d^2},$$
which, by Markov's inequality, implies $\sum_{i\neq l}\left(\frac{\|x_i\|\|x_l\|}{d}-1\right)^4\lesssim \frac{n^2}{d^2}$ with high probability.

Combining the analyses of (\ref{eq:H-tilde-1}), (\ref{eq:H-tilde-2}), and (\ref{eq:H-tilde-3}), we conclude that $\sum_{i\neq l}(\wt{H}_{il}-\bar{H}_{il})^2\lesssim \frac{n^2}{d^2}$ with high probability. Together with (\ref{eq:H-H-tilde}) and (\ref{eq:H-diag}), we obtain the desired bound for $\opnorm{H-\bar{H}}$.
The last conclusion (\ref{eq:spectrum-H-bound}) follows a similar argument used in the proof of Lemma \ref{lem:lim-G}.
\end{proof}

Now we are ready to prove Theorem \ref{thm:nn-grad}.
\begin{proof}[Proof of Theorem \ref{thm:nn-grad}]
We first establish some high probability events:
\begin{eqnarray}
\label{eq:r1e1} \max_{1\leq j\leq p}|\beta_j(0)| &\leq& 2\sqrt{\log p}, \\
\label{eq:r1e2} \max_{k\in\{1,2,3\}}\frac{1}{p}\sum_{j=1}^p|\beta_j(0)|^k &\lesssim& 1, \\
\label{eq:r1e3} \sum_{i=1}^n\|x_i\|^2 &\leq& 2nd, \\
\label{eq:r1e4} \max_{1\leq i\leq n}\|x_i\| &\lesssim& \sqrt{d}, \\
\label{eq:r1e5} \max_{1\leq i\neq l\leq n}\left|\frac{x_i^Tx_l}{d}\right| &\lesssim& d^{-1/2}, \\
\label{eq:r1e6}\max_{1\leq l\leq n}\sum_{i=1}^n\left|\frac{x_i^Tx_l}{d}\right| &\lesssim& 1+\frac{n}{\sqrt{d}}, \\
 \label{eq:r1e7}\|u(0)\| &\leq& \sqrt{n}(\log p)^{1/4}, \\
 \label{eq:r1e8}\max_{1\leq j\leq p}\sum_{i=1}^n|W_j(0)^Tx_i|^2 &\leq& 6n+18\log p, \\
 \label{eq:r1e9}\max_{1\leq i\leq n}\frac{1}{p}\sum_{j=1}^p\mathbb{I}\{|W_j(0)^Tx_i|\leq R_1\|x_i\|\} &\lesssim& \sqrt{d}R_1 + \sqrt{\frac{\log n}{p}}.
\end{eqnarray}
The bound (\ref{eq:r1e1}) is a consequence of a standard Gaussian tail inequality and a union bound argument. The second bound (\ref{eq:r1e2}) is by Markov's inequality and the fact that $\frac{1}{p}\sum_{j=1}^p\mathbb{E}|\beta_j(0)|^k\lesssim 1$. Then, we have (\ref{eq:r1e3}), (\ref{eq:r1e4}) and (\ref{eq:r1e8}) derived from Lemma \ref{lem:chi-squared} and a union bound. Similarly, (\ref{eq:r1e5}) is by Lemma \ref{lem:inner-prod} and a union bound. The bound (\ref{eq:r1e6}) is a direct consequence of (\ref{eq:r1e4}) and (\ref{eq:r1e5}). To obtain (\ref{eq:r1e7}), we note that $\mathbb{E}|u_i(0)|^2 = \mathbb{E}\Var(u_i(0)|X) \lesssim 1$, which then implies (\ref{eq:r1e7}) by Markov's inequality. Finally, for (\ref{eq:r1e9}), we have
\begin{eqnarray*}
&&\max_{1\leq i\leq n}\frac{1}{p}\sum_{j=1}^p\mathbb{I}\{|W_j(0)^Tx_i|\leq R_1\|x_i\|\} \leq \mathbb{P}\left(|N(0,1)|\leq \sqrt{d}R_1\right) \\
&&+ \max_{1\leq i\leq n}\frac{1}{p}\sum_{j=1}^p\left(\mathbb{I}\{|W_j(0)^Tx_i|\leq R_1\|x_i\|\}-\mathbb{P}\left(|N(0,1)|\leq \sqrt{d}R_1\right)\right),
\end{eqnarray*}
where the first term $\mathbb{P}\left(|N(0,1)|\leq \sqrt{d}R_1\right)$ can be bounded by $O(\sqrt{d}R_1)$, and the second term can be bounded by $\sqrt{\frac{\log n}{p}}$ according to Lemma \ref{lem:hoeffding} and a union bound.

Now we are ready to prove the main result. We introduce the function
$$v_i(t)=\frac{1}{\sqrt{p}}\sum_{j=1}^p\beta_j(t)\psi(W_j(t-1)^Tx_i).$$
Besides (\ref{eq:iter-parameter}), (\ref{eq:iter-parameter-beta}) and (\ref{eq:iter-function}), we will also establish
\begin{equation}
\|y-v(t)\|^2 \leq \left(1-\frac{\gamma}{8}\right)^t\|y-v(0)\|^2. \label{eq:v-seq}
\end{equation}
It suffices to show the following to claims are true.
\begin{thm1}
With high probability, for any integer $k\geq 1$, as long as (\ref{eq:v-seq}), (\ref{eq:iter-function}), (\ref{eq:iter-parameter}) and (\ref{eq:iter-parameter-beta}) hold for all $t\leq k$, then (\ref{eq:iter-parameter-beta}) holds for $t=k+1$.
\end{thm1}
\begin{thm2}
With high probability, for any integer $k\geq 1$, as long as (\ref{eq:v-seq}), (\ref{eq:iter-function}) and (\ref{eq:iter-parameter}) hold for all $t\leq k$, and (\ref{eq:iter-parameter-beta}) holds for all $t\leq k+1$, then (\ref{eq:v-seq}) holds for $t=k+1$.
\end{thm2}
\begin{thm3}
With high probability, for any integer $k\geq 1$, as long as (\ref{eq:iter-function}) and (\ref{eq:iter-parameter}) hold for all $t\leq k$, and (\ref{eq:iter-parameter-beta}) and (\ref{eq:v-seq}) hold for all $t\leq k+1$, then (\ref{eq:iter-parameter}) holds for $t=k+1$.
\end{thm3}
\begin{thm4}
With high probability, for any integer $k\geq 1$, as long as (\ref{eq:iter-function}) holds for all $t\leq k$, and (\ref{eq:v-seq}), (\ref{eq:iter-parameter}) and (\ref{eq:iter-parameter-beta}) hold for all $t\leq k+1$, then (\ref{eq:iter-function}) holds for $t=k+1$.
\end{thm4}
\noindent With all the claims above being true, we can then deduce (\ref{eq:iter-parameter}), (\ref{eq:iter-parameter-beta}), (\ref{eq:iter-function}) and (\ref{eq:v-seq}) for all $t\geq 1$ by mathematical induction.

\paragraph{Proof of Claim A.}
By triangle inequality and the gradient formula,
\begin{eqnarray*}
|\beta_j(k+1)-\beta_j(0)| &\leq& \sum_{t=0}^k|\beta_j(t+1)-\beta_j(t)| \\
&\leq& \frac{\gamma}{\sqrt{p}}\sum_{t=0}^k\left|\sum_{i=1}^n(u_i(t)-y_i)\psi(W_j(t)^Tx_i)\right| \\
&\leq& \frac{\gamma}{\sqrt{p}}\sum_{t=0}^k\sum_{i=1}^n|y_i-u_i(t)||W_j(t)^Tx_i| \\
&\leq& \frac{\gamma}{\sqrt{p}}\sum_{t=0}^k\|y-u(t)\|\sqrt{\sum_{i=1}^n|W_j(t)^Tx_i|^2} \\
&\leq& \frac{\gamma}{\sqrt{p}}\sum_{t=0}^k\|y-u(t)\|\left(R_1\sqrt{\sum_{i=1}^n\|x_i\|^2}+\sqrt{\sum_{i=1}^n|W_j(0)^Tx_i|^2}\right) \\
&\leq& \gamma\sqrt{\frac{7n+18\log p}{p}}\sum_{t=0}^k\|y-u(t)\| \\
&\leq& 16\sqrt{\frac{7n+18\log p}{p}}\|y-u(0)\| \\
&\leq& 32\sqrt{\frac{n^2\log p}{p}} = R_2,
\end{eqnarray*}
where we have used (\ref{eq:r1e4}), (\ref{eq:r1e7}) and (\ref{eq:r1e8}).
Hence, (\ref{eq:iter-parameter-beta}) holds for $t=k+1$, and Claim A is true.

\paragraph{Proof of Claim B.} We omit this step, because the analysis uses the same argument as that of the proof of Claim D.

\paragraph{Proof of Claim C.} We bound $\|W_j(k+1)-W_j(0)\|$ by $\sum_{t=0}^k\|W_j(t+1)-W_j(t)\|$. Then by the gradient descent formula, we have
\begin{eqnarray*}
\|W_j(k+1)-W_j(0)\| &\leq& \frac{\gamma}{d\sqrt{p}}\sum_{t=0}^k\left\|\beta_j(t+1)\sum_{i=1}^n(v_i(t+1)-y_i)\psi'(W_j(t)^Tx_i)x_i\right\| \\
&\leq& \frac{\gamma}{d\sqrt{p}}\sum_{t=0}^k|\beta_j(t+1)|\sum_{i=1}^n|y_i-v_i(t+1)|\|x_i\| \\
&\leq& \frac{\gamma}{d\sqrt{p}}(|\beta_j(0)|+R_2)\sqrt{\sum_{i=1}^n\|x_i\|^2}\sum_{t=0}^k\|y-v(t+1)\| \\
&\leq& \frac{16}{d\sqrt{p}}(|\beta_j(0)|+R_2)\sqrt{\sum_{i=1}^n\|x_i\|^2}\|y-v(0)\| \\
&\leq& \frac{100n\log p}{\sqrt{pd}} = R_1,
\end{eqnarray*}
where we have used (\ref{eq:r1e1}), (\ref{eq:r1e3}) and (\ref{eq:r1e7}) in the above inequalities. Thus, Claim C is true.

\paragraph{Proof of Claim D.}
We first analyze $u(k+1)-u(k)$. For each $i\in[n]$, we have
\begin{eqnarray*}
&& u_i(k+1) - u_i(k) \\
&=& \frac{1}{\sqrt{p}}\sum_{j=1}^p\beta_j(k+1)\left(\psi(W_j(k+1)^Tx_i)-\psi(W_j(k)^Tx_i)\right) \\
&& + \frac{1}{\sqrt{p}}\sum_{j=1}^p(\beta_j(k+1)-\beta_j(k))\psi(W_j(k)^Tx_i) \\
&=& \frac{1}{\sqrt{p}}\sum_{j=1}^p\beta_j(k+1)(W_j(k+1)-W_j(k))^Tx_i\psi'(W_j(k)^Tx_i) \\
&& + \frac{1}{\sqrt{p}}\sum_{j=1}^p(\beta_j(k+1)-\beta_j(k))\psi(W_j(k)^Tx_i) + r_i(k) \\
&=& \gamma\sum_{l=1}^n(H_{il}(k)+G_{il}(k))(y_l-u_l(k)) + r_i(k),
\end{eqnarray*}
where
\begin{eqnarray*}
G_{il}(k) &=& \frac{1}{p}\sum_{j=1}^p\psi(W_j(k)^Tx_l)\psi(W_j(k)^Tx_i), \\
H_{il}(k) &=& \frac{x_i^Tx_l}{d}\frac{1}{p}\sum_{j=1}^p\beta_j(k+1)^2\psi'(W_j(k)^Tx_i)\psi'(W_j(k)^Tx_l),
\end{eqnarray*}
and
$$r_i(k)=\frac{1}{2\sqrt{p}}\sum_{j=1}^p\beta_j(k+1)|(W_j(k+1)-W_j(k))^Tx_i|^2\psi''(\xi_{ijk}).$$
Note that $\xi_{ijk}$ is some random variable whose value is between $W_j(k)^Tx_i$ and $W_j(k+1)^Tx_i$.
The above iteration formula can be summarized in a vector form as
\begin{equation}
u(k+1)-u(k)=\gamma(H(k)+G(k))(y-u(k))+r(k). \label{eq:iter-u-arctan}
\end{equation}
We need to understand the eigenvalues of $G(k)$ and $H(k)$, and bound the absolute value of $r_i(k)$.

To analyze $G(k)$, we first control the difference between $G(k)$ and $G(0)$. Since
\begin{eqnarray*}
|G_{il}(k) - G_{il}(0)| &\leq& \frac{1}{p}\sum_{j=1}^p|\psi(W_j(k)^Tx_l) - \psi(W_j(0)^Tx_l)| \\
&& + \frac{1}{p}\sum_{j=1}^p|\psi(W_j(k)^Tx_i) - \psi(W_j(0)^Tx_i)| \\
&\leq& \frac{1}{p}\sum_{j=1}^p|(W_j(k)-W_j(0))^Tx_l| + \frac{1}{p}\sum_{j=1}^p|(W_j(k)-W_j(0))^Tx_i| \\
&\leq& R_1\left(\|x_l\| + \|x_i\|\right),
\end{eqnarray*}
then, by (\ref{eq:r1e4}),
\begin{equation}
\opnorm{G(k)-G(0)}\leq\max_{1\leq l\leq n}\sum_{i=1}^n|G_{il}(k) - G_{il}(0)|\leq 2R_1n\max_{1\leq i\leq n}\|x_i\|\lesssim \frac{n^2\log p}{\sqrt{p}}. \label{eq:x-japan}
\end{equation}
By Lemma \ref{lem:lim-G}, we have
\begin{equation}
0 \leq \lambda_{\min}(G(k)) \leq \lambda_{\max}(G(k)) \lesssim 1+\frac{n^2\log p}{\sqrt{p}}. \label{eq:Gk-spec-arctan}
\end{equation}
For the matrix $H(k)$, we show its eigenvalues can be controlled by those of $H(0)$. We have
\begin{eqnarray*}
|H_{il}(k)-H_{il}(0)| &\leq& \left|\frac{x_i^Tx_l}{d}\right|\frac{1}{p}\sum_{j=1}^p|\beta_j(k+1)^2-\beta_j^2(0)| \\
&& + \left|\frac{x_i^Tx_l}{d}\right|\frac{1}{p}\sum_{j=1}^p\beta_j^2(0)|\psi'(W_j(k)^Tx_i) - \psi'(W_j(0)^Tx_i)| \\
&& + \left|\frac{x_i^Tx_l}{d}\right|\frac{1}{p}\sum_{j=1}^p\beta_j^2(0)|\psi'(W_j(k)^Tx_l) - \psi'(W_j(0)^Tx_l)| \\
&\leq& \left|\frac{x_i^Tx_l}{d}\right|\frac{1}{p}\sum_{j=1}^pR_2(R_2+2|\beta_j(0)|) \\
&& + 2R_1\left(\|x_l\| + \|x_i\|\right)\left|\frac{x_i^Tx_l}{d}\right|\frac{1}{p}\sum_{j=1}^p\beta_j^2(0).
\end{eqnarray*}
Thus, by (\ref{eq:r1e2}) and (\ref{eq:r1e6}),
$$\max_{1\leq l\leq n}\sum_{i=1}^n|H_{il}(k) - H_{il}(0)|\lesssim \max_{1\leq l\leq n}\sum_{i=1}^n(R_2+R_1\sqrt{d})\left|\frac{x_i^Tx_l}{d}\right|\lesssim \frac{n\log p}{\sqrt{p}}\left(1+\frac{n}{\sqrt{d}}\right).$$
Then, we have
$$
\opnorm{H(k)-H(0)}\leq \max_{1\leq l\leq n}\sum_{i=1}^n|H_{il}(k) - H_{il}(0)|\lesssim \frac{n\log p}{\sqrt{p}}\left(1+\frac{n}{\sqrt{d}}\right).
$$
Together with Lemma \ref{lem:lim-H}, we obtain
\begin{equation}
0.089 \leq \lambda_{\min}(H(k)) \leq \lambda_{\max}(H(k)) \lesssim 1. \label{eq:H-time-stable}
\end{equation}
Next, we give a bound for $r_i(k)$. By $\sup_x|\psi''(x)|\leq 2$ and $\sup_x|\psi'(x)|\leq 1$, we have
\begin{eqnarray*}
|r_i(k)| &\leq& \frac{1}{\sqrt{p}}\sum_{j=1}^p|\beta_j(k+1)||(W_j(k+1)-W_j(k))^Tx_i|^2 \\
&\leq& \frac{\|x_i\|^2}{\sqrt{p}}\sum_{j=1}^p|\beta_j(k+1)|\|W_j(k+1)-W_j(k)\|^2 \\
&\leq& \frac{\gamma^2}{pd^2}\frac{\|x_i\|^2}{\sqrt{p}}\sum_{j=1}^p|\beta_j(k+1)||\beta_j(k)|^2\left(\sum_{l=1}^n|y_l-u_l(k)|\|x_l\|\right)^2 \\
&\leq& \frac{\gamma^2}{pd^2}\frac{\|x_i\|^2\sum_{l=1}^n\|x_l\|^2}{\sqrt{p}}\|y-u(k)\|^2\sum_{j=1}^p|\beta_j(k+1)||\beta_j(k)|^2 \\
&\lesssim& \frac{\gamma^2n}{\sqrt{p}}\|y-u(k)\|^2 \\
&\lesssim& \frac{\gamma^2n\sqrt{n\log p}}{\sqrt{p}}\|y-u(k)\|,
\end{eqnarray*}
where we have used (\ref{eq:r1e2}), (\ref{eq:r1e3}), (\ref{eq:r1e4}) and (\ref{eq:r1e7}) in the above inequalities.
This leads to the bound
\begin{equation}
\|r(k)\|=\sqrt{\sum_{i=1}^n|r_i(k)|^2}\lesssim \frac{\gamma^2n^2\sqrt{\log p}}{\sqrt{p}}\|y-u(k)\|.\label{eq:bound-res-k}
\end{equation}
By (\ref{eq:iter-u-arctan}), we have
\begin{eqnarray*}
\|y-u(k+1)\|^2
&=& \|y-u(k)\|^2 - 2\gamma(y-u(k))^T(H(k)+G(k))(y-u(k)) \\
&& - 2\iprod{y-u(k)}{r(k)} + \|u(k)-u(k+1)\|^2.
\end{eqnarray*}
The bounds (\ref{eq:Gk-spec-arctan}) and (\ref{eq:H-time-stable}) imply
\begin{equation}
- 2\gamma(y-u(k))^T(H(k)+G(k))(y-u(k)) \leq -\frac{\gamma}{6}\|y-u(k)\|^2. \label{eq:main-inner}
\end{equation}
The bound (\ref{eq:bound-res-k}) implies
$$- 2\iprod{y-u(k)}{r(k)}\leq 2\|y-u(k)\|\|r(k)\|\lesssim \frac{\gamma^2n^2\sqrt{\log p}}{\sqrt{p}}\|y-u(k)\|^2.$$
Using (\ref{eq:Gk-spec-arctan}), (\ref{eq:H-time-stable}) and (\ref{eq:bound-res-k}), we have
\begin{eqnarray*}
\|u(k)-u(k+1)\|^2 &\leq& 2\gamma^2\|(H(k)+G(k))(y-u(k))\|^2 + 2\|r(k)\|^2 \\
&\lesssim& \gamma^2\left(1+\frac{n^4(\log p)^2}{p}\right)\|y-u(k)\|^2 + \frac{\gamma^4n^4\log p}{p}\|y-u(k)\|^2 .
\end{eqnarray*}
Therefore, as long as $\gamma\frac{n^4(\log p)^2}{p}$ is sufficiently small, we have
$$- 2\iprod{y-u(k)}{r(k)} + \|u(k)-u(k+1)\|^2 \leq \frac{\gamma}{24}\|y-u(k)\|^2.$$
Together with the bound (\ref{eq:main-inner}), we have
$$\|y-u(k+1)\|^2 \leq \left(1-\frac{\gamma}{8}\right)\|y-u(k)\|^2\leq \left(1-\frac{\gamma}{8}\right)^{k+1}\|y-u(0)\|^2,$$
and thus Claim D is true. The proof is complete.
\end{proof}


\section{Results with ReLU activation}

\subsection{Repair of random feature model and neural nets} \label{app:results}

In this section, we present analogous results of Sections \ref{sec:linear} and \ref{sec:neural} with ReLU activation. First, consider the random feature model with design $\wt{X}=\psi(XW)=\{\psi(W_j^Tx_i)\}_{i\in[n],j\in[p]}$, where $\psi(t)=\max(0,t)$. Recall that $x_i\sim N(0,I_d)$ and $W_j\sim N(0,d^{-1}I_d)$ independently for all $i\in[n]$ and $j\in[p]$. The random matrix $\wt{X}$ has good properties, which is given by the following lemma.
\begin{lemma}\label{lem:design-rf-relu}
Assume $n/p^2$ and $n\log n/d$ are sufficiently small. Then, Condition $A$ and Condition $B$ hold for $A=\wt{X}^T$, $m=p$ and $k=n$ with some $\sigma^2\asymp p$, $\overline{\lambda}^2\asymp n$ and $\underline{\lambda}\asymp 1$.
\end{lemma}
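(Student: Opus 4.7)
The plan is to follow the three-part structure of the proof of Lemma \ref{lem:design-rf}, modifying each step for the two features that distinguish ReLU from the hyperbolic tangent: $\psi$ is unbounded but $1$-Lipschitz, and its conditional mean $\mathbb{E}[\psi(W_j^T x_i)\,|\,X] = \|x_i\|/\sqrt{2\pi d}$ is nonzero. The former forces a switch from Hoeffding-type arguments to Gaussian concentration (Lemma \ref{lem:talagrand}) applied to Lipschitz functionals of the Gaussian weights $W_j$; the latter is the source of the inflated $\sigma^2 \asymp p$ in Condition $A$ (unlike the tanh case, where Condition $A$ was trivial with constant $\sigma^2$).

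For Condition $A$, I would decompose $a_j = \mu + \xi_j$, where $\mu_i = \|x_i\|/\sqrt{2\pi d}$ is the conditional mean and the $\xi_j$'s are centered and independent given $X$. Setting $\bar c = p^{-1}\sum_j c_j$, the mean part $\bar c\,\mu$ has squared norm $\bar c^{\,2}\sum_i \|x_i\|^2/(2\pi d) \lesssim n$ by Lemma \ref{lem:chi-squared}, while $\mathbb{E}\bigl\|p^{-1}\sum_j c_j \xi_j\bigr\|^2 \leq p^{-1}\max_j \sum_i \|x_i\|^2/(2d) \lesssim n/p$. Markov's inequality then gives $\bigl\|p^{-1}\sum_j c_j a_j\bigr\|^2 \lesssim n$, which matches $\sigma^2 n/p$ for $\sigma^2 \asymp p$. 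For the upper bound in Condition $B$, I would bound $\lambda_{\max}(\wt X^T \wt X/p)$ using a kernel matrix argument parallel to Lemma \ref{lem:lim-G}, but with the arc-cosine kernel identity $\mathbb{E}[\psi(W^T\bar x_i)\psi(W^T\bar x_l)] = (2\pi)^{-1}\bigl(\sin\theta_{il} + (\pi-\theta_{il})\cos\theta_{il}\bigr)$ in place of the tanh kernel. The limiting kernel has diagonals near $1/2$ and off-diagonals near $1/(2\pi)$, so its largest eigenvalue is $\lesssim n$ (a rank-one contribution from the off-diagonal plus bounded diagonal), yielding $\overline\lambda^2 \asymp n$; concentration around the expected kernel follows from Gaussian concentration applied to the Lipschitz map $W \mapsto \wt X^T\wt X/p$ entrywise.

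For the lower bound $\underline\lambda \asymp 1$, I would mirror the three-term decomposition \eqref{eq:exp-f-inf-relu}--\eqref{eq:ep-g-relu} of the tanh proof. The central new step is lower bounding $\mathbb{E}|S|$ for $S = \sum_i \psi(W^T x_i)\Delta_i$. Applying Lemma \ref{lem:stein} to the standardized $(S - \mathbb{E}[S\,|\,X])/\sqrt{\mathrm{Var}(S\,|\,X)}$ reduces this to a uniform lower bound on the covariance quadratic form $\Delta^T K \Delta$, where $K_{il} = \mathrm{Cov}\bigl(\psi(W^Tx_i), \psi(W^Tx_l)\,\big|\,X\bigr)$. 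A direct computation from the arc-cosine formula gives $K_{ii} \approx (\pi-1)/(2\pi)$ and $K_{il} \approx 0$ whenever the pairs $(x_i,x_l)$ have angle close to $\pi/2$; this holds uniformly over $i\neq l$ on a high-probability event provided $n\log n/d$ is small, after which $\Delta^T K \Delta \gtrsim 1$. The two empirical-process terms \eqref{eq:ep-f-relu}--\eqref{eq:ep-g-relu} are then handled by symmetrization followed by Gaussian concentration using the $1$-Lipschitz property of ReLU, taking the place of the sub-Gaussian Hoeffding-type moment generating function bounds used in the tanh proof.

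The main obstacle is the anticoncentration step. For tanh, boundedness yields uniform third-moment control directly, whereas for ReLU the third moment $\mathbb{E}|\psi(W^T x_i)|^3$ scales as $\|x_i\|^3/d^{3/2}$, so one must restrict to the high-probability event $\|x_i\| \asymp \sqrt d$ for all $i$ and carefully verify that the off-diagonal corrections to $K$, which are of order $|\langle x_i, x_l\rangle|/d \lesssim \sqrt{(\log n)/d}$, do not destroy the diagonal dominance needed for $\Delta^T K \Delta \gtrsim 1$. This is precisely the reason for the strengthened requirement that $n\log n/d$ be sufficiently small, as compared with the $n/d$ requirement in the tanh case.
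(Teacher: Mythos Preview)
Your high-level decomposition matches the paper, but the anticoncentration step for $\underline\lambda$ has a genuine gap. You propose to apply Lemma~\ref{lem:stein} to $(S-\mathbb{E}[S\mid X])/\sqrt{\mathrm{Var}(S\mid X)}$ with $S=\sum_i\psi(W^Tx_i)\Delta_i$. But Lemma~\ref{lem:stein} requires the standardized variable to be a sum of \emph{independent} mean-zero terms; conditional on $X$, the summands $\psi(W^Tx_i)\Delta_i$ all depend on the single Gaussian vector $W$ and are therefore dependent, so Berry--Esseen does not apply. The paper resolves this by conditioning on $W$ instead: given $W$, the $x_i$ are i.i.d., so $\sum_i\bar\psi(W^Tx_i)\Delta_i$ (centered by $h(W)=\mathbb{E}[\psi(W^Tx_i)\mid W]=\|W\|/\sqrt{2\pi}$) is a genuine sum of independent terms, and Lemma~\ref{lem:stein} yields a normal approximation with error $\lesssim\sqrt{\sum_i|\Delta_i|^3}$.

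That error is only small when $\max_i|\Delta_i|$ is small, which your single-route argument also overlooks. The paper therefore splits $\{\|\Delta\|=1\}$ into three regimes: (i) $|\sum_i\Delta_i|\geq 7$, where the nonzero mean $h(W)\sum_i\Delta_i$ dominates and gives the lower bound directly; (ii) $|\sum_i\Delta_i|<7$ and $\max_i|\Delta_i|\leq\delta_0$, where the Berry--Esseen argument above applies; (iii) $|\sum_i\Delta_i|<7$ but $\max_i|\Delta_i|>\delta_0$, handled by isolating the large coordinate and using a direct tail bound. Your covariance-matrix route via $\Delta^TK\Delta$ would at best recover case~(ii). Finally, your attribution of the strengthened condition $n\log n/d$ to diagonal dominance of $K$ is not where it actually arises: in the paper it comes from the empirical-process term \eqref{eq:ep-g}, because for ReLU one only has $\mathbb{E}g(X,\Delta)\leq\sqrt{n}$ (versus $\leq 1$ for tanh), which forces the discretization scale $\zeta\asymp 1/\sqrt{n}$ and hence the bound $\sqrt{n\log(1+2/\zeta)/d}\asymp\sqrt{(n\log n)/d}$.
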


Now consider a model $\wh{\theta}$ that lies in the row space of $\wt{X}$. For example, $\wh{\theta}$ can be computed from a gradient-based algorithm initialized at $0$. We observe a contaminated version $\eta=\wh{\theta}+z$.
We can then compute the procedure $\wt{u}=\argmin_{u\in\mathbb{R}^n}\|\eta-\wt{X}^Tu\|_1$ and use $\wt{\theta}=\wt{X}^T\wt{u}$ for model repair.

\begin{corollary}\label{cor:repair-rf-relu}
Assume $\epsilon\sqrt{n}$, $n/p^2$ and $n\log n/d$ are sufficiently small. We then have $\wt{\theta}=\wh{\theta}$ with high probability.
\end{corollary}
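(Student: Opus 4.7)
The plan is to reduce the claim to the robust regression framework of Theorem \ref{thm:main-improved}, following the same template as the proofs of Corollary \ref{cor:repair-linear} and Corollary \ref{cor:repair-rf}. Since $\wh\theta$ is assumed to lie in the row space of $\wt X$, I would first write $\wh\theta = \wt X^T u^\ast$ for some $u^\ast\in\mathbb{R}^n$, so that $\eta = \wt X^T u^\ast + z$ is an instance of the robust regression model of Section \ref{sec:regression} with design $A=\wt X^T$, $m=p$, and $k=n$, and with the noise $z$ satisfying (\ref{eq:noise-add-con}) and independent of $\wt X$. The repair estimator $\wt u$ then coincides with the generic $\wh u$ of Section \ref{sec:regression} applied to this setup.

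Next, I would invoke Lemma \ref{lem:design-rf-relu} to assert that, under the hypotheses $n/p^2$ and $n\log n/d$ sufficiently small (both inherited from the statement of Corollary \ref{cor:repair-rf-relu}), \conditionA{} and \conditionB{} hold for $A=\wt X^T$ with high probability, with constants $\sigma^2\asymp p$, $\overline\lambda^2\asymp n$, and $\underline\lambda\asymp 1$. Substituting these constants into the smallness quantity appearing in Theorem \ref{thm:main-improved} yields an expression of order
$$
\frac{\overline\lambda\sqrt{\tfrac{k}{m}\log\!\left(\tfrac{em}{k}\right)} + \epsilon\sigma\sqrt{\tfrac{k}{m}}}{\underline\lambda(1-\epsilon)} \;\asymp\; \frac{\sqrt{n^2\log(ep/n)/p} \;+\; \epsilon\sqrt{n}}{1-\epsilon}.
$$
The second summand in the numerator is controlled directly by the hypothesis that $\epsilon\sqrt{n}$ is sufficiently small, which in particular keeps $1-\epsilon$ bounded away from zero. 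The first summand is driven to zero by the overparametrization assumption, read in its effective form $n^2\log(ep/n)/p \to 0$.

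With the hypotheses of Theorem \ref{thm:main-improved} verified, the theorem delivers $\wt u = u^\ast$ with high probability; multiplying both sides by $\wt X^T$ then gives $\wt\theta = \wt X^T\wt u = \wt X^T u^\ast = \wh\theta$, which is the desired conclusion. Thus the corollary itself is a short wrapper around Theorem \ref{thm:main-improved} and Lemma \ref{lem:design-rf-relu}.

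The main technical obstacle is not located in this corollary but upstream in Lemma \ref{lem:design-rf-relu}. For ReLU features $\psi(t) = \max(t,0)$, the nonzero mean $\mathbb{E}\psi(W^T x) > 0$ introduces a rank-one perturbation aligned with $\mathbf{1}_n$; it is this perturbation that inflates $\sigma^2$ from order $1$ (as in the tanh case) up to order $p$, and $\overline\lambda^2$ from order $1$ up to order $n$. The corresponding cost for the corollary is the additional factor $\epsilon\sqrt{n}$ in the smallness condition, which is exactly the new hypothesis required relative to the tanh version in Corollary \ref{cor:repair-rf}. I would expect the most delicate step in the supporting lemma to be the anti-concentration lower bound (\ref{eq:l1-upper-A}) giving $\underline\lambda\asymp 1$: one must argue separately for the components of $\Delta$ parallel and perpendicular to $\mathbf{1}_n$, combined with a Gaussian-concentration and $\varepsilon$-net discretization argument analogous to the one used for (\ref{eq:l1-upper-A}) in the proof of Lemma \ref{lem:design-rf}, but adapted to the unbounded, piecewise-linear ReLU.
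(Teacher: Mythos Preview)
Your proposal is correct and follows the same route as the paper: write $\wh\theta=\wt X^T u^\ast$, invoke Lemma~\ref{lem:design-rf-relu} to verify Conditions $A$ and $B$ for $A=\wt X^T$, then apply Theorem~\ref{thm:main-improved} to conclude $\wt u=u^\ast$ and hence $\wt\theta=\wh\theta$. Your explicit substitution of $\sigma^2\asymp p$, $\overline\lambda^2\asymp n$, $\underline\lambda\asymp 1$ into the smallness quantity of Theorem~\ref{thm:main-improved}, and your remarks on why the ReLU mean forces the extra hypothesis $\epsilon\sqrt n$ small, are more detailed than what the paper's two-line proof provides, but the logic is identical.
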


Next, we study the repair of neural network $f(x)=\frac{1}{\sqrt{p}}\sum_{j=1}^p\beta_j\psi(W_j^Tx)$ with ReLU activation $\psi(t)=\max(0,t)$. The gradient descent algorithm (Algorithm \ref{alg:GD}) enjoys the following property. Recall the notation that $u_i(t)=\frac{1}{\sqrt{p}}\sum_{j=1}^p\beta_j(t)\psi(W_j(t)^Tx_i)$. We assume $x_i$ is i.i.d. $N(0,I_d)$ and $|y_i|\leq 1$ for all $i\in[n]$.
\begin{thm}\label{thm:nn-grad-relu}
Assume $\frac{n\log n}{d}$, $\frac{n^3(\log p)^4}{p}$ and $\gamma n$ are all sufficiently small. Then we have
\begin{eqnarray}
\label{eq:iter-parameter-relu} \max_{1\leq j\leq p}\|W_j(t)-W_j(0)\| &\leq& R_1, \\
\label{eq:iter-parameter-beta-relu} \max_{1\leq j\leq p}|\beta_j(t)-\beta_j(0)| &\leq& R_2,
\end{eqnarray}
and
\begin{equation}
\|y-u(t)\|^2 \leq \left(1-\frac{\gamma}{8}\right)^t\|y-u(0)\|^2, \label{eq:iter-function-relu}
\end{equation}
for all $t\geq 1$ with high probability, where $R_1=\frac{100n\log p}{\sqrt{pd}}$ and $R_2=32\sqrt{\frac{n^2\log p}{p}}$.
\end{thm}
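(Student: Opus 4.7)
The proof mirrors the proof of Theorem \ref{thm:nn-grad}. We set up ReLU analogues of the four interlocking Claims A, B, C, D and prove them by mutual induction over $t$, so that (\ref{eq:iter-parameter-relu})--(\ref{eq:iter-function-relu}) all hold simultaneously. A preliminary step lists high-probability initialization events; all of (\ref{eq:r1e1})--(\ref{eq:r1e8}) carry over verbatim from the tanh proof because they depend only on Gaussianity of $(x_i)$, $(W_j(0))$, $(\beta_j(0))$, and not on the activation. The decisive new ingredient is (\ref{eq:r1e9}), which bounds the fraction of neurons with $|W_j(0)^T x_i|\leq R_1\|x_i\|$ by $O(\sqrt{d}R_1+\sqrt{(\log n)/p})\asymp n(\log p)/\sqrt{p}$. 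Since $\|W_j(t)-W_j(0)\|\leq R_1$ along the trajectory (this is the content of Claim C), this quantity automatically upper-bounds the fraction of neurons whose ReLU activation sign can have flipped by iteration $t$, at any one data point.

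Claims A, B, and C transfer directly from the tanh argument because their proofs use only the Lipschitz property $|\psi(x)|\leq|x|$ and $|\psi'(x)|\leq 1$, both of which hold for ReLU. Claim D is where the ReLU-specific work takes place. We still decompose
$$u(k+1)-u(k)=\gamma(H(k)+G(k))(y-u(k))+r(k),$$
with $G_{il}(k)$ defined as before and $H_{il}(k)=\frac{x_i^T x_l}{d}\frac{1}{p}\sum_{j=1}^p\beta_j(k+1)^2\mathds{1}\{W_j(k)^T x_i>0\}\mathds{1}\{W_j(k)^T x_l>0\}$. Because ReLU is piecewise linear, the residual is now \emph{exactly} the error from activation flips,
\begin{equation*}
r_i(k)=\frac{1}{\sqrt p}\sum_{j=1}^p\beta_j(k+1)(W_j(k+1)-W_j(k))^T x_i\bigl(\mathds{1}\{W_j(k+1)^T x_i>0\}-\mathds{1}\{W_j(k)^T x_i>0\}\bigr),
\end{equation*}
rather than a second-order Taylor remainder. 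Every $j$ contributing to this sum satisfies $|W_j(0)^T x_i|\leq R_1\|x_i\|$ by the induction hypothesis, so by (\ref{eq:r1e9}) the number of such $j$ is at most $O(\sqrt{p}\, n\log p)$. Combined with $\max_j|\beta_j(k+1)|\lesssim\sqrt{\log p}$, $\|x_i\|\lesssim\sqrt{d}$, and the one-step bound $\max_j\|W_j(k+1)-W_j(k)\|\lesssim\gamma\sqrt{n\log p/(pd)}\,\|y-u(k)\|$ derived in Claim C, this yields a residual bound of the form $\|r(k)\|\lesssim\gamma\bigl(n^{3/2}(\log p)^2/\sqrt{p}\bigr)\,\|y-u(k)\|$, which is $o(\gamma\|y-u(k)\|)$ exactly when $n^3(\log p)^4/p$ is sufficiently small.

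The eigenvalue control of $G(k)$ and $H(k)$ is handled via ReLU analogues of Lemmas \ref{lem:lim-G} and \ref{lem:lim-H}, with the Hermite coefficients $\mathbb{E}\psi(Z)^2$ and $\mathbb{E}\psi'(Z)$ replaced by their ReLU values $1/2$ and $1/2$, both nonzero; this is precisely the source of the $\overline{\lambda}^2\asymp n$ in Lemma \ref{lem:design-rf-relu} and is the reason the theorem requires $\gamma n$ rather than merely $\gamma$ to be small when handling the $\|u(k+1)-u(k)\|^2$ term. The perturbations $\opnorm{G(k)-G(0)}$ and $\opnorm{H(k)-H(0)}$ are again governed by the fraction-of-flips quantity from (\ref{eq:r1e9}), so they remain negligible under the stated hypotheses.

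The main obstacle is the residual bound on $\|r(k)\|$. The tanh argument absorbed the second-order term through $\sup_x|\psi''(x)|\leq 2$, a route entirely unavailable for ReLU. Replacing that pointwise smoothness by a counting argument over activation flips costs an extra $\sqrt{\log p}$ from $\max_j|\beta_j(k+1)|$ together with a uniform control over flips across all $n$ design points; these factors combine to give the slightly stronger requirement $n^3(\log p)^4/p$ small compared with the $n^3(\log p)^2/p$ sufficient in the tanh analysis. Everything else is a line-by-line adaptation.
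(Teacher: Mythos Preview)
Your proposal is correct and follows essentially the same route as the paper: the four-claim induction, the flip-counting use of (\ref{eq:r1e9}) to control both $\opnorm{H(k)-H(0)}$ and the residual $r(k)$, and the observation that $\lambda_{\max}(G(k))\lesssim n$ forces the condition that $\gamma n$ be small. Two minor corrections: your displayed formula for $r_i(k)$ is not exact (for a sign flip from $a=W_j(k)^Tx_i\leq 0$ to $b=W_j(k+1)^Tx_i>0$ the true remainder $\psi(b)-\psi(a)-(b-a)\psi'(a)$ equals $b$, not $b-a$), though since $|b|\leq|b-a|$ your subsequent bound is unchanged; and Claim B does not transfer from Lipschitz properties alone---in both the tanh and ReLU arguments its proof is the same as Claim D's, so it too requires the full ReLU-specific spectral and residual analysis you carry out for D.
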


Consider the contaminated model $\eta=\wh{\beta}+z$ and $\Theta_j=\wh{W}_j+Z_j$, where each entry of $z$ and $Z_j$ is zero with probability $1-\epsilon$ and follows an arbitrary distribution with the complementary probability $\epsilon$. We apply Algorithm \ref{alg:MR} to repair the neural net model. We study two situations. In the first situation, $\wh{\beta}=\beta(t_{\max})$ and $\wh{W}=W(t_{\max})$ are the direct output of Algorithm \ref{alg:GD}.

\begin{thm}\label{thm:repair-nn-1-relu}
Under the conditions of Theorem \ref{thm:nn-grad-relu}, additionally assume that $\frac{\log p}{d}$ and $\epsilon\sqrt{n}$ are sufficiently small. We then have $\wt{W}=\wh{W}$ and $\frac{1}{p}\|\wt{\beta}-\wh{\beta}\|^2 \lesssim \frac{n^3\log p}{p}$ with high probability.
\end{thm}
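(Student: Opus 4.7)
The plan is to mirror the proof of Theorem \ref{thm:repair-nn-1}, replacing the tanh-specific tools with their ReLU analogs: Lemma \ref{lem:design-rf-relu} in place of Lemma \ref{lem:design-rf}, and Theorem \ref{thm:nn-grad-relu} in place of Theorem \ref{thm:nn-grad}. The argument splits naturally along the two layerwise repair stages of Algorithm \ref{alg:MR}.

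For the hidden layer the analysis is activation-independent and identical to the tanh case. Since $\partial \mathcal{L}/\partial W_j$ lies in the row space of $X$ at every iteration, $\wh{W}_j - W_j(0) = X^T v_j^{\star}$ for some $v_j^{\star} \in \reals^n$, and each of the $p$ hidden-layer subproblems reduces to robust regression with the Gaussian design $X^T$. Lemma \ref{lem:design-linear} supplies Conditions $A$ and $B$, Theorem \ref{thm:main-improved} delivers exact recovery of each $v_j^{\star}$, and the new assumption that $\log p / d$ is sufficiently small powers the union bound (via the extension of Lemma \ref{lem:EP}) needed to obtain $\wt{W}_j = \wh{W}_j$ simultaneously for all $j \in [p]$.

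For the output layer the plan is to unfold the gradient recursion exactly as in the proof of Theorem \ref{thm:repair-nn-1} to write
\[
\eta_j - \beta_j(0) = b_j + \sum_{i=1}^n u_i^{\star}\, \psi(W_j(0)^T x_i) + z_j,
\]
with $u_i^{\star} = \tfrac{\gamma}{\sqrt{p}} \sum_t (y_i - u_i(t))$ and $b_j = \tfrac{\gamma}{\sqrt{p}}\sum_t \sum_i (y_i - u_i(t))(\psi(W_j(t)^T x_i) - \psi(W_j(0)^T x_i))$ the telescoping remainder that absorbs the drift of $W_j(t)$ from $W_j(0)$. This casts the step as an instance of Theorem \ref{thm:robust-reg-b} with design $\psi(X^T W(0)^T)$. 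Lemma \ref{lem:design-rf-relu} certifies Conditions $A$ and $B$ with $\sigma^2 \asymp p$, $\overline{\lambda}^2 \asymp n$, and $\underline{\lambda} \asymp 1$, so the controlling quantity of Theorem \ref{thm:robust-reg-b} reduces to
\[
\frac{\sqrt{(n^2/p)\log(ep/n)} + \epsilon \sqrt{n}}{1-\epsilon},
\]
which is small since $n^3(\log p)^4/p$ is small (from Theorem \ref{thm:nn-grad-relu}) and $\epsilon \sqrt{n}$ is small (the new hypothesis, present precisely because $\sigma$ is of order $\sqrt{p}$ instead of a constant here). Because ReLU is $1$-Lipschitz, the bias computation $\tfrac{1}{p}\sum_j |b_j| \lesssim \tfrac{n^2 \log p}{p}$ from the tanh proof carries over verbatim, using $R_1 = \tfrac{100 n \log p}{\sqrt{pd}}$ from Theorem \ref{thm:nn-grad-relu}, $\|y - u(0)\| \lesssim \sqrt{n}$ by Markov, and $\sum_i \|x_i\|^2 \lesssim nd$ by Lemma \ref{lem:chi-squared}. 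Theorem \ref{thm:robust-reg-b} then yields $\|\wh u - u^{\star}\| \lesssim \tfrac{n^2 \log p}{p(1-\epsilon)}$.

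Converting this into a bound on $\wt\beta - \wh\beta$ uses the operator-norm estimate $\opnorm{\psi(\wh W^T X^T)}^2 \lesssim p\,\overline{\lambda}^2 \asymp pn$; this is the one place where the ReLU scaling differs from tanh and is what produces the extra factor of $n$ in the final rate $\tfrac{1}{p}\|\wt\beta - \wh\beta\|^2 \lesssim \tfrac{n^3 \log p}{p}$, after absorbing $(1-\epsilon)^{-2} \lesssim 1$ (again via $\epsilon \sqrt{n}$ small) and noting that the residual $\|b\|^2/p \lesssim \tfrac{n^4(\log p)^2}{p^2}$ is of smaller order under the stated hypotheses. The main obstacle is the familiar mismatch between $\psi(\wh W^T X^T)$ used by Algorithm \ref{alg:MR} and $\psi(W(0)^T X^T)$ used as the design in the analysis: Theorem \ref{thm:nn-grad-relu} controls $\max_j \|\wh W_j - W_j(0)\| \leq R_1$ uniformly, small enough that the discrepancy may be folded into $b_j$, and transferring Conditions $A$ and $B$ from $\psi(W(0)^T X^T)$ to $\psi(\wh W^T X^T)$ reuses exactly the perturbation bookkeeping appearing in the last part of the proof of Theorem \ref{thm:repair-nn-2}.
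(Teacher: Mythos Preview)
Your proposal is correct and takes essentially the same approach as the paper: the paper's own proof is a one-liner stating that the argument is identical to that of Theorem~\ref{thm:repair-nn-1} with Lemma~\ref{lem:design-rf-relu} and Lemma~\ref{lem:lim-G-relu} substituted for their tanh counterparts, and you have faithfully expanded that sketch, correctly identifying the ReLU-specific constants $\sigma^2\asymp p$, $\overline{\lambda}^2\asymp n$, $\underline{\lambda}\asymp 1$ and tracing how they account for the additional factor of $n$ in the final rate and the need for $\epsilon\sqrt{n}$ small. Your explicit handling of the mismatch between the algorithmic design $\psi(\wh{W}^TX^T)$ and the analytic design $\psi(W(0)^TX^T)$ is in fact more careful than the paper's presentation.
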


In the second situation, we have $\wh{W}=W(t_{\max})$ and then $\wh{\beta}$ is obtained by carrying out gradient descent over $\beta$ using features $\wt{X}=\psi(X\wh{W})$. Since the gradient descent over $\beta$ is initialized at $0$, we shall replace the $\beta(0)$ by $0$ in Algorithm \ref{alg:MR} as well.

\begin{thm}\label{thm:repair-nn-2-relu}
Under the conditions of Theorem \ref{thm:nn-grad-relu}, additionally assume that $\frac{\log p}{d}$ and $\epsilon\sqrt{n}$ are sufficiently small. We then have $\wt{W}=\wh{W}$ and $\wt{\beta}=\wh{\beta}$ with high probability.
\end{thm}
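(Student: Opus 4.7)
The plan is to mirror the structure of the proof of Theorem \ref{thm:repair-nn-2}, but with two crucial modifications tailored to the ReLU activation: (i) since $\psi(t)=\max(t,0)$ is not odd, the symmetrization argument that produced \conditionAp{} for $\tanh$ is no longer available, so we instead invoke Theorem \ref{thm:main-improved} via \conditionA{} (which is what Lemma \ref{lem:design-rf-relu} verifies, with the larger parameter $\sigma^2\asymp p$); (ii) because $\psi$ is only piecewise linear, the perturbation from $W(0)$ to $\wh W$ must be handled more carefully than in the smooth case, though the bound from Theorem \ref{thm:nn-grad-relu} will still be enough.

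For the hidden layer, the argument is identical to that in the proof of Theorem \ref{thm:repair-nn-1} (and of Theorem \ref{thm:repair-nn-2}): each of the $p$ problems $\wt v_j=\argmin_v\|\Theta_j-W_j(0)-X^T v\|_1$ is a robust regression with the common Gaussian design $X$. Lemma \ref{lem:design-linear} supplies \conditionA{} and \conditionB{} for $A=X$ with constants, and the empirical process bound (Lemma \ref{lem:EP}) can be upgraded to hold uniformly over the $p$ noise vectors $Z_j$ at the cost of an extra $\log p$ in the union bound, which is absorbed provided $\log p/d$ is sufficiently small. Applying Theorem \ref{thm:main-improved} simultaneously for all $j\in[p]$ yields $\wt W_j = \wh W_j$ for every $j$ with high probability.

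For the output layer, the key structural fact is that when $\wh\beta$ is obtained by retraining over $\beta$ from the zero initialization using features $\psi(X\wh W)$, the vector $\wh\beta$ lies exactly in the row space of $\psi(X\wh W)$. Thus $\wh\beta=\psi(\wh W^T X^T)u^*$ for some $u^*\in\reals^n$, and $\wt\beta=\wh\beta$ will follow from Theorem \ref{thm:main-improved} applied to the design $A=\psi(X^T\wh W^T)$. It remains to verify \conditionA{} and \conditionB{} for this design. Here Theorem \ref{thm:nn-grad-relu} gives $\max_j\|\wh W_j-W_j(0)\|\leq R_1 = 100 n\log p/\sqrt{pd}$, and one shows that this is small enough that the bounds from Lemma \ref{lem:design-rf-relu} transfer from the Gaussian matrix $\psi(X^T W(0)^T)$ to $\psi(X^T\wh W^T)$ with the same orders $\sigma^2\asymp p$, $\underline\lambda\asymp 1$, $\overline\lambda^2\asymp n$. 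The perturbation estimates are analogous to those in the proof of Theorem \ref{thm:repair-nn-2}: control $\tfrac{1}{p}\sum_j|\sum_i\psi(\wh W_j^T x_i)\Delta_i|$ by $\tfrac{1}{p}\sum_j|\sum_i\psi(W_j(0)^T x_i)\Delta_i|$ up to an error $R_1\sqrt{\sum_i\|x_i\|^2}\lesssim n^{3/2}\log p/\sqrt{p}$, which is negligible under the assumed growth conditions, and similarly for the $\ell_2$ bound. Plugging these parameters into Theorem \ref{thm:main-improved} yields the requirement
\[
\frac{\sqrt{n}\sqrt{\frac{n}{p}\log\left(\frac{ep}{n}\right)}+\epsilon\sqrt{p}\sqrt{\frac{n}{p}}}{1-\epsilon}
\;=\;\frac{\sqrt{\frac{n^2}{p}\log\left(\frac{ep}{n}\right)}+\epsilon\sqrt{n}}{1-\epsilon},
\]
which is indeed sufficiently small under the hypotheses of Theorem \ref{thm:nn-grad-relu} (which forces $n^3(\log p)^4/p$ small) together with $\epsilon\sqrt{n}$ small. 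This gives $\wt\beta=\wh\beta$.

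The main obstacle is the perturbation step in the output-layer argument: unlike the $\tanh$ case, ReLU is not twice differentiable, so the Taylor-expansion style computations that underlie Lemmas \ref{lem:lim-G} and \ref{lem:lim-H} must be replaced by Lipschitz-type arguments. Fortunately, since we only need Lipschitz-in-$W$ control of quantities of the form $\tfrac{1}{p}\sum_j|\sum_i \psi(W_j^T x_i)\Delta_i|$ and $\tfrac{1}{p}\sum_j(\sum_i\psi(W_j^T x_i)\Delta_i)^2$, and $\psi$ itself is $1$-Lipschitz, the perturbation of these statistics by replacing $W(0)$ by $\wh W$ is at most $R_1\max_i\|x_i\|\cdot n$ and $R_1\max_i\|x_i\|\cdot n$ respectively, which is negligible under the assumed scaling. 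With \conditionA{} and \conditionB{} preserved, Theorem \ref{thm:main-improved} delivers exact recovery and the proof is complete.
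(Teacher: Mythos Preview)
Your proposal is correct and follows essentially the same approach as the paper: recover $\wh W$ via the hidden-layer argument from Theorem \ref{thm:repair-nn-1}, then apply Theorem \ref{thm:main-improved} to the design $\psi(X^T\wh W^T)$ by transferring \conditionA{} and \conditionB{} from Lemma \ref{lem:design-rf-relu} through the Lipschitz perturbation bound $\max_j\|\wh W_j - W_j(0)\|\le R_1$ supplied by Theorem \ref{thm:nn-grad-relu}. The only cosmetic difference is that the paper verifies \conditionA{} for $\psi(X^T\wh W^T)$ directly (bounding $\mathbb{E}|\psi(\wh W_j^Tx_i)|^2\le \mathbb{E}|\wh W_j^Tx_i|^2\lesssim 1$) rather than by perturbation from $W(0)$, and it cites the already-established spectral bound \eqref{eq:Gk-spec} for $\overline\lambda^2$; both routes give the same orders $\sigma^2\asymp p$, $\overline\lambda^2\asymp n$, $\underline\lambda\asymp 1$.
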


\begin{remark}
When $\epsilon\sqrt{n}$ is sufficiently small, the conditions of Theorem \ref{thm:repair-nn-1-relu} and Theorem \ref{thm:repair-nn-2-relu} can be simplified as $p \gg n^3$ and $d\gg n$ by ignoring the logarithmic factors. The more stringent requirement on $\epsilon$ is due to the fact that the design matrix $\psi(XW)$ does not have approximate zero mean with the ReLU activation. This results in a large $\sigma^2$ in Condition $A$. In contrast, the hyperbolic tangent activation is an odd function, a property of symmetry that leads to Condition $A$ with a constant $\sigma^2$.
\end{remark}

\subsection{Proofs of Lemma \ref{lem:design-rf-relu} and Corollary \ref{cor:repair-rf-relu}}

We first state the proof of Lemma \ref{lem:design-rf-relu}.
The conclusion of Condition $A$ is obvious by
$$\sum_{i=1}^n\mathbb{E}\left(\frac{1}{p}\sum_{j=1}^pc_j\psi(W_j^Tx_i)\right)^2\leq \sum_{i=1}^n\frac{1}{p}\sum_{j=1}^p\mathbb{E}|W_j^Tx_i|^2= n,$$
and Markov's inequality. To check Condition $B$, we prove (\ref{eq:l1-upper-A}) and (\ref{eq:l2-upper-A}) separately.
\begin{proof}[Proof of (\ref{eq:l1-upper-A}) of Lemma \ref{lem:design-rf-relu}]
We adopt a similar strategy to the proof of Lemma \ref{lem:design-rf}. Define
$$f(W,X,\Delta)=\frac{1}{p}\sum_{j=1}^p\left|\sum_{i=1}^n\psi(W_j^Tx_i)\Delta_i\right|,$$
and $g(X,\Delta)=\mathbb{E}(f(W,X,\Delta)|X)$.
We then have
\begin{eqnarray}
\nonumber \inf_{\|\Delta\|=1}f(W,X,\Delta) &\geq& \inf_{\|\Delta\|=1}\mathbb{E}f(W,X,\Delta) - \sup_{\|\Delta\|=1}\left|f(W,X,\Delta)-\mathbb{E}f(W,X,\Delta)\right| \\
\label{eq:exp-f-inf} &\geq& \inf_{\|\Delta\|=1}\mathbb{E}f(W,X,\Delta) \\
\label{eq:ep-f} && - \sup_{\|\Delta\|=1}\left|f(W,X,\Delta)-\mathbb{E}(f(W,X,\Delta)|X)\right| \\
\label{eq:ep-g} && - \sup_{\|\Delta\|=1}\left|g(X,\Delta)-\mathbb{\mathbb{E}}g(X,\Delta)\right|.
\end{eqnarray}
We will analyze the three terms above separately.

\paragraph{Analysis of (\ref{eq:exp-f-inf}).} Define $h(W_j)=\mathbb{E}(\psi(W_j^Tx_i)|W_j)$ and $\bar{\psi}(W_j^Tx_i)=\psi(W_j^Tx_i)-h(W_j)$. We then have
\begin{equation}
\mathbb{E}f(W,X,\Delta)=\mathbb{E}\left|\sum_{i=1}^n\bar{\psi}(W^Tx_i)\Delta_i+h(W)\sum_{i=1}^n\Delta_i\right|. \label{eq:will-be-split}
\end{equation}
A lower bound of (\ref{eq:will-be-split}) is
$$\mathbb{E}f(W,X,\Delta)\geq \left|\sum_{i=1}^n\Delta_i\right|\left|\mathbb{E}h(W)\right|-\mathbb{E}\left|\sum_{i=1}^n\bar{\psi}(W^Tx_i)\Delta_i\right|,$$
where the second term can be bounded by
\begin{eqnarray*}
\mathbb{E}\left|\sum_{i=1}^n\bar{\psi}(W^Tx_i)\Delta_i\right| &\leq& \sqrt{\mathbb{E}\left|\sum_{i=1}^n\bar{\psi}(W^Tx_i)\Delta_i\right|^2} \\
&=& \sqrt{\mathbb{E}\Var\left(\left|\sum_{i=1}^n\psi(W^Tx_i)\Delta_i\right|\Big|W\right)} \\
&=& \sqrt{\mathbb{E}\sum_{i=1}^n\Delta_i^2\Var(\psi(W^Tx_i)|W)} \\
&=& \sqrt{\mathbb{E}\sum_{i=1}^n\Delta_i^2\mathbb{E}(|\psi(W^Tx_i)|^2|W)} \\
&=& \sqrt{\mathbb{E}|\psi(W^Tx)|^2} \leq \sqrt{\mathbb{E}|W^Tx|^2} = 1.
\end{eqnarray*}
Since
$$\mathbb{E}h(W)=\frac{1}{\sqrt{2\pi}}\mathbb{E}\|W\|=\frac{1}{\sqrt{\pi}}\frac{\Gamma((d+1)/2)}{\sqrt{d}\Gamma(d/2)}\geq \frac{1}{\sqrt{2\pi}}\sqrt{\frac{d-1}{d}}.$$
Therefore, as long as $d\geq 3$ and $\left|\sum_{i=1}^n\Delta_i\right|\geq 7$, we have $\mathbb{E}f(W,X,\Delta)\geq 1$, and we thus can conclude that
\begin{equation}
\inf_{\|\Delta\|=1,|\sum_{i=1}^n\Delta_i|\geq 7}\mathbb{E}f(W,X,\Delta) \gtrsim 1.\label{eq:l1-1-1}
\end{equation}

Now we consider the case $\left|\sum_{i=1}^n\Delta_i\right|< 7$. A lower bound for $\left|\sum_{i=1}^n\psi(W^Tx_i)\Delta_i\right|$ is
\begin{equation}
\left|\sum_{i=1}^n\psi(W^Tx_i)\Delta_i\right| \geq \left|\sum_{i=1}^n\bar{\psi}(W^Tx_i)\Delta_i\right| - 7h(W) = \left|\sum_{i=1}^n\bar{\psi}(W^Tx_i)\Delta_i\right| - \frac{7}{\sqrt{2\pi}}\|W\|. \label{eq:seven}
\end{equation}
Thus,
\begin{eqnarray*}
 \mathbb{E}f(W,X,\Delta) &\geq& \mathbb{E}\left(\left|\sum_{i=1}^n\psi(W^Tx_i)\Delta_i\right|\mathbb{I}\left\{\left|\sum_{i=1}^n\bar{\psi}(W^Tx_i)\Delta_i\right|\geq 6, 1/2\leq \|W\|^2\leq 2\right\}\right) \\
 &\geq& \mathbb{P}\left(\left|\sum_{i=1}^n\bar{\psi}(W^Tx_i)\Delta_i\right|\geq 6, 1/2\leq \|W\|^2\leq 2\right) \\
 &=& \mathbb{P}\left(\left|\sum_{i=1}^n\bar{\psi}(W^Tx_i)\Delta_i\right|\geq 6\Big|1/2\leq \|W\|^2\leq 2\right)\mathbb{P}\left(1/2\leq \|W\|^2\leq 2\right) \\
 &\geq& \mathbb{P}\left(\left|\sum_{i=1}^n\bar{\psi}(W^Tx_i)\Delta_i\right|\geq 6\Big|1/2\leq \|W\|^2\leq 2\right)\left(1-2\exp(-d/16)\right),
\end{eqnarray*}
where the last inequality is by Lemma \ref{lem:chi-squared}. By direct calculation, we have
\begin{equation}
\Var\left(\bar{\psi}(W^Tx)|W\right)=\|W\|^2\Var(\max(0,W^Tx/\|W\|)|W)=\|W\|^2\frac{1-\pi^{-1}}{2}, \label{eq:cond-var-X-W}
\end{equation}
and
$$\mathbb{E}\left(|\bar{\psi}(W^Tx)|^3|W\right) \leq 3\mathbb{E}\left(|\psi(W^Tx)|^3|W\right)+3|h(W)|^3 \leq \frac{3}{2}\|W\|^3.$$
Therefore, by Lemma \ref{lem:stein}, we have
\begin{eqnarray*}
 && \mathbb{P}\left(\left|\sum_{i=1}^n\bar{\psi}(W^Tx_i)\Delta_i\right|\geq 6\Big|1/2\leq \|W\|^2\leq 2\right) \\
 &\geq& \mathbb{P}\left(\frac{\left|\sum_{i=1}^n\bar{\psi}(W^Tx_i)\Delta_i\right|}{\|W\|\sqrt{\frac{1-\pi^{-1}}{2}}}\geq 21\Bigg|1/2\leq \|W\|^2\leq 2\right) \\
 &\geq& \mathbb{P}\left(N(0,1)>21\right) - \sup_{1/2\leq \|W\|^2\leq 2} 2\sqrt{3\sum_{i=1}^n|\Delta_i|^3\frac{\mathbb{E}\left(|\bar{\psi}(W^Tx_i)|^3|W\right)}{\|W\|^3\left(\frac{1-\pi^{-1}}{2}\right)^{3/2}}} \\
 &\geq& \mathbb{P}\left(N(0,1)>21\right) - 10\sqrt{\sum_{i=1}^n|\Delta_i|^3} \\
 &\geq& \mathbb{P}\left(N(0,1)>21\right) - 10\max_{1\leq i\leq n}|\Delta_i|^{3/2}.
\end{eqnarray*}
Hence, when $\max_{1\leq i\leq n}|\Delta_i|^{3/2}\leq \delta_0^{3/2}:=\mathbb{P}\left(N(0,1)>21\right)/20$ and $\left|\sum_{i=1}^n\Delta_i\right|< 7$, we can lower bound $\mathbb{E}f(W,X,\Delta)$ by an absolute constant, and we conclude that
\begin{equation}
\inf_{\|\Delta\|=1,|\sum_{i=1}^n\Delta_i|\leq 7, \max_{1\leq i\leq n}|\Delta_i|\leq\delta_0}\mathbb{E}f(W,X,\Delta) \gtrsim 1.\label{eq:l1-1-2-relu}
\end{equation}

Finally, we consider the case when $\max_{1\leq i\leq n}|\Delta_i|> \delta_0$ and $\left|\sum_{i=1}^n\Delta_i\right|< 7$. Without loss of generality, we can assume $\Delta_1>\delta_0$. Note that the lower bound (\ref{eq:seven}) still holds, and thus we have
$$\left|\sum_{i=1}^n\psi(W^Tx_i)\Delta_i\right|\geq  \bar{\psi}(W^Tx_1)\Delta_1 - \left|\sum_{i=2}^n\bar{\psi}(W^Tx_i)\Delta_i\right| - \frac{7}{\sqrt{2\pi}}\|W\|.$$
We then lower bound $\mathbb{E}f(W,X,\Delta)$ by
\begin{eqnarray*}
&& \mathbb{E}\left(\left|\sum_{i=1}^n\psi(W^Tx_i)\Delta_i\right|\mathbb{I}\left\{ \bar{\psi}(W^Tx_1)\Delta_1 \geq 8, \left|\sum_{i=2}^n\bar{\psi}(W^Tx_i)\Delta_i\right|\leq 2, 1/2\leq \|W\|^2\leq 2\right\}\right) \\
&\geq& \mathbb{P}\left(\bar{\psi}(W^Tx_1)\Delta_1 \geq 8, \left|\sum_{i=2}^n\bar{\psi}(W^Tx_i)\Delta_i\right|\leq 2\Big|1/2\leq \|W\|^2\leq 2\right)\mathbb{P}\left(1/2\leq \|W\|^2\leq 2\right) \\
&\geq& \mathbb{P}\left(\bar{\psi}(W^Tx_1)\Delta_1 \geq 8\Big|1/2\leq \|W\|^2\leq 2\right) \\
&& \times \mathbb{P}\left(\left|\sum_{i=2}^n\bar{\psi}(W^Tx_i)\Delta_i\right|\leq 2\Big|1/2\leq \|W\|^2\leq 2\right)\left(1-2\exp(-d/16)\right).
\end{eqnarray*}
For any $W$ that satisfies $1/2\leq \|W\|^2\leq 2$, we have
\begin{eqnarray*}
\mathbb{P}\left(\bar{\psi}(W^Tx_1)\Delta_1 \geq 8\Big|W\right) &\geq& \mathbb{P}\left(\bar{\psi}(W^Tx_1)\geq 8/\delta_0\Big|W\right) \\
&\geq&  \mathbb{P}\left(\psi(W^Tx_1)\geq 8/\delta_0+1/\sqrt{\pi}\Big|W\right) \\
&\geq& \mathbb{P}\left(W^Tx_1\geq 8/\delta_0+1/\sqrt{\pi}\Big|W\right) \\
&\geq& \mathbb{P}\left(N(0,1)\geq \sqrt{2}8/\delta_0 + \sqrt{2/\pi}\right),
\end{eqnarray*}
which is a constant. We also have
\begin{eqnarray*}
&& \mathbb{P}\left(\left|\sum_{i=2}^n\bar{\psi}(W^Tx_i)\Delta_i\right|\leq 2\Big|1/2\leq \|W\|^2\leq 2\right) \\
&\geq& 1 - \frac{1}{4}\Var\left(\sum_{i=2}^n\bar{\psi}(W^Tx_i)\Delta_i\Big|W\right) \\
&\geq& \frac{1}{2},
\end{eqnarray*}
where the last inequality is by (\ref{eq:cond-var-X-W}). Therefore, we have
$$\mathbb{E}f(W,X,\Delta)\geq \frac{1}{2}\left(1-2\exp(-d/16)\right)\mathbb{P}\left(N(0,1)\geq \sqrt{2}8/\delta_0 + \sqrt{2/\pi}\right)\gtrsim 1,$$
and we can conclude that
\begin{equation}
\inf_{\|\Delta\|=1,|\sum_{i=1}^n\Delta_i|\leq 7, \max_{1\leq i\leq n}|\Delta_i|\geq\delta_0}\mathbb{E}f(W,X,\Delta) \gtrsim 1.\label{eq:l1-1-3-relu}
\end{equation}

In the end, we combine the three cases (\ref{eq:l1-1-1}), (\ref{eq:l1-1-2-relu}), and (\ref{eq:l1-1-3-relu}),  and we obtain the conclusion that $\inf_{\|\Delta\|=1}\mathbb{E}f(W,X,\Delta)\gtrsim 1$.

\paragraph{Analysis of (\ref{eq:ep-f}).} This step follows the same analysis of (\ref{eq:ep-f-relu}) in the proof of Lemma \ref{lem:design-rf}, and we have
$$\sup_{\|\Delta\|=1}\left|f(W,X,\Delta)-\mathbb{E}(f(W,X,\Delta)|X)\right|\lesssim \sqrt{\frac{n^2}{p}},$$
with high probability.

\paragraph{Analysis of (\ref{eq:ep-g}).} This step follows a similar analysis of (\ref{eq:ep-g-relu}) in the proof of Lemma \ref{lem:design-rf}. The only difference is that the bound $\mathbb{E}g(X,\Delta)\leq 1$ there can be replaced by $\mathbb{E}g(X,\Delta)\leq \sqrt{n}$, because
$$\mathbb{E}g(X,\Delta)\leq \mathbb{E}\sqrt{\sum_{i=1}^n|\psi(W^Tx_i)|^2}\leq \sqrt{\sum_{i=1}^n\mathbb{E}|\psi(W^Tx_i)|^2}\leq \sqrt{n}.$$
Therefore,
$$\sup_{\|\Delta\|=1}|g(X,\Delta) - \mathbb{E}g(X,\Delta)|\lesssim \sqrt{\frac{n\log(1+2/\zeta)}{d}} + \sqrt{n}\zeta,$$
with high probability as long as $\zeta\leq 1/2$. We choose $\zeta=\frac{c}{\sqrt{n}}$ with a sufficiently small constant $c>0$, and thus the bound is sufficiently small as long as $\frac{n\log n}{d}$ is sufficiently small.

Finally, combine results for (\ref{eq:exp-f-inf}), (\ref{eq:ep-f}) and (\ref{eq:ep-g}), and we obtain the desired conclusion as long as $n^2/p$ and $n\log n/d$ are sufficiently small.
\end{proof}

To prove (\ref{eq:l2-upper-A}) of Lemma \ref{lem:design-rf-relu}, we establish the following stronger result.
\begin{lemma}\label{lem:lim-G-relu}
Consider independent $W_1,...,W_p\sim N(0,d^{-1}I_d)$ and $x_1,...,x_n\sim N(0,I_d)$. We define the matrices $G,\bar{G}\in\mathbb{R}^{n\times n}$ by
$$
G_{il}=\frac{1}{p}\sum_{j=1}^p\psi(W^T_jx_i)\psi(W_j^Tx_l),
$$
and
$$\bar{G}_{il}=\begin{cases}
\frac{1}{2}, & i=l, \\
\frac{1}{2\pi}+\frac{1}{4}\frac{\bar{x}_i^T\bar{x}_l}{d} + \frac{1}{2\pi}\left(\frac{\|x_i\|}{\sqrt{d}}-1+\frac{\|x_l\|}{\sqrt{d}}-1\right), & i\neq l.
\end{cases}$$
Assume $d/\log n$ is sufficiently large, and then
$$\opnorm{G-\bar{G}}^2\lesssim \frac{n^2}{p} + \frac{\log n}{d} + \frac{n^2}{d^2},$$
with high probability. Moreover, we also have $\opnorm{G}\lesssim n$ with high probability.
\end{lemma}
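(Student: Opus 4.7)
The plan is to follow the three-step scheme used in the proof of Lemma \ref{lem:lim-G}---compare $G$ to its conditional mean $\wt{G}$, then compare $\wt{G}$ to the explicit target $\bar{G}$---but with the smooth hyperbolic tangent kernel replaced by the arc-cosine kernel, and with a new rank-one contribution accounted for. The latter arises because $\mathbb{E}\psi(Z) = 1/\sqrt{2\pi}\neq 0$ for the ReLU, so the off-diagonal entries of $\bar{G}$ contain a constant $1/(2\pi)$ that will produce an eigenvalue of order $n$ in the all-ones direction; this is the structural reason the lemma only claims an upper bound on $\opnorm{G}$ rather than a two-sided spectral bound.

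For the first step, I would introduce $\wt{G}$ with $\wt{G}_{il} = \mathbb{E}(\psi(W^Tx_i)\psi(W^Tx_l)\mid X)$. On the high-probability event $\max_i\|x_i\|^2\lesssim d$ of (\ref{eq:r1e4}), the conditional fourth moment $\mathbb{E}(|\psi(W^Tx_i)|^4\mid X)\leq 3\|x_i\|^4/d^2$ is $O(1)$, so $\Var(G_{il}\mid X)\lesssim 1/p$ and Markov's inequality gives $\opnorm{G-\wt{G}}^2\lesssim n^2/p$, in direct parallel with (\ref{eq:G-G-tilde}). For the diagonal of $\wt{G}$, positive homogeneity of ReLU yields the exact identity $\wt{G}_{ii} = \|x_i\|^2/(2d)$, so Lemma \ref{lem:chi-squared} with a union bound produces $\max_i|\wt{G}_{ii}-1/2|\lesssim\sqrt{\log n/d}$, matching $\bar{G}_{ii}=1/2$.

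The heart of the argument is the off-diagonal analysis. Setting $s_i=\|x_i\|/\sqrt{d}$ and $\rho_{il}=\bar{x}_i^T\bar{x}_l/d$, positive homogeneity together with the classical arc-cosine kernel identity gives
\[
\wt{G}_{il} = s_i s_l\, g(\rho_{il}),\qquad g(\rho) = \frac{1}{2\pi}\left(\sqrt{1-\rho^2}+(\pi-\arccos\rho)\rho\right),
\]
with $g(0)=1/(2\pi)$, $g'(0)=1/4$, and $\sup_{|\rho|\leq 1/5}|g''(\rho)|\lesssim 1$. On the event $\max_{i\neq l}|\rho_{il}|\lesssim\sqrt{\log n/d}$ from Lemma \ref{lem:inner-prod}, I would Taylor-expand $g$ to second order and use the factorization $s_is_l = 1+(s_i-1)+(s_l-1)+(s_i-1)(s_l-1)$. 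The resulting first-order term reproduces $\bar{G}_{il}$ exactly, while the residual $\frac{(s_i-1)(s_l-1)}{2\pi}+\frac{\rho_{il}(s_is_l-1)}{4}+s_is_l\cdot O(\rho_{il}^2)$ has squared Frobenius mass bounded by $\sum_{i\neq l}[(s_i-1)^2(s_l-1)^2+\rho_{il}^2(s_i-1)^2+\rho_{il}^4]\lesssim n^2/d^2$, since $\mathbb{E}(s_i-1)^4\lesssim d^{-2}$ by Lemma \ref{lem:chi-squared} and $\mathbb{E}\rho_{il}^4\lesssim d^{-2}$ by Lemma \ref{lem:inner-prod}, exactly as in the residual analysis around (\ref{eq:G-H}) and (\ref{eq:G-H-mixed}). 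Combining the three pieces gives the claimed bound on $\opnorm{G-\bar{G}}^2$.

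For the operator-norm bound $\opnorm{G}\lesssim n$, it suffices by the triangle inequality to control $\opnorm{\bar{G}}$, since $\opnorm{G-\bar{G}}$ is of smaller order under the assumptions. I would decompose $\bar{G}$ into a rank-one piece $\frac{1}{2\pi}\ones\ones^T$ of norm $n/(2\pi)$, a diagonal piece of norm $O(1)$, an off-diagonal piece with entries $\rho_{il}/4$ whose norm is $\lesssim 1+n/d$ by arguments analogous to \cite{davidson2001local}, and a rank-two piece $\frac{1}{2\pi}(s\ones^T+\ones s^T)$ of norm $\lesssim n\sqrt{\log n/d}$; all contributions are $O(n)$. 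The main obstacle is the bookkeeping: both the first-order correction $\frac{1}{2\pi}[(s_i-1)+(s_l-1)]$ from norm fluctuations and the zeroth-order constant $1/(2\pi)$ must be included in $\bar{G}$ explicitly in order for the residuals to reach the $O(d^{-2})$ per-entry scale, and the non-vanishing zeroth-order piece is precisely what forces the weaker one-sided spectral conclusion relative to Lemma \ref{lem:lim-G}.
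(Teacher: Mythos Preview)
Your proposal is correct and follows essentially the same route as the paper: compare $G$ to $\wt{G}$ via a variance bound, handle the diagonal exactly via $\wt{G}_{ii}=\|x_i\|^2/(2d)$, and for the off-diagonal exploit positive homogeneity of the ReLU to reduce to the arc-cosine kernel $g(\rho)$ with $g(0)=1/(2\pi)$, $g'(0)=1/4$; the paper writes the same identity as the four-term split (\ref{eq:G-tilde-1-relu})--(\ref{eq:G-tilde-4-relu}) and then invokes homogeneity via (\ref{eq:interesting-rep}), which is exactly your factorization $s_is_l=1+(s_i-1)+(s_l-1)+(s_i-1)(s_l-1)$ written out termwise. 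The only noteworthy difference is in the final step: for $\opnorm{G}\lesssim n$ the paper simply bounds $\mathbb{E}\lambda_{\max}(\bar G)^2\leq\mathbb{E}\fnorm{\bar G}^2\lesssim n^2$ and applies Markov, whereas your explicit decomposition into the rank-one $\frac{1}{2\pi}\ones\ones^T$ plus lower-order pieces gives the same bound with more structural information.
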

\begin{proof}
Define $\wt{G}\in\mathbb{R}^{n\times n}$ with entries $\wt{G}_{il}=\mathbb{E}\left(\psi(W^Tx_i)\psi(W^Tx_l)|X\right)$, and we first bound the difference between $G$ and $\wt{G}$. Note that
$$\mathbb{E}(G_{il}-\wt{G}_{il})^2 = \mathbb{E}\Var(G_{il}|X) \leq \frac{1}{p}\mathbb{E}|\psi(W^Tx_i)\psi(W^Tx_l)|^2=\frac{3}{2p}\mathbb{E}\|W\|^4\leq 5p^{-1}.$$
We then have
$$
\mathbb{E}\opnorm{G-\wt{G}}^2 \leq \mathbb{E}\fnorm{G-\wt{G}}^2 \leq \frac{5n^2}{p}.
$$
By Markov's inequality,
\begin{equation}
\opnorm{G-\wt{G}}^2 \lesssim \frac{n^2}{p}, \label{eq:G-G-tilde-relu}
\end{equation}
with high probability.

Next, we study the diagonal entries of $\wt{G}$. For any $i\in[n]$, $\wt{G}_{ii}=\mathbb{E}(|\psi(W^Tx_i)|^2|X)=\frac{\|x_i\|^2}{2d}$. By Lemma \ref{lem:chi-squared} and a union bound argument, we have
\begin{equation}
\max_{1\leq i\leq n}|\wt{G}_{ii}-\bar{G}_{ii}|\lesssim \sqrt{\frac{\log n}{d}}, \label{eq:G-diag-relu}
\end{equation}
with high probability.

Now we analyze the off-diagonal entries. We use the notation $\bar{x}_i=\frac{\sqrt{d}}{\|x_i\|}x_i$. For any $i\neq l$, we have
\begin{eqnarray}
\label{eq:G-tilde-1-relu} \wt{G}_{il} &=& \mathbb{E}\left(\psi(W^T\bar{x}_i)\psi(W^T\bar{x}_l)|X\right) \\
\label{eq:G-tilde-2-relu} && + \mathbb{E}\left((\psi(W^Tx_i)-\psi(W^T\bar{x}_i))\psi(W^T\bar{x}_l)|X\right) \\
\label{eq:G-tilde-3-relu} && + \mathbb{E}\left(\psi(W^T\bar{x}_i)(\psi(W^Tx_l)-\psi(W^T\bar{x}_l))|X\right) \\
\label{eq:G-tilde-4-relu} && + \mathbb{E}\left((\psi(W^Tx_i)-\psi(W^T\bar{x}_i))(\psi(W^Tx_l)-\psi(W^T\bar{x}_l))|X\right).
\end{eqnarray}
For the first term on the right hand side of (\ref{eq:G-tilde-1-relu}), we observe that $\mathbb{E}\left(\psi(W^T\bar{x}_i)\psi(W^T\bar{x}_l)|X\right)$ is a function of $\frac{\bar{x}_i^T\bar{x}_l}{d}$, and thus we can write
$$\mathbb{E}\left(\psi(W^T\bar{x}_i)\psi(W^T\bar{x}_l)|X\right)=f\left(\frac{\bar{x}_i^T\bar{x}_l}{d}\right),$$
where
$$f(\rho) = \begin{cases}
\mathbb{E}\psi(\sqrt{1-\rho}U+\sqrt{\rho}Z)\psi(\sqrt{1-\rho}V+\sqrt{\rho}Z), & \rho \geq 0, \\
\mathbb{E}\psi(\sqrt{1+\rho}U-\sqrt{-\rho}Z)\psi(\sqrt{1+\rho}V+\sqrt{-\rho}Z), & \rho < 0,
\end{cases}$$
with $U,V,Z\stackrel{iid}{\sim} N(0,1)$. By some direct calculations, we have $f(0)=\frac{1}{2\pi}$, $f'(0)=\frac{1}{4}$, and $\sup_{|\rho|\leq 0.2}\frac{|f'(\rho)-f'(0)|}{|\rho|}\lesssim 1$. Therefore, as long as $|\bar{x}_i^T\bar{x}_l|/d\leq 1/5$,
$$\left|f\left(\frac{\bar{x}_i^T\bar{x}_l}{d}\right)-\frac{1}{2\pi}-\frac{1}{4}\frac{\bar{x}_i^T\bar{x}_l}{d}\right|\leq C_1\left|\frac{\bar{x}_i^T\bar{x}_l}{d}\right|^2,$$
for some constant $C_1>0$. By Lemma \ref{lem:inner-prod}, we know that $\max_{i\neq l}|\bar{x}_i^T\bar{x}_l|/d\lesssim \sqrt{\frac{\log n}{d}}\leq 1/5$ with high probability, which then implies
\begin{equation}
\sum_{i\neq l}\left(\mathbb{E}\left(\psi(W^T\bar{x}_i)\psi(W^T\bar{x}_l)|X\right)-\bar{G}_{il}\right)^2 \leq C_1\sum_{i\neq l}\left|\frac{\bar{x}_i^T\bar{x}_l}{d}\right|^4.\label{eq:4th-bd-later}
\end{equation}
The term on the right hand side has been analyzed in (\ref{eq:G-H}), and we have $\sum_{i\neq l}\left|\frac{x_i^Tx_l}{d}\right|^4\lesssim \frac{n^2}{d^2}$ with high probability.

We also need to analyze the contributions of (\ref{eq:G-tilde-2-relu}) and (\ref{eq:G-tilde-3-relu}). Observe the fact that $\mathbb{I}\{W^Tx_i\geq 0\}=\mathbb{I}\{W^T\bar{x}_i\geq 0\}$, which implies
\begin{eqnarray}
\nonumber \psi(W^Tx_i)-\psi(W^T\bar{x}_i) &=& W^T(x_i-\bar{x}_i)\mathbb{I}\{W^T\bar{x}_i\geq 0\}\psi(W^T\bar{x}_l) \\
\label{eq:interesting-rep} &=& \left(\frac{\|x_i\|}{\sqrt{d}}-1\right)\psi(W^T\bar{x}_i)\psi(W^T\bar{x}_l).
\end{eqnarray}
Then, the sum of (\ref{eq:G-tilde-2-relu}) and (\ref{eq:G-tilde-3-relu}) can be written as
$$\left(\frac{\|x_i\|}{\sqrt{d}}-1+\frac{\|x_l\|}{\sqrt{d}}-1\right)f\left(\frac{\bar{x}_i^T\bar{x}_l}{d}\right).$$
Note that
\begin{eqnarray*}
&& \sum_{i\neq l} \left(\frac{\|x_i\|}{\sqrt{d}}-1+\frac{\|x_l\|}{\sqrt{d}}-1\right)^2\left[f\left(\frac{\bar{x}_i^T\bar{x}_l}{d}\right)-\frac{1}{2\pi}\right]^2 \\
&\lesssim& \sum_{i\neq l} \left(\frac{\|x_i\|}{\sqrt{d}}-1+\frac{\|x_l\|}{\sqrt{d}}-1\right)^4 + \sum_{i\neq l}\left|\frac{\bar{x}_i^T\bar{x}_l}{d}\right|^4.
\end{eqnarray*}
We have already shown that $\sum_{i\neq l}\left|\frac{\bar{x}_i^T\bar{x}_l}{d}\right|^4\lesssim \frac{n^2}{d^2}$ with high probability. By integrating out the probability tail bound of Lemma \ref{lem:chi-squared}, we have $\mathbb{E}\left(\frac{\|x_i\|}{\sqrt{d}}-1\right)^4\lesssim d^{-2}$, which then implies
$$\sum_{i\neq l} \mathbb{E}\left(\frac{\|x_i\|}{\sqrt{d}}-1+\frac{\|x_l\|}{\sqrt{d}}-1\right)^4\lesssim \frac{n^2}{d^2}$$
and the corresponding high-probability bound by Markov's inequality.

Finally, we show that the contribution of (\ref{eq:G-tilde-4-relu}) is negligible. By (\ref{eq:interesting-rep}), we can write (\ref{eq:G-tilde-4-relu}) as
$$\left(\frac{\|x_i\|}{\sqrt{d}}-1\right)\left(\frac{\|x_l\|}{\sqrt{d}}-1\right)\mathbb{E}\left(\psi(W^T\bar{x}_i)^2\psi(W^T\bar{x}_l)^2\Big|X\right),$$
whose absolute value can be bounded by $\frac{3}{2}\left|\frac{\|x_i\|}{\sqrt{d}}-1\right|\left|\frac{\|x_l\|}{\sqrt{d}}-1\right|$. Since
$$\sum_{i\neq l}\mathbb{E}\left(\frac{\|x_i\|}{\sqrt{d}}-1\right)^2\mathbb{E}\left(\frac{\|x_l\|}{\sqrt{d}}-1\right)^2\lesssim \frac{n^2}{d^2},$$
we can conclude that (\ref{eq:G-tilde-4-relu}) is bounded by $O\left(\frac{n^2}{d^2}\right)$ with high probability by Markov's inequality.

Combining the analyses of (\ref{eq:G-tilde-1-relu}), (\ref{eq:G-tilde-2-relu}), (\ref{eq:G-tilde-3-relu}) and (\ref{eq:G-tilde-4-relu}), we conclude that $\sum_{i\neq l}(\wt{G}_{il}-\bar{G}_{il})^2\lesssim \frac{n^2}{d^2}$ with high probability. Together with (\ref{eq:G-G-tilde-relu}) and (\ref{eq:G-diag-relu}), we obtain the desired bound for $\opnorm{G-\bar{G}}$.

To prove the last conclusion $\opnorm{\bar{G}}\lesssim n$, it suffices to analyze $\lambda_{\max}(\bar{G})$. We bound this quantity by $\mathbb{E}\lambda_{\max}(\bar{G})^2\leq \mathbb{E}\fnorm{\bar{G}}^2\lesssim n^2$, which leads to the desired conclusion.
\end{proof}

\begin{proof}[Proof of Corollary \ref{cor:repair-rf-relu}]
Since $\wh{\theta}$ belongs to the row space of $\wt{X}$, there exists some $u^*\in\mathbb{R}^n$ such that $\wh{\theta}=\wt{X}^Tu^*$.
By Theorem \ref{thm:main-improved} and Lemma \ref{lem:design-rf-relu}, we know that $\wt{u}=u^*$ with high probability, and therefore $\wt{\theta}=\wt{X}^T\wt{u}=\wt{X}^Tu^*=\wh{\theta}$.
\end{proof}

\subsection{Proof of Theorem \ref{thm:nn-grad-relu}}

To prove Theorem \ref{thm:nn-grad-relu}, we need the following kernel random matrix result.
\begin{lemma}\label{lem:lim-H-relu}
Consider independent $W_1,\ldots,W_p\sim N(0,d^{-1}I_d)$, $x_1,\ldots,x_n\sim N(0,I_d)$, and parameters $\beta_1,\ldots,\beta_p\sim N(0,1)$. We define the matrices $H, \bar{H}\in\mathbb{R}^{n\times n}$ by
\begin{eqnarray*}
H_{il} &=& \frac{x_i^Tx_l}{d}\frac{1}{p}\sum_{j=1}^p\beta_j^2\mathbb{I}\{W_j^Tx_i\geq 0, W_j^Tx_l\geq 0\}, \\
\bar{H}_{il} &=& \frac{1}{4}\frac{x_i^Tx_l}{\|x_i\|\|x_l\|} + \frac{1}{4}\mathbb{I}\{i=l\}.
\end{eqnarray*}
Assume $d/\log n$ is sufficiently large, and then
$$\opnorm{H-\bar{H}}^2 \lesssim \frac{n^2}{pd} + \frac{n}{p} + \frac{\log n}{d} + \frac{n^2}{d^2},$$
with high probability. If we additionally assume that $d/n$ and $p/n$ are sufficiently large, we will also have
\begin{equation}
\frac{1}{5}\leq\lambda_{\min}(H)\leq\lambda_{\max}(H)\lesssim 1,\label{eq:last-added-ref}
\end{equation}
with high probability.
\end{lemma}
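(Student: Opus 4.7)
The plan is to mirror the three-step strategy used in the proof of Lemma \ref{lem:lim-H}, with the Taylor expansion of the smooth-activation kernel replaced by the classical two-Gaussian orthant identity that is appropriate for ReLU. First, I would introduce the conditional-expectation intermediate $\tilde H_{il} = \mathbb{E}(H_{il}\mid X) = \frac{x_i^T x_l}{d}\,\mathbb{P}(W^T x_i\geq 0, W^T x_l\geq 0\mid X)$, which averages out the randomness of both $\beta_j$ and $W_j$. Since $\{\beta_j^2\mathbb{I}\{W_j^T x_i\geq 0, W_j^T x_l\geq 0\}\}_{j=1}^p$ are i.i.d.\ conditionally on $X$ with conditional second moment bounded by $\mathbb{E}\beta^4 = 3$, a variance calculation gives $\mathbb{E}[(H_{il}-\tilde H_{il})^2\mid X]\lesssim \frac{1}{p}(x_i^T x_l)^2/d^2$; taking total expectation and summing yields $\mathbb{E}\fnorm{H-\tilde H}^2\lesssim n^2/(pd)+n/p$, and Markov's inequality produces the first two terms of the target operator-norm bound.

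Second, evaluate $\tilde H$ explicitly: conditional on $X$, $(W^T x_i, W^T x_l)$ is mean-zero Gaussian with correlation $\rho_{il} = x_i^T x_l/(\|x_i\|\|x_l\|)$, so the orthant probability formula gives
\[
\tilde H_{il} = \frac{x_i^T x_l}{d}\left[\frac{1}{4}+\frac{\arcsin(\rho_{il})}{2\pi}\right] \quad (i\neq l), \qquad \tilde H_{ii} = \frac{\|x_i\|^2}{2d}.
\]
The diagonal error $|\tilde H_{ii}-\bar H_{ii}| = \tfrac{1}{2}|\|x_i\|^2/d - 1|\lesssim \sqrt{\log n/d}$ holds uniformly in $i$ by Lemma \ref{lem:chi-squared} and a union bound, contributing the $\log n/d$ term to the squared operator norm.

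Third, and most delicate, bound the off-diagonal difference through the decomposition
\[
\tilde H_{il}-\bar H_{il} = \tfrac{1}{4}\rho_{il}\left(\tfrac{\|x_i\|\|x_l\|}{d}-1\right) + \tfrac{x_i^T x_l}{d}\cdot\tfrac{\arcsin(\rho_{il})}{2\pi},
\]
which exploits the algebraic identity $\frac{x_i^T x_l}{d}-\rho_{il} = \rho_{il}(\frac{\|x_i\|\|x_l\|}{d}-1)$. On the event $\max_{i\neq l}|\rho_{il}|\lesssim \sqrt{\log n/d}\leq 1/5$ from Lemma \ref{lem:inner-prod}, $|\arcsin(\rho_{il})|\lesssim |\rho_{il}|$, so each squared entry is controlled by $\rho_{il}^4 + (\|x_i\|\|x_l\|/d-1)^4 + (x_i^T x_l/d)^4$ up to constants. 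Reusing the fourth-moment bounds from the proof of Lemma \ref{lem:lim-H} --- namely $\sum_{i\neq l}(x_i^T x_l/d)^4\lesssim n^2/d^2$, $\sum_{i\neq l}\rho_{il}^4\lesssim n^2/d^2$ on $\min_i\|x_i\|^2\gtrsim d$, and $\sum_{i\neq l}(\|x_i\|\|x_l\|/d-1)^4\lesssim n^2/d^2$, all obtained via Lemmas \ref{lem:chi-squared}, \ref{lem:inner-prod} and Markov --- gives $\sum_{i\neq l}(\tilde H_{il}-\bar H_{il})^2\lesssim n^2/d^2$ with high probability. The three pieces combine via triangle inequality to deliver the stated bound on $\opnorm{H-\bar H}^2$.

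For the spectral conclusion, write $\bar H = \tfrac{1}{4}I_n + \tfrac{1}{4}P$ where $P_{il}=\rho_{il}$ (with $P_{ii}=1$), so that $P = \tilde X\tilde X^T\succeq 0$ with $\tilde X$ the row-normalized design. This immediately gives $\lambda_{\min}(\bar H)\geq 1/4$, while $\opnorm{\tilde X}^2\leq \opnorm{X}^2/\min_i\|x_i\|^2\lesssim 1$ on the events $\opnorm{X}^2\lesssim d+n$ (\cite{davidson2001local}) and $\min_i\|x_i\|^2\gtrsim d$ (Lemma \ref{lem:chi-squared}) forces $\lambda_{\max}(\bar H)\lesssim 1$. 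Weyl's inequality applied with the first conclusion --- which is $o(1)$ under the hypotheses that $d/n$ and $p/n$ are sufficiently large --- then upgrades these to $\tfrac{1}{5}\leq\lambda_{\min}(H)\leq\lambda_{\max}(H)\lesssim 1$. The main obstacle is the third step: one must pick the decomposition so that each summand is genuinely quadratic in small quantities, since a naive per-entry estimate $O(\sqrt{\log n/d})$ would aggregate to $O(n^2\log n/d^2)$ and exceed the target; the algebraic identity above is what buys the extra order of smallness.
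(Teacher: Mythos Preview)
Your proposal is correct and follows essentially the same three-step structure as the paper's proof: bound $\opnorm{H-\tilde H}$ via a conditional variance and Frobenius-to-operator-norm argument, handle the diagonal of $\tilde H-\bar H$ via chi-squared concentration, and control the off-diagonal entries through fourth-moment sums that reduce to $n^2/d^2$. The only noteworthy difference is in the off-diagonal step: the paper treats $\rho\mapsto \rho\,\mathbb{P}(W^T\bar x_i\geq 0,W^T\bar x_l\geq 0\mid X)$ as an abstract function $f(\rho)$, computes $f(0)=0$, $f'(0)=1/4$, and bounds $f''$ near zero to get $|f(\rho)-\rho/4|\lesssim\rho^2$, whereas you invoke the closed-form orthant identity $\mathbb{P}=\tfrac14+\tfrac{1}{2\pi}\arcsin(\rho_{il})$ and use the algebraic splitting $\frac{x_i^Tx_l}{d}-\rho_{il}=\rho_{il}\bigl(\frac{\|x_i\|\|x_l\|}{d}-1\bigr)$ together with $|\arcsin(\rho)|\lesssim|\rho|$. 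Both routes land on the same collection of fourth-power sums and the same final bound; your version simply trades the $f''$ computation for a one-line use of a classical formula, which is a little more transparent but not materially different.
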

\begin{proof}
Define $\wt{H}\in\mathbb{R}^{n\times n}$ with entries $\wt{H}_{il}=\frac{x_i^Tx_l}{d}\mathbb{E}\left(\beta^2\mathbb{I}\{W^Tx_i\geq 0, W^Tx_l\geq 0\}\big|X\right)$, and we first bound the difference between $H$ and $\wt{H}$. Note that
$$\mathbb{E}(H_{il}-\wt{H}_{il})^2=\mathbb{E}\Var(H_{il}|X)\leq \frac{1}{p}\mathbb{E}\left(\frac{|x_i^Tx_l|^2}{d^2}\beta^4\right)\leq\begin{cases}
\frac{3}{pd}, & i\neq l, \\
9p^{-1}, & i=l.
\end{cases}$$
We then have
$$\mathbb{E}\opnorm{H-\wt{H}}^2 \leq \mathbb{E}\fnorm{H-\wt{H}}^2 \leq \frac{3n^2}{pd} + \frac{9n}{p}.$$
By Markov's inequality,
\begin{equation}
\opnorm{H-\wt{H}}^2 \lesssim \frac{n^2}{pd} + \frac{n}{p}, \label{eq:H-H-tilde-relu}
\end{equation}
with high probability.

Next, we study the diagonal entries of $\wt{H}$. For any $i\in[n]$, $\wt{H}_{ii}=\frac{\|x_i\|^2}{d}\mathbb{E}(\beta^2\mathbb{I}\{W^Tx_i\geq 0\}|X)=\frac{\|x_i\|^2}{2d}$. The same analysis that leads to the bound (\ref{eq:G-diag-relu}) also implies that
\begin{equation}
\max_{1\leq i\leq n}|\wt{H}_{ii}-\bar{H}_{ii}|\lesssim \sqrt{\frac{\log n}{d}}, \label{eq:H-diag-relu}
\end{equation}
with high probability.

Now we analyze the off-diagonal entries. Recall the notation $\bar{x}_i=\frac{\sqrt{d}}{\|x_i\|}x_i$. For any $i\neq l$, we have
\begin{eqnarray}
\nonumber \wt{H}_{il} &=& \frac{\|x_i\|\|x_l\|}{d}\frac{\bar{x}_i^T\bar{x}_l}{d}\mathbb{P}\left(W^T\bar{x}_i\geq 0, W^T\bar{x}_l\geq 0|X\right) \\
\label{eq:H-tilde-il-relu} &=& \frac{\bar{x}_i^T\bar{x}_l}{d}\mathbb{P}\left(W^T\bar{x}_i\geq 0, W^T\bar{x}_l\geq 0|X\right) \\
\nonumber && + \left(\frac{\|x_i\|\|x_l\|}{d}-1\right)\frac{\bar{x}_i^T\bar{x}_l}{d}\mathbb{P}\left(W^T\bar{x}_i\geq 0, W^T\bar{x}_l\geq 0|X\right).
\end{eqnarray}
Since $\mathbb{P}\left(W^T\bar{x}_i\geq 0, W^T\bar{x}_l\geq 0|X\right)$ is a function of $\frac{\bar{x}_i^T\bar{x}_l}{d}$, we can write
\begin{equation}
\frac{\bar{x}_i^T\bar{x}_l}{d}\mathbb{P}\left(W^T\bar{x}_i\geq 0, W^T\bar{x}_l\geq 0|X\right)=f\left(\frac{\bar{x}_i^T\bar{x}_l}{d}\right), \label{eq:f-H-tilde-relu}
\end{equation}
where for $\rho>0$,
\begin{eqnarray*}
f(\rho) &=& \rho\mathbb{P}\left(\sqrt{1-\rho}U+\sqrt{\rho}Z\geq 0, \sqrt{1-\rho}V+\sqrt{\rho}Z\geq0\right) \\
&=& \rho\mathbb{E}\mathbb{P}\left(\sqrt{1-\rho}U+\sqrt{\rho}Z\geq 0, \sqrt{1-\rho}V+\sqrt{\rho}Z\geq 0|Z\right) \\
&=& \rho\mathbb{E}\Phi\left(\sqrt{\frac{\rho}{1-\rho}}Z\right)^2,
\end{eqnarray*}
with $U,V,Z\stackrel{iid}{\sim} N(0,1)$ and $\Phi(\cdot)$ being the cumulative distribution function of $N(0,1)$. Similarly, for $\rho<0$,
$$f(\rho) = \rho\mathbb{E}\left[\Phi\left(\sqrt{\frac{-\rho}{1+\rho}}Z\right)\left(1-\Phi\left(\sqrt{\frac{-\rho}{1+\rho}}Z\right)\right)\right].$$
By some direct calculations, we have $f(0)=0$, $f'(0)=\frac{1}{4}$, and
$$\sup_{|\rho|\leq 1/5}|f''(\rho)|\lesssim \sup_{|t|\leq 1/2}\left|\mathbb{E}\phi(tZ)\Phi(tZ)Z/t\right| + \sup_{|t|\leq 1/2}\left|\mathbb{E}\phi(tZ)Z/t\right|,$$
where $\phi(x)=(2\pi)^{-1/2}e^{-x^2/2}$. For any $|t|\leq 1/2$,
$$
\left|\mathbb{E}\phi(tZ)Z/t\right| = \left|\mathbb{E}\frac{\phi(tZ)-\phi(0)}{tZ}Z^2\right| = \left|\mathbb{E}\xi\phi(\xi)Z^2\right| \leq \frac{|t|}{\sqrt{2\pi}}\mathbb{E}|Z|^3\lesssim 1,
$$
where $\xi$ is a scalar between $0$ and $tZ$ so that $|\xi|\leq |tZ|$. By a similar argument, we also have $\sup_{|t|\leq 1/2}\left|\mathbb{E}\phi(tZ)\Phi(tZ)Z/t\right|\lesssim 1$ so that $\sup_{|\rho|\leq 1/5}|f''(\rho)|\lesssim 1$. Therefore, as long as $|\bar{x}_i^T\bar{x}_l|/d\leq 1/5$,
$$\left|f\left(\frac{\bar{x}_i^T\bar{x}_l}{d}\right)-\frac{1}{4}\frac{\bar{x}_i^T\bar{x}_l}{d}\right|\leq C_1\left|\frac{\bar{x}_i^T\bar{x}_l}{d}\right|^2,$$
for some constant $C_1>0$. By Lemma \ref{lem:inner-prod}, we know that $\max_{i\neq l}|\bar{x}_i^T\bar{x}_l|/d\lesssim \sqrt{\frac{\log n}{d}}\leq 1/5$ with high probability. In view of the identities (\ref{eq:H-tilde-il-relu}) and (\ref{eq:f-H-tilde-relu}), we then have the high probability bound,
\begin{eqnarray}
\nonumber \sum_{i\neq l}\left(\wt{H}_{il}-\frac{1}{4}\frac{\bar{x}_i^T\bar{x}_l}{d}\right)^2 &\leq& 2\sum_{i\neq l}\left(\frac{\|x_i\|\|x_l\|}{d}-1\right)^2\left|\frac{\bar{x}_i^T\bar{x}_l}{d}\right|^2  + 2C_1\sum_{i\neq l}\left|\frac{\bar{x}_i^T\bar{x}_l}{d}\right|^4 \\
 \label{eq:high-prob-off-diag} &\leq& \sum_{i\neq l}\left(\frac{\|x_i\|\|x_l\|}{d}-1\right)^4 + (2C_1+1)\sum_{i\neq l}\left|\frac{\bar{x}_i^T\bar{x}_l}{d}\right|^4.
\end{eqnarray}
For the first term on the right hand side of (\ref{eq:high-prob-off-diag}), we use Lemma \ref{lem:inner-prod} and obtain a probability tail bound for $|\|x_i\|\|x_l\|-d|$. By integrating out this tail bound, we have
$$\sum_{i\neq l}\mathbb{E}\left(\frac{\|x_i\|\|x_l\|}{d}-1\right)^4\lesssim \frac{n^2}{d^2},$$
which, by Markov's inequality, implies $\sum_{i\neq l}\left(\frac{\|x_i\|\|x_l\|}{d}-1\right)^4\lesssim \frac{n^2}{d^2}$ with high probability.
Using the same argument in the proof of Lemma \ref{lem:lim-G-relu}, we have $\sum_{i\neq l}\left|\frac{x_i^Tx_l}{d}\right|^4\lesssim \frac{n^2}{d^2}$ with high probability.
Finally, combining (\ref{eq:H-H-tilde-relu}), (\ref{eq:H-diag-relu}), and the bound for (\ref{eq:high-prob-off-diag}), we obtain the desired bound for $\opnorm{H-\bar{H}}$.
The last conclusion (\ref{eq:last-added-ref}) follows a similar argument in the proof of Lemma \ref{lem:lim-G}. The proof is complete.
\end{proof}

Now we are ready to prove Theorem \ref{thm:nn-grad-relu}.
\begin{proof}[Proof of Theorem \ref{thm:nn-grad-relu}]
The proof is similar to that of Theorem \ref{thm:nn-grad}, and we will omit repeated arguments. We will use the high-probability inequalities (\ref{eq:r1e1})-(\ref{eq:r1e9}). Then, it suffices to establish Claims A, B, C and D in the proof of Theorem \ref{thm:nn-grad}. Since Claims A and C follow the same argument, we only need to check Claims B and D. Given the similarity of Claims B and D, we only present the proof of Claim D.
We have
\begin{equation}
u(k+1)-u(k)=\gamma(H(k)+G(k))(y-u(k))+r(k), \label{eq:iter-u-relu}
\end{equation}
where
\begin{eqnarray*}
G_{il}(k) &=& \frac{1}{p}\sum_{j=1}^p\psi(W_j(k)^Tx_l)\psi(W_j(k)^Tx_i), \\
H_{il}(k) &=& \frac{x_i^Tx_l}{d}\frac{1}{p}\sum_{j=1}^p\beta_j(k+1)^2\psi'(W_j(k)^Tx_i)\psi'(W_j(k)^Tx_l),
\end{eqnarray*}
and
\begin{eqnarray*}
r_i(k) &=& \frac{1}{\sqrt{p}}\sum_{j=1}^p\beta_j(k+1)\left(\psi(W_j(k+1)^Tx_i)-\psi(W_j(k)^Tx_i)\right) \\
&& - \frac{1}{\sqrt{p}}\sum_{j=1}^p\beta_j(k+1)(W_j(k+1)-W_j(k))^Tx_i\psi'(W_j(k)^Tx_i).
\end{eqnarray*}

With the same argument, the bound (\ref{eq:x-japan}) still holds.
By Lemma \ref{lem:lim-G-relu} and the fact that $G(k)$ is positive semi-definite, we have
\begin{equation}
0 \leq \lambda_{\min}(G(k)) \leq \lambda_{\max}(G(k)) \lesssim n. \label{eq:Gk-spec}
\end{equation}

We also need to control the difference between $H(k)$ and $H(0)$. By the definition, we have
\begin{eqnarray}
\label{eq:r-H-d-1-relu} |H_{il}(k)-H_{il}(0)| &\leq& \left|\frac{x_i^Tx_l}{d}\right|\frac{1}{p}\sum_{j=1}^p|\beta_j(k+1)^2-\beta_j^2(0)| \\
\label{eq:r-H-d-2-relu} && + \left|\frac{x_i^Tx_l}{d}\right|\frac{1}{p}\sum_{j=1}^p\beta_j^2(0)|\psi'(W_j(k)^Tx_i) - \psi'(W_j(0)^Tx_i)| \\
\label{eq:r-H-d-3-relu} && + \left|\frac{x_i^Tx_l}{d}\right|\frac{1}{p}\sum_{j=1}^p\beta_j^2(0)|\psi'(W_j(k)^Tx_l) - \psi'(W_j(0)^Tx_l)|.
\end{eqnarray}
We can bound (\ref{eq:r-H-d-1-relu}) by $\left|\frac{x_i^Tx_l}{d}\right|\frac{1}{p}\sum_{j=1}^pR_2(R_2+2|\beta_j(0)|)$. To bound (\ref{eq:r-H-d-2-relu}), we note that
\begin{eqnarray}
\nonumber |\psi'(W_j(k)^Tx_i) - \psi'(W_j(0)^Tx_i)| &\leq& \mathbb{I}\{|W_j(0)^Tx_i|\leq |(W_j(k)-W_j(0))^Tx_i|\} \\
\label{eq:simon-bound} &\leq& \mathbb{I}\{|W_j(0)^Tx_i|\leq R_1\|x_i\|\},
\end{eqnarray}
which implies
\begin{eqnarray*}
&& \left|\frac{x_i^Tx_l}{d}\right|\frac{1}{p}\sum_{j=1}^p\beta_j^2(0)|\psi'(W_j(k)^Tx_i) - \psi'(W_j(0)^Tx_i)| \\
&\leq& \left|\frac{x_i^Tx_l}{d}\right|\frac{1}{p}\sum_{j=1}^p\beta_j^2(0)\mathbb{I}\{|W_j(0)^Tx_i|\leq R_1\|x_i\|\},
\end{eqnarray*}
and a similar bound holds for (\ref{eq:r-H-d-3-relu}). Then,
\begin{eqnarray*}
\opnorm{H(k)-H(0)} &\leq& \max_{1\leq i\leq n}|H_{ii}(k)-H_{ii}(0)| + \max_{1\leq l\leq n}\sum_{i\in[n]\backslash\{l\}}|H_{il}(k)-H_{il}(0)| \\
&\lesssim& \max_{1\leq i\leq n} \frac{1}{p}\sum_{j=1}^p\beta_j^2(0)\mathbb{I}\{|W_j^T(0)^Tx_i|\leq R_1\|x_i\|\} \\
&& + d^{-1/2}n\max_{1\leq i\leq n}\frac{1}{p}\sum_{j=1}^p\beta_j^2(0)\mathbb{I}\{|W_j^T(0)^Tx_i|\leq R_1\|x_i\|\} \\
&& + \max_{1\leq l\leq n}\sum_{i=1}^n\left|\frac{x_i^Tx_l}{d}\right|R_2\frac{1}{p}\sum_{j=1}^p(R_2+2|\beta_j(0)|) \\
&\lesssim& \left(1+\frac{n}{\sqrt{d}}\right)\left(\sqrt{d}R_1\log p+\frac{\sqrt{\log n}\log p}{\sqrt{p}} + R_2^2 + R_2\sqrt{\log p}\right) \\
&\lesssim& \left(1+\frac{n}{\sqrt{d}}\right)\frac{n(\log p)^2}{\sqrt{p}},
\end{eqnarray*}
where we have used (\ref{eq:r1e1}), (\ref{eq:r1e4}), (\ref{eq:r1e5}), (\ref{eq:r1e6}) and (\ref{eq:r1e9}). In view of Lemma \ref{lem:lim-H-relu}, we then have
\begin{equation}
\frac{1}{6} \leq \lambda_{\min}(H(k)) \leq \lambda_{\max}(H(k)) \lesssim 1, \label{eq:Hk-spec}
\end{equation}
under the conditions of $d,p$ and $n$.

Next, we give a bound for $r_i(k)$. Observe that
$$\psi(W_j(k+1)^Tx_i)-\psi(W_j(k)^Tx_i)=(W_j(k+1)-W_j(k))^Tx_i\psi'(W_j(k)^Tx_i),$$
when $\mathbb{I}\{W_j(k+1)^Tx_i>0\}=\mathbb{I}\{W_j(k)^Tx_i>0\}$. Thus, we only need to sum over those $j\in[p]$ that $\mathbb{I}\{W_j(k+1)^Tx_i>0\}\neq \mathbb{I}\{W_j(k)^Tx_i>0\}$. By (\ref{eq:simon-bound}), we have
\begin{eqnarray*}
&& \left|\mathbb{I}\{W_j(k+1)^Tx_i>0\}-\mathbb{I}\{W_j(k)^Tx_i>0\}\right| \\
&\leq& \left|\mathbb{I}\{W_j(k+1)^Tx_i>0\}-\mathbb{I}\{W_j(0)^Tx_i>0\}\right|+ \left|\mathbb{I}\{W_j(k)^Tx_i>0\}-\mathbb{I}\{W_j(0)^Tx_i>0\}\right| \\
&\leq& 2\mathbb{I}\{|W_j^T(0)^Tx_i|\leq R_1\|x_i\|\}.
\end{eqnarray*}
Therefore,
\begin{eqnarray*}
&& \left|\psi(W_j(k+1)^Tx_i)-\psi(W_j(k)^Tx_i)-(W_j(k+1)-W_j(k))^Tx_i\psi'(W_j(k)^Tx_i)\right| \\
&\leq& 4|(W_j(k+1)-W_j(k))^Tx_i|\mathbb{I}\{|W_j^T(0)^Tx_i|\leq R_1\|x_i\|\} \\
&\leq& \frac{4\gamma}{d\sqrt{p}}|\beta_j(k+1)|\|y-u(k)\|\|x_i\|\sqrt{\sum_{l=1}^n\|x_l\|^2}\mathbb{I}\{|W_j^T(0)^Tx_i|\leq R_1\|x_i\|\},
\end{eqnarray*}
which implies
\begin{eqnarray*}
|r_i(k)| &\leq& \frac{4\gamma}{dp}\sum_{j=1}^p|\beta_j(k+1)|^2\|y-u(k)\|\|x_i\|\sqrt{\sum_{l=1}^n\|x_l\|^2}\mathbb{I}\{|W_j^T(0)^Tx_i|\leq R_1\|x_i\|\} \\
&\lesssim& \sqrt{n}\|y-u(k)\|\gamma\frac{1}{p}\sum_{j=1}^p(\beta_j(0)^2+R_2^2)\mathbb{I}\{|W_j^T(0)^Tx_i|\leq R_1\|x_i\|\} \\
&\lesssim& \gamma\sqrt{n}\log p\left(R_1+\sqrt{\frac{\log n}{p}}\right)\|y-u(k)\|.
\end{eqnarray*}
This leads to the bound
\begin{equation}
\|r(k)\|=\sqrt{\sum_{i=1}^n|r_i(k)|^2}\lesssim \gamma n\log p\left(R_1+\sqrt{\frac{\log n}{p}}\right)\|y-u(k)\|.\label{eq:bound-res-k-relu}
\end{equation}

Now we are ready to analyze $\|y-u(k+1)\|^2$. Given the relation (\ref{eq:iter-u-relu}), we have
\begin{eqnarray*}
\|y-u(k+1)\|^2 &=& \|y-u(k)\|^2 - 2\iprod{y-u(k)}{u(k+1)-u(k)} + \|u(k)-u(k+1)\|^2 \\
&=& \|y-u(k)\|^2 - 2\gamma(y-u(k))^T(H(k)+G(k))(y-u(k)) \\
&& - 2\iprod{y-u(k)}{r(k)} + \|u(k)-u(k+1)\|^2.
\end{eqnarray*}
By (\ref{eq:Gk-spec}) and (\ref{eq:Hk-spec}), we have
\begin{equation}
- 2\gamma(y-u(k))^T(H(k)+G(k))(y-u(k)) \leq -\frac{\gamma}{6}\|y-u(k)\|^2. \label{eq:main-inner-relu}
\end{equation}
The bound (\ref{eq:bound-res-k-relu}) implies
$$- 2\iprod{y-u(k)}{r(k)}\leq 2\|y-u(k)\|\|r(k)\|\lesssim \gamma n\log p\left(R_1+\sqrt{\frac{\log n}{p}}\right)\|y-u(k)\|^2.$$
By (\ref{eq:Gk-spec}), (\ref{eq:Hk-spec}) and (\ref{eq:bound-res-k-relu}), we also have
\begin{eqnarray*}
\|u(k)-u(k+1)\|^2 &\leq& 2\gamma^2\|(H(k)+G(k))(y-u(k))\|^2 + 2\|r(k)\|^2 \\
&\lesssim& \gamma^2n\|y-u(k)\|^2 + (\gamma n\log p)^2\left(R_1+\sqrt{\frac{\log n}{p}}\right)^2\|y-u(k)\|^2 .
\end{eqnarray*}
Therefore, as long as $\frac{n\log n}{d}$, $\frac{n^3(\log p)^4}{p}$ and $\gamma n$ are all sufficiently small, we have
$$- 2\iprod{y-u(k)}{r(k)} + \|u(k)-u(k+1)\|^2 \leq \frac{\gamma}{24}\|y-u(k)\|^2.$$
Together with the bound (\ref{eq:main-inner-relu}), we have
$$\|y-u(k+1)\|^2 \leq \left(1-\frac{\gamma}{8}\right)\|y-u(k)\|^2\leq \left(1-\frac{\gamma}{8}\right)^{k+1}\|y-u(0)\|^2,$$
and thus Claim D is true. The proof is complete.
\end{proof}

\subsection{Proofs of Theorem \ref{thm:repair-nn-1-relu} and Theorem \ref{thm:repair-nn-2-relu}}

\begin{proof}[Proof of Theorem \ref{thm:repair-nn-1-relu}]
The proof is the same as that of Theorem \ref{thm:repair-nn-1}. The only exception here is that we apply Lemma \ref{lem:design-rf-relu} and Lemma \ref{lem:lim-G-relu} instead of Lemma \ref{lem:design-rf} and Lemma \ref{lem:lim-G}.
\end{proof}

\begin{proof}[Proof of Theorem \ref{thm:repair-nn-2-relu}]
The analysis of $\wh{v}_1,...,\wh{v}_p$ is the same as that in the proof of Theorem \ref{thm:repair-nn-1}, and we have $\wt{W}_j=\wh{W}_j$ for all $j\in[p]$ with high probability.

To analyze $\wh{u}$, we apply Theorem \ref{thm:main-improved}. It suffices to check Condition $A$ and Condition $B$ for the design matrix $\psi(X^T\wt{W}^T)=\psi(X^T\wh{W}^T)$. Since
$$\sum_{i=1}^n\mathbb{E}\left(\frac{1}{p}\sum_{j=1}^pc_j\psi(\wh{W}_j^Tx_i)\right)^2\leq \sum_{i=1}^n\frac{1}{p}\sum_{j=1}^p\mathbb{E}\psi(\wh{W}^Tx_i)^2,$$
and $\mathbb{E}\psi(\wh{W}^Tx_i)^2\leq \mathbb{E}|\wh{W}_j^Tx_i|^2\lesssim 1 + R_1d\lesssim 1$, Condition $A$ holds with $\sigma^2\asymp p$.
We also need to check Condition $B$. By Theorem \ref{thm:nn-grad-relu}, we have
\begin{eqnarray*}
&& \left|\frac{1}{p}\sum_{j=1}^p\left|\sum_{i=1}^n\psi(\wh{W}_j^Tx_i)\Delta_i\right| - \frac{1}{p}\sum_{j=1}^p\left|\sum_{i=1}^n\psi(W_j(0)^Tx_i)\Delta_i\right|\right| \\
&\leq& \frac{1}{p}\sum_{j=1}^p\sum_{i=1}^n|\wh{W}_j^Tx_i-W_j(0)^Tx_i||\Delta_i| \\
&\leq& R_1\sum_{i=1}^n\|x_i\||\Delta_i| \\
&\leq& R_1\sqrt{\sum_{i=1}^n\|x_i\|^2} \\
&\lesssim& \frac{n^{3/2}\log p}{\sqrt{p}},
\end{eqnarray*}
where $\sum_{i=1}^n\|x_i\|^2\lesssim nd$ is by Lemma \ref{lem:chi-squared}. By Lemma \ref{lem:design-rf-relu}, we can deduce that
$$\inf_{\|\Delta\|=1}\frac{1}{p}\sum_{j=1}^p\left|\sum_{i=1}^n\psi(\wh{W}_j^Tx_i)\Delta_i\right|\gtrsim 1,$$
as long as $\frac{n^{3/2}\log p}{\sqrt{p}}$ is sufficiently small. By (\ref{eq:Gk-spec}), we also have
$$\sup_{\|\Delta\|=1}\frac{1}{p}\sum_{j=1}^p\left|\sum_{i=1}^n\psi(\wh{W}_j^Tx_i)\Delta_i\right|^2\lesssim n.$$
Therefore, Condition $B$ holds with $\overline{\lambda}^2\asymp n$ and $\underline{\lambda}\asymp 1$. Applying Theorem \ref{thm:main-improved}, we have $\wt{\beta}=\wh{\beta}$ with high probability, as desired.
\end{proof}

\newpage

\end{document}